\newtheorem{theorem}{Theorem}[section]
\newtheorem{definition}[theorem]{Definition}
\newtheorem{proposition}[theorem]{Proposition}
\newtheorem{lemma}[theorem]{Lemma}
\lstdefinelanguage{GAP}{%
	morekeywords={%
		Assert,Info,IsBound,QUIT,%
		TryNextMethod,Unbind,and,break,%
		continue,do,elif,%
		else,end,false,fi,for,%
		function,if,in,local,%
		mod,not,od,or,%
		quit,rec,repeat,return,%
		then,true,until,while%
	},%
	sensitive,%
	morecomment=[l]\#,%
	morestring=[b]",%
	morestring=[b]',%
}[keywords,comments,strings]
\begin{document}

\title[IBIS large base]{IBIS primitive groups of almost simple type}


\author[F. Mastrogiacomo]{Fabio Mastrogiacomo}
\address{Fabio Mastrogiacomo, Dipartimento di Matematica ``Felice Casorati", University of Pavia, Via Ferrata 5, 27100 Pavia, Italy} 
\email{fabio.mastrogiacomo01@universitadipavia.it}

\author[P. Spiga]{Pablo Spiga}
\address{Pablo Spiga, Dipartimento di Matematica Pura e Applicata,\newline
 University of Milano-Bicocca, Via Cozzi 55, 20126 Milano, Italy} 
\email{pablo.spiga@unimib.it}
\subjclass[2010]{primary 205}
\keywords{IBIS, base size, irredundant base, almost simple, Lie group}        
\thanks{The authors are members of the GNSAGA INdAM research group and kindly acknowledge their support.}
	\maketitle
\begin{abstract}        Let $G$ be a finite permutation group on $\Omega$. An ordered sequence 
        $(\omega_1\ldots,\omega_\ell)$ of elements of $\Omega$ is an irredundant base for $G$ if the pointwise stabilizer is trivial and no point is fixed by the stabilizer of its predecessors. The minimal cardinality of an irredundant base is said to be the base size of $G$. If all irredundant bases of $G$ have the same cardinality, $G$ is said to be an IBIS group. 
        
In this paper, we classify the finite almost simple primitive IBIS groups whose base size is at least $6$.
\end{abstract}
\section{Introduction}\label{introduction}
A \textit{\textbf{base}} for a permutation group $G$ is a sequence $(\omega_1 , \ldots , \omega_\ell )$ whose pointwise stabilizer in $G$ is the identity. A base is
said to be \textit{\textbf{irredundant}} if no base point is fixed by the stabilizer of its predecessors. Therefore, an irredundant base $(\omega_1,\dots,\omega_\ell)$ gives
rise to a strictly decreasing sequence of stabilizers
$$
G > G_{\omega_1} > G_{\omega_1 ,\omega_2} > \cdots > 
G_{\omega_1 ,\omega_2 ,\ldots,\omega_{\ell-1}} > 
G_{\omega_1,\omega_2,\ldots,\omega_\ell}= 1.$$

The minimal cardinality of a base, called the \textit{\textbf{base size}} of $G$, is usually denoted by $b(G)$ and has played a key role in
the investigation of primitive groups.

Cameron and Fon-der-Flaass~\cite{CF} (see also~\cite[Section~4.14]{Peter}) have proved that in a finite permutation group the following conditions are equivalent:
\begin{itemize}
\item all irredundant bases have the same size;
\item the irredundant bases are invariant under re-ordering;
\item the irredundant bases are the bases of a matroid.
\end{itemize}
A permutation group satisfying one, and hence all, of these conditions is said to be \textit{\textbf{IBIS}}, that is, \textit{\textbf{Irredundant Bases of Invariant Size}}. In particular, in an IBIS group $G$, all irredundant bases have cardinality $b(G)$ and this value is sometimes referred to as the \textit{\textbf{rank}} of the IBIS group, since it is the rank of the corresponding matroid. Aside from this introductory section, we avoid using the term rank with this meaning, because it can be easily confused with other properties of permutation groups that also bear this name.

Lucchini, Morigi and Moscatiello~\cite{LuMoMo} made the first attempt of classifying finite primitive IBIS groups. Their approach is via the O'Nan-Scott classification of primitive groups (we use the terminology from~\cite{Pra90}). 
\begin{theorem}[{Theorem~1.1,~\cite{LuMoMo}}]Let $G$ be a primitive IBIS group. Then one of the following holds:
\begin{enumerate}
\item $G$ is of affine type,
\item $G$ is almost simple,
\item $G$ is of diagonal type.
\end{enumerate}
Moreover, $G$ is a primitive IBIS group of diagonal type if and only if it belongs to the infinite family of diagonal groups $\{\mathrm{PSL}_2(2^f)\times \mathrm{PSL}_2(2^f)\mid f\in\mathbb{N},f\ge 2\}$ having degree $|\mathrm{PSL}_2(2^f)|=2^f(4^f-1)$.
\end{theorem}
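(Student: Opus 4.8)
The plan is to run through the O'Nan--Scott classification of finite primitive groups, in the terminology of~\cite{Pra90}, and show that outside the affine and almost simple types, which are recorded as conclusions~(1) and~(2), the only survivors are very restricted diagonal groups. Write $N=\mathrm{soc}(G)=T^k$ with $T$ simple. The affine case is exactly $T$ abelian and the almost simple case is $k=1$ with $T$ nonabelian; both are listed in the statement and, for this reduction theorem, require no further analysis (the point is only that every primitive IBIS group lands in one of the three buckets). It therefore remains to treat $T$ nonabelian with $k\ge 2$, i.e.\ the product action, twisted wreath, compound diagonal, holomorph-of-compound, and genuine diagonal (simple diagonal and holomorph-of-simple) types.

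First I would eliminate every type carrying a nontrivial top group permuting at least two coordinates: product action, twisted wreath, compound diagonal and holomorph of a compound group. In each of these $\Omega$ admits a decomposition $\Delta^l$ with $l\ge 2$ and $G\le H\wr S_l$ (or its regular twisted-wreath analogue), and the aim is to exhibit two irredundant bases of different cardinalities, which by the Cameron--Fon-der-Flaass criterion~\cite{CF} makes $G$ non-IBIS. The mechanism is that assembling an irredundant base splits into a phase pinning down each coordinate through the component action of $H$ and a phase resolving the coordinate permutations coming from $S_l$; since these phases can be interleaved with different efficiencies, one produces irredundant sequences of distinct lengths. Making this uniform over all the product-type and twisted-wreath subcases is essentially bookkeeping once a suitable desynchronisation lemma is fixed.

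The heart of the matter is the diagonal case, where $\Omega\cong T^{k-1}$, the socle acts with a diagonal point stabiliser, and $G/N$ embeds in $\mathrm{Out}(T)\times S_k$. Normalising the first base point and identifying the residual diagonal stabiliser with $T$, an irredundant base beyond the first point becomes a strictly descending chain of centraliser intersections $C_T(x_2)\supsetneq C_T(x_2)\cap C_T(x_3)\supsetneq\cdots\supsetneq 1$, so that $G$ is IBIS precisely when the closure $S\mapsto C_T(C_T(\langle S\rangle))$ on $T$ is a matroid, i.e.\ all maximal chains of this form have equal length. When $k\ge 3$, maximality of the diagonal already forces a nontrivial transitive subgroup of $S_k$ on top, and when $k=2$ the only extra possibilities are the swap and outer automorphisms; in either situation the relevant top element replaces honest centralisers by twisted centralisers, and a direct check shows exchange fails. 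This forces $k=2$ and $G=T\times T$ acting on $T$. For such $G$ the IBIS condition reads exactly that the centraliser of every nonidentity element is abelian and self-centralising and that distinct such centralisers intersect trivially; when this holds one can adjoin exactly two further points after the first before the stabiliser becomes trivial, so every irredundant base has three points and $b(G)=3$.

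The main obstacle is the final step: proving that among all nonabelian finite simple groups this trivial-intersection centraliser property characterises $T=\mathrm{PSL}_2(2^f)$. For $\mathrm{PSL}_2(2^f)$ it is immediate, since the centraliser of each nonidentity element is one of the maximal tori or a Sylow $2$-subgroup, all abelian, self-centralising and pairwise intersecting trivially; this yields the rank-two matroid, hence $b(G)=3$, on a set of size $|T|=2^f(4^f-1)$. For every other simple group one must instead produce elements $g,h$ with $1\ne C_T(g)\cap C_T(h)\subsetneq C_T(g)$, a nested chain of centralisers of length at least two coexisting with a length-one chain and so violating invariance; natural witnesses are an involution whose centraliser strictly contains that of a power or torus element. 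Carrying this out uniformly requires the classification of finite simple groups together with a case-by-case inspection of centraliser structures, and this is where the substantive work lies.
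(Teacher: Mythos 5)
First, note that the paper you are reading does not prove this statement at all: it is quoted verbatim as Theorem~1.1 of~\cite{LuMoMo} and used as a black box, so there is no internal proof to compare against. Your outline does follow the architecture of the proof in~\cite{LuMoMo}: an O'Nan--Scott reduction, elimination of the product-action, twisted wreath and compound diagonal types, and a structural analysis of the simple diagonal case in which an irredundant base beyond the first point becomes a strictly descending chain of centraliser intersections in $T$. Your reduction of the IBIS condition for $T\times T$ acting on $T$ to the statement that centralisers of nonidentity elements are abelian and pairwise trivially intersecting is correct, and the degree count $|T|=2^f(4^f-1)$ is right.

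However, two load-bearing steps are announced rather than proved. The elimination of the non-diagonal types with socle $T^k$, $k\ge 2$, rests on an unspecified ``desynchronisation lemma''; exhibiting irredundant bases of two different lengths in a product action or twisted wreath product genuinely requires choosing explicit point sequences and is not mere bookkeeping --- this is where much of the case analysis actually lives. More seriously, the final characterisation you defer --- that the only nonabelian finite simple groups whose nonidentity centralisers are abelian with trivial pairwise intersections are the groups $\mathrm{PSL}_2(2^f)$, $f\ge 2$ --- is precisely Suzuki's classification of simple CA-groups (together with the Brauer--Suzuki--Wall identification of $\mathrm{PSL}_2(2^n)$); it is a known, CFSG-free theorem that should be invoked by name rather than re-derived by the case-by-case inspection of the finite simple groups you propose but do not carry out. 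Finally, your claim that every group strictly between $T\times T$ and the full normaliser of diagonal type fails to be IBIS (``a direct check shows exchange fails'') needs an actual argument, in particular for $T=\mathrm{PSL}_2(2^f)$ itself, where the untwisted group is IBIS and one must show that adjoining the coordinate swap or a field automorphism destroys the property.
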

In light of this result, the problem of understanding finite primitive IBIS groups is reduced to affine groups and to almost simple groups. A first attempt of classifying finite primitive IBIS groups of affine type can be found in~\cite{LuMa}.

Lee and the second author~\cite{LeSp} have classified the almost simple primitive IBIS groups having socle an alternating group. In the same paper, the authors have proposed a conjecture; indeed, they have conjectured that every almost simple primitive IBIS group is one of the groups in~\cite[Table~1]{LeSp}. However, we discovered two additional families of primitive $\mathrm{IBIS}$ groups that do not appear in the table of \cite{LeSp}, namely $\Omega_d^\pm(q)$, with $q=2^f$, acting on non-singular $1$-dimensional subspaces. For completeness, we have reported the whole of ~\cite[Table~1]{LeSp} in Table~\ref{table:table} together with the two additional families we have discovered.

Later, Lee~\cite{Lee} has classified the almost simple primitive IBIS groups having socle a sporadic simple group.

There are two special circumstances where IBIS groups arise. A permutation group $G$ on $\Omega$ is said to be \textit{\textbf{sharply $k$-transitive}} if, for any two $k$-tuples of distinct elements of $\Omega$, there exists a unique element of $G$ mapping the first
$k$-tuple to the second. Observe that if $G$ is sharply $k$-transitive, then $b(G) = k$ and $G$ is an IBIS group. Moreover, a
transitive permutation group $G$ is said to be \textit{\textbf{geometric}} (or,  base-transitive, see~\cite{Robert}) if $G$ permutes its irredundant bases transitively. Clearly, every sharply $k$-transitive group is geometric and
every geometric group is IBIS.\footnote{In Table~\ref{table:table}, the third column indicates whether the group is sharply $k$-transitive, denoted by a $\checkmark$, and similarly, whether the group is geometric,  marked with a $\checkmark$ in the forth column.}  

In this paper, we deal with almost simple primitive groups having socle a simple group of Lie type.
\begin{theorem}\label{thrm:main}
Let $G$ be an almost simple primitive group having socle a simple group of Lie type $G_0$, with $b(G)\ge 6$ when $G_0$ is exceptional and $b(G)\ge 5$ when $G_0$ is classical. Then $G$ is IBIS if and only if $G$ is either $\mathrm{SL}_d(2)$ or $\mathrm{Sp}_d(2)$ in its natural action on the non-zero vectors of a $d$-dimensional vector space over the field with two elements, or $G=\Omega_d^\pm(q)$ with $d\ge 4$ and $q\ge 4$ acting on the non-singular $1$-dimensional subspaces of the $d$-dimensional vector space over the field with $q$ elements.
\end{theorem}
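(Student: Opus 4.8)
The plan is to combine the matroid characterisation of the IBIS property of Cameron--Fon-der-Flaass with the Aschbacher--Liebeck--Seitz description of the maximal subgroups of almost simple groups of Lie type, together with the existing literature bounding base sizes of primitive permutation groups. Since $G$ is primitive, the point stabiliser $H$ is maximal in $G$ and $G$ acts on the cosets of $H$. The first step is to separate the primitive actions into \emph{standard} actions---where $G_0$ is classical with natural module $V$ and $H$ is, up to scalars, the stabiliser of a subspace, a pair of subspaces, or a subspace decomposition of $V$---and all remaining actions.

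For the non-standard actions I would invoke the uniform bounds on base size: by the work of Burness and his collaborators, if the action of an almost simple group is non-standard then $b(G)$ is bounded above by a small absolute constant, and in particular every primitive action of an exceptional group of Lie type has base size at most $6$ (with the value $6$ occurring only rarely). Thus the hypothesis $b(G)\ge 6$ for exceptional $G_0$ restricts us to at most a short, explicit list of actions, each of which I would eliminate by exhibiting two irredundant bases of different lengths, contradicting IBIS. The same mechanism reduces the classical non-standard actions with $b(G)\ge 4$ to a finite, explicitly listed collection, none of which survives the IBIS test. This leaves only the standard subspace actions.

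The heart of the argument is the standard-action analysis, carried out type by type (linear, unitary, symplectic, orthogonal) and according to the type of subspace stabilised. Here the matroid criterion is decisive: a sequence is an irredundant base precisely when the associated chain of stabilisers descends strictly to the identity, and IBIS holds if and only if these sequences are the bases of a matroid on the point set. For the linear groups acting on projective points I would show that over $\mathbb{F}_q$ with $q>2$ the non-trivial diagonal torus produces irredundant bases of distinct lengths, so IBIS fails, whereas for $q=2$ the non-zero vectors carry the ordinary linear matroid and one recovers $\mathrm{SL}_d(2)$; the symplectic analogue over $\mathbb{F}_2$ yields $\mathrm{Sp}_d(2)$. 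For $\Omega_d^\pm(q)$ acting on non-singular $1$-spaces with $q=2^f\ge 4$, I would establish IBIS directly by giving a precise description of the pointwise stabiliser of a set of non-singular points and verifying the exchange axiom for the resulting closure operator; the remaining subspace actions (totally singular or non-degenerate subspaces, subspaces of dimension greater than $1$, and all actions with $q>2$ outside the orthogonal family) I would rule out by explicit geometric constructions of non-equicardinal irredundant bases.

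The main obstacle I anticipate is precisely this geometric part. Showing that IBIS fails is not a single uniform computation: in each family, and for each admissible field size and form type, one must produce two irredundant bases whose lengths differ, and the constructions are sensitive to the interplay between the field size, the Witt index, and the dimension of the stabilised subspaces. Conversely, proving that the orthogonal families $\Omega_d^\pm(q)$ genuinely are IBIS is delicate: it requires controlling how the quadratic form and its polarisation behave in characteristic $2$, where the non-singular points and the associated symplectic structure interact, and verifying that the stabiliser lattice of a set of non-singular points satisfies the matroid exchange property. Isolating exactly the thresholds $q\ge 4$ and $d\ge 4$ for which this holds, and no more, is the crux of the proof.
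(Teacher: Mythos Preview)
Your proposal is correct and follows essentially the same architecture as the paper: reduce via the Burness et al.\ base-size bounds to a short list of non-standard actions plus the standard subspace actions, then eliminate each by exhibiting irredundant bases of different lengths, leaving only $\mathrm{SL}_d(2)$, $\mathrm{Sp}_d(2)$, and $\Omega_d^\pm(q)$ on non-singular $1$-spaces. Two tactical differences worth noting: the paper handles the orthogonal groups on totally singular subspaces by transporting the problem through the Klein correspondence to $\mathrm{PSL}_4(q)$, $\mathrm{PSp}_4(q)$, and $\mathrm{PSU}_4(q)$ rather than arguing directly, and it proves that $\Omega_d^\pm(q)$ is IBIS not by checking the exchange axiom but by a short dimension-and-transvection argument showing every irredundant base has length exactly $d-1$.
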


Our proof of Theorem~\ref{thrm:main} uses the astonishing work on the Cameron-Kantor conjecture~\cite[Theorem~$2.1$]{cameron}. Most interest in the base size of primitive groups originated from Jordan's classic results~\cite{jordan}, which bound the cardinality of a primitive group via its base size. This interest was spurred in the '90s by the Cameron-Kantor conjecture~\cite{cameron}: there exists an absolute constant $b$ with $b(G)\le b$, for every almost simple primitive group in a non-standard action. (We refer to~\cite{cameron} and to Section~\ref{largebasesize} for the definition of \textit{\textbf{standard action}}). This conjecture was settled in the positive in~\cite{BLS09}; however, the refinement of Cameron~\cite[Chapter~$4$, Page~$120$]{Peter}, asking whether one can take $b=7$ has required considerable more effort. The detailed analysis of Burness~\cite{B1,B2,B3,B4} on fixed point ratios of classical groups has resulted in a positive answer to Cameron's question in~\cite{Bur07}, together with \cite{BLS09}.

Therefore, our proof of Theorem~\ref{thrm:main} uses deatailed information on the standard actions. Indeed, we prove this stronger result.
\begin{theorem}\label{thrm:main2}
Let $G$ be an almost simple primitive group having socle a simple classical group  in a standard action. Then $G$ is IBIS if and only if one of the following holds:
\begin{enumerate}
\item $G=\mathrm{SL}_d(2)$ or $G=\mathrm{Sp}_d(2)$ in its natural action on the non-zero vectors of an $n$-dimensional vector space over the field with two elements,
\item $G$ has socle $\mathrm{PSL}_2(q)$ and $G$ is endowed of its natural action on the $q+1$ points of the projective line,
\item the socle of $G$ is $\mathrm{Sp}_4(2)'$  in its natural action on the fifteen non-zero vectors of a $4$-dimensional vector space over the field with two elements.
\item $G=\Omega_d^\pm(q)$, $d\ge 4$, $q\ge 4$ in its natural action on the non-singular $1$-dimensional subspaces. In this case, all bases have cardinality $d-1$.
\end{enumerate}
\end{theorem}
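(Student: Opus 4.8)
The plan is to analyze each family of standard actions of almost simple classical groups separately, using the matroid characterization of IBIS groups: a group $G$ is IBIS if and only if its irredundant bases form the bases of a matroid, equivalently all maximal chains of point-stabilizers have the same length. Recall that a standard action means (roughly) a \emph{subspace action} on an orbit of subspaces (totally singular, non-singular, or otherwise distinguished subspaces of the natural module, or pairs thereof) together with the finitely many actions on cosets of a subfield or imprimitive subgroup. The strategy is to exploit a key rigidity: in an IBIS group, reordering an irredundant base must again yield an irredundant base, so if one can find two irredundant bases of different lengths, or a single reordering that produces a redundant point, the group is disqualified. First I would set up the framework by recalling the classification of maximal subgroups of classical groups (the Aschbacher classes $\mathcal{C}_1,\ldots,\mathcal{C}_8$ and $\mathcal{S}$), so that a standard action is precisely a $\mathcal{C}_1$ subspace action (or one of a short list of additional cases coming from the definition of ``standard''), and then treat the point-stabilizer $G_\omega$ geometrically.

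The main body of the proof would proceed case-by-case through the subspace actions on $k$-dimensional subspaces of the natural $d$-dimensional module $V$ over $\mathbb{F}_q$. For each such action, I would compute an irredundant base explicitly in geometric terms: an irredundant base corresponds to a sequence of subspaces $W_1,W_2,\ldots$ such that each $W_{i+1}$ is not stabilized by the stabilizer of $W_1,\ldots,W_i$, continuing until the pointwise stabilizer is trivial. The decisive tool is to test the IBIS property by constructing \emph{two} irredundant bases of differing length, or by showing that the collection of irredundant bases genuinely forms a matroid. For the positive cases---$\mathrm{SL}_d(2)$ and $\mathrm{Sp}_d(2)$ on non-zero vectors, $\mathrm{PSL}_2(q)$ on the projective line, $\mathrm{Sp}_4(2)'$, and $\Omega_d^\pm(q)$ on non-singular $1$-spaces---I would verify directly that every irredundant base has the same length: for the vector actions over $\mathbb{F}_2$ this reduces to the classical fact that a linearly independent spanning set is a basis of $V$ (so irredundant bases are exactly vector-space bases, which manifestly all have size $d$ or $d-1$), and the matroid is literally the matroid of the vector space. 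For $\Omega_d^\pm(q)$ on non-singular $1$-spaces, I would show that irredundant bases correspond to suitable spanning configurations of non-singular points whose associated stabilizer chain always has length $d-1$, matching the footnote.

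For the negative direction---showing no other standard actions are IBIS---I would argue by exhibiting, in each remaining case, an explicit pair of irredundant bases of unequal size. The cleanest way is usually to find a short irredundant base and a long one: since a long irredundant base of length $\ell$ gives a chain of $\ell$ proper stabilizer inclusions, I can often lengthen a chain by choosing subspaces in ``general position'' versus choosing them in a degenerate incidence that forces smaller stabilizer drops at each step. Concretely, for an action on $k$-subspaces with $2\le k$, one reorders or perturbs the base points to realize two maximal chains of the subgroup lattice $G>G_{\omega_1}>\cdots>1$ of different lengths; the existence of such chains follows from the non-modularity of the subspace-incidence poset once $d$ and $q$ are large enough, which is exactly where the hypotheses $b(G)\ge 4$ and $q\ge 4$ enter to rule out the small sporadic coincidences. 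I would organize this as a sequence of lemmas, one per Aschbacher $\mathcal{C}_1$ sub-case (isotropic $k$-subspaces for each geometry, non-isotropic $k$-subspaces, and the pair-of-subspaces actions), invoking the maximal subgroup structure to control $G_\omega$ and its further stabilizers.

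The hard part will be the uniform treatment of the non-isotropic and totally singular subspace actions for the symplectic, orthogonal, and unitary groups when the subspace dimension $k$ is at least $2$: here the stabilizer $G_\omega$ has a reductive-plus-unipotent structure, and producing two genuinely irredundant bases of different lengths requires careful bookkeeping of which configurations of subspaces are stabilized by which intermediate subgroups, together with precise control of the unipotent radical acting on the remaining points. A second delicate point will be disentangling the boundary cases in small dimension or over small fields (where $b(G)$ can be as small as the threshold $4$), since the sporadic isomorphisms among classical groups and the exceptional behavior over $\mathbb{F}_2,\mathbb{F}_3$ can produce genuine IBIS examples (such as $\mathrm{Sp}_4(2)'$) that must be identified by hand rather than excluded by the general argument; these are precisely the reasons for the stated hypotheses, and verifying that the exceptional list is complete---neither missing a sporadic IBIS action nor wrongly including one---will demand the most care.
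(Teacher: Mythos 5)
Your overall strategy (case division by subspace action, exhibiting two irredundant bases of unequal length for the negative cases via Lemma~\ref{lemma:1}/Lemma~\ref{lemma:minusminus1}, and direct verification for the positive cases) is the same as the paper's, and your treatment of $\mathrm{SL}_d(2)$ and $\mathrm{Sp}_d(2)$ on non-zero vectors is correct as far as it goes. However, there is a concrete gap in your case division: you describe the standard actions beyond the $\mathcal{C}_1$ subspace actions as ``the finitely many actions on cosets of a subfield or imprimitive subgroup.'' Subfield ($\mathcal{C}_5$) and imprimitive ($\mathcal{C}_2$) subgroups are \emph{not} subspace subgroups; the extra case in Definition~\ref{defintion6.2} is the even-characteristic $\mathcal{C}_8$ embedding $\mathrm{O}^{\pm}_{2m}(q)<\mathrm{Sp}_{2m}(q)'$, i.e.\ the action of $\mathrm{Sp}_{2m}(q)$ on quadratic forms polarising to the symplectic form. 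Your plan never touches this family, yet it occupies all of Section~\ref{symplecticeven} of the paper and is the source of several genuine IBIS examples (the $m=1$ cases with socle $\mathrm{PSL}_2(q)$ and the $m=q=2$ cases with socle $\mathrm{Sp}_4(2)'\cong\mathrm{Alt}(6)$), so omitting it would make the classification both incomplete in its case analysis and wrong in its list of exceptions.

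A second, softer weakness is that your negative-direction mechanism --- ``the existence of such chains follows from the non-modularity of the subspace-incidence poset once $d$ and $q$ are large enough'' --- is not an argument, and the hypothesis $b(G)\ge 4$ you invoke is not present in Theorem~\ref{thrm:main2} (it appears only in Theorem~\ref{thrm:main}). What the paper actually does is an induction on dimension: it fixes an incidence set $\Delta$ of points containing (or contained in) a fixed flag, proves $G_{\Delta}=G_W$ so that irredundant chains for the smaller classical group induced on $\Delta$ lift to irredundant chains in $G$ with \emph{equal} stabilizers, and then applies Lemma~\ref{lemma:1}; the base cases in dimensions $3$ and $4$ are computed explicitly, and the Klein correspondence transfers the $\mathrm{PSL}_4$, $\mathrm{PSU}_4$, $\mathrm{PSp}_4$ results to the six- and seven-dimensional orthogonal groups. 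Some mechanism of this kind is needed to make your ``general position versus degenerate incidence'' idea into a proof uniform in $d$, $q$ and $k$, and to force $G=G_0$ (not merely $G_0\le G$) in conclusion~(4).
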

While proving Theorem~\ref{thrm:main2}, we have delved deeply into the problem of classifying almost simple primitive IBIS groups. As a result, in the future, we intend to revisit this issue and examine the case of groups having base size at most 5. Indeed, in our opinion, there is some hope
in dealing with groups having small base size. For instance, suppose that $G$ is an almost simple primitive IBIS group with $b(G) = 2$. In
particular, all irredundant bases of $G$ have cardinality $2$. Thus, for any two distinct points $\alpha$ and $\beta$ in the domain of $G$
we have $G_\alpha\cap G_\beta = 1$.  Therefore $G$ is
a Frobenius group, contradicting the fact that $G$ is almost simple. This simple argument shows that, if $G$ is an almost
simple primitive IBIS group, then $b(G) \ge 3$. Therefore, from Theorem~\ref{thrm:main}, the cases that need to be considered are when $b(G)\in \{3,4,5\}$.\footnote{Most examples in Table~\ref{table:table} have $b(G)\in \{3,4\}$.}

\begin{table}[!h]

\begin{tabular}{c|c|c|c|c|c}
Group&Degree&$b(G)$&s. $k$-t.&g.&Comments\\\hline
$\mathrm{Alt}(n)$&$n$&$n-2$&$\checkmark$&$\checkmark$&$n\ge 5$\\
$\mathrm{Sym}(n)$&$n$&$n-1$&$\checkmark$&$\checkmark$&$n\ge 5$\\\hline
$\mathrm{PGL}_2(q)$&$q+1$&$3$&$\checkmark$&$\checkmark$&\\
$\mathrm{PSL}_2(q)$&$q+1$&$3$&&& $q$ odd\\
$\mathrm{PSL}_2(q)\rtimes\langle\tau\rangle$&$q+1$&$4$&&& $q$ odd, $\tau$ field automorphism\\
&&&&& of prime order\\
$\mathrm{PGL}_2(q)\rtimes\langle\tau\rangle$&$q+1$&$4$&&& $q>4$, $\tau$ field automorphism\\
&&&&& of prime order\\
$\mathrm{Aut}(\mathrm{PSL}_2(4))$&$5$&$4$&$\checkmark$&$\checkmark$&\\
$\mathrm{PSL}_2(q).\langle\tau\rangle$&$q+1$&$3$&$\checkmark$&$\checkmark$& $q=p^f$ odd, $f$ even, $\tau$ not\\
&&&&&field automorphism, $G\ne\mathrm{PGL}_2(q)$\\
\hline

$M_{11}$&$11$&$4$&$\checkmark$&$\checkmark$&\\
$M_{12}$&$12$&$5$&$\checkmark$&$\checkmark$&\\
$M_{22}$&$22$&$5$&&&\\
$M_{23}$&$23$&$6$&&&\\
$M_{24}$&$24$&$7$&&&\\\hline
$\mathrm{SL}_d(2)$&$2^d-1$&$d$&&$\checkmark$& $d\ge 3$\\\hline
$\mathrm{Sp}_d(2)$&$2^d-1$&$d$&&& $d\ge 4$\\
$\mathrm{Sp}_4(2)'\cong \mathrm{Alt}(6)$&$15$&$3$&&&\\\hline

${}^2B_2(q)$&$q^2+1$&$3$&&& $q=2^{2f+1}$, $f\ge 1$\\\hline
${}^2G_2(q)$&$q^3+1$&$3$&&& $q=3^{2f+1}$, $f\ge 1$\\
${}^2G_2(3)'\cong\mathrm{PSL}_2(8)$&$28$&$3$&&&\\\hline
$\mathrm{SL}_2(q)$&$\frac{(q-1)q}{2}$&$3$&&& $q=2^f$, $f\ge 3$\\
$\mathrm{Aut}(\mathrm{SL}_2(q))$&$\frac{(q-1)q}{2}$&$3$&&& $q=2^f$, $f\ge 3$ prime \\
$\mathrm{SL}_2(4)$&$6$&$3$&&$\checkmark$&\\\hline
$\mathrm{P}\Gamma\mathrm{L}_2(q)$&$\frac{(q+1)q}{2}$&$3$&&& $q=2^p$, $p$ odd prime\\\hline
$\mathrm{SL}_2(q)$&$\sqrt{q}(q+1)$&3&&& $q=2^f$, $f\ge 4$, $f$ even\\\hline
$\mathrm{Alt}(7)$&$15$&$3$&&$\checkmark$&\\\hline

$\Omega_d^\pm(q)$&$q^{\frac{d}{2}-1}(q^{\frac{d}{2}}\mp1)$&$d-1$&&&$q\ge 4$, $q$ even, $n\ge 4$\\
&&&&&action on non-singular $1$-spaces
\end{tabular}
\caption{Examples of almost simple primitive $\mathrm{IBIS}$ groups, with the sharply $k$-transitive and	geometric actions highlighted}\label{table:table}
\end{table}

\subsection{Standard and non-standard actions}\label{largebasesize}
To start with, we use well-known
results bounding $b(G)$ to deal with the so-called \textit{\textbf{non-standard actions}}. The terminology
below follows~\cite[Definition~2.1]{Bur07}.\footnote{Originally, this notation was introduced by Liebeck and Shalev~\cite{LS99}.}

\begin{definition}\label{defintion6.2}
{\rm
 Let $G$ be an almost simple classical group over $\mathbb{F}_q$, where $q=p^f$ and $p$ is prime, with socle $G_0$ and associated natural
module $V$. A subgroup $H$ of $G$ not containing $G_0$ is a \textit{\textbf{subspace subgroup}} if, for each maximal
subgroup $M$ of $G_0$ containing $H \cap G_0$, one of the following holds:
\begin{enumerate} 
\item\label{definition6.2:1} $M$ is the stabilizer in $G_0$ of a proper nonzero subspace $U$ of $V$, where $U$ is totally
singular, non-degenerate, or, if $G_0$ is orthogonal and $p = 2$, a non-singular $1$-space ($U$ can be any subspace if $G_0 = \mathrm{PSL}(V )$).
\item\label{definition6.2:2} $G_0 = \mathrm{Sp}_{2m} (q)'$, $p = 2$, and 
$M = {\mathrm O}^\pm_{2m} (q)$.
\end{enumerate}
}
\end{definition}
A \textbf{\textit{subspace action}} of the classical group $G$ is the action of $G$ on the coset space $[G : H]$,
where $H$ is a subspace subgroup of $G$.
Note that the definition above amounts precisely to this: let $G$ be an almost simple group whose socle is a classical group $G_0$ with natural module $V$. If $G_0 = \mathrm{PSL}_n(p^f)$ or $G \leq \mathrm{PGL}(V)$, then a maximal subgroup of $G$ is a subspace subgroup if and only if it is contained in the $\mathcal{C}_1$ class or, when $G_0 = \mathrm{Sp}_n(2^f)$, the $\mathcal{C}_8$ class (using the labelling of Aschbacher classes $\mathcal{C}_1$–$\mathcal{C}_8$ in \cite{KL90}).
Otherwise, $G_0 = \mathrm{Sp}_4(2^f)$ or $G_0 = \mathrm{P}\Omega^+_8(p^f)$, and $G_0$ has some exotic automorphisms giving rise to further maximal subgroups, some of which are subspace subgroups.\\
We note that~\cite{KL90} explicitly excludes these cases, but they are included in~\cite{bhr}.

\begin{definition}\label{definition6.3}
{\rm
 A transitive action of $G$ on a set $\Omega$ is said to be standard if, up to
equivalence of actions, one of the following holds:
\begin{enumerate}
\item $G_0 = \mathrm{Alt}(m)$ and $\Omega$ is an orbit of subsets or uniform partitions of $\{1, \ldots , m\}$.
\item $G$ is a classical group in a subspace action.
\end{enumerate}
}
\end{definition}
For an almost simple primitive permutation group in a non-standard action, the base
size is bounded by an absolute constant. This was conjectured by Cameron and Kantor
(see~\cite{Cam92,cameron}) and then settled in the aﬃrmative by Liebeck and Shalev in~\cite[Theorem~1.3]{LS99}. The constant was then made explicit in subsequent works~\cite{Bur07,Bur2018,BGS11,BLS09,BOW10}. The following theorem summarizes these results.

\begin{theorem}\label{thrm:6.4} Let $G$ be a finite almost simple group in a primitive faithful non-standard
action with socle $G_0$. Then, $b(G) \le 7$, with equality if and only if $G$ is the Mathieu group $M_{24}$ in its natural action of degree $24$. Moreover, $b(G)=6$ if and only if one of the following holds:
\begin{enumerate}
\item $G=M_{23}$ in its natural action of degree $23$,
\item $G=Co_3$ in its action on the right cosets of a maximal subgroup isomorphic to $McL.2$,
\item $G=Co_2$ in its action on the right cosets of a maximal subgroup isomorphic to $\mathrm{U}_6(2).2$,
\item $G=Fi_{22}.2$ in its action on the right cosets of a maximal subgroup isomorphic to $2.\mathrm{U}_6(2).2$,
\item\label{part1} $G_0=E_7(q)$ in its action on the cosets of a parabolic subgroup labeled $P_7$,
\item\label{part2} $G_0=E_6(q)$ in its action on the cosets of a parabolic subgroup labeled $P_1$ or $P_6$.
\end{enumerate}

Moreover, if $G_0$ is a classical group, then either $b(G)\le 4$ or $G=\mathrm{U}_6(2).2$ with stabilizer $\mathrm{U}_4(3).2^2.$
\end{theorem}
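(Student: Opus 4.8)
The plan is to recognize that Theorem~\ref{thrm:6.4} is not a self-contained assertion but a synthesis of the now-complete resolution of the Cameron--Kantor conjecture~\cite{cameron}, and accordingly to organize the argument along the type of the socle $G_0$. The starting point is the theorem of Liebeck and Shalev~\cite{LS99}, which guarantees the mere existence of an absolute constant bounding $b(G)$ for every almost simple primitive group in a non-standard action; this reduces the task to extracting the \emph{sharp} constant $7$ and to pinning down the finitely many configurations attaining the top two values $6$ and $7$. I would therefore split the analysis according to whether $G_0$ is a sporadic group, an alternating group, a classical group, or an exceptional group of Lie type, since each family is governed by a different body of fixed-point-ratio estimates.

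For the sporadic socles I would quote the determination of base sizes in~\cite{BOW10}, which yields the exact value of $b(G)$ for every primitive action and thus isolates $M_{24}$ (degree $24$, $b=7$), $M_{23}$ (degree $23$, $b=6$), and the three Conway/Fischer configurations $Co_3$ on cosets of $McL.2$, $Co_2$ on cosets of $\mathrm{U}_4(3).2.2$, and $Fi_{22}.2$ on cosets of $2.\mathrm{U}_6(2).2$, all with $b=6$. For alternating and symmetric socles in non-standard actions (that is, actions that are neither on subsets nor on uniform partitions) I would invoke the corresponding case analysis, which gives $b(G)\le 5$ with no new equality cases beyond those already recorded. For classical socles the crucial input is Burness~\cite{Bur07}, built on the fixed-point-ratio estimates~\cite{B1,B2,B3,B4} and complemented by~\cite{BGS11}: these references establish, via explicit bounds on fixed-point ratios organized through Aschbacher's class structure, that $b(G)\le 4$ in every non-subspace action, with the single exception $G=\mathrm{U}_6(2).2$ acting on the cosets of $\mathrm{U}_4(3).2^2$; this simultaneously proves the final sentence of the statement. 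For exceptional socles the decisive reference is~\cite{BLS09}, which reduces the extremal analysis to the large parabolic actions and produces exactly the two families $G_0=E_7(q)$ on cosets of $P_7$ and $G_0=E_6(q)$ on cosets of $P_1$ or $P_6$, each with $b=6$.

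Having obtained for every socle type the precise value of $b(G)$ in each primitive non-standard action, I would then assemble the extremal list: the unique action with $b(G)=7$ is $M_{24}$ of degree $24$, while the actions with $b(G)=6$ are exactly the four sporadic configurations $M_{23}$, $Co_3$, $Co_2$, $Fi_{22}.2$, together with the two parabolic families of parts~(\ref{part1}) and~(\ref{part2}). The refinement that $b(G)\le 4$ for classical $G_0$ outside $\mathrm{U}_6(2).2$ follows from the classical-group portion of the analysis, and in particular forces all examples with $b(G)\in\{6,7\}$ to lie among the sporadic and exceptional cases.

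The main obstacle is that no uniform argument delivers the sharp bound; the heavy lifting occurs inside the exceptional-group case, where bounding $b(G)$ for the parabolic actions of $E_6(q)$ and $E_7(q)$ requires the intricate fixed-point-ratio and subgroup-structure estimates of~\cite{BLS09}, and where confirming that the value is \emph{exactly} $6$ (rather than $5$ or $7$) in those two families is the most delicate point. A secondary difficulty is purely organizational: one must verify, across the several independent references, that the thresholds are mutually compatible and that every socle type is covered, so that no primitive non-standard action with $b(G)\ge 6$ is overlooked.
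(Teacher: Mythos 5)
Your proposal takes essentially the same approach as the paper: Theorem~\ref{thrm:6.4} is stated there purely as a synthesis of the cited literature (\cite{Bur07,Bur2018,BGS11,BLS09,BOW10}), with no independent proof, and your socle-by-socle assembly of those references is exactly what the paper intends. The one small correction is attribution: the precise determination that the parabolic actions of $E_6(q)$ on $P_1$, $P_6$ and of $E_7(q)$ on $P_7$ have $b(G)=6$ comes from \cite{Bur2018} (which the paper invokes again in Propositions~\ref{prop:e6} and~\ref{prop:e7}), not from \cite{BLS09}, whose role is the general bound $b(G)\le 7$ with equality only for $M_{24}$.
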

The labeling of the parabolic subgroups in part~\eqref{part1} and~\eqref{part2} follows~\cite{Bourbaki}.
\subsection{Structure of the paper}\label{structure}
Let $G$ be an almost simple primitive group having socle a simple group of Lie type $G_0$. When $G_0$ is classical, let $V$ be the natural module of the covering group of $G_0$ and let $p$ be the characteristic of the underlying field.  From Definition~\ref{defintion6.2}\footnote{See also the comment following Definition~\ref{defintion6.2} and~\cite[page~549]{Bur07}.} and Theorem~\ref{thrm:6.4}, the proof of Theorem~\ref{thrm:main} reduces to the following cases:
\begin{enumerate}
\item\label{HiHo1} $G_0$ is a classical group acting on totally singular subspaces of $V$,
\item\label{HiHo3+}$G_0=\mathrm{PSL}_d(q)$ acting on pairs of subspaces of $V$, see~\eqref{psk1} and~\eqref{psk2} in Section~\ref{psll} for more details,
\item\label{HiHo4} $G_0=\mathrm{Sp}_{2m}(q)'$, $q$ is even and $G_0$ is acting on  the right cosets of $\mathrm{O}_{2m}^\pm(q)$,
\item\label{HiHo2} $G_0$ is classical acting on non-degenerate subspaces of $V$,
\item\label{HiHo3} $G_0$ is orthogonal of even characteristic and $G_0$ is acting on non-singular $1$-subspaces of $V$,

\item\label{HiHo5} $G_0=\mathrm{Sp}_4(2^a)$, $G \not\leq \mathrm{P}\Gamma \mathrm{L}(V)$, where $V$ is the natural module of $G_0$, and $G_0$ is acting on the cosets of a local subgroup of type $[q^4]:C_{q-1}^2$, see~\cite[Table~8.14]{bhr},
\item\label{HiHo6} $G_0=\mathrm{P}\Omega_8^+(q)$,  $G \not\leq \mathrm{P}\Gamma \mathrm{L}(V)$, where $V$ is the natural module of $G_0$, and $G_0$ is acting on the cosets of a local subgroup of type $$[q^{11}]:\left[\frac{q-1}{d}\right]^2\cdot\frac{1}{d}\mathrm{GL}_2(q)\cdot d^2,$$where $d=\gcd(2,q-1)$, see~\cite[Table~8.50]{bhr},
\item\label{HiHo7} $G_0=\mathrm{P}\Omega_8^+(q)$,  $G \not\leq \mathrm{P}\Gamma \mathrm{L}(V)$, where $V$ is the natural module of $G_0$, and $G_0$ is acting on the cosets of a subgroup isomorphic to $G_2(q),$ see~\cite[Table~8.50]{bhr},
\item\label{HiHo8} $G_0=E_7(q)$ in its action on the cosets of a parabolic subgroup labeled $P_7$,
\item\label{HiHo9} $G_0=E_6(q)$ in its action on the cosets of a parabolic subgroup labeled $P_1$ or $P_6$.
\end{enumerate}
The structure of the paper is straightforward.
In Section~\ref{preliminaries} we collect some basic remarks and observations. The proof of  Theorem~\ref{thrm:main} is divided by cases \eqref{HiHo1} - \eqref{HiHo9} as follow.\\
We prove case~\eqref{HiHo1} and case~\eqref{HiHo3+} in Section~\ref{psl}.
We prove case~\eqref{HiHo4} in Section~\ref{symplecticeven}. 
We prove case~\eqref{HiHo2} in Section~\ref{nondegenerate}.
We prove case~\eqref{HiHo3} in Section~\ref{1non-singular}.
We prove case~\eqref{HiHo5} in Section~\ref{Sp4graph}.
We prove case~\eqref{HiHo6} and~\eqref{HiHo7} in Section~\ref{trialies}.
We prove cases~\eqref{HiHo8} and~\eqref{HiHo9} in Section~\ref{e6e7}.\\
In the final section of the paper, we present the $\mathrm{GAP}$ (\cite{GAP}) code used for all computations. Specifically, we provide the functions along with two examples demonstrating their applications. The first example uses the permutation representation of groups available in $\mathrm{GAP}$. In the second example, where the permutation representation is too large, we use the FinInG package (\cite{FINING}) for computations involving classical groups.
\section{Preliminaries}\label{preliminaries}
Let $G$ be a permutation group on $\Omega$. Given $\omega_1,\ldots,\omega_\ell\in \Omega$, we say that the sequence $(\omega_1,\ldots,\omega_\ell)$ is \textit{\textbf{irredundant}} if
$$G>G_{\omega_1}>G_{\omega_1,\omega_2}>\cdots>G_{\omega_1,\omega_2,\ldots,\omega_\ell}.$$

\begin{lemma}\label{lemma:1}
Let $G$ be a permutation group on $\Omega$ and let  $(\omega_1,\ldots,\omega_\ell)$ and $(\omega_1',\ldots,\omega_{\ell'}')$ be irredundant  with 
$$G_{\omega_1,\ldots,\omega_\ell}=G_{\omega_1',\ldots,\omega_{\ell'}'}.$$ 
if $\ell \neq \ell'$, then $G$ is not $\mathrm{IBIS}$.
\end{lemma}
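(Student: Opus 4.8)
The plan is to exhibit two irredundant bases of $G$ of different cardinalities, which by the very definition of IBIS forces $G$ not to be IBIS. The key observation is that the two given irredundant sequences share a common terminal stabilizer, so they can be completed to genuine bases by appending one and the same tail; the difference in their lengths then propagates to a difference in the lengths of the completed bases.

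First I would set $K:=G_{\omega_1,\ldots,\omega_\ell}=G_{\omega_1',\ldots,\omega_{\ell'}'}$. Since $G$ is a permutation group on $\Omega$ it acts faithfully, and hence so does its subgroup $K$; as $\Omega$ is finite, $K$ admits a base. I therefore choose an irredundant base $(\eta_1,\ldots,\eta_m)$ for $K$, namely a sequence with
$$K>K_{\eta_1}>\cdots>K_{\eta_1,\ldots,\eta_m}=1,$$
where the empty sequence $m=0$ is permitted, precisely in the case $K=1$.

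Next I would concatenate this tail with each of the two original sequences, forming
$$(\omega_1,\ldots,\omega_\ell,\eta_1,\ldots,\eta_m)\quad\text{and}\quad(\omega_1',\ldots,\omega_{\ell'}',\eta_1,\ldots,\eta_m).$$
The crucial point is that stabilizing further points inside $G_{\omega_1,\ldots,\omega_\ell}=K$ coincides with stabilizing them inside $K$, so that $G_{\omega_1,\ldots,\omega_\ell,\eta_1,\ldots,\eta_j}=K_{\eta_1,\ldots,\eta_j}$ for every $j$, and likewise for the primed sequence. Splicing the strictly decreasing chain of the original irredundant sequence onto the strictly decreasing chain for $K$ thus yields, in both cases, a strictly decreasing chain of stabilizers terminating in the identity. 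Hence both concatenations are irredundant bases of $G$, of cardinalities $\ell+m$ and $\ell'+m$ respectively. Since $\ell\neq\ell'$ gives $\ell+m\neq\ell'+m$, these are two irredundant bases of distinct sizes, so $G$ is not IBIS.

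The only point requiring care — and the mild obstacle in an otherwise short argument — is the behaviour at the junction between the two halves of each concatenated sequence: one must verify that no collapse of the stabilizer chain occurs there and that the terminal stabilizer is genuinely trivial. Both facts follow at once from the identity $G_{\omega_1,\ldots,\omega_\ell,\eta_1,\ldots,\eta_j}=K_{\eta_1,\ldots,\eta_j}$ together with the choice of $(\eta_1,\ldots,\eta_m)$ as a base for $K$, so no separate calculation is needed.
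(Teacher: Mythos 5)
Your argument is correct and is essentially the paper's own proof, made explicit: the paper likewise notes that since the two irredundant sequences have the same terminal stabilizer, they can be extended by one common tail to irredundant bases of $G$ of different lengths. The identity $G_{\omega_1,\ldots,\omega_\ell,\eta_1,\ldots,\eta_j}=K_{\eta_1,\ldots,\eta_j}$ that you isolate is exactly the (implicit) justification the paper relies on.
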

\begin{proof}
As $G_{\omega_1,\ldots,\omega_\ell}=G_{\omega_1',\ldots,\omega_{\ell'}'}$, we may extend the irredundant sequences $(\omega_1,\ldots,\omega_\ell)$ and $(\omega_1',\ldots,\omega_{\ell'}')$   using the same set of points to obtain irredundant bases for $G$.
\end{proof}

The following  results are just simple observations.
\begin{lemma}\label{lemma:minus1}
Let $G$ be a permutation group on $\Omega$ and let $H$ be a subgroup of $G$. If   $(\omega_1,\ldots,\omega_\ell)$  is irredundant for $H$, then it is  irredundant   for $G$.
\end{lemma}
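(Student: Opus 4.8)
The plan is to reduce the statement to a step-by-step condition and then observe that it is inherited trivially. Recall from the definition at the start of Section~\ref{preliminaries} that irredundance is not a global condition about the full pointwise stabilizer but rather a condition imposed separately at each position of the sequence: $(\omega_1,\ldots,\omega_\ell)$ is irredundant for $H$ precisely when every one of the inclusions in the chain
$$H\ge H_{\omega_1}\ge H_{\omega_1,\omega_2}\ge\cdots\ge H_{\omega_1,\ldots,\omega_\ell}$$
is strict. Equivalently, for each $i\in\{1,\ldots,\ell\}$ the point $\omega_i$ is not fixed by $H_{\omega_1,\ldots,\omega_{i-1}}$, so there exists a \emph{witness} $h_i\in H_{\omega_1,\ldots,\omega_{i-1}}$ with $\omega_i^{h_i}\neq\omega_i$. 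The whole argument will consist of re-using these witnesses.

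First I would fix an index $i$ together with such a witness $h_i$. Since $H\le G$, the element $h_i$ already lies in $G$; it fixes $\omega_1,\ldots,\omega_{i-1}$, so $h_i\in G_{\omega_1,\ldots,\omega_{i-1}}$, while it moves $\omega_i$, so $h_i\notin G_{\omega_1,\ldots,\omega_i}$. Hence the inclusion $G_{\omega_1,\ldots,\omega_i}\le G_{\omega_1,\ldots,\omega_{i-1}}$ is strict. Since $i$ was arbitrary, every inclusion in the chain $G\ge G_{\omega_1}\ge\cdots\ge G_{\omega_1,\ldots,\omega_\ell}$ is strict, which is exactly the assertion that $(\omega_1,\ldots,\omega_\ell)$ is irredundant for $G$.

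I do not expect any genuine obstacle here; the only point requiring care is conceptual rather than computational. One must not conflate \emph{irredundance} with being a \emph{base}. Being a base additionally demands that the terminal stabilizer be trivial, and that property is genuinely \emph{not} inherited when passing from the subgroup to the overgroup: in general $G_{\omega_1,\ldots,\omega_\ell}$ can be strictly larger than $H_{\omega_1,\ldots,\omega_\ell}$, so a base for $H$ need not be a base for $G$. Irredundance, by contrast, is purely a statement about the strictness of the descent, and strictness for $H$ forces strictness for $G$ because each witnessing element of $H$ is automatically an element of $G$. Thus the lemma follows at once from this observation, with no estimates or case analysis needed.
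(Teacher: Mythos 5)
Your proof is correct and is precisely the ``simple observation'' the paper has in mind (it omits the proof entirely): each strict inclusion in the stabilizer chain for $H$ is witnessed by an element of $H$, which also witnesses the corresponding strict inclusion for $G$ since $H\le G$. Your cautionary remark that irredundance, unlike the base property, passes upward to overgroups is exactly the right conceptual point and is consistent with how the paper uses the lemma.
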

\begin{proof}
	Let $H_i = H_{\omega_1,\dots,\omega_i}$ and $G_i = G_{\omega_1,\dots,\omega_i}$, for $i=1,2,\dots,\ell$. Then, $H_i = G_i \cap H$. Since $H_1 > H_2 > \dots > H_\ell$, we deduce $G_1 > G_2 > \dots >G_\ell$.
\end{proof}

\begin{lemma}\label{lemma:minusminus1}
Let $A,G$ and $B$ be  permutation groups on $\Omega$ with $A\le G\le B$. If $A$ has an irredundant base of length $\ell_A$ and $B$ has an irredundant base of length $\ell_B$ with $\ell_A>\ell_B$, then  $G$ is not $\mathrm{IBIS}$.
\end{lemma}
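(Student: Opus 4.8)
The plan is to exhibit two irredundant bases for $G$ of different lengths, which by definition forces $G$ to fail the IBIS property. The two bases will be manufactured from the two hypotheses separately: the base for $B$ will yield a \emph{short} irredundant base for $G$, while the irredundant base for $A$ will yield a \emph{long} one. The key asymmetry to keep in mind throughout is that passing between $A$, $G$, $B$ preserves one of the two defining features of an irredundant base (strict descent of stabilizers, or triviality of the final stabilizer) but not the other, so in each case one feature must be repaired by hand.

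First I would handle the upper group $B$. Let $(\beta_1,\ldots,\beta_{\ell_B})$ be an irredundant base for $B$, so that $B_{\beta_1,\ldots,\beta_{\ell_B}}=1$. Since $G\le B$, we get $G_{\beta_1,\ldots,\beta_{\ell_B}}\le B_{\beta_1,\ldots,\beta_{\ell_B}}=1$, so this sequence is a base for $G$, although it need not remain irredundant for $G$. I would then prune it to an irredundant base: scanning left to right and discarding any point fixed by the stabilizer in $G$ of the points retained so far, one obtains a subsequence that is irredundant for $G$, has the same (trivial) pointwise stabilizer, and hence is an irredundant base for $G$ of length at most $\ell_B$.

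Next I would handle the lower group $A$. Let $(\alpha_1,\ldots,\alpha_{\ell_A})$ be an irredundant base for $A$; in particular it is irredundant for $A$, and since $A\le G$, Lemma~\ref{lemma:minus1} shows it is irredundant for $G$ as well. Now the situation is dual to the previous one: the strict descent of stabilizers is inherited, but $G_{\alpha_1,\ldots,\alpha_{\ell_A}}$ may be nontrivial. If it is nontrivial, then (as $G$ acts faithfully) it moves some point $\omega$, necessarily distinct from all the $\alpha_i$, and appending such points one at a time continues the strict descent until the stabilizer becomes trivial. This extension preserves irredundancy and only adds points, producing an irredundant base for $G$ of length at least $\ell_A$.

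Finally I would combine the two constructions: $G$ has an irredundant base of length at most $\ell_B$ and another of length at least $\ell_A$, and since $\ell_A>\ell_B$ these lengths are distinct. Hence not all irredundant bases of $G$ have the same size, so $G$ is not $\mathrm{IBIS}$. I do not expect any serious obstacle here; the only point requiring care is the directionality of the two repair steps (pruning for $B$, extending for $A$), and in particular the correct reading of Lemma~\ref{lemma:minus1}, which transfers irredundancy upward from a subgroup to an overgroup and is exactly what makes the $A$-to-$G$ step legitimate.
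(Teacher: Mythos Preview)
Your argument is correct and is precisely the straightforward verification the paper has in mind: the lemma is stated without proof in the paper, under the remark that it is a ``simple observation,'' and your pruning/extension steps together with Lemma~\ref{lemma:minus1} constitute exactly that observation.
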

\begin{proof}
	Let $(\omega_1,\dots,\omega_{\ell_B})$ be an irredundant base for $B$. Observe that this is a base for $G$ and hence we may extract from it an irredundant base for $G$ of length at most $\ell_B$. 
	Let $(\omega_1,\dots,\omega_{\ell_A})$ be an irredundant base for $A$. Observe that this an irredundant sequence for $G$ and hence we may extend it to an irredundant base for $G$ of length at least $\ell_A$. 
\end{proof}
\section{Primitive action on totally-singular subspaces: case~\eqref{HiHo1} and case~\eqref{HiHo3+}}\label{psl}
We divide the proof of Theorem~\ref{thrm:main} for case~\eqref{HiHo1} depending on whether $G_0$ is linear, unitary, symplectic or orthogonal. There are strong similarities in the arguments we use to deal with each case. However, each family has its own peculiar behavior and hence we have preferred to divide the proofs in separate cases.

In this and in later sections, we denote the projective image of a matrix with square brackets.

\subsection{Linear groups}\label{psll}
Let $d\ge 2$ be a positive integer, let $q$ be a prime power (that is, $q=p^f$ for some prime number $p$ and some positive integer $f$), let $G_0=\mathrm{PSL}_d(q)$ and let $G$ be an almost simple group with socle $G_0$. In particular, $G_0\unlhd G\le \mathrm{Aut}(G_0)$, where 
$\mathrm{Aut}(G_0)=\mathrm{P}\Gamma\mathrm{L}_d(q)$ when $d=2$ and $$\mathrm{Aut}(G_0)=\mathrm{P}\Gamma\mathrm{L}_d(q)\rtimes\langle\iota\rangle=\mathrm{PGL}_d(q)\rtimes(\mathrm{Aut}(\mathbb{F}_q)\times\langle\iota\rangle),$$ when $d>2$, where $\iota$ is a graph automorphism of $G_0$ and $\mathrm{Aut}(\mathbb{F}_q)$ is the group of field automorphisms of $\mathbb{F}_q$.

We now describe the primitive subspace actions.
 We let $V=\mathbb{F}_q^d$ be the $d$-dimensional vector space over the finite field $\mathbb{F}_q$ of cardinality $q$ and let $k\in \{1,\ldots,d-1\}$. Then,
\begin{enumerate}
\item\label{pslk} we let $\Omega_k$ be the collection of all $k$-dimensional subspaces of $V$ for $1\leq k\leq d/2$, here $\mathrm{PSL}_d(q)\unlhd G\le\mathrm{P}\Gamma\mathrm{L}_d(q)$,
\item\label{psk1}for $1\le k<d/2$, we let $\Omega_k^1=\{\{W,U\}\mid W\in \Omega_k, U\in \Omega_{d-k}, V=W\oplus U\}$, here $\mathrm{PSL}_d(q)\unlhd G\nleq\mathrm{P}\Gamma\mathrm{L}_d(q)$,
\item\label{psk2}for $1\le k<d/2$, we let $\Omega_k^2=\{\{W,U\}\in W\in \Omega_k, U\in \Omega_{d-k}, W\le U\}$, here $\mathrm{PSL}_d(q)\unlhd G\nleq\mathrm{P}\Gamma\mathrm{L}_d(q)$.
\end{enumerate}

Since our argument is inductive, we need to discuss in detail the cases when $d$ is small\footnote{Most of the results for linear groups can be deduced from~\cite{KRD}. However, we prefer to give some independent arguments, since they highlight some general constructions in arbitrary subspace actions for classical groups.}. The case $d=2$ is quite special and it was already discussed in~\cite{LeSp}. Here, we recall some basic facts from~\cite[Example~3.1]{LeSp}. Let $G$ be an almost simple group with socle 
$\mathrm{PSL}_2(q)$ acting on the $q+1$ points of the projective line. In particular, $$\mathrm{PSL}_2(q)\unlhd G\le \mathrm{P}\Gamma\mathrm{L}_2(q).$$ Set
$r=|G:G\cap\mathrm{PGL}_2(q)|$. Recall that $q=p^f$.
\begin{lemma}[{{Example~3.1,~\cite{LeSp}}}]\label{lemma:-100}
The group $G$ is $\mathrm{IBIS}$ if and only if
$r=1$, or $r$ is prime and $r\mid f$.
\end{lemma}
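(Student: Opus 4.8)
The plan is to reprove this by separating the reduction of the projective part of $G$ from the reduction of its field-automorphism part, and then analysing how a cyclic group of field automorphisms can be cut down to the identity by fixing points. First, set $B=G\cap\mathrm{PGL}_2(q)$, so that $\mathrm{PSL}_2(q)\le B$ and $G/B$ is cyclic of order $r$; since $G/B$ embeds in $\mathrm{P}\Gamma\mathrm{L}_2(q)/\mathrm{PGL}_2(q)\cong C_f$ we get $r\mid f$ for free, so the content of the statement is that $G$ is IBIS if and only if $r\in\{1\}\cup\{\text{primes}\}$. I would dispose of $q\le 3$ by hand and assume $q\ge 4$. Recall that $\mathrm{PGL}_2(q)$ is sharply $3$-transitive: the pointwise stabiliser in $\mathrm{PGL}_2(q)$ of three distinct points is trivial, while the stabiliser of two distinct points is cyclic of order $q-1$ and meets $\mathrm{PSL}_2(q)$ in a subgroup of order $(q-1)/\gcd(2,q-1)>1$. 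Consequently no two points form a base, every irredundant base has at least three points, and for any three distinct points $\alpha,\beta,\gamma$ the residual stabiliser $R:=G_{\alpha,\beta,\gamma}$ satisfies $R\cap B=1$; thus $R$ embeds in $G/B$ and is cyclic of order $m_{\alpha,\beta,\gamma}\mid r$, consisting of field-automorphism type elements.

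Next I would make $R$ explicit. Normalising $\alpha=\infty$, $\beta=0$, the elements fixing them are the maps $x\mapsto ax^{\sigma^i}$ (with $\sigma$ a generator of $\mathrm{Aut}(\mathbb F_q)$), and such a map fixes a third point $c$ precisely when $a=c^{1-\sigma^i}$; the assignment $\sigma^i\mapsto(x\mapsto c^{1-\sigma^i}x^{\sigma^i})$ is a homomorphism, so $R=G_{\infty,0,c}$ is cyclic and is $\mathrm{PGL}_2(q)$-conjugate (by $x\mapsto x/c$) to a group $\langle\phi\rangle\cong C_m$ of genuine field automorphisms. For $\langle\phi\rangle$ I would compute single-point stabilisers directly: writing $\phi=\sigma^{f/m}$ and letting $T$ be the degree of the field of definition of a point $\omega$, one finds $\mathrm{Stab}_{\langle\phi\rangle}(\omega)=\langle\phi^{\,T/\gcd(T,f/m)}\rangle$. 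Letting $\omega$ range over the line (so that $T$ ranges over all divisors of $f$) this realises the trivial subgroup — take $T=f$ — and every maximal subgroup $\langle\phi^\ell\rangle$ with $\ell\mid m$ prime — take $T=\ell f/m$. Hence the field-automorphism tail of an irredundant base is exactly a strictly descending chain of subgroups of $C_m$ from $C_m$ to $1$, each step effected by a point, and both a direct jump $C_m>1$ (one point) and a full chain of length $\Omega(m)$ are realisable.

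I would then conclude in both directions. If every residual order is $1$ or prime, every such tail has length $0$ or $1$ and is forced, so all irredundant bases have the common length $3$ or $4$ and $G$ is IBIS; the point here is that for prime $r$ the residual order is even \emph{constant}: the twist condition $c^{1-\sigma^i}\in N$ for $N=\{a:(x\mapsto ax)\in B\}$ is governed by the subgroup $I=\{a^{1-q^{1/r}}\}$, and since $1-q^{1/r}$ is even in odd characteristic (and $N=\mathbb F_q^*$ in even characteristic) one has $I\subseteq N$, so the condition holds for all $c$ or for none, giving either $m=r$ for every triple or $m=1$ for every triple. Conversely, if $r$ is composite I would exhibit a residual group of composite order: $G$ contains a field-automorphism type element of order $r$, fixing the subfield line $\mathrm{PG}_1(\mathbb F_{q^{1/r}})$ of at least three points, and three of these points give a triple with $R\supseteq C_r$; applying the jump-versus-chain dichotomy to $C_r$ produces two irredundant bases of lengths $4$ and $3+\Omega(r)\ge 5$, so $G$ is not IBIS. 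Together with $r\mid f$ this yields the stated equivalence.

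The main obstacle is the control of the residual cyclic order across \emph{all} bases. Concretely, the delicate points are (a) the field-of-definition computation identifying which subgroups of $C_m$ arise as genuine point stabilisers, which underpins both the jump and the constancy arguments, and (b) the production, when $r$ is composite, of an honest field-automorphism element of order $r$ lying in $G$ (not merely in $\mathrm{P}\Gamma\mathrm{L}_2(q)$); this confronts the split-versus-non-split structure of $G$ over $B$ and is most cleanly handled by a Lang--Steinberg type conjugacy statement for field automorphisms inside a fixed coset of $\mathrm{PGL}_2(q)$. The constancy of $m$ for prime $r$, resting on the inclusion $I\subseteq N$, is the remaining quantitative input.
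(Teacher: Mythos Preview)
The paper does not prove this lemma; it is quoted verbatim from \cite[Example~3.1]{LeSp}, so there is no in-paper argument to compare against.

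Your forward direction is sound: the three-point stabiliser $R$ embeds in $G/B\cong C_r$, and your constancy argument (that the twist $c^{1-\sigma^i}$ is always a square when $q$ is odd, so membership in $G$ depends only on the field exponent $i$) correctly shows that $|R|$ is the same for every triple, namely $|H\cap(\{0\}\times C_f)|$ where $H=G/\mathrm{PSL}_2(q)\le C_2\times C_f$. When this common value is $1$ or prime, all irredundant bases have length $3$ or $4$ respectively.

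The converse, however, has a genuine gap that your Lang--Steinberg suggestion cannot close. You assert that for composite $r$ there is a triple whose residual contains $C_r$, because $G$ contains a field-automorphism-type element of order $r$. But $\mathrm{PGL}_2(q)$-conjugation preserves $\mathrm{PSL}_2(q)$-cosets, so an element of $G$ can be $\mathrm{PGL}_2(q)$-conjugate to $\sigma^{f/r}$ only if it already lies in $\mathrm{PSL}_2(q)\sigma^{f/r}$. When $q$ is odd, $r$ is even, and $H$ is the ``twisted'' cyclic subgroup $\langle(\delta,\sigma^{f/r})\rangle\le C_2\times C_f$, that coset misses $G$ entirely. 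Worse, your own constancy computation then forces \emph{every} three-point stabiliser to have order exactly $r/2$, since only the pure-field part $H\cap(\{0\}\times C_f)$ survives. Taking $r=4$ (e.g.\ $q=3^4$ and $G=\mathrm{PSL}_2(81).\langle\delta\sigma\rangle$ with $\sigma$ the Frobenius) yields constant residual order $2$, hence every irredundant base has length $4$: this $G$ is IBIS with $r=4$ composite. So either the statement as transcribed here is inexact relative to \cite{LeSp}, or an implicit hypothesis has been dropped; either way the obstacle you flagged is real and Lang--Steinberg does not repair it.
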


Suppose now $d\ge 3$. Let $V=\mathbb{F}_q^d$ and let $e_1,\ldots,e_d$ be an $\mathbb{F}_q$-basis of $V$. 

\subsubsection{Case~\eqref{pslk}}
We first discuss the case $k=1$.
\begin{lemma}\label{prop:2}
Let $G$ be a group as in~\eqref{pslk} acting on $\Omega_k$ with $k=1$. Then $G$ is $\mathrm{IBIS}$ if and only if $q=2$.
\end{lemma}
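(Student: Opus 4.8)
The plan is to prove Lemma~\ref{prop:2} by analyzing irredundant sequences for the action of $G$ on the projective points $\Omega_1$ of $V=\mathbb{F}_q^d$ with $d\ge 3$, distinguishing sharply the case $q=2$ (where the action is on the $2^d-1$ nonzero vectors and $G=\mathrm{SL}_d(2)$ is exactly the target group from Theorem~\ref{thrm:main}) from the case $q\ge 3$. First I would settle the positive direction: when $q=2$ we have $\mathrm{PSL}_d(2)=\mathrm{SL}_d(2)=\mathrm{GL}_d(2)$ acting on the nonzero vectors of $V$, and an irredundant base corresponds to choosing vectors $v_1,\dots,v_\ell$ such that each $v_{i+1}$ lies outside the span $\spanq{v_1,\dots,v_i}$; since the pointwise stabilizer of a set of vectors is trivial exactly when those vectors span $V$, and over $\mathbb{F}_2$ a point is fixed iff the corresponding vector is fixed, every irredundant sequence is a sequence of linearly independent vectors and hence has length exactly $d$. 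This shows $G$ is IBIS of rank $d$, matching the table entry $b(G)=d$.

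For the negative direction ($q\ge 3$), the plan is to exhibit two irredundant bases of different lengths, invoking the equivalence recorded after the Cameron--Fon-der-Flaass discussion. The key point is that over a field with more than two elements the stabilizer of a projective point $\langle v\rangle$ does not fix $v$ itself but scales it, so stabilizer chains can ``collapse'' subspaces at different rates. Concretely, I would first produce a short irredundant base: the stabilizer in $\mathrm{PGL}_d(q)$ of the $d+1$ points $\langle e_1\rangle,\dots,\langle e_d\rangle,\langle e_1+\cdots+e_d\rangle$ is trivial (this is the classical frame argument), so there is an irredundant base of length at most $d+1$, and a careful count shows one can realize an irredundant base of length exactly $d$ or $d+1$. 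Then I would produce a \emph{longer} irredundant base by choosing the points more cleverly: pick $\langle e_1\rangle, \langle e_2\rangle, \dots$ so that at each stage the added point genuinely shrinks the stabilizer but the diagonal torus of scalars on the already-chosen coordinates is only cut down one dimension at a time, yielding an irredundant chain strictly longer than $d$. The existence of such a chain is what forces non-IBIS by the matroid characterization.

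The main obstacle will be the bookkeeping for the negative direction: one must verify that the longer chosen sequence is genuinely irredundant, i.e.\ that each successive stabilizer is a \emph{proper} subgroup, and that its total length differs from the length $d$ of a frame-type base. The cleanest route is to compute the order of the successive pointwise stabilizers $G_{\omega_1,\dots,\omega_i}$ directly inside $\mathrm{PGL}_d(q)$ (or inside $G$, using $r=|G:G\cap\mathrm{PGL}_d(q)|$ and the field/graph automorphism contributions), tracking the unipotent radical and the torus of scalings separately; over $\mathbb{F}_q$ with $q\ge 3$ the torus contributes a nontrivial factor $(q-1)^{?}$ at intermediate stages that only becomes trivial later, and it is exactly this residual torus that lets one insert an extra irredundant point. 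I would therefore present one explicit irredundant base of length $d$ arising from a basis of $V$ together with the fact that a basis pointwise-stabilizes to the diagonal torus (nontrivial when $q\ge 3$, so the basis alone is \emph{not} a base), and contrast it with an irredundant base that adds further points of differing count. Once both lengths are exhibited, Lemma~\ref{lemma:1} (or directly the Cameron--Fon-der-Flaass criterion) finishes the proof that $G$ is not IBIS for $q\ge 3$.
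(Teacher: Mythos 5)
Your overall strategy coincides with the paper's: for $q=2$ one shows (or cites~\cite{LeSp}) that every irredundant base consists of $d$ linearly independent vectors, and for $q\ge 3$ one exhibits two irredundant sequences of different lengths, the extra length coming from the fact that the stabilizer of the coordinate points is the nontrivial diagonal torus, so that a point such as $\langle e_1+e_2\rangle$ can be inserted as a genuinely proper step. The difference is one of execution, and it matters. The paper does not try to produce two full bases; it produces two irredundant \emph{chains}, of lengths $4$ and $5$, supported on the $3$-dimensional subspace $\langle e_1,e_2,e_3\rangle$, proves that their pointwise stabilizers in $G$ coincide (the linear part must act as a scalar on $\langle e_1,e_2,e_3\rangle$, and any residual field automorphism fixes all five points because their coordinates lie in the prime field), and then invokes Lemma~\ref{lemma:1} to extend both chains by the same points. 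Your version needs care at exactly the point you gloss over: when $G\not\le\mathrm{PGL}_d(q)$ the stabilizer of the frame $\langle e_1\rangle,\ldots,\langle e_d\rangle,\langle e_1+\cdots+e_d\rangle$ is the group of field automorphisms contained in $G$, which is nontrivial whenever $q$ is a proper prime power, so your claim that the short irredundant base has length ``exactly $d$ or $d+1$'' is false in general; the number of further points needed to kill the field automorphisms depends on $f$ and on $G$. This does not sink the argument, because what Lemma~\ref{lemma:1} actually requires is only that your two chains terminate at the \emph{same} subgroup of $G$ --- which does hold here, since all your chosen points have coordinates in the prime field --- but that verification, including the semilinear part, is the substantive content of the proof and must be written out; once it is, you should drop any attempt to pin down absolute base lengths and argue purely via the length difference of the two chains.
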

\begin{proof}
When $q=2$, $G=\mathrm{GL}_d(2)$ and $G$ is the group described in~\cite[Example~3.3]{LeSp}; hence $G$ is IBIS. Therefore, we may suppose that $q>2$.

Let $\omega_1=\omega_1'=\langle e_1\rangle$, $\omega_2=\omega_2'=\langle e_2\rangle$, $\omega_3=\omega_4'=\langle e_3\rangle$, $\omega_3'=\langle e_1+e_2\rangle$, $\omega_4=\omega_5'=\langle e_1+e_2+e_3\rangle$.  As $q>2$, with a simple computation, we see that $(\omega_1,\ldots,\omega_4)$ and $(\omega_1',\ldots,\omega_5')$ are irredundant chains for $G_0=\mathrm{PSL}_d(q)$. Indeed, $$G_{\omega_1,\omega_2} = G_{\omega_1',\omega_2'}$$ consists of (the image of) matrices where the first two rows are zero except for the diagonal elements. By stabilizing $\omega_3'$, we force the first two diagonal elements to be equal. Subsequently, stabilizing $\omega_4'$ and $\omega_5'$, we first ensure that the third row is zero except for its diagonal element, and then we impose that the third diagonal element equals the first two.
For the first base, we essentially follow the same approach, but we enforce the equality of all three diagonal elements simultaneously by stabilizing $\omega_4$. Therefore, by Lemma~\ref{lemma:minus1}, these  are irredundant chains for $G$. 

We show that
$$G_{\omega_1,\ldots,\omega_4}=G_{\omega_1',\ldots,\omega_5'},$$
from which the proof immediately follows from Lemma~\ref{lemma:1}. 

As $\{\omega_1,\ldots,\omega_4\}\subseteq\{\omega_1',\ldots,\omega_5'\}$, we deduce $G_{\omega_1,\ldots,\omega_4}\ge G_{\omega_1',\ldots,\omega_5'}.$ Thus, let $g\in G_{\omega_1,\ldots,\omega_4}$. In particular, there exists a linear transformation $x\in\mathrm{GL}(V)$ and a Galois automorphism $\sigma\in \mathrm{Aut}(\mathbb{F}_q)$ such that $g$ is  the permutation induced by $\sigma x$ on $\Omega_1$. Thus
\begin{align*}
e_1^\sigma x&=e_1x\in \langle e_1\rangle=\omega_1,\\
e_2^\sigma x&=e_2x\in \langle e_2\rangle=\omega_2,\\
e_3^\sigma x&=e_3x\in \langle e_3\rangle=\omega_3,\\
(e_1+e_2+e_3)^\sigma x&=(e_1+e_2+e_3)x\in \langle e_1+e_2+e_3\rangle=\omega_4.
\end{align*}
In particular, there exist $\lambda_1,\lambda_2,\lambda_3,\lambda_4\in \mathbb{F}_q$ with
\begin{align*}
e_1x=&\lambda_1 e_1,\\
e_2x=&\lambda_2 e_2,\\
e_3x=&\lambda_3 e_3,\\
(e_1+e_2+e_3)x=&\lambda_4 (e_1+e_2+e_3).
\end{align*} This linear system of equations gives $\lambda_1=\lambda_2=\lambda_3=\lambda_4$. This shows that $x$ acts as a scalar on the subspace $W=\langle e_1,e_2,e_3\rangle$ of $V$. Since $\omega_1',\ldots,\omega_5'$ are $1$-dimensional subspaces of $W$ and since $e_1,e_2,e_1+e_2,e_3,e_1+e_2+e_3$ have coefficients in the prime field $\mathbb{F}_p$, we deduce $g\in G_{\omega_1',\ldots,\omega_5'}$.
\end{proof}

\begin{lemma}\label{prop:2bis}
Let $G=\mathrm{GL}_d(2)$ be acting on $\Omega_k$ with $k=2$. Then $G$ is not $\mathrm{IBIS}$.
\end{lemma}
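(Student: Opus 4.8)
The plan is to apply Lemma~\ref{lemma:1}: I will exhibit two irredundant sequences of $2$-subspaces with the \emph{same} pointwise stabiliser in $G$ but of different lengths. The entire configuration will live inside the $4$-dimensional subspace $W=\langle e_1,e_2,e_3,e_4\rangle$, so I take $d\ge 4$ (four dimensions are genuinely needed: when $d=3$ the $2$-subspaces are the hyperplanes, dual to the points, and the action is $\mathrm{IBIS}$).

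First I record the mechanism that makes both sequences collapse to the same group. Every $2$-subspace appearing below lies in $W$, so any $g\in G$ preserving all of them sends each $e_i$ with $i\le 4$ into $W$, whence $g(W)=W$. Thus the pointwise stabiliser in $G$ of such a family is the full preimage, under the restriction map $g\mapsto g|_W$, of the stabiliser of the family inside $\mathrm{GL}(W)\cong\mathrm{GL}_4(2)$. In particular, any family whose stabiliser in $\mathrm{GL}(W)$ is trivial has pointwise stabiliser in $G$ equal to $P:=\{g\in G : g|_W=\mathrm{id}_W\}$, the pointwise stabiliser of $W$. My two sequences will each reach the common stabiliser $P$, but in $4$ and in $5$ steps. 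They are
$$\bigl(\langle e_1,e_2\rangle,\ \langle e_3,e_4\rangle,\ \langle e_1+e_3,e_2+e_4\rangle,\ \langle e_1,e_3+e_4\rangle\bigr)$$
of length $4$, and
$$\bigl(\langle e_1,e_2\rangle,\ \langle e_3,e_4\rangle,\ \langle e_1+e_3,e_2+e_4\rangle,\ \langle e_1+e_4,e_2+e_3\rangle,\ \langle e_1,e_3\rangle\bigr)$$
of length $5$.

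The verification is a computation inside $\mathrm{GL}(W)$. Since $W=\langle e_1,e_2\rangle\oplus\langle e_3,e_4\rangle$, the stabiliser of the first two terms is the block group $\mathrm{GL}_2(2)\times\mathrm{GL}_2(2)$ of order $36$. The third term $\langle e_1+e_3,e_2+e_4\rangle$ is the graph of the identification $e_i\leftrightarrow e_{i+2}$, so its stabiliser inside the block group is the diagonal $\{(A,A):A\in\mathrm{GL}_2(2)\}\cong S_3$ of order $6$. In the first sequence the term $\langle e_1,e_3+e_4\rangle$ meets each block in a line and forces $A$ to fix both $e_1$ and $e_1+e_2$, hence $A=\mathrm{id}$, giving stabiliser $P$ after four steps. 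In the second sequence the term $\langle e_1+e_4,e_2+e_3\rangle$ is the graph of the twisted identification $e_1\mapsto e_4,\ e_2\mapsto e_3$, cutting the diagonal $S_3$ down to the order-$2$ group generated by the ``double transposition'' $e_1\leftrightarrow e_2,\ e_3\leftrightarrow e_4$; the final term $\langle e_1,e_3\rangle$ is moved by that involution, leaving stabiliser $P$ after five steps. Irredundancy in $G$ is immediate, since at each stage a suitable block-diagonal element of $\mathrm{GL}(W)\le G$ (trivial off $W$) fixes the earlier subspaces while moving the current one. Both sequences therefore terminate at the single subgroup $P$ with lengths $4$ and $5$, and Lemma~\ref{lemma:1} yields that $G$ is not $\mathrm{IBIS}$.

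The only real work is the stabiliser bookkeeping, and the one point to be careful about is that the two sequences land on \emph{exactly} the same subgroup $P$, not merely on subgroups of equal order: this is guaranteed here because each of the two configurations has trivial stabiliser in $\mathrm{GL}(W)$, so each pointwise stabiliser in $G$ is forced to be precisely the pointwise stabiliser of $W$. Checking that the last term of each sequence genuinely kills the residual symmetry ($S_3$, respectively its order-$2$ subgroup) is the step most prone to error, but it is the routine graph-of-isomorphism argument sketched above.
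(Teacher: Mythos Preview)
Your proof is correct and follows essentially the same approach as the paper: both arguments work inside the $4$-dimensional subspace $W=\langle e_1,e_2,e_3,e_4\rangle$, produce two irredundant sequences of $2$-subspaces of lengths $4$ and $5$ whose common pointwise stabiliser in $G$ is the pointwise stabiliser of $W$, and then invoke Lemma~\ref{lemma:1}. The only difference is the particular choice of $2$-spaces (the paper uses $\langle e_1,e_2\rangle,\langle e_3,e_4\rangle,\langle e_1+e_3,e_2+e_4\rangle,\langle e_1,e_3\rangle,\langle e_2,e_4\rangle$ for the long chain and $\langle e_1,e_2\rangle,\langle e_1,e_3\rangle,\langle e_2,e_4\rangle,\langle e_3,e_4\rangle$ for the short one), but the underlying idea and the verification are the same.
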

\begin{proof}
As $k=2$, we have $d\ge 4$. We let
\begin{align*}
\omega_1&=\langle e_1,e_2\rangle,\, \omega_2=\langle e_3,e_4\rangle,\, \omega_3=\langle e_1+e_3,e_2+e_4\rangle,\,
\omega_4=\langle e_1,e_3\rangle,\, \omega_5=\langle e_2,e_4\rangle,\\
\omega_1'&=\langle e_1,e_2\rangle,\, \omega_2'=\langle e_1,e_3\rangle,\,
\omega_3'=\langle e_2,e_4\rangle,\,
\omega_4'=\langle e_3,e_4\rangle.
\end{align*} 
A computation shows that $(\omega_1,\omega_2,\omega_3,\omega_4,\omega_5)$ and $(\omega_1',\omega_2',\omega_3',\omega_4')$ are irredundant chains reaching the same stabilizer. In particular, their stabilizer is the subgroup of $\mathrm{GL}_d(2)$ of matrices of the form
\[
	\begin{bmatrix}
		I_4 & \underline{0} \\
		A & B
	\end{bmatrix},
\]
where $I_4$ is the $4 \times 4$ identity matrix, $\underline{0}$ is the $4 \times (d-4)$ zero matrix, $A$ is a $(d-4)\times 4$ matrix and $B \in \mathrm{GL}_{d-4}(2)$. Using Lemma~\ref{lemma:1}, we conclude that $G$ is not $\mathrm{IBIS}$.
\end{proof}

The action of $G=\mathrm{GL}_4(2)$ on the $2$-subspaces of $V=\mathbb{F}_2^4$ is actually very interesting; Lemma~\ref{prop:2bis} shows that $G$ has irredundant bases of size $4$ and $5$, however, it can be verified with the auxiliary help of a computer that every minimal base\footnote{Recall that a minimal base is a base for $G$, for which none of its proper subsets is again a base.} of $G$ has cardinality $4$. In~\cite{DMS}, these permutation groups were christened with MiBIS, from \textit{\textbf{Minimal Bases of Invariant Sizes}}. 

Using Lemmas~\ref{prop:2} and~\ref{prop:2bis} we can now deal with the case $k\ge 2$.
\begin{lemma}\label{prop:3}
Let $G$ be a group as in~\eqref{pslk} acting on $\Omega_k$ with $k\ge 2$. Then $G$ is not $\mathrm{IBIS}$.
\end{lemma}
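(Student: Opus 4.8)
The plan is to reduce the statement, for every admissible $k$, to the two base cases already in hand: Lemma~\ref{prop:2} (the case $k=1$, $q>2$) and Lemma~\ref{prop:2bis} (the case $q=2$, $k=2$). First I would normalise $k$. The inverse-transpose (duality) automorphism of $\mathrm{P}\Gamma\mathrm{L}_d(q)$ induces a permutation isomorphism between the action on $\Omega_k$ and the action on $\Omega_{d-k}$, so for the groups in~\eqref{pslk} we may assume $2\le k\le d/2$, whence $d\ge 2k\ge k+2$. The value $k=1$, and with it its dual $k=d-1$, is exactly the content of Lemma~\ref{prop:2}, and is therefore excluded from the present lemma; this is why Lemma~\ref{prop:2bis} already records the constraint $d\ge 4$ when $k=2$.

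The main device is a \emph{thickening} of the irredundant chains produced in the base cases. Let $W=\langle e_1,e_2,e_3\rangle$ and $T=\langle e_4,\dots,e_{k+2}\rangle$ when $q>2$, and $W=\langle e_1,e_2,e_3,e_4\rangle$ and $T=\langle e_5,\dots,e_{k+2}\rangle$ when $q=2$ (so $T=0$ precisely when $q=2$ and $k=2$, recovering Lemma~\ref{prop:2bis} verbatim). In both situations $\dim W+\dim T=k+2\le d$ by the inequality $d\ge 2k$, the subspaces $W$ and $T$ meet trivially, and each subspace $\omega$ occurring in the base-case chains satisfies $\omega\subseteq W$, so $\omega\oplus T$ is a $k$-dimensional subspace. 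I would then replace every $\omega$ in the two chains of Lemma~\ref{prop:2} (resp. Lemma~\ref{prop:2bis}) by $\omega\oplus T$, obtaining two sequences of points of $\Omega_k$, of lengths $4$ and $5$ (resp. $4$ and $5$). A routine verification, identical in spirit to the base cases, shows that each thickened sequence is irredundant for $G_0=\mathrm{PSL}_d(q)$, hence for $G$ by Lemma~\ref{lemma:minus1}.

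For the stabiliser computation I would argue through the quotient $V/T$. The subspaces $\omega\oplus T$ in either chain all contain $T$, and two of them meet exactly in $T$ (since the corresponding $\omega$'s intersect trivially), so any $g\in G$ fixing one chain pointwise stabilises $T$ and induces an element $\bar g$ of the linear group on $V/T$. Modulo $T$ the subspaces $\omega\oplus T$ project onto precisely the one-subspaces (resp. two-subspaces) of the base-case chains, which $\bar g$ fixes; here $\dim(V/T)=d-k+2\ge 4$, so Lemma~\ref{prop:2} (resp. Lemma~\ref{prop:2bis}) applies and forces $\bar g$ to fix the one remaining projected subspace, whence $g$ fixes the corresponding extra $k$-subspace. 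Carrying out this implication in both directions shows that the two thickened sequences have the same pointwise stabiliser in $G$. Since their lengths differ, Lemma~\ref{lemma:1} yields that $G$ is not $\mathrm{IBIS}$.

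The step demanding the most care is the stabiliser equality in the full almost simple group $G$, not merely in $G_0$: one must check that the reduction modulo $T$ is compatible with the field automorphisms contained in $G$. This is exactly why $T$ and all the vectors defining the $\omega$'s are taken over the prime field $\mathbb{F}_p$ — every field automorphism then fixes each $\omega\oplus T$ setwise, so, as in the proof of Lemma~\ref{prop:2}, one writes a general element of $G$ as $\sigma x$ and the analysis collapses to its linear part $x$. The remaining verifications (strict descent of the two stabiliser chains and triviality of the final stabiliser after completion to a base) are the routine computations modelled on Lemmas~\ref{prop:2} and~\ref{prop:2bis}.
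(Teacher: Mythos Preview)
Your approach is essentially the paper's: fix a $(k-1)$-dimensional subspace (or $(k-2)$-dimensional when $q=2$), pass to the quotient, and invoke the base cases Lemmas~\ref{prop:2} and~\ref{prop:2bis}. The only difference is packaging: by carrying along the \emph{explicit} chains from those lemmas and thickening them, you can read off directly that two of the $\omega_i\oplus T$ meet exactly in $T$, whereas the paper invokes the base lemmas as black boxes and must then prove a separate intersection claim (its equation~\eqref{eq:number1}), which is the most delicate step there. One small slip: for $q>2$ you have $\dim T=k-1$, so $\dim(V/T)=d-k+1$ rather than $d-k+2$; the needed bound is $d-k+1\ge 3$, which still follows from $d\ge 2k$, so the argument is unaffected.
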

\begin{proof}
When $q>2$, let $W=\langle e_1,\ldots,e_{k-1}\rangle$ and, when $q=2$, let $W=\langle e_1,\ldots,e_{k-2}\rangle$. Now, let $\Delta=\{\omega\in \Omega_k\mid W\le \omega\}$, let $G_W$ be the stabilizer of $W$ in $G$, let $G_{\Delta}$ be the setwise stabilizer of $\Delta$ in $G$ and let $H$ be the permutation group induced by $G_W$ on its action on $\Delta$. We claim that
\begin{equation}\label{equation:basta}G_W=G_{\Delta}.
\end{equation}
Indeed, if $g\in G_W$, then $g$ fixes setwise $W$ and hence $g$ permutes the $k$-dimensional subspaces of $V$ containing $W$, that is, $g$ fixes setwise $\Delta$. Thus $g\in G_\Delta$. Conversely, if $g\in G_\Delta$, then $g$ fixes setwise
$$\bigcap_{\delta\in \Delta}\delta=W$$
and hence $g\in G_W$.

Observe that when $q=2$, from Lemma~\ref{prop:2bis}, we may assume that $k\ge 3$.

When $q>2$, $H$ is an almost simple group with socle $\mathrm{PSL}_{d-k+1}(q)$ and the action of $H$ on $\Delta$ is permutation isomorphic to the natural action of $H$ on the $1$-dimensional subspaces of $V/W\cong\mathbb{F}_q^{d-k+1}$; moreover, $d-k+1\ge 3$ because $2\le k\le d/2$. When $q=2$,  $H=\mathrm{GL}_{d-k+2}(2)$ and the action of $H$ on $\Delta$ is permutation isomorphic to the natural action of $H$ on the $2$-dimensional subspaces of $V/W\cong\mathbb{F}_q^{d-k+2}$; moreover, $d-k+2\ge 4$ because $3\le k\le d/2$.

Thus, from Lemmas~\ref{prop:2} and~\ref{prop:2bis}, the action of $H$ on $\Delta$ is not IBIS. Therefore, in its action on $\Delta$ the group $H$ has two irredundant bases $(\omega_1,\ldots,\omega_\ell)$ and $(\omega_1',\ldots,\omega_{\ell'}')$ with $\ell'\ne \ell$. We claim that
\begin{equation}\label{eq:number1}
\bigcap_{i=1}^\ell\omega_i=W=\bigcap_{i=1}^{\ell'}\omega_i'.
\end{equation}
We only show that $W=\bigcap_{i=1}^\ell\omega_i$, because the argument for showing $W=\bigcap_{i=1}^{\ell'}\omega_i'$ is similar. As $\omega_i\in \Delta$ for each $i$ and as $W\le \omega_i$, we have $W\le\bigcap_{i=1}^\ell\omega_i$. Suppose first $q>2$. Since $\dim_{\mathbb{F}_q}W=k-1$ and $\dim_{\mathbb{F}_q}\omega_i=k$, we have $W\ne \bigcap_{i=1}^\ell\omega_i$ only if $\ell=1$. However, when $\ell=1$, we get a contradiction because $\mathrm{PSL}_{d-k+1}(q)$ in its action on the projective points cannot have a base of size $\ell=1$, and hence neither can $H$. Suppose now $q=2$. Let $W'=\bigcap_{i=1}^\ell \omega_i$. Since $\dim_{\mathbb{F}_q}W=k-2$ and $\dim_{\mathbb{F}_q}\omega_i=k$, we have $W\ne W'$ only when $\dim_{\mathbb{F}_q}W'=k$ or $\dim_{\mathbb{F}_q}W'=k-1$. In the first case, we have $\ell=1$ and  we get a contradiction because $H=\mathrm{GL}_{d-k+2}(2)$ in its action on the $2$-dimensional subspaces of $\mathbb{F}_q^{d-k+2}$ cannot have a base of size $\ell=1$. Assume then $\mathrm{dim}_{\mathbb{F}_q}W'=k-1$. Without loss of generality, replacing the standard $\mathbb{F}_q$-basis of $V$ if necessary, we have $W'=\langle W,e_{k-1}\rangle$. Let $\omega=\langle W,e_{k},e_{k+1}\rangle$ and let $x\in\mathrm{GL}_d(q)$ with $e_{k}x=e_{k-1}+e_{k}$ and $e_ix=e_i$, for all $i\ne k$. Since $W\le \omega$ and $\dim_{\mathbb{F}_q}\omega=k$, we have $\omega\in \Delta$. Since $x$ acts trivially on $V/W'$ and since $W'\le \omega_i$ for each $i$, we have $x\in G_{\omega_1,\ldots,\omega_\ell}$. Since $\omega_1,\ldots,\omega_\ell$ is an irredundant base for the action of $H$ on the $2$-dimensional subspaces of $V/W$, we deduce that $x$ fixes each $2$-dimensional subspace of $V/W$. However, $$\omega^x=\langle Wx,e_{k}x,e_{k+1}x\rangle=\langle W,e_{k-1}+e_{k},e_{k+1}\rangle\ne\langle e_1,\ldots,e_{k-2},e_k,e_{k+1}\rangle=\omega,$$
which is a contradiction. This concludes the proof of~\eqref{eq:number1}.

From~\eqref{equation:basta} and~\eqref{eq:number1}, we have $G_{\omega_1,\ldots,\omega_\ell}\le G_\Delta$ and $G_{\omega_1',\ldots,\omega_{\ell'}'}\le G_\Delta$. Moreover, since $\omega_1,\ldots,\omega_\ell$ and $\omega_1',\ldots,\omega_{\ell'}'$ are irredundant bases for  $H$, we get
 $$G_{\omega_1,\ldots,\omega_\ell}=G_{(\Delta)}=G_{\omega_1',\ldots,\omega_{\ell'}'},$$
where as usual $G_{(\Delta)}$ is the pointwise stabiliser of $\Delta$.  Therefore, by Lemma~\ref{lemma:1}, $G$ is not IBIS in its action on $\Omega_k$.
\end{proof}

\subsubsection{Case~\eqref{psk1}}\label{sec:psk1}
We introduce some notation that we will use in this section and in Section~\ref{sec:psk2}.

We begin by recalling the concept of polarity on the projective linear space. Let $\mathrm{PG}_{d-1}(q)$ denote the $(d-1)$-dimensional projective space over $V$. The elements of $\mathrm{PG}_{d-1}(q)$ are the subspaces of $V$. The $1$-dimensional subspaces are referred to as points, the $2$-dimensional subspaces as lines, and the $(d-1)$-dimensional subspaces as hyperplanes. Clearly, the action of $\Gamma\mathrm{L}_d(q)$ on $V$ induces a faithful action of $\mathrm{P}\Gamma\mathrm{L}_d(q)$ on $\mathrm{PG}_{d-1}(q)$. The projective space $\mathrm{PG}_{d-1}(q)$ is an incidence structure: given two subspaces $W$ and $W'$ of $V$, we say that $W$ is incident with $W'$ if $W \le W'$.

Now, $\mathrm{PG}_{d-1}(q)$ is endowed with a natural polarity $\iota$, which is an involutory mapping $\iota: \mathrm{PG}_{d-1}(q) \to \mathrm{PG}_{d-1}(q)$ that maps $k$-dimensional subspaces to $(d-k)$-dimensional subspaces and reverses the inclusions. To define this polarity, let $e_1, \ldots, e_d$ be the canonical basis of $V = \mathbb{F}_q^d$.

This basis allows us to parametrize the elements of $V$ as $d$-tuples of elements of $\mathbb{F}_q$. Given $(x_1, \ldots, x_d) \in V \setminus \{0\}$, we denote
$$\left[x_1, \ldots, x_d\right]$$
as the $1$-dimensional subspace spanned by $(x_1, \ldots, x_d)$. The elements of $\mathrm{PGL}_d(q)$ act naturally on the right. In particular, given $x \in \mathrm{GL}_d(q)$ and $W \le V$, the matrix $x$ acts on $\mathrm{PG}_{d-1}(q)$ by setting $W \mapsto Wx$, where $Wx$ denotes the image of $W$ under $x$.

This basis also allows us to parametrize the hyperplanes of $V$, and thus, by taking intersections, it allows us to describe arbitrary subspaces of $V$. Specifically, we denote a hyperplane $U$ of $\mathrm{PG}_{d-1}(q)$ as $[x_1,\dots,x_d]^T$, where $U = \ker (x_1,\dots,x_d)^T$. The polarity $\iota$ is defined by
$$[x_1, \ldots, x_d] \mapsto \left[
\begin{array}{c}
	x_1 \\
	\vdots \\
	x_d
\end{array}
\right] \quad \text{and} \quad \left[
\begin{array}{c}
	x_1 \\
	\vdots \\
	x_d
\end{array}
\right] \mapsto [x_1, \ldots, x_d].$$
This allows us to extend $\iota$ to all elements in the projective space $\mathrm{PG}_{d-1}(q)$.

Note that if we denote the elements of $V$ as row vectors and represent the corresponding hyperplane by $\langle v^T \rangle$ (thinking of $v^T$ as a column vector), then $w \in \langle v^T \rangle$ if and only if $w \cdot v^T = 0$, where $\cdot$ denotes the standard scalar product. The elements of $\mathrm{PGL}_d(q)$ act naturally on the right on the hyperplanes. In particular, given $x \in \mathrm{GL}_d(q)$ and $w \in V$, the matrix $x$ maps $\langle w^T \rangle$ to $\langle x^{-1} w^T \rangle$. It is important to note that the action we have just defined on hyperplanes coincides with the action defined in the previous paragraph and hence there is no ambiguity between the two definitions.

The mapping $\iota$ is inclusion-reversing, interchanging intersections with spans, and preserving incidence.

Finally, the action of $\iota$ on $\mathrm{PG}_{d-1}(q)$ is compatible with the action of $\mathrm{P}\Gamma\mathrm{L}_d(q)$. We have
$$v^{\iota x \iota} = (v^T)^{x \iota} = (x^{-1} v^T)^\iota = v(x^{-1})^T,$$
where $x^T$ denotes the transpose of the matrix $x$. Since this holds for every $v \in V$, we conclude that $\iota^2 = 1$ and $\iota x \iota = (x^{-1})^T$. Thus, $\iota$ is a graph automorphism of $\mathrm{PGL}_d(q)$.

\begin{lemma}\label{prop:3bispart1}
Let $G$ be a group as in~\eqref{psk1} acting on $\Omega_k^1$. Then $G$ is not $\mathrm{IBIS}$.
\end{lemma}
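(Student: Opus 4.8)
The plan is to apply Lemma~\ref{lemma:1}: I will produce two irredundant sequences of points of $\Omega_k^1$ whose underlying sets are nested, say $\{\omega_1,\dots,\omega_\ell\}\subsetneq\{\omega_1',\dots,\omega_{\ell'}'\}$ with $\ell\ne\ell'$, and such that every element of $\mathrm{Aut}(G_0)$ fixing the smaller set already fixes the larger one. The nesting gives $G_{\omega_1',\dots,\omega_{\ell'}'}\le G_{\omega_1,\dots,\omega_\ell}$ for free and uniformly in $G$, while the forcing statement gives the reverse inclusion; since this is proved at the level of $\mathrm{Aut}(G_0)$, it holds for every admissible $G$ simultaneously. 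Irredundancy will be checked for $G_0=\mathrm{PSL}_d(q)$ and then transferred to $G$ by Lemma~\ref{lemma:minus1}. This is exactly the mechanism of Lemma~\ref{prop:2}, now carried out for complementary pairs.

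The new feature, compared with Lemma~\ref{prop:2}, is that a point of $\Omega_k^1$ records both a $k$-space $W$ and a complementary $(d-k)$-space $U$, and that $G$ contains a graph automorphism, so the pointwise stabiliser of a set of pairs can contain elements swapping the two members of each pair. Using the dual-basis formalism introduced above, I describe a point of $\Omega_1^1$ as $\{\langle v\rangle,\langle w^T\rangle\}$ with $v\cdot w^T\ne 0$, and I record the basic fact that a graph-type element fixes $\{W,U\}$ if and only if it interchanges $W$ and $U$, which by dimension forces $U$ to be the annihilator of $W$. I would first treat $k=1$, choosing pairs whose $1$-space parts realise the configuration $\langle e_1\rangle,\langle e_2\rangle,\langle e_3\rangle,\langle e_1+e_2+e_3\rangle$ of Lemma~\ref{prop:2}, with complementary hyperplanes chosen via the dual basis (adjusting the coefficients in the few small characteristics where a chosen dot product would vanish), and with the forced extra pair built on $\langle e_1+e_2\rangle$. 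The forcing for a linear element is then the computation of Lemma~\ref{prop:2}: fixing the four chosen $1$-spaces forces a scalar action on $\langle e_1,e_2,e_3\rangle$, and the dual constraints pin down the hyperplane of the extra pair; for a graph-type element one runs the same computation after swapping the two members of each pair, using that a fixed pair satisfies $U=\langle$annihilator of $W\rangle$.

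For general $k$ with $1\le k<d/2$ the same template applies: the entire forcing takes place inside a bounded-dimensional subspace, while the remaining coordinates are pinned down by a complementary frame of pairs, so that the two nested sequences differ in length by exactly one and share the same pointwise stabiliser in $\mathrm{Aut}(G_0)$. The main obstacle is precisely the graph automorphism: since $G\not\le\mathrm{P}\Gamma\mathrm{L}_d(q)$, one cannot ignore the swapping elements, and the forcing argument must be verified for them as well as for the elements of $\mathrm{P}\Gamma\mathrm{L}_d(q)$; the annihilator relation governing the fixed pairs of graph-type elements is what makes this step tractable. Once both inclusions of stabilisers and the irredundancy of both sequences are established, Lemma~\ref{lemma:1} yields that $G$ is not $\mathrm{IBIS}$.
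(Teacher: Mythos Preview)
Your overall strategy---produce two nested irredundant sequences with equal pointwise stabilisers in $\mathrm{Aut}(G_0)$ and invoke Lemma~\ref{lemma:1} and Lemma~\ref{lemma:minus1}---is exactly the paper's strategy, and for $q>2$ the plan is viable. However, the proposal has a genuine gap at $q=2$. The ``extra'' point you intend to insert is built on $\langle e_1+e_2\rangle$, and the irredundancy of the longer chain at that step is precisely what fails over $\mathbb{F}_2$: once $\langle e_1\rangle$ and $\langle e_2\rangle$ are fixed (together with the associated complements), there is no room left in $\mathbb{F}_2^\times$ for the stabiliser to drop at $\langle e_1+e_2\rangle$. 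The paper handles $q=2$ by an entirely separate construction, producing irredundant chains of lengths $2$ and $3$ with a direct contradiction argument showing no coset of $\iota$ survives in the shorter stabiliser; your plan does not account for this, nor for the sporadic case $(d,q)=(3,4)$ that the paper also isolates.

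A secondary point concerns the graph automorphism. Your plan is to run the forcing computation twice, once for linear elements and once for graph-type elements via the annihilator relation. This can be made to work, but the paper sidesteps it more cleanly: it chooses the first three pairs so that a direct computation of $A_{\omega_1,\omega_2,\omega_3}$ in $\mathrm{Aut}(G_0)$ shows the $\iota$-coset is already empty, after which the question of whether $\omega_3'$ is fixed reduces to a statement inside $\mathrm{P}\Gamma\mathrm{L}_d(q)$. Your remark that ``the dual constraints pin down the hyperplane of the extra pair'' is also doing real work that is not carried out---acting as a scalar on $\langle e_1,e_2,e_3\rangle$ does not by itself fix an arbitrary complementary hyperplane in $V$, so the hyperplane of the extra pair must be chosen in the $\mathbb{F}_p$-span of the hyperplanes already fixed, and this needs to be said and checked. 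Finally, the passage from $k=1$ to general $k$ (``the same template applies'') is asserted rather than demonstrated; the paper treats all $k$ uniformly from the outset by padding with fixed subspaces $U=\langle e_4,\ldots,e_{k+2}\rangle$ and $W=\langle e_{k+3},\ldots,e_d\rangle$, which is what you should do as well.
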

\begin{proof}
Let $U=\langle e_4,\ldots,e_{k+2}\rangle$ and let $W=\langle e_{k+3},\ldots,e_d\rangle$. There is a slight abuse of notation here: when $d=3$, $U=W=0$; when $k=1$, $U=0$.

Let
\begin{align*}
\omega_1 &= \omega_1' = \{\langle e_1,U \rangle, \langle e_2,e_3,W\rangle\}, \\
 \omega_2 &= \omega_2' = \{\langle e_2,U \rangle, \langle e_1, e_3,W\rangle\}, \\
 \omega_3' &= \{\langle e_1+e_2,U\rangle, \langle e_1,e_3,W\rangle \}, \\
 \omega_3 &= \omega_4' = \{ \langle e_1+e_2+e_3,U \rangle, \langle e_1, e_2,W\rangle\}.
\end{align*}
When $q\ne 2 $ and when $(d,q)\ne (3,4)$, a simple computation shows that $(\omega_1,\omega_2,\omega_3)$ and $(\omega_1',\omega_2',\omega_3',\omega_4')$ are irredundant chains for $\mathrm{PSL}_d(q)$\footnote{The condition $q\ne 2$ guarantees that $(\omega_1,\omega_2,\omega_3)$ is an irredundant chain and the condition $(d,q)\ne (3,4)$ guarantees that $(\omega_1',\omega_2',\omega_3',\omega_4')$ is also an irredundant chain.}. Moreover,
$$
\mathrm{PSL}_d(q)_{\omega_1,\omega_2,\omega_3}=
\mathrm{PSL}_d(q)_{\omega_1',\omega_2',\omega_3',\omega_4'}=
\left\{
\left[
\begin{array}{ccc}
I_3&0&0\\
0&X&0\\
0&0&Y
\end{array}
\right]\mid 
\begin{array}{c}
X\in\mathrm{GL}_{k-1}(q),\\Y\in\mathrm{GL}_{d-k-2}(q),\\ \det(X)\det(Y)=1 
\end{array}
\right\},
$$
where $I_3$ is the $3\times 3$ identity matrix. Therefore, by Lemma \ref{lemma:minus1}, $(\omega_1,\omega_2,\omega_3)$ and $(\omega_1',\omega_2',\omega_3',\omega_4')$ are irredundant chains for $G$.

	We claim that 
	\begin{equation}\label{eq:eqTRENO}
		G_{\omega_1,\omega_2,\omega_3} = G_{\omega_1',\omega_2',\omega_3',\omega_4'}.
	\end{equation}
	As $\{\omega_1,\omega_2,\omega_3\}=\{\omega_1',\omega_2',\omega_4'\}$, it suffices to show that $G_{\omega_1,\omega_2,\omega_3}$ fixes $\omega_3'$. Let $A=\mathrm{P}\Gamma\mathrm{L}_d(q)\rtimes\langle\iota\rangle$. Using the fact that $\iota$ and $\mathrm{Aut}(\mathbb{F}_q)$ fix $\omega_1$ and $\omega_2$, it is easy to verify that
$$A_{\omega_1,\omega_2}=
\left\{
\left[
\begin{array}{ccccc}
1&0&0&0&0\\
0&a&0&0&0\\
0&0&c&0&x\\
0&0&0&X&0\\
0&0&0&0&Y
\end{array}\right]\tau\iota^\varepsilon\mid 
\begin{array}{c}
a,b\in\mathbb{F}_q\setminus\{0\},x\in\mathbb{F}_q^{d-k-2}\\X\in\mathrm{GL}_{k-1}(q),Y\in \mathrm{GL}_{d-k-2}(q) ,\\
\tau\in\mathrm{Aut}(\mathbb{F}_q),\varepsilon\in\{0,1\}
\end{array}
\right\}.
$$
From this, it follows that
$$A_{\omega_1,\omega_2,\omega_3}=\left\{
\left[
\begin{array}{ccc}
I_3&0&0\\
0&X&0\\
0&0&Y
\end{array}
\right]\mid 
\begin{array}{c}
X\in\mathrm{GL}_{k-1}(q),\\Y\in\mathrm{GL}_{d-k-2}(q)
\end{array}
\right\}\rtimes
\mathrm{Aut}(\mathbb{F}_q).
$$
From this explicit calculation, it is immediate that each element of $A_{\omega_1,\omega_2,\omega_3}$ fixes $\omega_3'$ and hence, each element of $G_{\omega_1,\omega_2,\omega_3}$ fixes $\omega_3'$.
By~\eqref{eq:eqTRENO} and Lemma~\ref{lemma:1}, $G$ is not IBIS.

It remains to deal with the cases $q=2$ and $(d,q)=(3,4)$. When $(d,q)=(3,4)$, $\mathrm{PSL}_3(4)\unlhd G\le \mathrm{PSL}_3(4)\rtimes\langle\iota\rangle$ and $|\Omega|=336$. It can be verified directly, or with the auxiliary help of a computer, that $G$ is not IBIS.

Assume $q=2$. Thus $G=\mathrm{GL}_d(2)\rtimes\langle\iota\rangle$. Let $U=\langle e_3,\ldots,e_{k+1}\rangle$ and $W=\langle e_{k+2},\ldots,e_d\rangle$. As above, there is an abuse of notation here: when $d=3$, $U=0$. We define
\begin{align*}
\omega_1=\omega_1'&=\{\langle e_1,U\rangle,\langle e_2,W\rangle\},\\
\omega_2=\omega_3'&=\{\langle e_2,U\rangle,\langle e_1+e_2,W\rangle\},\\
\omega_2'&=\{\langle e_2,U\rangle,\langle e_1,W\rangle\}.
\end{align*}
We claim that
$$G_{\omega_1,\omega_2}=G_{\omega_1',\omega_2',\omega_3'},$$
from which it follows, thanks to Lemma~\ref{lemma:1}, that $G$ is not IBIS.
It  is immediate to verify that
$$G_{\omega_1',\omega_2'}=
\left\{
\left[
\begin{array}{ccc}
I_2&0&0\\
0&X&0\\
0&0&Y
\end{array}
\right]\mid 
\begin{array}{c}
X\in\mathrm{GL}_{k-1}(2), Y\in\mathrm{GL}_{d-k-1}(2)
\end{array}
\right\}
\rtimes\langle\iota\rangle
$$
and 
$$G_{\omega_1',\omega_2',\omega_3'}=
\left\{
\left[
\begin{array}{ccc}
I_2&0&0\\
0&X&0\\
0&0&Y
\end{array}
\right]\mid 
\begin{array}{c}
X\in\mathrm{GL}_{k-1}(2), Y\in\mathrm{GL}_{d-k-1}(2)
\end{array}
\right\}.
$$
It is also clear that
$$\mathrm{GL}_d(2)_{\omega_1,\omega_2}=
\left\{
\left[
\begin{array}{ccc}
I_2&0&0\\
0&X&0\\
0&0&Y
\end{array}
\right]\mid 
\begin{array}{c}
X\in\mathrm{GL}_{k-1}(2), Y\in\mathrm{GL}_{d-k-1}(2)
\end{array}
\right\}=G_{\omega_1',\omega_2',\omega_3'}.
$$
Suppose, arguing by contradiction, that $\iota g\in G_{\omega_1,\omega_2}$, for some $g\in \mathrm{GL}_d(2)$. Now,
\begin{align*}
\omega_1^\iota&=\{\langle e_1,U\rangle,\langle e_2,W\rangle\}^\iota=
\{\langle e_2,W\rangle,\langle e_1,U\rangle\}=\omega_1,\\
\omega_2^\iota&=\{\langle e_2,U\rangle,\langle e_1+e_2,W\rangle\}^\iota=
\{\langle e_1,W\rangle,\langle e_1+e_2,U\rangle\}.
\end{align*}
From this and from the fact that $\iota g$ fixes $\omega_1,\omega_2$, we deduce
\begin{align*}
\langle e_1,U\rangle^g&=\langle e_1,U\rangle,\\
\langle e_2,W\rangle^g&=\langle e_2,W\rangle,\\
\langle e_1+e_2,U\rangle^g&=\langle e_2,U\rangle,\\
\langle e_1,W\rangle^g&=\langle e_1+e_2,W\rangle.
\end{align*}
From this it follows that $g$ fixes the subspaces $\langle e_1,e_2\rangle$, $U$ and $W$. Now, the first equality implies $e_1^g=e_1$ and the forth equality implies $e_1^g=e_2$, which is a contradiction.
\end{proof}

\subsubsection{Case~\eqref{psk2}}\label{sec:psk2}
In this section, we deal with case~\eqref{psk2}. This case is  easier to handle.

\begin{lemma}\label{refdaaggiustareeepart1}
	Let $G$ be a group as in~\eqref{psk2} acting on $\Omega_k^2$. Then $G$ is not $\mathrm{IBIS}$.
\end{lemma}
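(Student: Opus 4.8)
The plan is to mimic the strategy of Lemma~\ref{prop:3bispart1}, which handled case~\eqref{psk1}: exhibit two irredundant chains of different lengths whose terminal stabilizers coincide, and then invoke Lemma~\ref{lemma:1}. Recall that here a point of $\Omega_k^2$ is an incident flag $\{W,U\}$ with $W\in\Omega_k$, $U\in\Omega_{d-k}$ and $W\le U$, and that $\mathrm{PSL}_d(q)\unlhd G\nleq\mathrm{P}\Gamma\mathrm{L}_d(q)$, so $G$ may contain the graph automorphism $\iota$ as well as field automorphisms. As the authors remark, this case is easier than~\eqref{psk1}, essentially because the two subspaces of a flag are nested rather than complementary, so the graph automorphism and the field automorphisms interact more transparently with the flag structure.

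\emph{First I would reduce to small dimension.} By choosing, as in Lemma~\ref{prop:3bispart1}, auxiliary subspaces $U_0=\langle e_4,\ldots,e_{k+2}\rangle$ lying inside all the flags we construct and a complement $W_0=\langle e_{k+3},\ldots,e_d\rangle$ outside them, I can arrange that the stabilizer of the constructed flags in $\mathrm{P}\Gamma\mathrm{L}_d(q)\rtimes\langle\iota\rangle$ splits as a block-diagonal group whose only nontrivial block is a fixed $\mathrm{GL}_3$-type corner, the remaining blocks being a free $\mathrm{GL}_{k-1}(q)\times\mathrm{GL}_{d-k-2}(q)$ factor together with $\mathrm{Aut}(\mathbb{F}_q)$ and $\langle\iota\rangle$. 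This localizes the computation to the $\langle e_1,e_2,e_3\rangle$-corner, exactly as before.

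\emph{Next I would write down the chains.} The natural attempt is to take flags built from the flag $\{\langle e_1,U_0\rangle,\langle e_1,e_2,\ldots\rangle\}$ and its companions under permuting $e_1,e_2,e_3$ and the combination $e_1+e_2+e_3$, producing one chain of length $3$ and a second of length $4$ that revisits the same points in a reordered way (the reordering being the source of the length discrepancy, just as $\omega_3'=\{\langle e_1+e_2,U\rangle,\ldots\}$ was in the previous lemma). I would then compute the common terminal stabilizer
\begin{equation*}
G_{\omega_1,\omega_2,\omega_3}=G_{\omega_1',\omega_2',\omega_3',\omega_4'},
\end{equation*}
the key point being that, because the flags are nested, the stabilizer in $A=\mathrm{P}\Gamma\mathrm{L}_d(q)\rtimes\langle\iota\rangle$ of the first two flags is forced into block-upper-triangular shape, and imposing the third flag pins the top corner to act as a scalar on $\langle e_1,e_2,e_3\rangle$; since the coordinates involved lie in the prime field $\mathbb{F}_p$, every field automorphism and the residual unipotent freedom also fix the extra flag $\omega_3'$. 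An additional sub-case for $q=2$ and for the sporadic pair $(d,q)=(3,4)$ will probably be needed, handled respectively by a separate small construction over $\mathbb{F}_2$ and by a direct (possibly computer-assisted) check, exactly as in Lemma~\ref{prop:3bispart1}.

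\emph{The main obstacle} I anticipate is the treatment of the graph automorphism $\iota$: unlike the purely linear cases of~\eqref{pslk}, here $\iota$ sends a $k$-space to a $(d-k)$-space, hence sends a flag $\{W,U\}$ to the flag $\{U^{\perp\!\!\!\perp},W^{\perp\!\!\!\perp}\}$ in the dual, and I must verify that the constructed flags are either genuinely stabilized by the relevant coset of $\iota$ or, if not, that the discrepancy still leaves the two terminal stabilizers equal. Getting the bookkeeping of dualities right — so that the chain built in the primal space is matched by its image under $\iota x\iota=(x^{-1})^T$ — is the delicate part; once the flags are chosen so that $\iota$ either fixes them or is excluded from the stabilizer by an argument like the final contradiction $e_1^g=e_1$ versus $e_1^g=e_2$ in Lemma~\ref{prop:3bispart1}, the conclusion follows immediately from Lemma~\ref{lemma:1}.
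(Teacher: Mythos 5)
Your high-level strategy (two irredundant chains of different lengths with the same terminal stabilizer, then Lemma~\ref{lemma:1}) is the paper's strategy, but your plan is missing the one idea that makes this case work, and you have explicitly left it as an unresolved ``main obstacle''. The paper's construction takes the first two flags to share the \emph{same} $k$-dimensional subspace: $\omega_1=\{\langle e_1,U\rangle,\langle e_1,e_2,W\rangle\}$ and $\omega_2=\{\langle e_1,U\rangle,\langle e_1,e_3,W\rangle\}$. Any element of $G$ in the coset of the graph automorphism sends a flag $\{P,U'\}$ to a flag whose $k$-space is the annihilator of $U'$; fixing both $\omega_1$ and $\omega_2$ would force the two distinct $(d-k)$-spaces $\langle e_1,e_2,W\rangle$ and $\langle e_1,e_3,W\rangle$ to have the same annihilator $\langle e_1,U\rangle$, a contradiction. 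Hence $G_{\omega_1,\omega_2}\le\mathrm{P}\Gamma\mathrm{L}_d(q)$ in a single stroke, dimensions are preserved from then on, and the identity $G_{\omega_1,\omega_2,\omega_3,\omega_4}=G_{\omega_1',\omega_2',\omega_3'}$ reduces to the set equality $\{\omega_1,\omega_2,\omega_4\}=\{\omega_1',\omega_2',\omega_3'\}$ plus the observation that both constituents of $\omega_3$ already occur among the fixed subspaces. Your proposed flags, obtained by ``permuting $e_1,e_2,e_3$'', all have distinct $k$-spaces, so this mechanism is unavailable and the duality bookkeeping you worry about remains genuinely open in your write-up.

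Two further points. First, your auxiliary setup is transplanted from case~\eqref{psk1} and does not fit the nested situation: in $\Omega_k^2$ the $(d-k)$-space of a flag must \emph{contain} the $k$-space, so you cannot place a complement $W_0=\langle e_{k+3},\ldots,e_d\rangle$ ``outside'' the flags; the paper instead takes $U=\langle e_4,\ldots,e_{k+2}\rangle$ inside the $k$-spaces and $W=\langle e_4,\ldots,e_{d-k+1}\rangle\ge U$ inside the $(d-k)$-spaces. Second, the special sub-cases $q=2$ and $(d,q)=(3,4)$ you anticipate are artifacts of the wrong template: irredundancy is certified in the paper by three explicit unipotent matrices $g_1,g_2,g_3$ defined over the prime field, so the argument is uniform in $q$ and no vector of the form $e_1+e_2+e_3$ (whose irredundancy constraints caused those exceptions in Lemma~\ref{prop:3bispart1}) is needed.
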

\begin{proof}
Let $A=\mathrm{P}\Gamma\mathrm{L}_d(q)\rtimes \langle \iota\rangle $. Let $U=\langle e_4,\ldots,e_{k+2}\rangle$ and $W=\langle e_4,\ldots,e_{d-k+1}\rangle$. Observe that $U\le W$, because $k+2\le d-k+1$ (recall $k<d/2$). Moreover, $\dim U=k-1$ and $\dim W=d-k-2$. As in the previous lemmas, there is a slight abuse of notation here. Indeed, when $k=1$, $U=0$. Now, let
\begin{align*}
\omega_1 &= \omega_1' = \{\langle e_1,U \rangle, \langle e_1,e_2,W\rangle\}, \\
 \omega_2 &= \omega_2' = \{\langle e_1,U \rangle, \langle e_1, e_3,W\rangle\}, \\
 \omega_3&= \{\langle e_2,U\rangle, \langle e_1,e_2,W\rangle \}, \\
 \omega_4 &= \omega_3' = \{ \langle e_2,U \rangle, \langle e_2, e_3,W\rangle\}.
\end{align*}

Clearly, $G_{\omega_1,\omega_2}=G_{\omega_1',\omega_2'}\le\mathrm{P}\Gamma\mathrm{L}_d(q)$, because $\omega_1$ and $\omega_2$ share the same $k$-dimensional subspace. From $G_{\omega_1',\omega_2',\omega_3'}\le\mathrm{P}\Gamma\mathrm{L}_d(q)$, it follows that $G_{\omega_1',\omega_2',\omega_3'}$ fixes $\omega_3$ and hence 
\begin{equation}\label{29_11_2023}G_{\omega_1',\omega_2',\omega_3'}=G_{\omega_1,\omega_2,\omega_3,\omega_4}.
\end{equation} Now, consider the matrices
\[g_1=\left[
\begin{array}{cccc}
1&0&0&0\\
1&1&0&0\\
1&1&1&0\\
0&0&0&I_{d-3}
\end{array}\right],\, 
g_2=\left[
\begin{array}{cccc}
1&0&0&0\\
1&1&0&0\\
1&0&1&0\\
0&0&0&I_{d-3}
\end{array}\right],\, 
g_3=\left[
\begin{array}{cccc}
1&0&0&0\\
0&1&0&0\\
1&0&1&0\\
0&0&0&I_{d-3}
\end{array}\right].
\]
Observe $g_1,g_2,g_3\in\mathrm{PSL}_d(q)\le G$. Moreover, $g_1\in G_{\omega_1}\setminus G_{\omega_1,\omega_2}$, $g_2\in G_{\omega_1,\omega_2}\setminus G_{\omega_1,\omega_2,\omega_3}$, $g_3\in G_{\omega_1,\omega_2,\omega_3}\setminus G_{\omega_1,\omega_2,\omega_3,\omega_4}$. Therefore, $(\omega_1,\omega_2,\omega_3,\omega_4)$ and $(\omega_1',\omega_2',\omega_3')$ are irredundant chains for $G$. Now, from~\eqref{29_11_2023} and Lemma~\ref{lemma:1}, this shows that $G$ is not IBIS.
\end{proof}

We conclude this section summing up the main result.
\begin{proposition}Let $G$ be an almost simple primitive group having socle $\mathrm{PSL}_d(q)$ in a subspace action. If $G$ is $\mathrm{IBIS}$, then either
\begin{itemize}
\item $q=2$ and $G=\mathrm{GL}_d(2)$ is acting on the non-zero vectors of $\mathbb{F}_2^d$, or
\item $d=2$ and $|G:G\cap\mathrm{PGL}_2(q)|$ is either $1$ or a prime number.
\end{itemize}
\end{proposition}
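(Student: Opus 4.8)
The plan is to assemble the final proposition directly from the three lemmas that classify the subspace actions of $\mathrm{PSL}_d(q)$, combined with the earlier treatment of the two-dimensional case. The key observation is that every subspace action of an almost simple group with socle $\mathrm{PSL}_d(q)$ falls into exactly one of the families enumerated in~\eqref{pslk},~\eqref{psk1}, or~\eqref{psk2}, together with the special case $d=2$ acting on the projective line. So the strategy is simply a case analysis over these possibilities, discarding all non-IBIS ones using the lemmas already established.

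First I would dispose of $d=2$. Here the action is on the $q+1$ points of the projective line, and Lemma~\ref{lemma:-100} tells us that $G$ is $\mathrm{IBIS}$ if and only if $r=|G:G\cap\mathrm{PGL}_2(q)|$ equals $1$ or is a prime dividing $f$. In particular, when $G$ is $\mathrm{IBIS}$ the quantity $r$ is either $1$ or prime, which yields the second bullet of the proposition. Next I would assume $d\ge 3$ and run through the three families. For family~\eqref{pslk} acting on $\Omega_k$, Lemma~\ref{prop:2} shows that when $k=1$ the group is $\mathrm{IBIS}$ precisely when $q=2$ (giving $G=\mathrm{GL}_d(2)$ on non-zero vectors, the first bullet), while Lemma~\ref{prop:3} shows that for every $k\ge 2$ the group fails to be $\mathrm{IBIS}$. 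For the two remaining families, Lemma~\ref{prop:3bispart1} handles~\eqref{psk1} and Lemma~\ref{refdaaggiustareeepart1} handles~\eqref{psk2}, each concluding non-$\mathrm{IBIS}$ unconditionally.

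Collecting these facts, the only surviving $\mathrm{IBIS}$ cases are $q=2$ with $G=\mathrm{GL}_d(2)$ acting on $\Omega_1$ (equivalently on the non-zero vectors of $\mathbb{F}_2^d$) and the case $d=2$ with $r\in\{1\}\cup\{\text{primes}\}$, which is exactly the statement. I would also remark that for $q=2$ the action on $\Omega_1$ coincides with the natural action on non-zero vectors, since projective points over $\mathbb{F}_2$ are in bijection with non-zero vectors; this identification is what lets us phrase the first bullet in the vector-space language used in Theorem~\ref{thrm:main}.

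The only genuinely delicate point, and the one I would flag as the main obstacle, is verifying that the enumeration in~\eqref{pslk}--\eqref{psk2} together with $d=2$ really is exhaustive for the \emph{primitive} subspace actions. One must check that the subspace subgroups of Definition~\ref{defintion6.2}\eqref{definition6.2:1} in the linear case correspond exactly to stabilizers of single subspaces (the $\mathrm{P}\Gamma\mathrm{L}_d(q)$ situation~\eqref{pslk}) or of the incidence/complementary pairs that appear when $G$ contains the graph automorphism $\iota$ (the situations~\eqref{psk1} and~\eqref{psk2}), with no further primitive possibilities slipping through. Once this completeness is granted---it is implicit in the reduction recorded in Section~\ref{structure}---the proposition follows immediately by citing the four lemmas, so the write-up is short.
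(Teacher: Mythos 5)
Your proposal is correct and follows essentially the same route as the paper, which presents this proposition as a summary statement immediately after Lemmas~\ref{lemma:-100},~\ref{prop:2},~\ref{prop:3},~\ref{prop:3bispart1} and~\ref{refdaaggiustareeepart1}, with exactly the case analysis you describe. The exhaustiveness of the enumeration~\eqref{pslk}--\eqref{psk2} that you flag is indeed the only implicit ingredient, and it is supplied by the setup at the start of Section~\ref{psll} together with the reduction in Section~\ref{structure}.
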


\subsection{Unitary groups}\label{psu}
Let $d$ be a positive integer and $q = p^f$, with $p$ a prime number. Let $G_0 = \mathrm{PSU}_d(q)$ and let $G$ be an almost simple group having socle $G_0$. In particular, $G_0 \leq G \trianglelefteq \mathrm{Aut}(G_0) =\mathrm{PGU}_d(q)\rtimes \mathrm{Aut}(\mathbb{F}_{q^2})  $. 
In the following, we set $A = \mathrm{Aut}(\mathrm{PSU}_d(q))$.

\begin{lemma}\label{lemma:auxiliary}
There exists  $\alpha\in \mathbb{F}_{q^2}$ with $\alpha+\alpha^q+1=0$ such that the orbit of $\alpha$ under the action of $\mathrm{Aut}(\mathbb{F}_{q^2})$ has cardinality $2f$.
\end{lemma}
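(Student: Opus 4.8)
The plan is to identify $\mathrm{Aut}(\mathbb{F}_{q^2})$ with the Galois group $\mathrm{Gal}(\mathbb{F}_{p^{2f}}/\mathbb{F}_p)$, a cyclic group of order $2f$ generated by the Frobenius map $\phi\colon x\mapsto x^p$. Under this identification, the orbit of an element $\alpha$ is precisely its set of Galois conjugates over $\mathbb{F}_p$, so its cardinality equals $[\mathbb{F}_p(\alpha):\mathbb{F}_p]$. Hence the orbit has size exactly $2f$ if and only if $\alpha$ generates $\mathbb{F}_{q^2}=\mathbb{F}_{p^{2f}}$ over $\mathbb{F}_p$, that is, if and only if $\alpha$ lies in no proper subfield of $\mathbb{F}_{q^2}$. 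The condition $\alpha+\alpha^q+1=0$ reads $\mathrm{Tr}(\alpha)=-1$, where $\mathrm{Tr}\colon\mathbb{F}_{q^2}\to\mathbb{F}_q$, $\mathrm{Tr}(\alpha)=\alpha+\alpha^q$, is the trace of the quadratic extension $\mathbb{F}_{q^2}/\mathbb{F}_q$. Since $\mathrm{Tr}$ is a nonzero $\mathbb{F}_q$-linear map, it is surjective, and therefore the fibre $S:=\mathrm{Tr}^{-1}(-1)$ has exactly $|\mathbb{F}_{q^2}|/|\mathbb{F}_q|=q$ elements. Thus it suffices to prove that $S$ contains an element that generates $\mathbb{F}_{q^2}$ over $\mathbb{F}_p$.

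To this end I would bound the number of \emph{bad} elements of $S$, namely those lying in some maximal proper subfield of $\mathbb{F}_{q^2}$. The maximal proper subfields are the fields $\mathbb{F}_{p^{2f/r}}$ as $r$ ranges over the prime divisors of $2f$, and every non-generator lies in at least one of them, so a union bound gives that the number of bad elements is at most $\sum_{r}|S\cap\mathbb{F}_{p^{2f/r}}|$. The key local computation is to evaluate $|S\cap K|$ for a subfield $K=\mathbb{F}_{p^m}$ with $m\mid 2f$. Writing $e=\gcd(m,f)$, one checks that $m/e\in\{1,2\}$ (this uses $m\mid 2f$), so that the restriction of the $q$-power map to $K$ has order at most $2$ and $\mathrm{Tr}|_K$ takes values in $K\cap\mathbb{F}_q=\mathbb{F}_{p^{e}}$. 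If $K\subseteq\mathbb{F}_q$ (the case $m\mid f$) then $\mathrm{Tr}|_K(\alpha)=2\alpha$, so $|S\cap K|\le 1$, with equality only for $p$ odd (via $\alpha=-1/2$); note that in characteristic $2$ there are no such elements. Otherwise $[K:\mathbb{F}_{p^e}]=2$ and $\mathrm{Tr}|_K$ coincides with the relative trace $\mathrm{Tr}_{K/\mathbb{F}_{p^e}}$, which is surjective with kernel of size $p^{m/2}$; hence $|S\cap K|=p^{m/2}$.

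Applying this to the maximal subfields, the prime $r=2$ contributes $K=\mathbb{F}_q$ with $|S\cap K|\le 1$, while each odd prime $r\mid f$ contributes $\mathbb{F}_{p^{2f/r}}$ with $|S\cap K|=p^{f/r}\le p^{f/3}$. Therefore the number of bad elements is at most $1+\sum_{r\mid f,\ r\ \text{odd prime}}p^{f/r}\le 1+\omega(f)\,p^{f/3}$, where $\omega(f)$ denotes the number of distinct prime divisors of $f$. I would finish by verifying the elementary inequality $1+\omega(f)\,p^{f/3}<p^{f}=q$ for every prime $p$ and every $f\ge 1$ (the sum is empty when $f$ is a power of $2$), from which $|S|$ strictly exceeds the number of bad elements and the existence of the required $\alpha$ follows. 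The only delicate points — and hence the main obstacle — are the bookkeeping in the local count $|S\cap K|$ (in particular the characteristic-$2$ behaviour, where $\mathbb{F}_q$ contributes nothing because $-1=1\neq 0=\mathrm{Tr}(\alpha)$ for $\alpha\in\mathbb{F}_q$) and the direct check of the closing inequality for the finitely many small values of $f$ where the crude estimate is tightest.
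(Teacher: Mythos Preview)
Your proposal is correct and follows essentially the same approach as the paper: both count $|S|=q$ via the trace, identify the bad elements as those lying in $S\cap\mathbb{F}_{p^{2f/r}}$ for primes $r\mid 2f$, obtain $|S\cap\mathbb{F}_q|\le 1$ for $r=2$ and $|S\cap\mathbb{F}_{p^{2f/r}}|\le p^{f/r}\le q^{1/3}$ for odd $r$, and finish with the elementary inequality. The only minor difference is that for odd $r$ the paper bounds $|S_i|$ by the degree of the polynomial $\alpha+\alpha^{p^{f/r}}+1$, whereas you obtain the exact value $p^{f/r}$ by recognising $\mathrm{Tr}|_K$ as the relative trace of a quadratic extension; this is a slight sharpening but not a genuinely different route.
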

\begin{proof}
 Let 
	\[
		S = \{ \alpha \in \mathbb{F}_{q^2} \, : \, \alpha + \alpha^q+1=0\}.
	\]
Consider the trace map $\mathrm{Tr}:\mathbb{F}_{q^2}\to\mathbb{F}_q$ defined by $x\mapsto x+x^q$. Since $\mathrm{Tr}$ is a surjective $\mathbb{F}_q$-linear mapping, we deduce that 
\begin{equation}\label{eq:S}
|S|=q.
\end{equation}

Recall that $\mathrm{Aut}(\mathbb{F}_{q^2})$ is cyclic of order $2f$. Let $2f = r_1^{l_1} \ldots r_t^{l_t}$ be the factorization of $2f$ in prime numbers with $2=r_1<r_2<\cdots<r_t$. The elements of $S$ not lying on a regular orbit under the action of $\mathrm{Aut}(\mathbb{F}_{q^2})$ are fixed by some element of $\mathrm{Aut}(\mathbb{F}_{q^2})$ of prime order $r_i$, for some $i\in \{1,\ldots,t\}$. Actually, using the fact that this group is cyclic, we deduce that the elements of $S$ fixed by some automorphism of order $r_i$ are the elements of the set 
$$S_i=S\cap\mathbb{F}_{q^{2/r_i}}=\{\alpha\in S\mid \alpha\in \mathbb{F}_{q^{2/r_i}}\}.$$

Suppose $i=1$ and let $\alpha\in S_1$. Then $\alpha^q=\alpha$ because $\alpha\in\mathbb{F}_q$. Therefore $0=\alpha+\alpha^q+1=2\alpha+1$ and hence $|S_1|\le 1$\footnote{Actually, $|S_1|=0$ when $q$ is even and $|S_1|=1$ when $q$ is odd.}

Suppose $i>1$ and let $\alpha\in S_i$. Since $r_i$ is odd and $\alpha\in \mathbb{F}_{q^{2/r_i}}$\footnote{Since $r_i$ is odd, we may write $f=\frac{2f}{r_i}\cdot \frac{r_i-1}{2}+\frac{f}{r_i}$. Now observe that, as $\alpha\in \mathbb{F}_{q^{2/r_i}}=\mathbb{F}_{p^{2f/r_i}}$, we get $\alpha^{p^{\frac{2f}{r_i}\cdot \frac{r_i-1}{2}}}=\alpha$.}, we deduce $\alpha^q=\alpha^{p^{f/r_i}}$. Therefore
$$0=\alpha+\alpha^q+1=\alpha+\alpha^{p^{\frac{f}{r_i}}}+1.$$
Since this can be viewed as a polynomial equation in $\alpha$ of degree $p^{f/r_i}$, we deduce that it has at most $p^{f/r_i}=q^{1/r_i}$ solutions and hence $|S_i|\le q^{1/r_i}$. 

Summing up,
\begin{align}\label{eq:Sagain}
\left|\bigcup_{i=1}^tS_i\right|&\le \sum_{i=1}^t|S_i|\le 1+\sum_{i=2}^t|S_i|\le 1+\sum_{i=2}^tq^{1/r_i}\le 1+(t-1)q^{1/3}\\\nonumber
&\le 1+(\log_2q-1)q^{1/3}.
\end{align}
 Using~\eqref{eq:S} and~\eqref{eq:Sagain}, we deduce that $\bigcup_iS_i$ is a proper subset of $S$ as long as $q>1+(\log_2q-1)q^{1/3}$. Since this inequality is always satisfied, we deduce the existence of $\alpha\in S$ lying on an orbit of cardinality $2f$ under the action of $\mathrm{Aut}(\mathbb{F}_{q^2})$. 
\end{proof}

 We let $V=\mathbb{F}_{q^2}^d$ be the $d$-dimensional vector space over the finite field $\mathbb{F}_{q^2}$ of cardinality $q^2$ and let $k\in \{1,\ldots,d-1\}$. Then, for $1\le k\le d/2$, we let $\Omega_k$ be the collection of all $k$-dimensional totally singular subspaces of $V$.

\begin{lemma}\label{psu:cased3}
When $d=3$ and $q\ne 2$, there exist two positive integers $\ell,\ell'\ge 2$ with $\ell\ne\ell'$ and there exist $\omega_1,\ldots,\omega_\ell$, $\omega_1',\ldots,\omega_{\ell'}'$ elements of $\Omega_1$ such that
\begin{itemize}
\item $(\omega_1,\ldots,\omega_\ell)$ and $(\omega_1',\ldots,\omega_{\ell'}')$ are irredundant bases for the action of $G$ on $\Omega_1$,
\item $\omega_1+\cdots+\omega_\ell=V=\omega_{1}'+\cdots+\omega_{\ell'}'$, $\omega_1\cap\cdots\cap\omega_\ell=0=\omega_1'\cap\cdots\cap\omega_{\ell'}'$.
\end{itemize}
In particular, the action of $G$ on $\Omega_1$ is not $\mathrm{IBIS}$. When $(d,q)=(3,2)$,  the action of $G$ on $\Omega_1$ is $\mathrm{IBIS}$.
\end{lemma}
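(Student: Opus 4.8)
The plan is to realise $\mathrm{PSU}_3(q)$ on a Hermitian space and to produce, when $q\neq 2$, two irredundant bases for $G$ of lengths $3$ and $4$; since an IBIS group has all its irredundant bases of the same cardinality, this immediately yields the non-IBIS conclusion. Concretely, I would equip $V=\mathbb{F}_{q^2}^3$ with the Hermitian form $\beta(u,v)=u_1v_3^q+u_2v_2^q+u_3v_1^q$, so that $\langle e_1\rangle$ and $\langle e_3\rangle$ are singular and span a hyperbolic line while $e_2$ is anisotropic. The diagonal matrices preserving $\beta$ are exactly $\mathrm{diag}(a,b,a^{-q})$ with $a\in\mathbb{F}_{q^2}^{\ast}$ and $b^{q+1}=1$, so the stabiliser of the ordered pair $(\langle e_1\rangle,\langle e_3\rangle)$ contains this torus $T$ of order $(q^2-1)(q+1)$. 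The decisive observation is that a vector $(1,1,\mu)$ is singular precisely when $\mu+\mu^q+1=0$, i.e.\ when $\mu$ belongs to the set $S$ of Lemma~\ref{lemma:auxiliary}; I would therefore fix $\alpha\in S$ lying on a regular $\mathrm{Aut}(\mathbb{F}_{q^2})$-orbit and set $v=e_1+e_2+\alpha e_3$.

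For the length-$3$ base I would take $(\langle e_1\rangle,\langle e_3\rangle,\langle v\rangle)$. A direct matrix computation shows that an element of $T$ fixing $\langle v\rangle$ must be scalar, so the stabiliser of the three points in $\mathrm{PGU}_3(q)$ is trivial; moreover an element $h\phi$ with $h\in\mathrm{PGU}_3(q)$ and $\phi\in\mathrm{Aut}(\mathbb{F}_{q^2})$ fixing all three points forces $\alpha^\phi=\alpha$, hence $\phi=1$ by the regular-orbit choice of $\alpha$. Thus the pointwise stabiliser in $G$ is trivial for every $G$ with $\mathrm{PSU}_3(q)\le G\le\mathrm{Aut}(\mathrm{PSU}_3(q))$; the chain is irredundant because $T\neq 1$, it spans $V$ (as $e_2=v-e_1-\alpha e_3$), and its intersection is $0$.

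For the length-$4$ base I would insert a further singular point of the hyperbolic line before $\langle v\rangle$: take $w=\theta e_1+e_3$ with $\theta\neq 0$ and $\mathrm{Tr}(\theta)=0$, and consider $(\langle e_1\rangle,\langle e_3\rangle,\langle w\rangle,\langle v\rangle)$. The key point is that an element $\mathrm{diag}(a,b,a^{-q})\in T$ fixes $\langle w\rangle$ if and only if $a^{q+1}=1$; since $\{a:a^{q+1}=1\}$ is a proper subgroup of $\mathbb{F}_{q^2}^{\ast}$ precisely when $q>2$, the stabiliser drops strictly at the third step exactly in that range, so the chain is irredundant for $q>2$. The final point $\langle v\rangle$ then trivialises the stabiliser, because $\{\langle e_1\rangle,\langle e_3\rangle,\langle v\rangle\}$ is already a base; this sequence again spans $V$ and intersects in $0$. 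Two irredundant bases of different lengths show that $G$ is not IBIS when $q\neq 2$.

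Finally, when $(d,q)=(3,2)$ the mechanism breaks down: every $a\in\mathbb{F}_4^{\ast}$ satisfies $a^{q+1}=a^3=1$, so inserting $\langle w\rangle$ does not reduce $T$ and the length-$4$ construction becomes redundant; correspondingly $G$ is IBIS, which I would confirm by the direct (and computationally trivial, the degree being $9$) verification. The main obstacle I anticipate is not the linear algebra on $T$ but the bookkeeping needed to guarantee a trivial pointwise stabiliser in the full almost simple group $G$ rather than merely in $\mathrm{PSU}_3(q)$: this is exactly where the regular-orbit element $\alpha$ supplied by Lemma~\ref{lemma:auxiliary} is indispensable, and some care with the central scalar subgroup distinguishing $\mathrm{PSU}$ from $\mathrm{PGU}$ is required throughout.
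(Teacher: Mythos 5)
Your proposal is correct and follows essentially the same route as the paper: the same anti-diagonal Hermitian form, the same hyperbolic pair $\langle e_1\rangle,\langle e_3\rangle$ whose stabiliser is the diagonal torus, the same third point $\langle \theta e_1+e_3\rangle$ with $\mathrm{Tr}(\theta)=0$ to force a strict drop only when $q>2$, and the same use of the regular-orbit element from Lemma~\ref{lemma:auxiliary} to kill the field automorphisms, yielding irredundant bases of lengths $3$ and $4$. The only differences are cosmetic (you reuse the regular-orbit singular point as the fourth base point where the paper takes a generic singular point off the hyperbolic line, and you argue irredundance directly in $G$ rather than via Lemma~\ref{lemma:minusminus1}).
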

\begin{proof}
Let $e_1,e_2,e_3$ be an $\mathbb{F}_{q^2}$-basis of $V$ such that the Hermitian form defining $G_0$ is
\[
\begin{pmatrix}
0&0&1\\
0&1&0\\
1&0&0
\end{pmatrix}.
\]

When $(d,q)=(3,2)$, the statement can be checked directly: $G$ is IBIS for each choice of $G$ with $\mathrm{PSU}_3(2)\le G\le \mathrm{P}\Gamma\mathrm{U}_3(2)$. Therefore, for the rest of the proof we suppose $(d,q)\ne (3,2)$.
  
	The set on which $G$ acts is
	\[
		\Omega = \{\langle(1,0,0)\rangle\} \cup \{(\alpha,\beta,1) \, : \, \alpha + \alpha^q+\beta\beta^q = 0\}.
	\]
	We give an irredundant base of size $4$ for $G_0$ and an irredundant base of size $3$ for $A$, hence by Lemma~\ref{lemma:minusminus1} $G$ is not IBIS.
	
	Take $\omega_1 = \langle e_1 \rangle$, $\omega_2 = \langle e_3 \rangle$ and $\omega_3 = \langle \alpha e_1+e_3\rangle$, where $\alpha\in\mathbb{F}_{q^2}\setminus\{0\}$ satisfies $\alpha + \alpha^q= 0$, and $\omega_4 = \langle \tilde{\alpha}e_1+ \beta e_2+e_3\rangle$, where $\tilde\alpha,\beta\in\mathbb{F}_{q^2}$ satisfy $\tilde{\alpha}+\tilde{\alpha}^q+\beta^{q+1}=0$ and $\beta\ne0$. A computation shows that
$$(G_0)_{\omega_1,\omega_2}=
\left\{
\left[
\begin{array}{ccc}
a&0&0\\
0&a^{q-1}&0\\
0&0&a^{-q}
\end{array}
\right]\mid a\in\mathbb{F}_{q^2}\setminus\{0\}
\right\}.$$	
Observe that $(G_0)_{\omega_1,\omega_2}\ne 1$, because $q\ne 2$. From this description of $(G_0)_{\omega_1,\omega_2}$, we get
$$(G_0)_{\omega_1,\omega_2,\omega_3}=
\left\{
\left[
\begin{array}{ccc}
a&0&0\\
0&a^{q-1}&0\\
0&0&a^{-q}
\end{array}
\right]\mid a\in\mathbb{F}_{q^2}, a^{q+1}=1
\right\}.$$
Observe that $(G_0)_{\omega_1,\omega_2,\omega_3}$ has order $(q+1)/\gcd(3,q+1)$. Thus $(G_0)_{\omega_1,\omega_2,\omega_3}\ne 1$, because $q\ne 2$. It is now easy to check that $(G_0)_{\omega_1,\omega_2,\omega_3,\omega_4}=1$ and hence $(\omega_1,\omega_2,\omega_3,\omega_4)$ is an irredundant base for $G_0$. Finally, observe that $\omega_1 + \cdots+ \omega_4 = V$ and $\omega_1\cap\cdots\cap \omega_4=0$. We can now complete this sequence to construct an irredundant base  $(\omega_1,\dots,\omega_\ell)$ for $G$ with $\ell \geq 4$ such that $\omega_1 + \dots + \omega_\ell= V$ and $\omega_1\cap\cdots\cap\omega_\ell=0$.

Let $A=\mathrm{P}\Gamma\mathrm{U}_d(q)$.	We now construct an irredundant base of cardinality $3$ for $A$. From Lemma~\ref{lemma:auxiliary}, let $\alpha\in \mathbb{F}_{q^2}$ with $\alpha+\alpha^q+1=0$ and such that the orbit of $\alpha$ under the action of $\mathrm{Aut}(\mathbb{F}_{q^2})$ has cardinality $2f$. Now take $\omega_1^\prime = \omega_1$, $\omega_2^\prime = \omega_2$ and $\omega_3^\prime = \langle \alpha e_1+e_2+e_3\rangle$. 
	The same  computation as above shows that
	\[
	A_{\omega_1^\prime,\omega_2^\prime} = 
		\left\langle
			\begin{bmatrix}
				\gamma & 0 & 0 \\
				0 & \delta & 0 \\
				0 & 0 & \gamma^{-q}
			\end{bmatrix}, \mathrm{Aut}(\mathbb{F}_{q^2}\backslash \mathbb{F}_p) \, : \, \gamma,\delta\in\mathbb{F}_{q^2}\setminus\{0\},\delta^{q+1}=1\right\rangle.
	\]
Let $g\in A_{\omega_1',\omega_2'}$ with $g$ fixing $\omega_3'$. From the description above, we see that there exists $\gamma,\delta\in\mathbb{F}_{q^2}\setminus\{0\}$ with $\delta^{q+1}=1$ and there exists $\sigma\in\mathrm{Aut}(\mathbb{F}_{q^2})$ with
$$\alpha^\sigma \gamma e_1+\delta e_2+\gamma^{-q}e_3\in \omega_3'=\langle \alpha e_1+e_2+e_3\rangle.$$	
By checking the second and the third coordinate, we deduce $\delta=\gamma^{-q}$ and hence $\gamma^{q+1}=1$, because $\delta^{q+1}=1$. Now, by checking the first and second coordinate, we deduce $$\alpha^\sigma \gamma=\alpha\delta=\alpha \gamma^{-q}=\alpha\gamma.$$
Therefore $\alpha^\sigma=\sigma$. Since $\alpha$ lies in a regular orbit under the action of $\mathrm{Aut}(\mathbb{F}_{q^2})$, we get $\sigma=1$. This shows that $g=1$ and hence $A_{\omega_1',\omega_2',\omega_3'}=1.$ As before $\omega_1'+\omega_2'+\omega_3'=V$, $\omega_1'\cap\omega_2'\cap\omega_3'=0$ and it is also a base for $G$.
\end{proof}

We deal with the case $d=4$ and $k=2$.

\begin{lemma}\label{psu:cased=4k=2}
When $d=4$, there exist two positive integers $\ell,\ell'\ge 2$ with $\ell\ne\ell'$ and there exist $\omega_1,\ldots,\omega_\ell$, $\omega_1',\ldots,\omega_{\ell'}'$ elements of $\Omega_2$ such that
\begin{itemize}
\item $(\omega_1,\ldots,\omega_\ell)$ and $(\omega_1',\ldots,\omega_{\ell'}')$ are irredundant bases for the action of $G$ on $\Omega_2$,
\item $\omega_1+\cdots+\omega_\ell=V=\omega_{1}'+\cdots+\omega_{\ell'}'$, $\omega_1\cap\cdots\cap\omega_\ell=0=\omega_1'\cap\cdots\cap\omega_{\ell'}'$.
\end{itemize}
\end{lemma}
\begin{proof}
We use the Hermitian form given by the matrix
$$\begin{pmatrix}
0&0&0&1\\
0&0&1&0\\
0&1&0&0\\
1&0&0&0
\end{pmatrix}$$
and we use the basis $e_1,e_2,f_1,f_2$ of $V=\mathbb{F}_{q^2}^4$. Let $G_0$ be the socle of $G$.

Let $\omega_1=\langle e_1,e_2\rangle$, $\omega_2=\langle f_1,f_2\rangle$, $\omega_3=\langle e_1,f_2\rangle$, $\omega_4=\langle e_2,f_1\rangle$, $\omega_5=\langle e_1+e_2,f_1-f_2\rangle$ and $\omega_6=\langle e_1+f_2,e_2-f_1\rangle$. It is readily seen that $\omega_1,\ldots,\omega_6$ are $2$-dimensional totally singular subspaces of $V$. An easy computation shows that
$$(G_0)_{\omega_1,\omega_2,\omega_3,\omega_4}=
\left\{
\left[
\begin{array}{cccc}
a&0&0&0\\
0&b&0&0\\
0&0&a^{-q}&0\\
0&0&0&b^{-q}
\end{array}
\right]\mid a,b\in \mathbb{F}_{q^2}\setminus\{0\} \hbox{ with }ab\in\mathbb{F}_q
\right\}.$$
In particular, $|(G_0)_{\omega_1,\omega_2,\omega_3,\omega_4}|=(q^2-1)(q-1)/\gcd(4,q+1)\ne 1$. We deduce$$(G_0)_{\omega_1,\omega_2,\omega_3,\omega_4,\omega_5}=
\left\{
\left[
\begin{array}{cccc}
a&0&0&0\\
0&a&0&0\\
0&0&a^{-q}&0\\
0&0&0&a^{-q}
\end{array}
\right]\mid a\in \mathbb{F}_{q^2}\setminus\{0\} \hbox{ with }a^2\in\mathbb{F}_q
\right\}.$$
From this it follows that,
\[
|(G_0)_{\omega_1,\omega_2,\omega_3,\omega_4,\omega_5}|=
\begin{cases}
q-1&\textrm{if }q \textrm{ is even},\\
\frac{2(q-1)}{\gcd(4,q+1)}&\textrm{if }q \textrm{ is odd}.
\end{cases}
\]
Thus $|(G_0)_{\omega_1,\omega_2,\omega_3,\omega_4,\omega_5}|=1$ if and only if $q\in \{2,3\}$.
This show that $\omega_1,\ldots,\omega_5$ is an irredundant base for $G_0$ of cardinality $5$ when $q\in \{2,3\}$, whereas $\omega_1,\ldots,\omega_6$ is an irredundant base for $G_0$ of cardinality $6$ when $q\ge 4$.

The argument above shows that
$$A_{\omega_1,\omega_2,\omega_3,\omega_4}=
\left\{
\left[
\begin{array}{cccc}
1&0&0&0\\
0&b&0&0\\
0&0&1&0\\
0&0&0&b^{-q}
\end{array}
\right]\mid b\in \mathbb{F}_{q^2}\setminus\{0\}
\right\}\rtimes\mathrm{Aut}(\mathbb{F}_{q^2}).$$
Let $q=p^f$ and let $\alpha$ be a generator of the multiplicative group of the field $\mathbb{F}_{q^2}$ and let $\omega_5'=\langle e_1+\alpha f_2,e_2-\alpha f_1\rangle$. Observe that $\omega_5'$ is a $2$-dimensional totally singular  subspace of $V$. Let $g\in A_{\omega_1,\omega_2,\omega_3,\omega_4,\omega_5'}$. Then there exist $b\in\mathbb{F}_{q^2}\setminus\{0\}$ and $\eta\in\mathrm{Aut}(\mathbb{F}_{q^2})$ such that
$$\omega_5'=\omega_5'^g=\langle e_1+\alpha^\eta b^{-q} f_2,be_2-\alpha^\eta f_1\rangle.$$
This yields $\alpha=\alpha^\eta b^{-q}$ and $\alpha b=\alpha^\eta$. Therefore $b=b^q=\alpha^{q}\alpha^{-1}$ and hence $\alpha^{\eta}\alpha^{-1}\in\mathbb{F}_q$. Now, $\alpha^{\eta}=\alpha^{p^i}$, where $i$ is a divisor of $2f$. Since $\alpha^{\eta}\alpha^{-1}=\alpha^{p^i-1}\in \mathbb{F}_{q}$, we deduce that $\alpha^{(p^i-1)(p^f-1)}=1$ and, as $\alpha$ is a generator of the multiplicative group of $\mathbb{F}_{q^2}$, we get that $p^{2f}-1$ divides $(p^i-1)(p^f-1).$ If $i=2f$, then $\alpha$ is the identity and $b=1$ and hence $g=1$. Assume that $i<2f$. Then $i\le f$ and hence $(p^i-1)(p^f-1)< p^{2f}-1$, however this contradicts the fact that $p^{2f}-1$ divides $(p^i-1)(p^f-1).$ This shows that $\omega_1,\omega_2,\omega_3,\omega_4,\omega_5'$ is an irredundant base for $A$ (and hence for $G$) of cardinality $5$. This implies that $G$ is not IBIS when $q\ge 4$.  Observe also that these two irredundant bases satisfy the additional requirements in the statement of the lemma.

Suppose $q=2$. When $G=\mathrm{PSU}_4(2)$, we have found irredundant bases of cardinality $4$ and $5$ (satisfying the requirements in the statement of this lemma) and hence $G$ is not IBIS. Similarly, if $G\ne\mathrm{PSU}_4(2)$, then $G=\mathrm{P}\Gamma\mathrm{U}_4(2)$ and we have found irredundant bases of cardinality $5$ and $6$ and hence $G$ is not IBIS.

Suppose $q=3$. We have found irredundant bases of cardinality $6$ for $\mathrm{PSU}_4(3)$ and hence we may use the argument above to deduce that $G$ is not IBIS.
\end{proof}

\begin{lemma}\label{psu:cased=4k=2q=2}
When $(d,q)=(5,2)$, there exist two positive integers $\ell,\ell'\ge 2$ with $\ell\ne\ell'$ and there exist $\omega_1,\ldots,\omega_\ell$, $\omega_1',\ldots,\omega_{\ell'}'$ in $\Omega_2$ such that
\begin{itemize}
\item $(\omega_1,\ldots,\omega_\ell)$ and $(\omega_1',\ldots,\omega_{\ell'}')$ are irredundant bases for the action of $G$ on $\Omega_2$,
\item $\omega_1+\cdots+\omega_\ell=V=\omega_{1}'+\cdots+\omega_{\ell'}'$, $\omega_1\cap\cdots\cap\omega_\ell=0=\omega_1'\cap\cdots\cap\omega_{\ell'}'$.
\end{itemize}
\end{lemma}
\begin{proof}
This follows by a direct inspection in $\mathrm{P}\Gamma\mathrm{U}_5(2)$ and in $\mathrm{PSU}_5(2)$.
\end{proof}

\begin{proposition}\label{psu:cased>3}
When $(d,q)\ne (3,2)$, the action of $G$ on $\Omega_k$ is not $\mathrm{IBIS}$ and $b(G)\ge 2$. When $(d,q)=(3,2)$, the action of $G$ on $\Omega_1$ is $\mathrm{IBIS}$.
\end{proposition}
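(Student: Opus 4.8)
The plan is to prove Proposition~\ref{psu:cased>3} by reducing the general case $G_0=\mathrm{PSU}_d(q)$ acting on totally singular $k$-subspaces to the small-dimensional base cases already handled in Lemmas~\ref{psu:cased3}, \ref{psu:cased=4k=2}, and \ref{psu:cased=4k=2q=2}. The strategy mirrors the linear case: I would fix an auxiliary totally singular (or suitably nondegenerate) subspace, pass to the action induced on the totally singular $k$-subspaces contained in or containing it, and use the earlier lemmas to produce two irredundant bases of different lengths for this induced action, which then lift to irredundant bases of different lengths for $G$ via Lemmas~\ref{lemma:1} and~\ref{lemma:minus1}.

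First I would treat the exceptional conclusion: when $(d,q)=(3,2)$ with $k=1$, the claim that $G$ is IBIS is exactly the last assertion of Lemma~\ref{psu:cased3}, so nothing further is needed there. For the main (non-IBIS) assertion, I would organize the argument by the size of $d$ relative to $k$. The genuinely base cases $d=3$ (with $q\neq 2$), $d=4$ with $k=2$, and $(d,q)=(5,2)$ with $k=2$ are dispatched directly by the three preceding lemmas, each of which already exhibits irredundant bases $(\omega_1,\dots,\omega_\ell)$ and $(\omega_1',\dots,\omega_{\ell'}')$ with $\ell\neq\ell'$ and, crucially, with $\sum_i\omega_i=V=\sum_i\omega_i'$ and $\bigcap_i\omega_i=0=\bigcap_i\omega_i'$. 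These span/intersection conditions are precisely what make the inductive step work.

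For the inductive step (larger $d$), I would choose a nondegenerate subspace $W$ of $V$ of small dimension on which the induced unitary group is one of the handled base cases, and consider the set $\Delta$ of totally singular $k$-subspaces of $W$ (equivalently, those $k$-subspaces of $V$ lying inside $W$). The setwise stabilizer $G_W$ induces on $\Delta$ an almost simple group with unitary socle of smaller dimension, and the span/intersection normalization guarantees that an irredundant base for this induced action, together with the recovery identity $\sum_i\omega_i=W$ and $\bigcap_i\omega_i=\text{(the radical data determining }W)$, pins down the stabilizer in $G$ so that $G_{\omega_1,\dots,\omega_\ell}=G_{\omega_1',\dots,\omega_{\ell'}'}$ whenever the two base lengths agree on $\Delta$. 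Applying Lemma~\ref{lemma:1} then yields that $G$ is not IBIS, and $b(G)\geq 2$ follows since no almost simple group has a base of size $1$.

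The main obstacle I anticipate is the bookkeeping needed to verify that the pointwise stabilizers of the lifted sequences actually coincide in $G$ (not merely in $G_0$): one must confirm that the field automorphisms and the unitary structure outside $W$ are fully controlled by the span and intersection data, exactly as the regular-orbit element $\alpha$ from Lemma~\ref{lemma:auxiliary} is used in Lemma~\ref{psu:cased3} to kill the Galois part. The delicate point is ensuring that the chosen $W$ and the complementary geometry leave no residual automorphism fixing all chosen subspaces but moving a point of $\Delta$; this is where the precise normalization $\sum\omega_i=V$, $\bigcap\omega_i=0$ in the base lemmas is indispensable, since it forces any stabilizing element to act trivially on the relevant quotient and thereby collapses the discrepancy in base length into a genuine IBIS failure.
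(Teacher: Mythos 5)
Your overall strategy coincides with the paper's: reduce to the small-dimensional base cases by passing to the action induced on the totally singular $k$-spaces contained in a suitable non-degenerate subspace $W$ (for $k=1$), or containing a fixed totally singular subspace $U$ of dimension $k-1$ or $k-2$ (for $k>1$), and then use the span/intersection normalization, respectively the fact that any two distinct members of $\Delta$ meet exactly in $U$, to force the two pointwise stabilizers into $G_W$ (resp.\ $G_U$) and hence to coincide, so that Lemma~\ref{lemma:1} applies. Your list of base cases and the role you assign to Lemmas~\ref{psu:cased=4k=2} and~\ref{psu:cased=4k=2q=2} (for $d=2k$ and $(d,q)=(2k+1,2)$, where $\dim(U^\perp/U)$ would otherwise be too small) match the paper.

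There is, however, one concrete gap: your induction for $k=1$ breaks down at $q=2$. The only base case you invoke for $k=1$ is $d=3$ with $q\ne 2$, and you cannot use a non-degenerate $3$-dimensional $W$ when $q=2$, because $(d,q)=(3,2)$ is precisely the exceptional IBIS case of Lemma~\ref{psu:cased3} -- the induced group on $\Delta$ would then be IBIS and produce no pair of irredundant bases of different lengths. The paper fills this hole by an explicit computation in $\mathrm{PSU}_4(2)$ and $\mathrm{P}\Gamma\mathrm{U}_4(2)$ (exhibiting irredundant bases of lengths $5$ and $4$ whose members span $V$), which then serves as the base case for $k=1$, $q=2$, with the induction for $d\ge 5$ carried out using a non-degenerate $4$-dimensional subspace instead of a $3$-dimensional one. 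Without supplying this extra base case (or an equivalent workaround), your argument does not cover $k=1$, $q=2$, $d\ge 4$. A secondary, more minor point: for $k>1$ the coincidence of stabilizers does not come from the $\sum_i\omega_i=V$, $\bigcap_i\omega_i=0$ normalization you emphasize, but from the simpler observation that $\delta_1\cap\delta_2=U$ for any two distinct $\delta_1,\delta_2\in\Delta$, which already places $G_{\delta_1,\delta_2}$ inside $G_U=G_\Delta$; the normalization is what is needed in the $k=1$ reduction to $G_W$.
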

\begin{proof}
We first deal with the case $k=1$, this will set up the base case of an inductive argument.

When $d=3$, the proof follows from Lemma~\ref{psu:cased3}; therefore, for the rest of the argument, we suppose $d\ge 4$. Assume $q\ne 2$. Let $W$ be a non-degenerate $3$-dimensional subspace of $V$ and let 
$$\Delta=\{\omega\mid \omega\le W,\dim_{\mathbb{F}_{q^2}}(\omega)=1\hbox{ and }\omega \hbox{ singular}\}.$$
Let $G_{\Delta}$ be the stabilizer of $\Delta$ in $G$ and let $H$ be the permutation group induced by $G_\Delta$ on $\Delta$.

Observe that $G_{\Delta}=G_{W}$ because the span of the singular vectors of $V$ contained in $W$ is $W$ itself. Moreover, $H$ is an almost simple primitive group having socle $\mathrm{PSU}_3(q)$ acting on the singular $1$-dimensional subspaces of $W$. From Lemma~\ref{psu:cased3}, there exist two positive integers $\ell,\ell'\ge 2$ with $\ell\ne\ell'$ and there exist $\omega_1,\ldots,\omega_\ell$, $\omega_1',\ldots,\omega_{\ell'}'$ totally singular $1$-dimensional subspaces of $W$ such that
\begin{itemize}
\item $(\omega_1,\ldots,\omega_\ell)$ and $(\omega_1',\ldots,\omega_{\ell'}')$ are irredundant bases for the action of $H$ on $\Delta$,
\item $\omega_1+\cdots+\omega_\ell=W=\omega_{1}'+\cdots+\omega_{\ell'}'$.
\end{itemize}
Since $\omega_1+\cdots+\omega_\ell=W=\omega_{1}'+\cdots+\omega_{\ell'}'$, we deduce that $G_{\omega_1,\ldots,\omega_\ell}$ and $G_{\omega_1',\ldots,\omega_{\ell'}'}$ both stabilize $W$, that is, 
$G_{\omega_1,\ldots,\omega_\ell},G_{\omega_1',\ldots,\omega_{\ell'}'}\le G_W=G_\Delta$. From this, since $H_{\omega_1,\ldots,\omega_\ell}=H_{\omega_1',\ldots,\omega_{\ell'}'}=1$, we deduce
$$G_{\omega_1,\ldots,\omega_\ell}=G_{\omega_1',\ldots,\omega_{\ell'}'}.$$
As $\ell\ne\ell'$, Lemma~\ref{lemma:1} implies that $G$ is not IBIS.

Finally, assume $q=2$. Let $A=\mathrm{P}\Gamma\mathrm{U}_d(2)$. We first suppose $d=4$. We use the Hermitian form given by the matrix
$$\begin{pmatrix}
0&0&0&1\\
0&0&1&0\\
0&1&0&0\\
1&0&0&0
\end{pmatrix}.$$
Let $\omega_1=\langle e_1\rangle$, $\omega_2=\langle e_2\rangle$, $\omega_3=\langle e_3\rangle$ and $\omega_4=\langle e_4\rangle$. A computation shows that $(G_0)_{\omega_1,\omega_2,\omega_3,\omega_4}$ has order $3$ and is generated by
$$
\begin{bmatrix}
1&0&0&0\\
0&\alpha&0&0\\
0&0&\alpha^{2}&0\\
0&0&0&1
\end{bmatrix},$$
where $\alpha$ is a generator of $\mathbb{F}_4^\ast$. This shows that $G$ has an irredundant base $\omega_1,\ldots,\omega_\ell$, with $\ell= 5$ and with $V=\omega_1+\cdots+\omega_\ell$. Similarly, let $\omega_1'=\langle e_1\rangle$, $\omega_2'=\langle e_2\rangle$, $\omega_3'=\langle e_4\rangle$ and $\omega_4'=\langle e_1+e_2+\alpha e_3+\alpha e_4\rangle$. A computation shows that $A_{\omega_1',\omega_2',\omega_3'}=(G_0)_{\omega_1',\omega_2',\omega_3'}\rtimes\mathrm{Aut}(\mathbb{F}_4)$, where
$$
(G_0)_{\omega_1',\omega_2',\omega_3'}=
\left\{
\begin{bmatrix}
1&0&0&0\\
0&a&0&0\\
0&z&a^2&0\\
0&0&0&1
\end{bmatrix}\mid a,z\in\mathbb{F}_4,a\ne 0, az+a^2z^2=0
\right\}.$$
Let $g\in A_{\omega_1',\omega_2',\omega_3',\omega_4'}$. From the description of  $A_{\omega_1',\omega_2',\omega_3'}$, there exist $a,z\in\mathbb{F}_4$ (with $a\ne 0$ and $az+a^2z^2=0$) and an automorphism $\eta$ of $\mathbb{F}_4$ such that
$$\omega_4'=\langle e_1+(a+\alpha^\eta z)e_2+\alpha^\eta a^2e_3+\alpha^{\eta}e_4\rangle.$$This yields the linear system of equations
$$
\left\{
\begin{array}{rcl}
1&=&a+\alpha^\eta z,\\
\alpha&=&a^2\alpha^\eta ,\\
\alpha &=&\alpha^\eta .
\end{array}
\right.
$$
Using the fact that $\alpha$ is a generator of $\mathbb{F}_4^\ast$, this gives $a=1$, $z=0$ and $\eta=1$. 
This shows that $\omega_1',\omega_2',\omega_3',\omega_4'$ is an irredundant base for $G$  with $V=\omega_1'+\omega_2'+\omega_3'+\omega_4'$. This has established that $G$ is not IBIS. The case $d\ge 4$ can be deal with a similar argument as above (used for $q\ne 2$), by taking $W$ a non-degenerate $4$-dimensional subspace of $V$. This completes the case when $k=1$.

\medskip

Suppose that $k>1$. 
Let $U$ be a totally singular subspace of $V$ with $\dim_{\mathbb{F}_{q^2}}(U)=k-1$ and let
\begin{align*}
\Delta=\{W\mid U\le W, \mathrm{dim}_{\mathbb{F}_{q^2}}(W)=k \hbox{ and }W \hbox{ totally singular}\}.\end{align*}
Let $H$ be the permutation group induced by $G$ on $\Delta$.

Observe that $G_\Delta=G_{U}$, where $G_U$ is the setwise stabilizer of $U$ in $G$. The Hermitial form $\kappa$ defining the unitary group $G_0$ induces a form $\kappa'$ on the $\mathbb{F}_{q^2}$-vector space $U^\perp/U$. Observe that $\dim_{\mathbb{F}_{q^2}}(U^\perp/U)=d-2k+2$ and that the form induced on $U^\perp/U$ is non-degenerate and Hermitian.  Let $\bar V=U^\perp/U$ and consider $$\Omega=\{W'\le \bar V\mid\mathrm{dim}_{\mathrm{F}_{q_0}}(W')=1 \hbox{ and } W'\hbox{ is totally singular for }\kappa'\}.$$
Clearly, there exists a one-to-one correspondence between the elements of $\Delta$ and the elements of $\Omega$; moreover, this correspondence allows to identify the action of $H$ on $\Delta$ with the natural action on the $1$-dimensional totally singular subspaces of $\bar V$.

Suppose $d-2k+2\ge 3$ if $q\ge 3$  and $d-2k+2\ge 4$ when $q=2$.
From Lemma~\ref{psu:cased3}, the action of $H$ on $\Delta$ is not IBIS and hence there exist two irredundant sequences $(\delta_1,\ldots,\delta_{\ell})$ and $(\delta_1',\ldots,\delta_{\ell'}')$ of points of $\Delta$ such that $\ell,\ell'\ge 2$, $\ell\ne \ell'$ and
\begin{equation}\label{eq:nastyagain}
G_{\delta_1,\ldots,\delta_{\ell}}\cap G_{\Delta}=G_{\delta_1',\ldots,\delta_{\ell'}'}\cap G_\Delta.
\end{equation}
Since $\ell\ge 2$, $\delta_1$ and $\delta_2$ are two distinct $k$-dimensional subspaces of $V$ containing $U$ and hence 
$$\delta_1\cap \delta_2=U.$$
This shows that $G_{\delta_1,\delta_2}=G_{\delta_1}\cap G_{\delta_2}\le G_{U}=G_{\Delta}$ and hence $G_{\delta_1,\ldots,\delta_{\ell}}\le G_{\Delta}$. Since this argument applies also for $\ell'$, from~\eqref{eq:nastyagain}, we deduce 
$$G_{\delta_1,\ldots,\delta_{\ell}}=G_{\delta_1',\ldots,\delta_{\ell'}'}.$$
As $\ell\ne \ell'$, from Lemma~\ref{lemma:1}, we deduce that the action of $G$ on $\Omega_k$ is not IBIS. It remains to consider the case $d-2k+2\le 2$ (that is, $d=2k$), and $(d,q)=(2k+1,2)$.

Suppose $d=2k$, or $d=2k+1$ and $q=2$.  
Let $U$ be a totally singular subspace of $V$ with $\dim_{\mathbb{F}_{q^2}}(U)=k-2$ and let
\begin{align*}
\Delta=\{W\mid U\le W, \mathrm{dim}_{\mathbb{F}_{q^2}}(W)=k \hbox{ and }W \hbox{ totally singular}\}.\end{align*}
We may apply the argument above verbatim, with the old $U$ replaced by this new $U$. Observe that $\dim(U^\perp/U)=4$ when $d=2k$ and $\dim(U^\perp/U)=5$ when $(d,q)=(2k+1,2)$. We deduce, using Lemma~\ref{psu:cased=4k=2} when $d=2k$ and Lemma~\ref{psu:cased=4k=2q=2} when $(d,q)=(2k+1,2)$, that $G$ is not IBIS.
\end{proof}

\subsection{Symplectic groups}\label{psp}
Let $d\ge 4$ be an even positive integer, let $q=p^f$ be a prime power with $p$ a prime number, let $G_0=\mathrm{PSp}_d(q)$ and let $G$ be an almost simple group with socle $G_0$. We fix a basis $e_1,\ldots,e_d,f_1,\ldots,f_d$ of $V=\mathbb{F}_q^d$ such that the symplectic form defining $G_0$ satisfies $(e_i,e_j)=(f_i,f_j)=0$ and $(e_i,f_j)=1$ if and only $i=j$.  Let $\mu$ be a generator of the multiplicative group of the field $\mathbb{F}_q$ and let $\delta$ be the projective image of $\mathrm{diag}(\mu,\dots,\mu,1,\dots,1)$.
 In particular, $G_0\unlhd G\le \mathrm{Aut}(G_0)$  and 
$$
\mathrm{Aut}(G_0)=
\begin{cases}
\mathrm{Sp}_d(q) \rtimes \mathrm{Aut}(\mathbb{F}_q)& \text{ if } q \text{ even,} \\
(\langle\mathrm{PSp}_d(q), \delta \rangle) \rtimes \mathrm{Aut}(\mathbb{F}_q)&\text{ if } q \text{ odd,}
	\end{cases}
$$
see \cite[Section~2.4]{KL90}. In the following, we set $A = \mathrm{Aut}(\mathrm{PSp}_d(q))$.

For $1\le k\le d/2$, we let $\Omega_k$ be the collection of all $k$-dimensional totally singular subspaces of $V$. Observe that, since we are in a symplectic  space, every $1$-dimensional subspace of $V$ is singular, hence $\Omega_1$ is the collection of all $1$-dimensional subspaces.

\begin{lemma}\label{psp:cased4}
	When $d=4$ and $q\ne 2$, there exist two positive integers $\ell,\ell'\ge 2$ with $\ell\ne\ell'$ and there exist $\omega_1,\ldots,\omega_\ell$, $\omega_1',\ldots,\omega_{\ell'}'$ elements of $\Omega_1$ such that
	\begin{itemize}
\item $(\omega_1,\ldots,\omega_\ell)$ and $(\omega_1',\ldots,\omega_{\ell'}')$ are irredundant bases for the action of $G$ on $\Omega_1$,
		\item $\omega_1+\cdots+\omega_\ell=V=\omega_{1}'+\cdots+\omega_{\ell'}'$, $\omega_1\cap\cdots\cap\omega_\ell=0=\omega_1'\cap\cdots\cap\omega_{\ell'}'$.
	\end{itemize}
In particular, the action of $G$ on the set of the totally singular subspaces of dimension $1$ is not $\mathrm{IBIS}$.

When $d=4$ and $q=2$, $G$ is $\mathrm{IBIS}$.
\end{lemma}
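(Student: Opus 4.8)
The plan is to treat the cases $q\neq 2$ and $q=2$ separately. For $q\neq 2$ I will exhibit, for every $G$ with $G_0\le G\le A$, one irredundant base of length $5$ and one of length $4$, each spanning $V$ and with trivial intersection; since $G$ then admits irredundant bases of two different lengths, it is not $\mathrm{IBIS}$. The guiding principle, which lets a single construction work uniformly across all intermediate $G$, is to choose the points so that \emph{(i)} the full stabilizer in $A=\mathrm{Aut}(\mathrm{PSp}_4(q))$ is trivial, and \emph{(ii)} every strict inclusion in the stabilizer chain is witnessed by an element of the socle $G_0\le G$. Condition \emph{(i)} makes the sequence a base for $A$, hence for every $G\le A$; condition \emph{(ii)}, via Lemma~\ref{lemma:minus1}, makes the chain of $G$-stabilizers strictly decreasing. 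Together they certify that each sequence is an irredundant base for $G$, and then $\ell\neq\ell'$ gives the conclusion.

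For the base of length $5$ I take the four rational frame points $\omega_1=\langle e_1\rangle$, $\omega_2=\langle e_2\rangle$, $\omega_3=\langle f_1\rangle$, $\omega_4=\langle f_2\rangle$, followed by the generic point $\omega_5=\langle \mu e_1+e_2+f_1+f_2\rangle$, where $\mu$ is a generator of $\mathbb{F}_q^\ast$. A direct computation shows that $(G_0)_{\omega_1,\omega_2,\omega_3,\omega_4}$ is the diagonal torus $\{[\mathrm{diag}(a,b,a^{-1},b^{-1})]\mid a,b\in\mathbb{F}_q^\ast\}$; its stabilizers strictly decrease along the first four steps, and it is nontrivial precisely because $q>2$, so that the fifth step is a genuine drop. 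Fixing $\omega_5$ forces $a=b=\pm 1$, whence $(G_0)_{\omega_1,\dots,\omega_5}=1$, and this last drop is witnessed inside $G_0$. Finally, since $\mu$ generates $\mathbb{F}_q^\ast$ it lies in no proper subfield and hence in a regular $\mathrm{Aut}(\mathbb{F}_q)$-orbit; a short computation then shows that any diagonal or field automorphism in $A$ fixing the four frame points and $\omega_5$ must be scalar with trivial field part, so $A_{\omega_1,\dots,\omega_5}=1$. The five points span $V$ and have trivial intersection.

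For the base of length $4$ I take four points in \emph{general position} (independent and pairwise non-perpendicular), with one coordinate equal to $\mu$; concretely $\omega_1'=\langle e_1\rangle$, $\omega_2'=\langle f_1\rangle$, $\omega_3'=\langle e_1+f_1+e_2\rangle$, $\omega_4'=\langle e_1+f_1+\mu f_2\rangle$. The key elementary fact is that any similarity fixing four pairwise non-perpendicular independent points acts as a scalar: if it sends $v_i\mapsto\lambda_i v_i$ with multiplier $\nu$, then $\lambda_i\lambda_j=\nu$ for all $i\neq j$, forcing all $\lambda_i$ equal. Thus the four points already kill $\mathrm{PGSp}_4(q)$ projectively, while the regular-orbit coordinate $\mu$ of $\omega_4'$ eliminates the field automorphisms, giving $A_{\omega_1',\dots,\omega_4'}=1$ and $(G_0)_{\omega_1',\dots,\omega_4'}=1$. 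Moreover $(G_0)_{\omega_1',\omega_2',\omega_3'}$ is a nontrivial one-dimensional torus, so the final drop is again witnessed inside $G_0$, and the earlier inclusions are clearly strict. Hence this is an irredundant base of length $4$ for every such $G$, again spanning $V$ with trivial intersection; as $4\neq 5$, the group $G$ is not $\mathrm{IBIS}$.

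When $q=2$ there are no field automorphisms, so $A=\mathrm{Sp}_4(2)\cong\mathrm{Sym}(6)$ with $G_0=\mathrm{Sp}_4(2)'\cong\mathrm{Alt}(6)$ acting on the $15$ points of $\mathbb{P}(V)$; here the assertion is that $G$ is $\mathrm{IBIS}$, which I verify by direct inspection (this is the $\mathrm{Sp}_4(2)'\cong\mathrm{Alt}(6)$ entry of Table~\ref{table:table}, of base size $3$). The main obstacle throughout is exactly the uniformity over all $G$ with $G_0\le G\le A$: one must arrange each construction so that the \emph{last} strict inclusion is realised by socle elements while the top stabilizer in $A$ is already trivial, since otherwise a sequence that kills $A$ might fail to be irredundant for a $G$ containing no field automorphisms. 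This is what dictates the precise choice of the generic point $\omega_5$ and of the regular-orbit coordinate in $\omega_4'$, and what the stabilizer computations securing \emph{(i)} and \emph{(ii)} are designed to guarantee.
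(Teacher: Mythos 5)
Your proof is correct, but it takes a genuinely different route from the paper's. The paper exhibits an irredundant base of cardinality $6$ for the socle $G_0$ (namely $\langle e_1\rangle,\langle e_2\rangle,\langle f_1\rangle,\langle f_2\rangle,\langle e_1+e_2\rangle,\langle e_1+f_1\rangle$) together with an irredundant base of cardinality $5$ for the full group $A$, and then concludes via the sandwich Lemma~\ref{lemma:minusminus1}, delegating the case $q=3$ to a computer. You instead produce a length-$4$ and a length-$5$ sequence each of which is \emph{simultaneously} an irredundant base for every $G$ with $G_0\le G\le A$, by arranging that the $A$-stabilizer of the full sequence is trivial while every drop in the chain is witnessed inside the socle, so that Lemma~\ref{lemma:minus1} applies uniformly. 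This buys three things: it delivers directly the two irredundant bases \emph{for $G$} with the spanning and trivial-intersection properties that the statement (and its inductive use in Proposition~\ref{psp:cased>4}) actually demands, rather than only the ``not IBIS'' conclusion; it treats $q=3$ by the same argument with no machine computation; and the observation that four independent pairwise non-perpendicular points force a similarity to be scalar is an elementary, reusable device that replaces the paper's longer $6$-point chain. One cosmetic inaccuracy, which does not affect the argument: $(G_0)_{\omega_1',\omega_2',\omega_3'}$ is not a one-dimensional torus but a unipotent-by-$(\pm 1)$ group whose projective image has order $q$ (its elements fix $e_1,f_1$ up to sign and act on $\langle e_2,f_2\rangle$ by $e_2\mapsto \pm e_2$, $f_2\mapsto \pm f_2+\lambda e_2$); only its nontriviality is used, and that is correct for all $q$.
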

\begin{proof} 
When $d=4$ and $q=2$, the action of $G$ is IBIS, as proved in~\cite{LeSp}. Therefore, suppose $q\ne 2$.

	For $q=3$, the veracity of the lemma can be checked with the help of a computer. So we now suppose that $q>3$. 
	We recall that  we are using the symplectic form given by
	$$\begin{pmatrix}
		0&0&1&0\\
		0&0&0&1\\
		-1&0&0&0\\
		0&-1&0&0
	\end{pmatrix}$$
	and the base $e_1,e_2,f_1,f_2$ of $V=\mathbb{F}_q^4$.
	
	We give an irredundant base of cardinality $6$ for $G_0=\mathrm{PSp}_4(q)$ and an irredundant base of cardinality $5$ for $A=\mathrm{Aut}(G_0)$. From this, the result follows immediately from Lemma~\ref{lemma:minusminus1}.  Firstly, consider $\omega_1 = \langle e_1 \rangle$, $\omega_2 = \langle e_2 \rangle$, $\omega_3 = \langle f_1 \rangle$, $\omega_4 = \langle f_2 \rangle$, $\omega_5 = \langle e_1+e_2 \rangle$, $\omega_6 = \langle e_1 + f_1 \rangle$. A computation shows that 
	$$
	(G_0)_{\omega_1,\omega_2,\omega_3,\omega_4}=
	\left\{
	\begin{bmatrix}
		a&0&0&0\\
		0&b&0&0\\
		0&0&a^{-1}&0\\
		0&0&0&b^{-1}
	\end{bmatrix}\mid a,b\in\mathbb{F}_q \setminus \{0\}
	\right\}.$$
	Now an element $g \in (G_0)_{\omega_1,\omega_2,\omega_3,\omega_4}$ fixes $\omega_5$ if and only if $ae_1+be_2 = \alpha(e_1+e_2)$ for some $\alpha \in \mathbb{F}_q$. This implies that $a=b$.\\
	Finally, an element $g \in (G_0)_{\omega_1,\omega_2,\omega_3,\omega_4,\omega_5}$ fixes $\omega_6$ if and only if $ae_1+a^{-1}f_1 = \alpha(e_1+f_1)$  for some $\alpha \in \mathbb{F}_q$. This yields $a=a^{-1}$, that is, $a=\pm 1$. In conclusion, $\omega_1,\dots,\omega_6$ is an irredundant base of cardinality $6$ for $G_0$. Moreover, the sum of these subspaces is equal to $V$ and their intersection is equal to $0$. This implies that there exists $\ell \geq 6$ and an irredundant base $(\omega_1,\dots,\omega_\ell)$ for $G$ with $\omega_1+\cdots + \omega_\ell = V$ and $\omega_1\cap\cdots\cap\omega_\ell=0$.
	
	We now take in account $A$. Let $\alpha$ be a generator of the multiplicative group of the field $\mathbb{F}_q$. Take $\omega_1' = \omega_1, \dots, \omega_4' = \omega_4$ and $\omega_5' = \langle \alpha e_1 + e_2 + \alpha f_1 + f_2 \rangle$.  We deal simultaneously with the case $q$ odd and $q$ even, by defining $\delta=1$ when $q$ is even. A computation shows that
	\[
		A_{\omega_1',\dots,\omega_4'} = \left \{ \begin{bmatrix}
			a&0&0&0\\
			0&b&0&0\\
			0&0&a^{-1}&0\\
			0&0&0&b^{-1}
		\end{bmatrix}\mid a,b\in\mathbb{F}_q \setminus \{0\}\right\}\rtimes (\langle\delta\rangle\times \mathrm{Aut}(\mathbb{F}_q)).
	\]
Let $g\in A_{\omega_1',\ldots,\omega_4'}$ be an element fixing $\omega_5'$. Then, from the description of $A_{\omega_1',\ldots,\omega_4'}$ above, we see that there exist $a,b\in\mathbb{F}_q\setminus\{0\}$, $\varepsilon\in \{0,1\}$ and $\eta\in\mathrm{Aut}(\mathbb{F}_q)$ such that
	\[
\langle \alpha e_1+e_2+\alpha f_1+f_2\rangle=\omega_5'=\omega_5'^g=\langle \mu^\varepsilon\alpha^\eta a e_1+ \mu^\varepsilon b e_2+\alpha^\eta a^{-1}f_1+b^{-1}f_2\rangle.\]
We obtain the following system
	\[\left\{
		\begin{array}{ccc}
			\alpha c &=& \mu^\varepsilon  \alpha^\eta a \\
			c &=& \mu^\varepsilon b \\
			\alpha c &=& \alpha^\eta a^{-1} \\
			c &=& b^{-1},
		\end{array}\right.
	\]
for some $c\in\mathbb{F}_q\setminus\{0\}$. The second and the forth equations give $\mu^\varepsilon=b^2$ and this implies $\varepsilon=0$, because $\mu$ is a generator of the multiplicative group $\mathbb{F}_q$.	Therefore, $b^2=1$ and hence $c=b\in \{1,-1\}$. The first and the third equations give $a=a^{-1}$ and hence $a\in \{1,-1\}$. Now, the first equation gives $ac\alpha=\alpha^\eta $ and hence $\alpha^2=(\alpha^2)^{\eta}$. Therefore, $\alpha^2$ is invariant under $\eta$. Suppose, arguing by contradiction, that $\eta\ne 1$. Then the set of elements of $\mathbb{F}_q$ invariant under $\eta$ forms a proper subfield $\mathbb{F}$ of $\mathbb{F}_q$, with $|\mathbb{F}|\le \sqrt{q}$. On the other hand, since $\alpha$ is a generator of the multiplicative group of $\mathbb{F}_q$ and since $\alpha^2\in\mathbb{F}$, we get $|\mathbb{F}|\ge (q+1)/2$. However, the inequality $\sqrt{q}\ge (q+1)/2$ is not satisfied. This contradiction has shown that $\eta=1$. Now, the linear system above gives $a=b=c\in \{1,-1\}$ and hence $ A_{\omega_1',\ldots,\omega_4',\omega_5'}=1$.
\end{proof}

We deal with the case $d=4$ and $k=2$. This is very similar to the proof of Lemma~\ref{psu:cased=4k=2}.

\begin{lemma}\label{psp:cased=4k=2}
Let $d=4$. When $q\ne 2$, there exist two positive integers $\ell,\ell'\ge 2$ with $\ell\ne \ell'$ and there exist $\omega_1,\ldots,\omega_\ell,\omega_1',\ldots,\omega_{\ell'}'$ elements of $\Omega_2$ such that
\begin{itemize}
\item $(\omega_1,\ldots,\omega_\ell)$ and $(\omega_1',\ldots,\omega_{\ell'}')$ are irredundant bases for the action of $G$ on $\Omega_2$,
		\item $\omega_1+\cdots+\omega_\ell=V=\omega_{1}'+\cdots+\omega_{\ell'}'$, $\omega_1\cap\cdots\cap\omega_\ell=0=\omega_1'\cap\cdots\cap\omega_{\ell'}'$.
\end{itemize}
In particular, $G$ is not $\mathrm{IBIS}$ and $b(G)\ge 2$. When $q=2$, the action of $G$ on the totally singular $2$-dimensional subspaces of $V$ is $\mathrm{IBIS}$ and $b(G)\ge 2$.
\end{lemma}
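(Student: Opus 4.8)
The plan is to follow the blueprint of Lemma~\ref{psu:cased=4k=2}: I would exhibit an irredundant base of length $6$ for the socle $G_0=\mathrm{PSp}_4(q)$ and an irredundant base of length $5$ for $A=\mathrm{Aut}(G_0)$, both beginning with a common transverse pair of Lagrangians so that the sum and intersection conditions hold automatically, and then conclude with Lemma~\ref{lemma:1} that every $G$ with $G_0\le G\le A$ fails to be IBIS. The exceptional small cases come first: when $q=2$ we have $G_0\cong\mathrm{Alt}(6)$ acting on the $15$ totally singular lines, which is IBIS and is recorded in Table~\ref{table:table} (see~\cite{LeSp}), while $q=3$ I would dispatch by a direct machine computation, exactly as is done for $q=3$ in Lemma~\ref{psp:cased4}. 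So from now on assume $q>3$.

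For the socle I would take the totally singular $2$-spaces $\omega_1=\langle e_1,e_2\rangle$, $\omega_2=\langle f_1,f_2\rangle$, $\omega_3=\langle e_1,f_2\rangle$, $\omega_4=\langle e_2,f_1\rangle$, $\omega_5=\langle e_1+e_2,f_1-f_2\rangle$ and $\omega_6=\langle e_1+f_1,e_2+f_2\rangle$. Stabilising the transverse pair $\omega_1,\omega_2$ leaves a copy of $\mathrm{GL}_2(q)$ acting as $\mathrm{diag}(X,(X^{T})^{-1})$, and imposing $\omega_3,\omega_4$ forces $X$ diagonal, so that $(G_0)_{\omega_1,\omega_2,\omega_3,\omega_4}=\{[\mathrm{diag}(a,b,a^{-1},b^{-1})]\mid a,b\in\mathbb{F}_q\setminus\{0\}\}$. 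A direct check then shows that $\omega_5$ forces $a=b$ and $\omega_6$ forces $a^2=1$, whence $(G_0)_{\omega_1,\ldots,\omega_6}=1$; since $q>3$ the intermediate stabilisers are pairwise distinct, so $(\omega_1,\ldots,\omega_6)$ is an irredundant base for $G_0$ of length $6$. By Lemma~\ref{lemma:minus1} it is irredundant for $G$, and I would extend it (adding further singular $2$-spaces) to an irredundant base of $G$ of length $\ell\ge 6$. As $\omega_1\oplus\omega_2=V$, this base satisfies $\sum_i\omega_i=V$ and $\bigcap_i\omega_i=0$.

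For $A$ I keep $\omega_1,\ldots,\omega_4$ and compute $A_{\omega_1,\omega_2,\omega_3,\omega_4}=\{[\mathrm{diag}(a,b,a^{-1},b^{-1})]\}\rtimes(\langle\delta\rangle\times\mathrm{Aut}(\mathbb{F}_q))$, where $\delta=[\mathrm{diag}(\mu,\mu,1,1)]$ and $\mu$ is a primitive root of $\mathbb{F}_q$. Choosing $\alpha$ a generator of $\mathbb{F}_q\setminus\{0\}$, I set $\omega_5'=\langle e_1+f_1+\alpha f_2,\ e_2+\alpha f_1+f_2\rangle$, which is totally singular because its coordinate block on $\langle f_1,f_2\rangle$ is symmetric. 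Writing a general element of $A_{\omega_1,\ldots,\omega_4}$ as $v\mapsto v^{\eta}\,\mathrm{diag}(\mu^{\varepsilon}a,\mu^{\varepsilon}b,a^{-1},b^{-1})$, the requirement that $\omega_5'$ be fixed yields the three relations $\mu^{\varepsilon}a^2=1$, $\mu^{\varepsilon}b^2=1$ and $\alpha^{\eta}=\alpha\mu^{\varepsilon}ab$. Turning these into $g=1$ is the step I expect to be the main obstacle, and it is handled exactly as in Lemma~\ref{psp:cased4}. First, $\mu^{\varepsilon}a^2=1$ forces $\varepsilon=0$: when $q$ is odd the primitive root $\mu$ is a non-square and cannot equal $a^{-2}$ (when $q$ is even $\delta=1$ and there is nothing to prove). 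Hence $a^2=b^2=1$ and $\alpha^{\eta}=\pm\alpha$, so $\alpha^2$ is fixed by $\eta$; if $\eta\ne1$ its fixed field $\mathbb{F}$ is proper, so $|\mathbb{F}|\le\sqrt q$, while $\alpha^2\in\mathbb{F}$ gives $|\mathbb{F}|\ge(q+1)/2$, contradicting $q>3$. Thus $\eta=1$, the last relation gives $ab=1$ and so $a=b$, making $\mathrm{diag}(a,b,a^{-1},b^{-1})$ scalar; therefore $A_{\omega_1,\ldots,\omega_4,\omega_5'}=1$.

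Consequently $(\omega_1,\ldots,\omega_4,\omega_5')$ is a base for $A$, hence for $G$, and it is irredundant for $G$: its first four terms already form an irredundant sequence for $G_0$ (Lemma~\ref{lemma:minus1}), while $G_{\omega_1,\ldots,\omega_4}$ contains the nontrivial torus and $G_{\omega_1,\ldots,\omega_4,\omega_5'}=1$. This base has length $5$ and again satisfies $\sum_i\omega_i=V$ and $\bigcap_i\omega_i=0$. Having produced irredundant bases of $G$ of the two different lengths $\ell\ge 6$ and $5$, both with trivial pointwise stabiliser, Lemma~\ref{lemma:1} shows that $G$ is not IBIS; the bound $b(G)\ge2$ is immediate, since the stabiliser of a single totally singular line is nontrivial.
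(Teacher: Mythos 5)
Your proposal is correct and follows essentially the same strategy as the paper's proof: treat $q=2$ via Table~\ref{table:table} and $q=3$ by computer, then for $q>3$ produce an irredundant base of length $6$ for $G_0=\mathrm{PSp}_4(q)$ and one of length $5$ for $A=\mathrm{Aut}(G_0)$ starting from the same transverse pair $\omega_1,\omega_2$. The only (harmless) deviations are your choice of the fifth point for $A$ — you keep $\omega_3,\omega_4$ and adjoin a single Lagrangian graph of a symmetric matrix, imitating Lemma~\ref{psu:cased=4k=2}, whereas the paper switches to $\omega_3',\omega_4'$ and then uses a regular orbit of $\mathrm{Aut}(\mathbb{F}_q)$ — and your use of Lemma~\ref{lemma:1} on two irredundant bases of $G$ itself in place of the sandwich argument of Lemma~\ref{lemma:minusminus1}; both of your computations check out.
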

\begin{proof}
We use the symplectic form given by the matrix
$$\begin{pmatrix}
0&0&1&0\\
0&0&0&1\\
-1&0&0&0\\
0&-1&0&0
\end{pmatrix}$$
and we use the basis $e_1,e_2,f_1,f_2$ of $V=\mathbb{F}_{q}^4$. Let $G_0$ be the socle of $G$ and assume, for the time being, that $q\notin\{ 2,3\}$.

Let $\omega_1=\langle e_1,e_2\rangle$, $\omega_2=\langle f_1,f_2\rangle$, $\omega_3=\langle e_1,f_2\rangle$, $\omega_4=\langle e_2,f_1\rangle$, $\omega_5=\langle e_1+e_2,f_1-f_2\rangle$ and $\omega_6=\langle e_1+f_2,e_2+f_1\rangle$. It is readily seen that $\omega_1,\ldots,\omega_6$ are $2$-dimensional totally singular subspaces of $V$. An easy computation shows that
$$(G_0)_{\omega_1,\omega_2,\omega_3,\omega_4}=
\left\{
\left[
\begin{array}{cccc}
a&0&0&0\\
0&b&0&0\\
0&0&a^{-1}&0\\
0&0&0&b^{-1}
\end{array}
\right]\mid a,b\in \mathbb{F}_{q}\setminus\{0\}
\right\}.$$
In particular, $|(G_0)_{\omega_1,\omega_2,\omega_3,\omega_4}|=(q-1)^2/\gcd(2,q-1)\ne 1$, because $q\ne 2$. We deduce$$(G_0)_{\omega_1,\omega_2,\omega_3,\omega_4,\omega_5}=
\left\{
\left[
\begin{array}{cccc}
a&0&0&0\\
0&a&0&0\\
0&0&a^{-1}&0\\
0&0&0&a^{-1}
\end{array}
\right]\mid a\in \mathbb{F}_{q}\setminus\{0\}
\right\}.$$
From this it follows that
$
|(G_0)_{\omega_1,\omega_2,\omega_3,\omega_4,\omega_5}|=(q-1)/\gcd(2,q-1)\ne 1$, because $q\ne 3$.
This show that $\omega_1,\ldots,\omega_6$ is an irredundant base for $G_0$ of cardinality $6$.

The argument above shows that
$$A_{\omega_1,\omega_2}=
\left(\left\{
\left[
\begin{array}{cc}
X&0\\
0&(X^{-1})^{tr}
\end{array}
\right]\mid X\in \mathrm{GL}_{2}(q)
\right\}\rtimes \langle\delta\rangle\right)\rtimes \mathrm{Aut}(\mathbb{F}_{q^2}).$$
Here by abuse of notation we are including the case $q$ even by setting $\delta=1$. Let $\omega_3'=\langle e_1,e_2+f_2\rangle$ and observe that $\omega_3'$ is a totally singular $2$-dimensional subspace of $V$. Using the description of $A_{\omega_1,\omega_2}$ and using the fact that $\delta$ is the projective image of $\mathrm{diag}(\mu,\mu,1,1)$ with $\mu$ a generator of the multiplicative group of the field $\mathbb{F}_q$, it follows that
$$A_{\omega_1,\omega_2,\omega_3'}=
\left\{
\left[
\begin{array}{cccc}
a&0&0&0\\
0&1&0&0\\
0&0&a^{-1}&0\\
0&0&0&1
\end{array}
\right]\mid a\in \mathbb{F}_{q}\setminus\{0\}
\right\}\rtimes\mathrm{Aut}(\mathbb{F}_q).$$
Let $\omega_4'=\langle e_1+e_2,f_1+f_2\rangle$ and observe that $\omega_4'$ is a totally singular $2$-dimensional subspace of $V$. Clearly,
$$A_{\omega_1,\omega_2,\omega_3',\omega_4'}=\mathrm{Aut}(\mathbb{F}_q).$$
Since $\mathrm{Aut}(\mathbb{F}_q)$ admits a regular orbit on its action on $\mathbb{F}_q$, the irredundant chain $\omega_1,\omega_2,\omega_3',\omega_4'$ can be extended to an irredundant base of cardinality $5$.
This implies that $G$ is not IBIS when $q\ge 4$. 

Suppose $q=2$. Here, $G=\mathrm{Sp}_4(2)'\cong\mathrm{Alt}(6)$ or $G=\mathrm{Sp}_4(2)\cong\mathrm{Sym}(6)$, and $G$ has degree $15$. We have verified with a computer that in both cases the group is IBIS, indeed, this example is in Table~\ref{table:table}.

Suppose $q=3$. Here, $G=\mathrm{PSp}_4(3)$ or $G=\mathrm{PSp}_4(3).2$, and the degree is $40$. We have verified with a computer that in both cases the group is not IBIS and it admits bases of different cardinality satisfying the additional conditions in the statement of the lemma.
\end{proof}

\begin{lemma}\label{psp:cased=6k=2}
Let $d=6$ and $q=2$ and let $k\in \{2,3\}$. There exist two positive integers $\ell,\ell'\ge 2$ with $\ell\ne \ell'$ and there exist $\omega_1,\ldots,\omega_\ell,\omega_1',\ldots,\omega_{\ell'}'$ elements of $\Omega_k$ such that
\begin{itemize}
\item $(\omega_1,\ldots,\omega_\ell)$ and $(\omega_1',\ldots,\omega_{\ell'}')$ are irredundant bases for the action of $G$ on $\Omega_k$,
		\item $\omega_1+\cdots+\omega_\ell=V=\omega_{1}'+\cdots+\omega_{\ell'}'$, $\omega_1\cap\cdots\cap\omega_\ell=0=\omega_1'\cap\cdots\cap\omega_{\ell'}'$.
\end{itemize}
In particular, $G$ is not $\mathrm{IBIS}$ on $\Omega_k$ and $b(G)\ge 2$.
\end{lemma}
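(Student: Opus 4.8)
Since $q=2$ the field $\mathbb{F}_2$ has no nontrivial automorphisms and $\mathrm{PSp}_6(2)=\mathrm{Sp}_6(2)$ has trivial centre and no graph automorphism; hence $G=\mathrm{Sp}_6(2)$ is the only possibility, a group of manageable order $2^9\cdot 3^4\cdot 5\cdot 7$. The plan is to produce, for each $k\in\{2,3\}$ and working in the hyperbolic basis $e_1,e_2,e_3,f_1,f_2,f_3$, two irredundant bases of totally singular $k$-subspaces of different lengths $\ell\ne\ell'$, each summing to $V$ and meeting in $0$. Since an irredundant base is by definition an irredundant sequence with trivial pointwise stabiliser, two such bases of different lengths contradict the IBIS property directly (equivalently, apply Lemma~\ref{lemma:1}, the two terminal stabilisers both being trivial). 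The span and intersection conditions are the extra data that will be needed to use this statement as the base case of the induction on $d$ for even $q$, exactly as Lemmas~\ref{psu:cased3},~\ref{psu:cased=4k=2} and~\ref{psu:cased=4k=2q=2} were used inside Proposition~\ref{psu:cased>3}.

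For $k=3$ (Lagrangians) a convenient starting pair is $\langle e_1,e_2,e_3\rangle$, $\langle f_1,f_2,f_3\rangle$, which already span $V$ and meet trivially, with joint stabiliser the Levi subgroup $\{A\oplus(A^{-1})^{\mathrm{tr}}\mid A\in\mathrm{GL}_3(2)\}$ of order $168$. Adjoining the diagonal Lagrangian $\langle e_1+f_1,e_2+f_2,e_3+f_3\rangle$ forces $A=(A^{-1})^{\mathrm{tr}}$, i.e. $AA^{\mathrm{tr}}=I$, cutting this to the orthogonal-type subgroup $\mathrm{O}_3(2)$ of order $6$, and one then appends a few further Lagrangians until the stabiliser is trivial; building one shorter chain that reaches the identity quickly and one longer chain that reduces the stabiliser in smaller steps yields two irredundant bases of distinct lengths $\ell\ne\ell'$. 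For $k=2$ I would carry out the same bookkeeping on totally singular $2$-spaces, starting from $\langle e_1,e_2\rangle,\langle f_1,f_2\rangle$ and following the successive stabilisers inside the corresponding parabolic; the totally singular condition $(u,v)=0$ restricts the admissible subspaces but the computation is of the same finite character.

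The main obstacle is that, unlike the cases $q\ge 4$ and the odd cases treated in Lemmas~\ref{psp:cased4} and~\ref{psp:cased=4k=2}, here there is no outer automorphism to exploit: $\mathrm{Aut}(\mathrm{PSp}_6(2))$ coincides with $G_0=\mathrm{Sp}_6(2)$, so the length discrepancy cannot be manufactured by comparing bases of $G_0$ with bases of a strictly larger overgroup via Lemma~\ref{lemma:minusminus1}. The two bases of different lengths must therefore both live inside the single group $\mathrm{Sp}_6(2)$, and over $\mathbb{F}_2$ there is essentially no freedom in the choice of vectors, so the generic device of selecting a primitive element with a regular Galois orbit is unavailable. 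Consequently the chains have to be pinned down explicitly and their stabiliser orders checked one step at a time; since everything reduces to a finite verification of irredundancy, triviality of the terminal stabiliser, and the span/intersection conditions, the cleanest route is a direct, if necessary computer-assisted, inspection in $\mathrm{Sp}_6(2)$, exactly as for the unitary analogue in Lemma~\ref{psu:cased=4k=2q=2}.
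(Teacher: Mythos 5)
Your proposal is correct and ultimately takes the same route as the paper: the paper's entire proof of this lemma is a single sentence appealing to a computer computation in $\mathrm{Sp}_6(2)$, and your argument, after correctly noting that $G=\mathrm{Sp}_6(2)$ admits no proper overgroup to exploit via Lemma~\ref{lemma:minusminus1} and sketching the first steps of a stabiliser chain for $k=3$, likewise concludes with a direct, computer-assisted finite verification. The partial hand-computation (Levi subgroup of order $168$, cutting to order $6$ via the diagonal Lagrangian) is accurate but is not carried to completion, so the burden of exhibiting the two bases of distinct lengths with the span and intersection conditions still rests on the machine check, exactly as in the paper.
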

\begin{proof}
This follows with a computer computation.
\end{proof}

\begin{proposition}\label{psp:cased>4}
	When $(q,k)\ne (2,1)$ and when $(d,q,k)\ne (4,2,2)$, the action of $G$ on $\Omega_k$ is not $\mathrm{IBIS}$ and $b(G)\ge 2$. When $(q,k)=(2,1)$ and when $(d,q,k)=(4,2,2)$, the action of $G$ on $\Omega_\kappa$ is $\mathrm{IBIS}$.
\end{proposition}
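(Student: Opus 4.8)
The plan is to argue by induction on the dimension $d$, reducing every configuration to one of the small cases already settled in Lemmas~\ref{psp:cased4},~\ref{psp:cased=4k=2} and~\ref{psp:cased=6k=2}. The two IBIS conclusions are immediate: when $(d,q,k)=(4,2,2)$ this is Lemma~\ref{psp:cased=4k=2}, and when $(q,k)=(2,1)$ the group is $\mathrm{Sp}_d(2)$ acting on the non-zero vectors of $\mathbb{F}_2^d$, which is IBIS by~\cite{LeSp} (the family recorded in Table~\ref{table:table}). The bound $b(G)\ge 2$ is clear, since the stabilizer of a single totally singular subspace is non-trivial. Thus the substance is the ``not IBIS'' assertion in all remaining cases, and throughout I will use the following elementary observation, which drives every lifting step: if an almost simple group with symplectic socle acts on totally singular subspaces, then every irredundant base spans the whole underlying space, because the pointwise stabilizer of a family of subspaces summing to a proper subspace $W_0\subsetneq V$ is non-trivial (it contains a non-identity isometry supported away from $W_0$).

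For the inductive step I use two reductions. The first, applied when $k\ge 2$, takes a totally singular $1$-space $U$ and sets $\Delta=\{W\mid U\le W,\ \dim W=k,\ W\text{ totally singular}\}$. As in the proof of Proposition~\ref{psu:cased>3}, one checks that $G_\Delta=G_U$, that the form induces a non-degenerate symplectic form on $\bar V=U^\perp/U$ of dimension $d-2$, and that the group $H$ induced by $G_U$ on $\Delta$ is an almost simple group with socle $\mathrm{PSp}_{d-2}(q)$ in its natural action on the totally singular $(k-1)$-subspaces of $\bar V$. Provided the triple $(d-2,q,k-1)$ is not one of the two IBIS exceptions, the inductive hypothesis supplies two irredundant bases of $H$ on $\Delta$ of different lengths; by the spanning observation these sum to $U^\perp$, so their $G$-stabilizers fix $U^\perp$ and hence $U=(U^\perp)^\perp$, lie in $G_U=G_\Delta$, and both equal the pointwise stabilizer $G_{(\Delta)}$. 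Lemma~\ref{lemma:1} then yields that $G$ is not IBIS. This disposes of every case with $k\ge 3$ and of $k=2$ with $q\ne 2$; the restriction is merely $d-2\ge 4$, which holds once $d\ge 6$, the values $d=4$ and $(q,d,k)=(2,6,2),(2,6,3)$ being the base cases furnished by Lemmas~\ref{psp:cased4},~\ref{psp:cased=4k=2} and~\ref{psp:cased=6k=2}.

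The second reduction handles the configurations the first cannot reach, namely $k=1$ and the pair $(k,q)=(2,2)$. Here I take a non-degenerate symplectic subspace $W\le V$ and let $\Delta$ be the totally singular $k$-subspaces contained in $W$; since totally singular subspaces span $W$ we again get $G_\Delta=G_W$, and $H$ has socle $\mathrm{PSp}_{\dim W}(q)$ acting on the totally singular $k$-subspaces of $W$. For $k=1$ and $q\ne 2$ I take $\dim W=4$ and invoke Lemma~\ref{psp:cased4}; for $(k,q)=(2,2)$ and $d\ge 8$ I take $\dim W=6$ and invoke Lemma~\ref{psp:cased=6k=2}. In each case the relevant lemma provides two irredundant bases of $H$ of different lengths whose sum is $W$; lifting exactly as before (the $G$-stabilizers lie in $G_W=G_\Delta$ and coincide with $G_{(\Delta)}$) and applying Lemma~\ref{lemma:1} shows that $G$ is not IBIS. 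The remaining case $k=1$, $q=2$ is precisely the excluded IBIS family.

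The delicate point, and the reason the symplectic analysis departs from the unitary one of Proposition~\ref{psu:cased>3}, is the behaviour over $\mathbb{F}_2$: there the natural action on $1$-spaces is the action on non-zero vectors and is IBIS, so the first reduction applied to a $k=2$ group over $\mathbb{F}_2$ would land on the IBIS action of $\mathrm{Sp}_{d-2}(2)$ on vectors and give no information. This is exactly why that case is diverted to the second reduction, where it is pushed down instead to $\mathrm{Sp}_6(2)$ acting on totally singular $2$-subspaces. Accordingly, the two things that must be verified with care are: (i) the identifications $G_\Delta=G_U$ and $G_\Delta=G_W$, together with the fact that $H$ is genuinely an almost simple group with the stated socle in its natural subspace action, so that both the inductive hypothesis and the small-dimensional lemmas apply; and (ii) the bookkeeping of the two $\mathbb{F}_2$ exceptions, ensuring that no reduction is ever carried out with an IBIS triple as its target. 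Once these are secured, the spanning observation renders every lifting step automatic, and I expect (i) together with the $\mathbb{F}_2$ case distinction to be the only genuine obstacle.
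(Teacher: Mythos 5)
Your proposal is correct and follows essentially the same strategy as the paper: the same two geometric reductions (quotienting by a totally singular subspace to pass to $U^\perp/U$, and restricting to a non-degenerate subspace $W$ with $G_\Delta=G_W$), the same small-dimensional inputs (Lemmas~\ref{psp:cased4},~\ref{psp:cased=4k=2},~\ref{psp:cased=6k=2}), and the same lifting of two unequal-length irredundant bases via Lemma~\ref{lemma:1}. The only difference is organizational: you iterate the quotient reduction one totally singular line at a time, inducting $(d,q,k)\to(d-2,q,k-1)$, whereas the paper quotients by a totally singular subspace of dimension $k-1$ (or $k-2$, $k-3$ in the exceptional $d=2k$ and $q=2$ configurations) in a single step; your bookkeeping of the $\mathbb{F}_2$ exceptions correctly avoids ever targeting an IBIS triple, so the argument goes through.
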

\begin{proof}
We first deal with the case $k=1$, this will set up the base case of an inductive argument.

	When $d=4$, the proof follows from Lemma~\ref{psp:cased4}; therefore, for the rest of the argument, we suppose $d> 4$. Assume $q>2$. Let $W$ be a non-degenerate $4$-dimensional subspace of $V$ and let 
	$$\Delta=\{\omega\mid \omega\le W,\dim_{\mathbb{F}_{q}}(\omega)=1\hbox{ and }\omega \hbox{ singular}\}.$$
	Let $G_{\Delta}$ be the stabilizer of $\Delta$ in $G$ and let $H$ be the permutation group induced by $G_\Delta$ on $\Delta$.
	
	Observe that $G_{\Delta}=G_{W}$ because the span of the singular vectors of $V$ contained in $W$ is $W$ itself. Moreover, $H$ is an almost simple primitive group having socle $\mathrm{PSp}_4(q)$ acting on the totally singular $1$-dimensional subspaces of $W$. From Lemma~\ref{psp:cased4}, there exist two positive integers $\ell,\ell'\ge 2$ with $\ell\ne\ell'$ and $\omega_1,\ldots,\omega_\ell$, $\omega_1',\ldots,\omega_{\ell'}'$ totally singular $1$-dimensional subspaces of $V$ such that
	\begin{itemize}
		\item $\omega_1+\cdots+\omega_\ell=W=\omega_{1}'+\cdots+\omega_{\ell'}'$,
		\item $H_{\omega_1,\ldots,\omega_\ell}=H_{\omega_1',\ldots,\omega_{\ell'}'}$.
	\end{itemize}
	Since $\omega_1+\cdots+\omega_\ell=W=\omega_{1}'+\cdots+\omega_{\ell'}'$, we deduce 
	$G_{\omega_1,\ldots,\omega_\ell},G_{\omega_1',\ldots,\omega_{\ell'}'}\le G_W=G_\Delta$. Now, since $H_{\omega_1,\ldots,\omega_\ell}=H_{\omega_1',\ldots,\omega_{\ell'}'}$, we deduce
	$$G_{\omega_1,\ldots,\omega_\ell}=G_{\omega_1',\ldots,\omega_{\ell'}'}.$$
	As $\ell\ne\ell'$, Lemma~\ref{lemma:1} implies that $G$ is not IBIS.

Assume now $q=2$. Since each element of $V$ is totally singular, the action of $G=\mathrm{Sp}_d(2)$ on the totally singular $1$-dimensional subspaces of $V$ is the natural action of $G$ on the non-zero vectors of $V$. This action is IBIS, see~\cite{LeSp}, and is reported in Table~\ref{table:table}. This completes the case $k=1$.

\medskip

Suppose that $k>1$. 
Let $U$ be a totally singular subspace of $V$ with $\dim_{\mathbb{F}_{q}}(U)=k-1$ and let
\begin{align*}
\Delta=\{W\mid U\le W, \mathrm{dim}_{\mathbb{F}_{q}}(W)=k \hbox{ and }W \hbox{ totally singular}\}.\end{align*}
Let $H$ be the permutation group induced by $G$ on $\Delta$.

Observe that $G_\Delta=G_{U}$, where $G_U$ is the setwise stabilizer of $U$ in $G$. The symplectic form $\kappa$ defining the symplectic group $G_0$ induces a form $\kappa'$ on the $\mathbb{F}_{q}$-vector space $U^\perp/U$. Observe that $\dim_{\mathbb{F}_{q}}(U^\perp/U)=d-2k+2$ and that the form induced on $U^\perp/U$ is non-degenerate and symplectic.  Let $\bar V=U^\perp/U$ and consider $$\Omega=\{W'\le \bar V\mid\mathrm{dim}_{\mathbb{F}_{q}}(W')=1 \hbox{ and } W'\hbox{ is totally singular for }\kappa'\}.$$
Clearly, there exists a one-to-one correspondence between the elements of $\Delta$ and the elements of $\Omega$; moreover, this correspondence allows to identify the action of $H$ on $\Delta$ with the natural action on the $1$-dimensional totally singular subspaces of $\bar V$.

Suppose $d-2k+2\ge 4$ and $q\ne 2$.
From Lemma~\ref{psp:cased4}, the action of $H$ on $\Delta$ is not IBIS and hence there exist two irredundant chains $(\delta_1,\ldots,\delta_{\ell})$ and $(\delta_1',\ldots,\delta_{\ell'}')$ of points of $\Delta$ such that $\ell,\ell'\ge 2$, $\ell\ne \ell'$ and
\begin{equation}\label{eq:nasty1}
G_{\delta_1,\ldots,\delta_{\ell}}\cap G_{\Delta}=G_{\delta_1',\ldots,\delta_{\ell'}'}\cap G_\Delta.
\end{equation}
Since $\ell\ge 2$, $\delta_1$ and $\delta_2$ are two distinct $k$-dimensional subspaces of $V$ containing $U$ and hence 
$$\delta_1\cap \delta_2=U.$$
This shows that $G_{\delta_1,\delta_2}=G_{\delta_1}\cap G_{\delta_2}\le G_{U}=G_{\Delta}$ and hence $G_{\delta_1,\ldots,\delta_{\ell}}\le G_{\Delta}$. Since this argument applies also for $\ell'$, from~\eqref{eq:nasty1}, we deduce 
$$G_{\delta_1,\ldots,\delta_{\ell}}=G_{\delta_1',\ldots,\delta_{\ell'}'}.$$
As $\ell\ne \ell'$, from Lemma~\ref{lemma:1}, we deduce that the action of $G$ on $\Omega_k$ is not IBIS. It remains to consider the case $d-2k+2\le 2$ (that is, $d=2k$), and the case $q=2$.

Suppose $d=2k$.  
Let $U$ be a totally singular subspace of $V$ with $\dim_{\mathbb{F}_{q}}(U)=k-2$ and let
\begin{align*}
\Delta=\{W\mid U\le W, \mathrm{dim}_{\mathbb{F}_{q}}(W)=k \hbox{ and }W \hbox{ totally singular}\}.\end{align*}
We may apply the argument above verbatim, using Lemma~\ref{psp:cased=4k=2} instead of Lemma~\ref{psp:cased4}, with the old $U$ replaced by this new $U$. We deduce that $G$ is not IBIS. It remains to deal with the case $q=2$.

\medskip

Suppose  $q=2$ and $k\in \{2,3\}$. Clearly, we may suppose that $d\ge 6$. Let $W$ be a non-degenerate $6$-dimensional subspace of $V$ and let 
	$$\Delta=\{\omega\mid \omega\le W,\dim_{\mathbb{F}_{q}}(\omega)=k\hbox{ and }\omega \hbox{ singular}\}.$$
	Let $G_{\Delta}$ be the stabilizer of $\Delta$ in $G$ and let $H$ be the permutation group induced by $G_\Delta$ on $\Delta$.
	
	Observe that $G_{\Delta}=G_{W}$ because the span of the $k$-dimensional singular subspaces of  $V$ of dimension $k$ contained in $W$ is $W$ itself. Moreover, $H$ is an almost simple primitive group having socle $\mathrm{PSp}_4(q)$ acting on the totally singular $k$-dimensional subspaces of $W$. From Lemma~\ref{psp:cased=6k=2}, there exist two positive integers $\ell,\ell'\ge 2$ with $\ell\ne\ell'$ and $\omega_1,\ldots,\omega_\ell$, $\omega_1',\ldots,\omega_{\ell'}'$ totally singular $k$-dimensional subspaces of $V$ such that
	\begin{itemize}
		\item $\omega_1+\cdots+\omega_\ell=W=\omega_{1}'+\cdots+\omega_{\ell'}'$,
		\item $H_{\omega_1,\ldots,\omega_\ell}=H_{\omega_1,\ldots,\omega_{\ell'}'}$.
	\end{itemize}
	Since $\omega_1+\cdots+\omega_\ell=W=\omega_{1}'+\cdots+\omega_{\ell'}'$, we deduce 
	$G_{\omega_1,\ldots,\omega_\ell},G_{\omega_1',\ldots,\omega_{\ell'}'}\le G_W=G_\Delta$. Now, since $H_{\omega_1,\ldots,\omega_\ell}=H_{\omega_1,\ldots,\omega_{\ell'}'}$, we deduce
	$$G_{\omega_1,\ldots,\omega_\ell}=G_{\omega_1,\ldots,\omega_{\ell'}'}.$$
	As $\ell\ne\ell'$, Lemma~\ref{lemma:1} implies that $G$ is not IBIS.

Suppose now $q=2$ and $k\ge 4$.
 Again, we may let $U$ be a totally singular subspace of $V$ with $\dim_{\mathbb{F}_{q}}(U)=k-3$ and let
\begin{align*}
\Delta=\{W\mid U\le W, \mathrm{dim}_{\mathbb{F}_{q}}(W)=k \hbox{ and }W \hbox{ totally singular}\}.\end{align*}
We may apply the argument above verbatim, with the old $U$ replaced by this new $U$. We deduce that $G$ is not IBIS.
\end{proof}

\subsection{The Klein correspondence}\label{sec:klein}
In this section we make a brief digression on the \textit{\textbf{Klein correspondence}} in geometry. For this purpose, we follow~\cite[Chapter~8]{PolarSpaces}. This will allow us to use the previous results on the unitary and symplectic groups to deal with the orthogonal groups.

Let $W$ be the $\mathbb{F}_q$-vector space of all $4\times 4$ skew-symmetric matrices. Clearly, $\dim_{\mathbb{F}_q} W=6$, because the entries of an element of $W$ above the diagonal uniquely determine the matrix. Let $Q:W\to\mathbb{F}_q$ be the quadratic form defined by
$$X\mapsto Q(X)=\mathrm{Pf}(X),$$
where $\mathrm{Pf}(X)$ is the \textit{\textbf{Pfaffian}} of the matrix $X$. In particular, if
\[
\begin{pmatrix}
0&x_{12}&x_{13}&x_{14}\\
-x_{12}&0&x_{23}&x_{24}\\
-x_{13}&-x_{23}&0&x_{34}\\
-x_{14}&-x_{24}&-x_{34}&0
\end{pmatrix},
\]
then  $$\mathrm{Pf}(X)=x_{12}x_{34}-x_{13}x_{24}+x_{14}x_{23}.$$

Clearly, $Q$ has Witt defect $0$. The group $\mathrm{GL}_4(q)$ acts as a linear group on $W$ by 
$$X\mapsto P^T XP,$$
where $X\in W$, $P\in\mathrm{GL}_4(q)$ and $P^T$ is the transpose of the matrix $P$.
Using the properties of the Pfaffian, we have
$\mathrm{Pf}(P^T XP)=\det(P)\mathrm{Pf}(X)$. Therefore, $P$ preserves the quadratic form $Q$ if and only if $\det(P)=1$. Since $\mathrm{SL}_4(q)$ equals its own derived subgroup, we deduce 
$\mathrm{SL}_4(q)\le \Omega_6^+(q)$. Observe that we are identifying here $\mathrm{SL}_4(q)$ with a subgroup of $\mathrm{GL}_6(q)$ using the action of $\mathrm{SL}_4(q)$ on $W$. Moreover, from this embedding, we deduce $\mathrm{PSL}_4(q)\le \mathrm{P}\Omega_6^+(q)$. Comparing the orders of $\mathrm{PSL}_4(q)$ and of $\mathrm{P}\Omega_6^+(q)$, we get $\mathrm{PSL}_4(q)=\mathrm{P}\Omega_6^+(q)$.

We now deduce some geometric facts. Let $0\ne X\in W$ be singular with respect to $Q$. This means $Q(X)=0$. Since $\det(X)$ is the square of $\mathrm{Pf}(X)=Q(X)$, we deduce that $\det(X)=0$. Since $X$ is skew-symmetric, we deduce that the rank of $X$ is even. As $X\ne 0$, we deduce that $X$ has rank $2$. It is not hard to verify that a skew-symmetric matrix of rank $2$ is of the form
$$X=v^T w-w^T v,$$
for some vectors $v, w \in \mathbb{F}_q^4$. Observe that $v$ and $w$ are linearly independent over $\mathbb{F}_q$, because otherwise $X\ne 0$. This sets up a bijection between the points on the quadric $Q$ and the $2$-dimensional subspaces of $\mathbb{F}_q^4$.

We summarise in the following proposition a few facts that we need later. A \textit{\textbf{plain pencil}} in $\mathbb{F}_q^4$ is an incident point-plane pair, that is, a pair $\{A,B\}$ where $A$ and $B$ are subspaces of $\mathbb{F}_q^4$ with $A<B$, $\dim_{\mathbb{F}_{q}} A=1$ and $\dim_{\mathbb{F}_{q}} B=3$. Given a plain pencil $\{A,B\}$, the \textit{\textbf{plain pencil of lines}} of $\{A,B\}$ is the collection of all the $2$-dimensional subspaces of $B$ containing $A$.
\begin{proposition}\label{klein1}
\begin{description}
\item[A] There exists a natural one-to-one correspondence between the $1$-dimensional singular subspaces of $\mathbb{F}_q^6\cong W$ with respect to $Q$ and the $2$-dimensional subspaces of $\mathbb{F}_q^4$.
\item[B]  Under this correspondence, lines in $Q$, that is, $2$-dimensional subspaces of $W=\mathbb{F}_q^6$ which are totally singular for $Q$, correspond to plane pencils of lines in $\mathbb{F}_q^4$.
\item[C] Under this correspondence, planes in $Q$, that is, $3$-dimensional subspaces of $W=\mathbb{F}_q^6$ which are totally singular for $Q$, correspond to maximal families of pairwise intersecting lines in $\mathbb{F}_q^4$. There are two types: all lines through a fixed point and all lines in a fixed plane.
\end{description}
\end{proposition}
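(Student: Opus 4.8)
The plan is to prove the three parts in sequence, with everything resting on a single computation: the value of the bilinear form polarising $Q=\mathrm{Pf}$ on two rank-two skew-symmetric matrices. \textbf{Part A} is, up to formalities, already contained in the discussion preceding the statement. I would define $\Phi\colon U\mapsto\langle v^Tw-w^Tv\rangle$ on a $2$-dimensional subspace $U=\langle v,w\rangle$ of $\mathbb{F}_q^4$ and check it is well defined: replacing $(v,w)$ by another basis $(v',w')=(v,w)M$ with $M\in\mathrm{GL}_2(q)$ multiplies $v^Tw-w^Tv$ by $\det(M)$, so the projective point depends only on $U$. The inverse sends a singular point $\langle X\rangle$ (necessarily of rank $2$, as recalled) to the row space of $X$; since row $i$ of $v^Tw-w^Tv$ equals $v_iw-w_iv\in\langle v,w\rangle$ and these rows span $\langle v,w\rangle$, the row space of $v^Tw-w^Tv$ is exactly $U$. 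Hence the two maps are mutually inverse and $\Phi$ is a bijection.

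The heart of the argument is the following. Let $\beta(X,Y)=Q(X+Y)-Q(X)-Q(Y)$ be the associated bilinear form. Writing $X_i=v_i^Tw_i-w_i^Tv_i$, a direct expansion from $\mathrm{Pf}(X)=x_{12}x_{34}-x_{13}x_{24}+x_{14}x_{23}$ yields
\[
\beta(X_1,X_2)=\det\begin{pmatrix}v_1\\ w_1\\ v_2\\ w_2\end{pmatrix},
\]
the determinant of the $4\times4$ matrix with rows $v_1,w_1,v_2,w_2$ (this is a polynomial identity over $\mathbb{Z}$, so it holds in every characteristic, including the delicate case $p=2$ where $\beta$ is alternating). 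Therefore $\langle X_1\rangle$ and $\langle X_2\rangle$ span a totally singular line of $W$ if and only if $v_1,w_1,v_2,w_2$ are linearly dependent, that is, $U_1\cap U_2\ne0$. This is the incidence criterion: collinearity on $Q$ corresponds to meeting of the associated lines of $\mathbb{F}_q^4$, and it drives both remaining parts.

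\textbf{For Part B}, let $L=\langle X_1,X_2\rangle$ be a totally singular line with $U_1=\Phi^{-1}(\langle X_1\rangle)$, $U_2=\Phi^{-1}(\langle X_2\rangle)$. By the criterion $U_1\cap U_2\ne0$, and $U_1\ne U_2$ since $X_1,X_2$ are independent; hence $A:=U_1\cap U_2$ is a point and $B:=U_1+U_2$ is a plane. Pick $a$ with $A=\langle a\rangle$ and write $U_1=\langle a,b\rangle$, $U_2=\langle a,c\rangle$; after rescaling representatives we may take $X_1=a^Tb-b^Ta$ and $X_2=a^Tc-c^Ta$. Then a general point of $L$ is
\[
\langle\lambda X_1+\mu X_2\rangle=\langle a^Td-d^Ta\rangle,\qquad d=\lambda b+\mu c,
\]
with image $\langle a,d\rangle$ under $\Phi^{-1}$. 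As $[\lambda:\mu]$ ranges over the projective line, $\langle a,d\rangle$ ranges exactly over the $2$-spaces $C$ with $A<C<B$, i.e. over the plane pencil of the incident pair $\{A,B\}$; since $\Phi^{-1}$ is injective, this is a bijection between $L$ and the pencil.

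\textbf{For Part C}, let $\Pi$ be a totally singular $3$-space and let $\mathcal{F}$ be the corresponding set of lines of $\mathbb{F}_q^4$. By the criterion applied to each pair of points of $\Pi$, the members of $\mathcal{F}$ are pairwise incident, and $\mathcal{F}$ is maximal with this property: a further incident line would give a singular point orthogonal to all of $\Pi$, hence a totally singular $4$-space, impossible as $Q$ has Witt index $3$. To classify such maximal families I would use the elementary lemma that three pairwise incident lines are concurrent or coplanar (if two of their three intersection points coincide the lines share a point; otherwise the three distinct points span a plane containing all three lines) and bootstrap it to conclude that $\mathcal{F}$ is either the star $\mathcal{S}_a=\{C:\langle a\rangle<C,\ \dim C=2\}$ of all lines through a fixed point, or the family $\mathcal{P}_B=\{C:C<B,\ \dim C=2\}$ of all lines in a fixed plane $B$. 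Conversely each of these is a totally singular $3$-space: $\{a^Tx-x^Ta:x\in\mathbb{F}_q^4\}$ is $3$-dimensional (the linear map $x\mapsto a^Tx-x^Ta$ has kernel $\langle a\rangle$) and $\beta$ vanishes on it because its value on two such matrices is $\det(a,x,a,y)=0$, while $\{v^Tw-w^Tv:v,w\in B\}$ is $3$-dimensional and totally singular since any four vectors of the $3$-space $B$ are dependent. Matching $|\Pi|=q^2+q+1=|\mathcal{S}_a|=|\mathcal{P}_B|$, and noting the two families are distinct, confirms these are exactly the two rulings. The main obstacle is the bilinear-form identity of the second paragraph, since the whole geometry follows from the incidence criterion it produces; the secondary difficulty is making the dichotomy in Part C airtight, passing cleanly from the ``concurrent or coplanar'' lemma to the star/plane classification while controlling small or degenerate subfamilies. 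One can sidestep this last point by a counting argument instead, comparing the total number $2(q^3+q^2+q+1)$ of generators of $Q$ with the numbers of points and of planes of $\mathbb{F}_q^4$, which already exhibits every generator as a star or a plane family.
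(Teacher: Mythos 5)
Your proof is correct, but it is worth noting that the paper does not actually prove this proposition: part A is essentially derived in the discussion preceding the statement (via $\det X=\mathrm{Pf}(X)^2$ and the rank-$2$ normal form $v^Tw-w^Tv$), while parts B and C are simply cited to Cameron's \emph{Projective and Polar Spaces}. You supply a genuinely self-contained argument, and its key ingredient --- the identity $\beta(X_1,X_2)=\det(v_1,w_1,v_2,w_2)$ for the polarisation of the Pfaffian on pure wedges --- is exactly the right lemma: both sides are multilinear and alternating in the four vectors, so the identity reduces to a check on basis wedges and is valid in every characteristic, and it immediately converts collinearity on the quadric into the incidence condition $U_1\cap U_2\ne 0$. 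Parts A and B then go through exactly as you describe. In part C the two points you flag as delicate are both fine but should be written out: maximality uses that the singular points of a totally singular $\Pi$ span $\Pi$ (automatic) together with the Witt index being $3$; and the passage from the ``three pairwise incident lines are concurrent or coplanar'' lemma to the star/plane dichotomy is one line --- if the lines of $\mathcal{F}$ are not all concurrent, three of them are non-concurrent and hence lie in a common plane $B$, and any line meeting three non-concurrent lines of $B$ must itself lie in $B$, so $\mathcal{F}\subseteq\mathcal{P}_B$ and maximality forces equality. Your closing count $2(q^3+q^2+q+1)$ of generators of $Q^+(5,q)$ against the points and planes of $\mathrm{PG}(3,q)$ is also correct and gives a clean independent confirmation. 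What your approach buys over the paper's is a verifiable proof in place of a reference; what it costs is the half page needed to state and check the Pfaffian identity.
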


\subsection{Orthogonal groups of plus type}\label{sec:orthogonal+}

Let $d\ge 6$ be an even positive integer, let $q=p^f$ be a prime power with $p$ a prime number, let $G_0=\mathrm{P}\Omega_d^+(q)$ and let $G$ be an almost simple group with socle $G_0$.
We now describe the primitive subspace actions.

 We let $V=\mathbb{F}_q^d$ be the $d$-dimensional vector space over the finite field $\mathbb{F}_{q}$ of cardinality $q$. Then, for $1\le k\le d/2$, we let $\Omega_k$ be the collection of all $k$-dimensional totally singular subspaces of $V$, with respect to the quadratic form $Q$ preserved by $G_0$. When $k=d/2$, the collection $\Omega_{d/2}$ of all maximal totally isotropic subspaces of $V$ splits into two disjoint families, $\Omega_{d/2}=\Omega_{d/2,1}\cup\Omega_{d/2,2}$, namely the \textit{\textbf{greeks}} and the \textit{\textbf{latins}} where two elements  $X$ and $Y$ of $\Omega_{d/2}$ are in the same family if and only if $X\cap Y$ has even codimension in either $X$ or $Y$, for details we refer to~\cite{PolarSpaces}.

\begin{lemma}\label{ort+}The action of $G$ on $\Omega_k$, or $\Omega_{d/2,1}$, $\Omega_{d/2,2}$, is $\mathrm{IBIS}$ if and only if $(d,q,k)=(6,2,3)$.
\end{lemma}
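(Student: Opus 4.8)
The plan is to induct on $d$, using the Klein correspondence of Section~\ref{sec:klein} to dispose of the base case $d=6$ and a peeling argument, modelled on Propositions~\ref{psu:cased>3} and~\ref{psp:cased>4}, to reduce every larger dimension to it.

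For $d=6$ we have $G_0=\mathrm{P}\Omega_6^+(q)\cong\mathrm{PSL}_4(q)$, and Proposition~\ref{klein1} translates the four actions into actions of a group with socle $\mathrm{PSL}_4(q)$ on subspaces of $\mathbb{F}_q^4$: $\Omega_1$ becomes the action on the $2$-dimensional subspaces (lines); $\Omega_2$ becomes the action on plane pencils, that is, on the incident point–hyperplane pairs, which is precisely case~\eqref{psk2} with $(d,k)=(4,1)$; and the two families $\Omega_{3,1},\Omega_{3,2}$ become the actions on the points and on the hyperplanes of $\mathbb{F}_q^4$. Under this dictionary the orthogonal graph automorphism interchanging greeks and latins corresponds to the linear duality $\iota$, which swaps points with hyperplanes while fixing setwise both the set of lines and the flag variety. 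Hence for a single family $\Omega_{3,i}$ the group $G$ cannot contain the graph automorphism, so $\mathrm{PSL}_4(q)\unlhd G\le\mathrm{P}\Gamma\mathrm{L}_4(q)$ and Lemma~\ref{prop:2} (together with its dual) shows this action is IBIS exactly when $q=2$; the action on $\Omega_2$ is never IBIS by Lemma~\ref{refdaaggiustareeepart1} (which already allows $\iota\in G$); and the action on $\Omega_1$ is never IBIS by Lemma~\ref{prop:2bis} when $q=2$ and by Lemma~\ref{prop:3} when $q>2$. This settles $d=6$, and the IBIS direction of the statement: if $(d,q,k)=(6,2,3)$ then $q=2$ admits neither field nor diagonal automorphisms and $G$ fixes a family, so $G=\mathrm{P}\Omega_6^+(2)=\mathrm{GL}_4(2)$ acting on the points of $\mathbb{F}_2^4$, which is IBIS by Lemma~\ref{prop:2}.

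For $d\ge 8$ I would show the action is never IBIS. Given $\Omega_k$, choose a totally singular subspace $U$ with $\dim U=k-1$ (a singular $1$-space when $k=d/2$, and a suitable non-degenerate subspace for the few intermediate values of $k$), so that $\Delta=\{X\in\Omega_k:U\le X\}$ is stabilised exactly by $G_U$ and the group $H$ induced by $G_U$ on $\Delta$ acts on $U^\perp/U\cong\mathbb{F}_q^{d-2k+2}$—again orthogonal of plus type—as on its singular $1$-spaces, respectively its maximal totally singular subspaces. Since any two distinct members of $\Delta$ meet in $U$, we have $G_{\delta_1,\delta_2}\le G_U=G_\Delta$, so the $G_\Delta$-preimages of two irredundant bases of $H$ of different lengths—which exist by the inductive hypothesis—both coincide with the kernel of the map $G_\Delta\to H$, and Lemma~\ref{lemma:1} shows $G$ is not IBIS. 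Choosing $\dim U$ so that $U^\perp/U$ has dimension $6$ drives everything down to the base case, and because the only IBIS base case $(6,2,3)$ is maximal, the reduction meets it only in the single residual configuration $(d,q,k)=(8,2,4)$.

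The principal obstacle is the graph automorphism in the lines case $\Omega_1$: the duality $\iota$ fixes the set of lines of $\mathbb{F}_q^4$, yet Lemmas~\ref{prop:2bis} and~\ref{prop:3} are stated only for $\mathrm{PSL}_4(q)\unlhd G\le\mathrm{P}\Gamma\mathrm{L}_4(q)$, and this case also feeds the $k=1$ step of the induction. I would close the gap by observing that the two chains produced there span $\mathbb{F}_q^4$ and meet in $0$, and then either verify directly that the $\iota$-coset contributes identically to both pointwise stabilisers (so Lemma~\ref{lemma:1} still applies to every admissible $G$), or sandwich $\mathrm{PSL}_4(q)\le G\le\mathrm{Aut}(\mathrm{PSL}_4(q))$ and exhibit an irredundant base for the full automorphism group on the lines shorter than the longer socle base, invoking Lemma~\ref{lemma:minusminus1}. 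Finally, the residual configuration $(8,2,4)$, whose sub-action lands on the IBIS base case and is therefore inconclusive by induction, I would settle by direct computation, exactly as the analogous symplectic and unitary exceptions were handled in Lemmas~\ref{psp:cased=6k=2} and~\ref{psu:cased=4k=2q=2}.
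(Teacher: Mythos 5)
Your base case $d=6$ coincides with the paper's: both translate the four actions through the Klein correspondence into the $\mathrm{PSL}_4(q)$ subspace actions and quote Lemmas~\ref{prop:2},~\ref{prop:2bis},~\ref{prop:3} and~\ref{refdaaggiustareeepart1}. Your inductive step, however, takes a genuinely different route: you quotient by a totally singular subspace $U$ and work in $U^\perp/U$ (the device the paper uses for the unitary and symplectic families in Propositions~\ref{psu:cased>3} and~\ref{psp:cased>4}), whereas the paper's proof of Lemma~\ref{ort+} instead fixes a \emph{non-degenerate} $6$-dimensional subspace $W$ of Witt defect $0$, takes $U$ totally isotropic inside $W^\perp$, sets $\Delta=\{U'\oplus U\mid U'\le W,\dots\}$ with $G_\Delta=G_{W,U}$, and applies the Klein correspondence inside $W$ directly; this lets it calibrate $\dim U$ case by case ($k-1$, $k-2$, $k-3$, and $k-4$ for $q=2$, $k=d/2$) and, in the $k=d/2-1$ case, to check explicitly that the induced group contains $\mathrm{PSL}_4(q)\langle\iota\rangle$ so that Lemma~\ref{refdaaggiustareeepart1} applies. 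Your endgame (only $(8,2,4)$ survives the reduction and is settled by computer) agrees with the paper.

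The genuine gap is in your justification that the lifted stabilizers land in $G_\Delta$. You write that ``any two distinct members of $\Delta$ meet in $U$, so $G_{\delta_1,\delta_2}\le G_U=G_\Delta$''; this is true only when $\dim U=k-1$, i.e.\ when $U$ has codimension $1$ in the members of $\Delta$. But with $\dim U=k-1$ the quotient $U^\perp/U$ has dimension $d-2k+2$, which equals $4$ when $k=d/2-1$ (so the induced group is $\mathrm{P}\Omega_4^+(q)$, not almost simple) and $2$ when $k=d/2$. To reach a $6$- or $8$-dimensional quotient in these cases you must take $\dim U\in\{k-2,k-3,k-4\}$, and then two distinct members of $\Delta$ can intersect in a space strictly larger than $U$. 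The conclusion $G_{\delta_1,\dots,\delta_\ell}\le G_U$ then requires $\bigcap_i\delta_i=U$, i.e.\ that the irredundant bases produced downstairs have trivial common intersection as subspaces of $U^\perp/U$ (and, for the equality of the two stabilizers, that this holds for \emph{both} bases). This is exactly why the auxiliary unitary and symplectic lemmas (e.g.\ Lemmas~\ref{psu:cased=4k=2} and~\ref{psp:cased=4k=2}) record the extra conditions $\omega_1\cap\cdots\cap\omega_\ell=0$ and $\omega_1+\cdots+\omega_\ell=V$; the $\mathrm{PSL}_4(q)$ lemmas you invoke do not supply such data, so your induction as stated does not close for $k\in\{d/2-1,d/2\}$. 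Relatedly, your prescription of $U$ is garbled (``a suitable non-degenerate subspace'' cannot be totally singular), and in the $k=d/2$ reduction you must also verify that the members of $\Delta$ all lie in the same family $\Omega_{d/2,i}$ of $U^\perp/U$ (the paper imposes the parity condition $\dim(U'/(U'\cap U''))$ even for precisely this reason). You do correctly flag, and propose plausible repairs for, the remaining issue that Lemmas~\ref{prop:2bis} and~\ref{prop:3} are stated only for $G\le\mathrm{P}\Gamma\mathrm{L}_4(q)$ while the orthogonal group can induce the duality $\iota$ on the lines of $\mathbb{F}_q^4$.
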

\begin{proof}
When $d=6$, using the Klein correspondence, we are reduced to the primitive actions of almost simple groups with socle $\mathrm{PSL}_4(q)$ on $1$-, $2$-, $3$-dimensional subspaces of $\mathbb{F}_q^4$, and on pairs $\{A,B\}$ of subspaces with $A<B$ and $\dim B=3$, $\dim A=1$. The result in this case follows from Lemmas~\ref{prop:2},~\ref{prop:3} and~\ref{refdaaggiustareeepart1}. Therefore, we suppose $d\ge 8$. Let $W$ be a non-degenerate subspace of $V$ with $\dim W=6$ and such that the quadratic form $Q$ restricts to a quadratic form having Witt defect $0$ on $W$. In particular, the quadratic form $Q$ restricts also to $W^\perp$ to a quadratic form having Witt defect $0$.

Suppose that $k\le d/2-2$.   Let $U$ be a totally isotropic subspace of $W^\perp$ of dimension $k-1$. Let
$$\Delta=\{U'\oplus U\mid U'\le W, \dim (U'\oplus U)=k, U' \textrm{ totally isotropic}\}.$$ 
Clearly, $G_\Delta=G_{W,U}$. Using the Klein correspondence and the results on the subspace actions of $\mathrm{PSL}_4(q)$ (see Lemma~\ref{prop:3} applied with $k=2$), we  deduce that $G$ is not IBIS.

Suppose that $k= d/2-1$.   Let $U$ be a totally isotropic subspace of $W^\perp$ of dimension $k-2$, observe that $U$ is a maximal totally isotropic subspace of $W^\perp$. Let
$$\Delta=\{U'\oplus U\mid U'\le W, \dim (U'\oplus U)=k, U' \textrm{ totally isotropic}\}.$$ 
Clearly, $G_\Delta=G_{W,U}$. Before using the Klein correspondence we need to make an additional remark. Observe that the orthogonal decoposition $V=W\perp W^\perp$ gives rise to an embedding of $\mathrm{SO}_6^+(q)\times\mathrm{SO}_{d-6}^+(q)$ into $\mathrm{SO}_d^+(q)$. From~\cite{KL90}, we see that $(\mathrm{SO}_6^+(q)\times\mathrm{SO}_{d-6}^+(q))\cap \Omega_d^+(q)$ projects surjectively onto $\mathrm{SO}_6^+(q)$. In particular, the action of $G_{\Delta}$ on the $2$-dimensional totally singular subspaces of $W$ induces an almost simple group containing $\mathrm{PSO}_6^+(q)$. Now, we can apply the Klein correspondence because the associated group in the action on pairs of subspaces contains $\mathrm{PSL}_4(q)\langle\iota\rangle$, where $\iota$ is the inverse transpose automorphism. Now, using Lemma~\ref{refdaaggiustareeepart1}, we  deduce that $G$ is not IBIS.

Suppose that $k= d/2$. Without loss of generality, we may suppose that $G$ is acting on $\Omega_{d/2,1}$. Let $U$ be a totally isotropic subspace of $W^\perp$ of dimension $k-3$ and let $U''$ be a $3$-dimensional totally singular subspace of $W$ with $U'\oplus U\in\Omega_{d/2,1}$. Let
\begin{align*}
\Delta=\{U'\oplus U\mid& U'\le W, \dim (U'\oplus U)=k,\\
& U' \textrm{ totally isotropic and}\dim(U'/(U'\cap U'')) \textrm{ even}\}.
\end{align*}
Clearly, $G_\Delta=G_{W,U}$. What is more, the condition that $\dim(U'/(U'\cap U''))$ is even guarantees that every element of $\Delta$ is in $\Omega_{d/2,1}$.   Using the Klein correspondence and Lemma~\ref{prop:2}, we deduce that $G$ is not IBIS, except when $q=2$.

Assume $k=d/2$ and $q=2$. Let $U$ be a totally isotropic subspace of $V$ with $\dim U=k-4$ and consider $\Delta=\{\omega\in\Omega\mid U\le \omega\}$. We have a one-to-one correspondence between the elements of $\Delta$ and the $4$-dimensional totally isotropic subspaces of $U^\perp/U$ falling in the same family. Therefore, the result for an arbitrary value of $d$ follows from the analogous result for $d=8$. We have checked with a computer that when $d=8$, the action of $G$ on $\Omega_{4,1}$ and on $\Omega_{4,2}$ is not IBIS.
\end{proof}

\subsection{Odd dimensional orthogonal groups}\label{sec:orthogonalodd}
To deal with this case we continue our analysis on the Klein correspondence. Recall that we have an embedding of $\mathrm{SL}_4(q)$ in $\Omega_6^+(q)$.

Let $A$ be a non-singular skew-symmetric matrix. For the way that the action of $\mathrm{GL}_4(q)$ was defined on $W$, the stabilizer of $A$ in $\mathrm{SL}_4(q)$ consists of the matrices $P$ such that $P^TAP=A$. Therefore, this stabilizer is the symplectic group defined by the matrix $A$.

On the other hand, $A$ can be viewed as a vector in $W$ with $Q(A)\ne 0$ and so the stabilizer of $A$ viewed in the orthogonal group $\Omega_6^+(q)$ is the $5$-dimensional orthogonal group on $A^\perp$. Thus 
$\Omega_5(q)\cong \mathrm{PSp}_4(q)$.
Under this correspondence the orthogonal geometry and the symplectic geometry are linked together.

\begin{proposition}\label{klein2}
\begin{description}
\item[A] There exists a natural one-to-one correspondence between the $1$-dimensional singular subspaces of $A^\perp\le \mathbb{F}_q^6\cong W$ with respect to $Q$ and the $2$-dimensional subspaces of $\mathbb{F}_q^4$ which are totally singular for $\mathrm{Sp}_4(q)$.
\item[B]  Under this correspondence, lines in $Q$ contained in $A^\perp$  correspond to $1$-dimensional subspaces of  $\mathbb{F}_q^4$.
\end{description}
\end{proposition}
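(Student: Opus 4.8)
The plan is to obtain both statements by restricting the Klein correspondence of Proposition~\ref{klein1} to the hyperplane $A^\perp$ of $W$ and matching the resulting orthogonal incidence data with the symplectic geometry of $\mathbb{F}_q^4$. Since all non-degenerate symplectic forms on $\mathbb{F}_q^4$ are equivalent and the whole correspondence is equivariant under $\mathrm{GL}_4(q)$, there is no loss of generality in taking $A$ to be the standard alternating matrix with $a_{13}=a_{24}=1$ and all other entries above the diagonal equal to $0$, the form used throughout Section~\ref{psp}. Throughout I use the explicit parametrisation recorded before Proposition~\ref{klein1}: a singular $1$-space of $W$ is $\langle X\rangle$ with $X=v^Tw-w^Tv$ of rank $2$, and under the correspondence it is sent to the $2$-space $\langle v,w\rangle$ of $\mathbb{F}_q^4$.

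For Part~A I would first write down the polar form $B(X,Y)=Q(X+Y)-Q(X)-Q(Y)$ of the Pfaffian explicitly, obtaining
\[
B(X,Y)=x_{12}y_{34}+x_{34}y_{12}-x_{13}y_{24}-x_{24}y_{13}+x_{14}y_{23}+x_{23}y_{14}.
\]
Writing $X=v^Tw-w^Tv$, so that $x_{ij}=v_iw_j-v_jw_i$, and specialising $Y=A$ gives $B(X,A)=-(x_{13}+x_{24})=-\,vAw^T$. Hence $\langle X\rangle\le A^\perp$ if and only if $vAw^T=0$, that is, if and only if the associated $2$-space $\langle v,w\rangle$ is totally singular for the symplectic form $A$. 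As the same identity runs in both directions, the bijection of Proposition~\ref{klein1}~A restricts to a bijection between the singular $1$-spaces of $A^\perp$ and the totally singular $2$-spaces of $(\mathbb{F}_q^4,A)$, which is exactly the assertion; the stabiliser of $A$, namely $\mathrm{Sp}_4(q)$, preserves both sides and makes the correspondence equivariant.

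For Part~B I would invoke Proposition~\ref{klein1}~B, which identifies the lines of $Q$ (the totally singular $2$-spaces of $W$) with the plane pencils of lines of $\mathbb{F}_q^4$, a pencil being determined by an incident point–plane pair $\{A_0,B_0\}$ with $\dim A_0=1$ and $\dim B_0=3$. A line of $Q$ lies in $A^\perp$ precisely when each of its singular points does, so by Part~A this happens if and only if every member of the corresponding pencil is totally singular. Writing $A_0=\langle v_0\rangle$, a line $\langle v_0,u\rangle$ of the pencil (with $u\in B_0$) is totally singular exactly when $v_0Au^T=0$; requiring this for all $u\in B_0$ forces $B_0\le v_0^{\perp}$, where $v_0^{\perp}$ is the $3$-dimensional symplectic perp of $v_0$. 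As $\dim B_0=3=\dim v_0^{\perp}$, we get $B_0=v_0^{\perp}$, so the pencil is completely determined by the point $A_0$. Thus the lines of $Q$ contained in $A^\perp$ are in bijection with the $1$-dimensional subspaces $\langle v_0\rangle$ of $\mathbb{F}_q^4$, as claimed.

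The routine part is the polar-form computation in Part~A. The one point requiring care is the collapse in Part~B: the condition ``every line of the pencil is totally singular'' forces the plane $B_0$ to coincide with the symplectic perp $v_0^{\perp}$, so that the two-parameter pencil data degenerates to the single point $A_0$. I expect this dimension count, together with the remark that $v_0^{\perp}$ automatically contains $A_0$ (since $v_0Av_0^T=0$ for the alternating form $A$), to be the only genuinely geometric ingredient beyond the bookkeeping inherited from Proposition~\ref{klein1}.
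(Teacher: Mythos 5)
Your proposal is correct, and it is in fact more detailed than what the paper offers: the paper states Proposition~\ref{klein2} without proof, treating it (like Proposition~\ref{klein1}) as part of the standard Klein correspondence package cited from~\cite{PolarSpaces}, and only prepares the ground by identifying the stabiliser of $A$ in $\mathrm{SL}_4(q)$ with the symplectic group of $A$ and its stabiliser in $\Omega_6^+(q)$ with the orthogonal group on $A^\perp$. Your argument supplies the missing verification explicitly. The key computation checks out: with $x_{ij}=v_iw_j-v_jw_i$ and $A$ the standard alternating matrix, the polar form of the Pfaffian gives $B(X,A)=-(x_{13}+x_{24})=-vAw^T$, so membership of the singular point $\langle v^Tw-w^Tv\rangle$ in $A^\perp$ is exactly total isotropy of $\langle v,w\rangle$ for the form $A$; and your dimension count $B_0=v_0^\perp$ in Part~B correctly collapses the point--plane pencil data to the single point $\langle v_0\rangle$, in both directions. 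Two small remarks: the reduction to the standard $A$ via $\mathrm{GL}_4(q)$-equivariance is legitimate and worth stating as you do; and your computation is characteristic-free, which is slightly stronger than the paper needs, since Proposition~\ref{klein2} is only invoked in Section~\ref{sec:orthogonalodd} where $q$ is odd (in characteristic $2$ one has $A\in A^\perp$, but $\langle A\rangle$ is non-singular and no totally singular line contains it, so nothing breaks). What your approach buys is a self-contained proof independent of the external reference; what the paper's approach buys is brevity, at the cost of leaving the reader to reconstruct exactly the computation you performed.
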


Let $d$ be an odd integer with $d\ge 5$, let $q$ be an odd prime power and let $k$ be an integer with $1\le k\le (d-1)/2$. We let $G$ be an almost simple group with socle $\Omega_d(q)=\mathrm{P}\Omega_d(q)$ and let $\Omega_k$ be the collection of all totally isotropic subspaces of $\mathbb{F}_q^d$ of dimension $k$, with respect to the quadradic form preserved by $\Omega_d(q)$. We consider the primitive action of $G$ on $\Omega_k$.

\begin{lemma}\label{oddorthogonal}The action of $G$ on $\Omega_k$ is not $\mathrm{IBIS}$.
\end{lemma}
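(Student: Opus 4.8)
The plan is to settle the base dimension $d=5$ via the Klein correspondence and then to reduce every larger odd dimension to it by stabilising a suitable subspace, exactly in the spirit of Propositions~\ref{psu:cased>3} and~\ref{psp:cased>4}. Recall from the discussion preceding Proposition~\ref{klein2} that $\Omega_5(q)\cong\mathrm{PSp}_4(q)$, and that $A^\perp$ is the associated $5$-dimensional orthogonal space. By Proposition~\ref{klein2}, the action of an almost simple group with socle $\mathrm{P}\Omega_5(q)$ on the singular $1$-subspaces of $A^\perp$ is permutation isomorphic to the action of the corresponding group with socle $\mathrm{PSp}_4(q)$ on the totally isotropic $2$-subspaces of $\mathbb{F}_q^4$, while its action on the totally singular $2$-subspaces of $A^\perp$ is permutation isomorphic to the action on the $1$-subspaces of $\mathbb{F}_q^4$. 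Since $q$ is odd, Lemmas~\ref{psp:cased=4k=2} and~\ref{psp:cased4} supply, in each of these symplectic actions, two irredundant bases of different lengths whose members span $\mathbb{F}_q^4$. Transporting them through the explicit Klein map of Section~\ref{sec:klein} produces, for $d=5$ and each $k\in\{1,2\}$, two irredundant bases of different lengths in $\Omega_k$ whose members span $A^\perp$; in particular $G$ is not IBIS when $d=5$, and this spanning property is what will drive the reductions.

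With $d=5$ in hand I would first complete the case $k=1$ for every odd $d\ge 5$. For $d>5$ pick a non-degenerate $5$-dimensional subspace $W$ and let $\Delta$ be the set of singular $1$-subspaces of $W$; since these span $W$ we have $G_\Delta=G_W$, and $G_W$ induces on $\Delta$ an almost simple group $H$ with socle $\mathrm{P}\Omega_5(q)$. The $d=5$ case applied to $H$ gives two irredundant bases of $\Delta$ of different lengths whose members span $W$. By Lemma~\ref{lemma:minus1} both are irredundant for $G$, and since they span $W$ every element of $G$ fixing each member fixes $W$; hence both pointwise stabilisers in $G$ coincide with the pointwise stabiliser $G_{(W)}$ of $W$. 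As the lengths differ, Lemma~\ref{lemma:1} shows that $G$ is not IBIS.

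Next I would treat $2\le k<(d-1)/2$ by the quotient construction used for the symplectic and unitary groups. Choose a totally isotropic subspace $U$ with $\dim U=k-1$; then $U^\perp/U$ carries a non-degenerate orthogonal form of odd dimension $d-2k+2\ge 5$, and the set $\Delta$ of totally isotropic $k$-subspaces containing $U$ is identified with the singular $1$-subspaces of $U^\perp/U$, on which $G_\Delta=G_U$ induces an almost simple group with socle $\mathrm{P}\Omega_{d-2k+2}(q)$. By the case $k=1$ already proved, this action is not IBIS, so it admits two irredundant bases of different lengths. Any two distinct members $\delta_1,\delta_2$ of $\Delta$ satisfy $\delta_1\cap\delta_2=U$, whence every relevant $G$-stabiliser lies in $G_U=G_\Delta$; the two bases therefore have equal pointwise stabilisers in $G$, and Lemma~\ref{lemma:1} again yields the claim.

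The remaining, and most delicate, case is $k=(d-1)/2$. Here the reduction with $\dim U=k-1$ produces $U^\perp/U$ of dimension $3$, and $\mathrm{P}\Omega_3(q)\cong\mathrm{PSL}_2(q)$ acting on the $q+1$ points of the projective line is IBIS for the socle (Lemma~\ref{lemma:-100} with $r=1$), so that choice is useless. Instead I would take $U$ totally isotropic with $\dim U=k-2$, so that $U^\perp/U$ has dimension $5$ and $\Delta$ is identified with the totally singular $2$-subspaces of $U^\perp/U$, on which the induced group has socle $\mathrm{P}\Omega_5(q)$. Invoking the $d=5$, $k=2$ case with its spanning property, the two bases supplied there span $U^\perp/U$, so the corresponding totally isotropic $k$-subspaces of $V$ span $U^\perp$; hence every $G$-element fixing all of them fixes $U^\perp$ and therefore $U=(U^\perp)^\perp$, so both pointwise stabilisers in $G$ lie in $G_U=G_\Delta$ and coincide, and Lemma~\ref{lemma:1} completes the proof. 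The two genuine difficulties are checking that the symplectic spanning condition survives the Klein correspondence at $d=5$ (verified on the explicit bases of Lemmas~\ref{psp:cased4} and~\ref{psp:cased=4k=2}), and the need, in the maximal case $k=(d-1)/2$, to lower $\dim U$ by two in order to avoid the IBIS action of $\mathrm{PSL}_2(q)$ on the projective line.
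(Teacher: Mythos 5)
Your proposal is correct and follows essentially the same route as the paper: both reduce to the $5$-dimensional orthogonal space (equivalently, via Proposition~\ref{klein2}, to the subspace actions of $\mathrm{PSp}_4(q)$ on $\mathbb{F}_q^4$ treated in Lemmas~\ref{psp:cased4} and~\ref{psp:cased=4k=2}), taking the auxiliary totally isotropic subspace $U$ of dimension $k-1$ in general and dropping to $k-2$ precisely in the maximal case $k=(d-1)/2$, and both use the spanning conditions to pin the pointwise stabilisers inside $G_{W,U}$. The only difference is organisational --- you layer the argument as an induction through the $d=5$ base case, whereas the paper performs the reduction in a single step with $W$ of dimension $5$ and $U\le W^\perp$ --- but the decomposition, the key lemmas and the delicate points you identify are exactly those of the paper.
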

\begin{proof}
We dismiss the notation established to describe the Klein correspondence and we use Proposition~\ref{klein2} directly. Therefore, as usual, we let $V=\mathbb{F}_q^d$, $G_0$ be the socle of $G$ and $Q$ be the quadratic form on $V$ preseved by $G_0$.

Let $W$ be a non-degenerate subspace of $V$ with $\dim W=5$ and such that the quadratic form $Q$  restricts to a non-degenerate quadratic form having Witt defect 0 on $W^\perp$. Now, when $k=(d-1)/2$, let $U$ be a totally isotropic subspace of $W^\perp$ of dimension $k-2$; when $k< (d-1)/2$, let $U$ be a totally isotropic subspace of $W^\perp$ of dimension $k-1$. Finally, let
$$\Delta=\{U'\oplus U\mid U'\le W, \dim (U'\oplus U)=k, U' \textrm{ totally isotropic}\}.$$ 

Clearly, $G_\Delta=G_{W,U}$. Using the fact that $q$ is odd and using the Klein correspondence and the results on the subspace actions of $\mathrm{PSp}_4(q)$ (see Proposition~\ref{psp:cased>4}), we deduce that $G$ is not IBIS.
\end{proof}

\subsection{Orthogonal groups of minus type}\label{sec:orthogonalminus}
The Klein correspondence allows to identify $\mathrm{P}\Omega_6^-(q)$ with $\mathrm{PSU}_4(q)$, details can be found in~\cite{PolarSpaces}. Under this identitication the orthogonal geometry and the Hermitian geometry are linked together. When $k\in \{1,2\}$, the $k$-dimensional totally isotropic subspaces for the orthogonal group correspond to the $k$-dimensional totally singular subspaces for the Hermitian group.

Let $d$ be an even integer with $d\ge 6$, let $q$ be a prime power and let $k$ be an integer with $1\le k\le d/2-1$. We let $G$ be an almost simple group with socle $\mathrm{P}\Omega_d^-(q)$ and let $\Omega_k$ be the collection of all totally isotropic subspaces of $\mathbb{F}_q^d$ of dimension $k$, with respect to the quadradic form $Q$ preserved by $\Omega_d(q)$. We consider the primitive action of $G$ on $\Omega_k$.

\begin{lemma}\label{evenminusorthogonal}The action of $G$ on $\Omega_k$ is not $\mathrm{IBIS}$.
\end{lemma}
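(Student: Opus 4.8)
The plan is to follow the template already used for the orthogonal groups of plus type (Lemma~\ref{ort+}) and of odd dimension (Lemma~\ref{oddorthogonal}): transport the problem through the Klein correspondence, which identifies $\mathrm{P}\Omega_6^-(q)$ with $\mathrm{PSU}_4(q)$ and, for $k\in\{1,2\}$, the totally isotropic $k$-subspaces of the $6$-dimensional orthogonal space of minus type with the totally singular $k$-subspaces of the Hermitian space $\mathbb{F}_{q^2}^4$. Everything is thereby reduced to the unitary results of Section~\ref{psu}. First I would settle the base case $d=6$ and then slice a general $V$ down to a non-degenerate $6$-dimensional subspace.

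For $d=6$ we have $k\in\{1,2\}$, and the Klein correspondence turns the action of $G$ on $\Omega_k$ into the action of an almost simple group with socle $\mathrm{PSU}_4(q)$ on the totally singular $k$-subspaces of $\mathbb{F}_{q^2}^4$. Since the unitary dimension is $4\ne 3$, Proposition~\ref{psu:cased>3} applies and this action is not IBIS; transporting the two irredundant bases of different lengths back through the correspondence and invoking Lemma~\ref{lemma:1} gives the claim. For $d\ge 8$ I would fix a non-degenerate $6$-dimensional subspace $W$ of $V$ on which $Q$ has minus type, so that $W^\perp$ has plus type, dimension $d-6$ and Witt index $d/2-3$. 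When $k\le d/2-2$ choose a totally isotropic $U\le W^\perp$ of dimension $k-1$ (possible since $k-1\le d/2-3$), and when $k=d/2-1$ choose a maximal totally isotropic $U\le W^\perp$ of dimension $d/2-3$; in either case set
\[
\Delta=\{\,U'\oplus U\mid U'\le W\ \text{totally isotropic},\ \dim(U'\oplus U)=k\,\},
\]
so that $\dim U'\in\{1,2\}$. Because $U\le W^\perp$ is perpendicular to $W$, the form on $U\oplus W$ has radical $U$ and induces on $(U\oplus W)/U\cong W$ the minus-type form of dimension $6$; hence the group induced by $G_\Delta$ on $\Delta$ is almost simple with socle $\mathrm{P}\Omega_6^-(q)\cong\mathrm{PSU}_4(q)$ acting on totally isotropic $\dim U'$-subspaces, and by the $d=6$ case it is not IBIS.

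Concretely, Lemmas~\ref{psu:cased3} and~\ref{psu:cased=4k=2} provide two irredundant bases of this induced action, of lengths $\ell\ne\ell'$, whose members span the whole Hermitian space and have trivial common intersection. Transported through the Klein correspondence these give chains $(\delta_1,\dots,\delta_\ell)$ and $(\delta_1',\dots,\delta_{\ell'}')$ in $\Delta$ whose members span $U\oplus W$ and meet in $U$, so that their pointwise stabilisers in $G$ fix both $U=\bigcap_i\delta_i$ and $U\oplus W=\sum_i\delta_i$ and therefore lie in $G_\Delta$. Since the stabilisers induced in the quotient $H$ coincide, the two chains have the same pointwise stabiliser in $G$, and Lemma~\ref{lemma:1} shows that $G$ is not IBIS.

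The hard part is the bookkeeping that makes the reduction legitimate, rather than the combinatorics of the chains. First, one must check that the group induced by $G_\Delta$ on $\Delta$ really contains the full socle $\mathrm{P}\Omega_6^-(q)$: this follows, as in the plus-type argument, from the fact that the orthogonal decomposition $V=W\perp W^\perp$ embeds $\mathrm{SO}_6^-(q)\times\mathrm{SO}_{d-6}^+(q)$ in $\mathrm{SO}_d^-(q)$ so that $(\mathrm{SO}_6^-(q)\times\mathrm{SO}_{d-6}^+(q))\cap\Omega_d^-(q)$ projects surjectively onto $\mathrm{SO}_6^-(q)$ (using~\cite{KL90}), whence after the Klein correspondence the induced group contains $\mathrm{PSU}_4(q)$ together with the outer automorphisms needed to apply Proposition~\ref{psu:cased>3}. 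Second, one must verify that the spanning and intersection conditions recorded in Lemmas~\ref{psu:cased3} and~\ref{psu:cased=4k=2} are transported correctly by the Klein correspondence (Proposition~\ref{klein1} and its minus-type analogue described above), so that the transported chains indeed recover $U$ and $U\oplus W$; this is routine geometric verification of exactly the kind carried out in the proofs of Lemma~\ref{ort+} and Lemma~\ref{oddorthogonal}. No greek/latin subtlety arises here, since the Witt index of the minus-type form is $d/2-1$ and its maximal totally isotropic subspaces form a single $G$-orbit.
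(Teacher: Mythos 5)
Your proposal follows exactly the paper's argument: fix a non-degenerate $6$-dimensional subspace $W$ of minus type with $W^\perp$ of plus type, pick a totally isotropic $U\le W^\perp$ of dimension $k-1$ (or $k-2$ when $k=d/2-1$), and reduce via the Klein correspondence to the subspace actions of $\mathrm{PSU}_4(q)$ settled in Proposition~\ref{psu:cased>3}; your extra bookkeeping (surjectivity onto the induced $6$-dimensional orthogonal group, transport of the spanning/intersection conditions) only makes explicit what the paper leaves implicit. The one slip is cosmetic: for $\dim U'=1$ the relevant input is the unitary-dimension-$4$, $k=1$ case inside Proposition~\ref{psu:cased>3}, not Lemma~\ref{psu:cased3}, which concerns unitary dimension $3$.
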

\begin{proof}
Let $W$ be a non-degenerate subspace of $V$ with $\dim W=6$ and such that the quadratic form $Q$ restricts to a non-degenerate quadratic form having Witt defect 0 on $W^\perp$ and to a non-degenerate quadratic form having Witt defect 1 on $W$. Now, when $k=d/2-1$, let $U$ be a totally isotropic subspace of $W^\perp$ of dimension $k-2$; when $k< d/2-1$, let $U$ be a totally isotropic subspace of $W^\perp$ of dimension $k-1$. Finally, let
$$\Delta=\{U'\oplus U\mid U'\le W, \dim (U'\oplus U)=k, U' \textrm{ totally isotropic}\}.$$ 

Clearly, $G_\Delta=G_{W,U}$. Using the Klein correspondence and the results on the subspace actions of $\mathrm{PSU}_4(q)$ (see Proposition~\ref{psu:cased>3}), we deduce that $G$ is not IBIS.
\end{proof}

\section{Even characteristic symplectic groups: case~\eqref{HiHo4}}\label{symplecticeven}

In this section we deal with the actions listed at item~\eqref{definition6.2:2} of Definition~\ref{defintion6.2}. In particular $G$ is almost simple group with socle $G_0=\mathrm{Sp}_{2m}(q)'$ and $q$ a power of $2$, with $G \leq \mathrm{P}\Gamma\mathrm{L}_{2m}(q)$. Consulting \cite{bhr, KL90}, it is clear that if $H$ is a maximal subgroup of $G$ not containing $G_0$, and $H\cap G_0$ is a subgroup of $M=\mathrm{O}_{2m}^\pm(q)$, then $H\cap G_0=M$. In light of this, we may first consider the action of $G_0=\mathrm{Sp}_{2m}(q)$ on the set of right cosets of $M=\mathrm{O}_{2m}^\pm(q)$.\footnote{Details about the embedding of $\mathrm{O}_{2m}^\pm(q)$ in $\mathrm{Sp}_{2m}(q)$ when $q$ is even can be found in~\cite{Dye}.}

Our treatment follows~\cite{GiLoSp}, which in turn was inspired by~\cite[\S 7.7]{dixon_mortimer} where the case of $\mathrm{Sp}_{2m}(2)$ was considered. Let $e$ and $m$ be positive integers, let $q=2^e$, let $\mathbb{F}_q$ be the finite field with $q$ elements, and let $V=\mathbb{F}_q^{2m}$ be the $2m$-dimensional vector space of row vectors over $\mathbb{F}_q$. To start with we adjust notation slightly, and assume that $G=\mathrm{Sp}_{2m}(q)$ is the symplectic group defined by the symmetric matrix
\[
f=\begin{pmatrix}
0&I\\
I&0
\end{pmatrix},
\] 
where $0$ and $I$ are the zero and identity $m\times m$-matrices, respectively. In particular, $G$ is the group of invertible matrices preserving the bilinear form $\varphi:V\times V\to \mathbb{F}_q$ defined by $$\varphi(u,v)=ufv^{T},$$ for every $u,v\in V$, that is
\[ G = \left\{g \in \mathrm{GL}_{2m}(q) \mid gfg^{T}=f\right\}.
\]
Note that the bilinear form $\varphi$ is alternating, i.e. for all $u \in V$, we have
\begin{equation}\label{eq:blabla-1}
\varphi(u,u)=0.
\end{equation}
Moreover, since $\mathbb{F}_q$ is of characteristic 2, the form $\varphi$ is symmetric, i.e. for all $u,v \in V$, we have
\begin{equation}\label{eq:blabla-3}
\varphi(u,v)= \varphi(v,u).
\end{equation}
Now we let $\Omega$ be the set of quadratic forms $\theta:V\to\mathbb{F}_q$ polarising to $\varphi$. Recall that this means that $\theta:V\to\mathbb{F}_q$ is a function satisfying
\begin{itemize}
\item $\theta(u+v)-\theta(u)-\theta(v)=\varphi(u,v)$ for every $u,v\in V$, and
\item $\theta(cu)=c^2u$, for every $c\in\mathbb{F}_q$ and $u\in V$.
\end{itemize}

Next, consider the matrix 
\[
e=\begin{pmatrix}
0&I\\
0&0
\end{pmatrix}
\]
and the quadratic form $\theta_0:V\to\mathbb{F}_q$ defined by $$\theta_0(u)=ueu^T,$$
for every $u\in V$. Then, for every $u,v\in V$, a computation shows that
\begin{equation}\label{eq:blabla-2}
\begin{split}
\theta_0(u+v)-\theta_0(u)-\theta_0(v)&=\varphi(u,v). 
\end{split}
\end{equation}

In particular, \(\theta_0\) is a quadratic form whose polarisation is the symplectic form $\varphi$. This shows that $\theta_0\in \Omega$.

Let $\theta\in \Omega$ and define $\lambda=\theta-\theta_0$. We have
\begin{align*}
\lambda(u+v)=&\lambda(u)+\lambda(v),\\
\lambda(cu)=&c^2\lambda(u),
\end{align*}
for every $u,v\in V$ and for every $c\in\mathbb{F}_q$. Therefore, since $\mathbb{F}_q$ is of characteristic $2$, the function $\lambda:V\to\mathbb{F}_q$ is semilinear and hence there exists a unique $b\in V$ such that
$$\lambda(u)=(u\cdot b^T)^2,\,\, \hbox{for every }u\in V.$$ Since $f$ is an invertible matrix, there exists a unique $a\in V$ with $b=af$ and hence
$$\lambda(u)=(ufa^T)^2=\varphi(u,a)^2,$$
for every $u\in V$. Summing up, an arbitrary element of $\Omega$ is of the form
$$u\mapsto \theta_0(u)+\varphi(u,a)^2,$$
where $a\in V$. We denote this element of $\Omega$ simply by $\theta_a$. Thus
\begin{equation}\label{eq:blabla}\theta_a(u)=\theta_0(u)+\varphi(u,a)^2,\quad\textrm{for every }u\in V.\end{equation} 
In particular, the elements of $\Omega$ are parametrised by the vectors of $V$. Moreover, if $\theta_{a}=\theta_{a'}$ for some $a, a' \in V$, then $\theta_{a}(u)=\theta_{a'}(u)$ for every $u \in V$ and this implies $\varphi(u, a)= \varphi(u, a')$ for every $u \in V$. Since $\varphi$ is non-degenerate, we obtain $a=a'$. Hence, the set $\Omega$ is in one-to-one correspondence with $V$. This, in particular, yields that $$|\Omega|= q^{2m}.$$

\begin{lemma}[Lemma~6.8,~\cite{GiLoSp}]
For every $\theta\in \Omega$ and $x\in G$, the mapping 
\begin{equation}
\begin{split}\label{eq:blabla1}
\theta^x &\colon V\to\mathbb{F}_q\\
& u \mapsto \theta(ux^{-1})
\end{split}
\end{equation}lies in $\Omega$. In particular, this defines an action of the group $G$ on the set $\Omega$.
\end{lemma}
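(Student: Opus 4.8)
The plan is to verify directly that $\theta^x$ satisfies the two defining properties of an element of $\Omega$ --- that it polarises to $\varphi$ and that it is quadratic in the sense $\theta^x(cu)=c^2\theta^x(u)$ --- and then to check that the assignment $(\theta,x)\mapsto\theta^x$ obeys the two axioms of a right group action. The only structural input needed is that every element of $G$ preserves $\varphi$: from $xfx^T=f$ one gets $\varphi(ux,vx)=(ux)f(vx)^T=u(xfx^T)v^T=ufv^T=\varphi(u,v)$ for all $u,v\in V$ and all $x\in G$, and since $G$ is a group the same holds with $x$ replaced by $x^{-1}$. I would record this invariance first, since it is the crux of the whole argument.

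For the polarisation property, I would use that $x^{-1}$ is linear, so $(u+v)x^{-1}=ux^{-1}+vx^{-1}$; applying the polarisation identity for $\theta$ and then the $\varphi$-invariance of $x^{-1}$ yields
\[
\theta^x(u+v)-\theta^x(u)-\theta^x(v)
=\theta(ux^{-1}+vx^{-1})-\theta(ux^{-1})-\theta(vx^{-1})
=\varphi(ux^{-1},vx^{-1})=\varphi(u,v).
\]
For the scalar property, linearity of $x^{-1}$ gives $(cu)x^{-1}=c(ux^{-1})$, whence $\theta^x(cu)=\theta(c\,ux^{-1})=c^2\theta(ux^{-1})=c^2\theta^x(u)$. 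Together these show $\theta^x\in\Omega$.

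To confirm that this is a genuine action I would then verify the identity axiom, $\theta^{1}(u)=\theta(u\cdot 1)=\theta(u)$, and compatibility: for $x,y\in G$,
\[
(\theta^x)^y(u)=\theta^x(uy^{-1})=\theta(uy^{-1}x^{-1})=\theta(u(xy)^{-1})=\theta^{xy}(u),
\]
using $(xy)^{-1}=y^{-1}x^{-1}$. Hence $(\theta^x)^y=\theta^{xy}$, so the map defines a right action of $G$ on $\Omega$.

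There is no real obstacle here; the argument is a routine unwinding of definitions. The one point that genuinely must be used --- and the reason the construction works at all --- is that $x^{-1}\in G$ preserves $\varphi$. Without this, the polarisation of $\theta^x$ would be the transported form $\varphi(ux^{-1},vx^{-1})$ rather than $\varphi$ itself, and $\theta^x$ would fail to lie in $\Omega$. Moreover, the appearance of $x^{-1}$ (rather than $x$) in the definition is precisely what makes compatibility come out as a right action, and it is worth flagging this in the write-up so the reader sees why the inverse is built into \eqref{eq:blabla1}.
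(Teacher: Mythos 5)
Your proof is correct and is the standard verification: the paper itself gives no proof of this lemma, simply citing it from Gill--Lod\'a--Spiga, and the argument there is the same routine check that $x^{-1}$ preserves $\varphi$ (so the polarisation of $\theta^x$ is again $\varphi$), that semilinearity in the scalar is inherited, and that the use of $x^{-1}$ makes the action axioms come out correctly. Nothing is missing.
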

Before continuing our discussion, we gather some information on $G$. Let $a\in V$, we define the mapping 

\begin{equation}\label{eq:blabla2}
\begin{split} 
t_a \colon &V\to V\\
&u \mapsto u+\varphi(u,a)a.
\end{split}
\end{equation}
Such a function is called a \textit{\textbf{transvection}}. For every $u,v \in V$ and $c \in \mathbb{F}_q$ we have
\begin{align*}
(u+v)t_a & = (u)t_a +(v)t_a; \\
(cu)t_a & = c(u) t_a.
\end{align*}
Hence $t_a$ is linear. Moreover, a computation shows that for every $u\in V$ we have
\begin{align*}
(u)t_a^2& = u,
\end{align*}
so that $t_a$ is an involution. Finally, for every $u,v\in V$, we have
\begin{align*}
\varphi((u)t_a,(v)t_a)&=\varphi(u,v).
\end{align*}
Therefore $t_a$ preserves $\varphi$ and hence lies in the symplectic group $G$.

We are now interested in computing the image of $\theta_a$ under the transvection $t_c$. First recall that, in a field of characteristic 2, since $x= -x$ for every $x \in \mathbb{F}_q$, the square root $\sqrt{\cdot} \colon \mathbb{F}_q \to \mathbb{F}_q$ is a well-defined map. Moreover, for every $a,b,x,y \in \mathbb{F}_q$ such that $x=a^2$ and $y=b^2$ we have $(\sqrt{x}+\sqrt{y})^2= (a+b)^2= a^2+b^2= x+y$, which implies $\sqrt{x}+\sqrt{y}=\sqrt{x+y}$. Moreover, recall that $\theta_a$ is a quadratic form polarising to $\varphi$ and that $t_a$ is an involution, in particular $t_c=t_c^{-1}$. By using these facts, it can be seen that, given $v \in V$, we have
\begin{align*}
\theta_a^{t_c}(u)&=\theta_{a+(\sqrt{\theta_a(c)}+1)c}(u).
\end{align*}
Hence
\begin{equation}\label{eq:0}
\theta_a^{t_c}=
\theta_{a+(\sqrt{\theta_a(c)}+1)c}.
\end{equation}

We now recall some facts about Galois theory. For a reference see \cite[Chapter VI]{lang}. The Frobenius mapping $\phi \colon x \mapsto x^2$ from $\mathbb{F}_q$ to itself is a generator of the Galois group of $\mathbb{F}_q$ over $\mathbb{F}_2$. There exists a well-defined $\mathbb{F}_2$-linear \textit{\textbf{trace}} mapping $\mathrm{Tr}:\mathbb{F}_q\to\mathbb{F}_2$. In what follows, we need only two basic facts about $\mathrm{Tr}$: first, $\mathrm{Tr}$ is surjective and second, from Hilbert's Theorem 90,  the kernel of $\mathrm{Tr}$ consists of the set $\{x^2+x\mid x\in\mathbb{F}_q\}$ and has cardinality $q/2$.

Define
\begin{align}\label{defineomega}
\Omega^+=\{\theta_a\mid \mathrm{Tr}(\theta_0(a))=0\},\\
\Omega^-=\{\theta_a\mid\mathrm{Tr}(\theta_0(a))=1\}.\nonumber
\end{align}

Observe that the above definition is a generalization of the definition of $\Omega^+$ and $\Omega^-$ in \cite[Corollary 7.7 A]{dixon_mortimer}. Indeed, if $q=2$, then the Galois group is the trivial group and hence the trace map is the identity.


\begin{lemma}[Lemma~6.9,~\cite{GiLoSp}]\label{report} The sets $\Omega^+$ and $\Omega^-$ are $G$-orbits on $\Omega$, with
$$|\Omega^+|=\frac{q^m(q^m+1)}{2},\quad |\Omega^-|=\frac{q^m(q^m-1)}{2}.$$
Moreover, the action of $G$ on $\Omega^\pm$ is permutation equivalent to the action of $G$ on the right cosets of $\mathrm{O}_{2m}^\pm(q)$.
\end{lemma}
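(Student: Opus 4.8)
The plan is to reduce everything to $G=\mathrm{Sp}_{2m}(q)$ by first observing that $N=G$. Each $t_a$ is a symplectic transvection, and from $(u)t_{ca}=u+\varphi(u,ca)(ca)=u+c^2\varphi(u,a)a$ one sees that scaling the vector realises the symplectic transvection with an arbitrary \emph{square} multiplier; since $q$ is even, every element of $\mathbb{F}_q$ is a square, so $N$ contains all symplectic transvections, and these generate $\mathrm{Sp}_{2m}(q)$. Hence $N=G$, and the statements about $N$-orbits and $G$-orbits coincide; it suffices to argue for $G$.

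Next I would show that the partition $\Omega=\Omega^+\cup\Omega^-$ is $G$-invariant, i.e. that $a\mapsto\mathrm{Tr}(\theta_0(a))$ is constant on $G$-orbits. As $G$ is generated by the $t_c$, it is enough to check invariance under one transvection. Writing $s=\sqrt{\theta_a(c)}+1$, formula~\eqref{eq:0} gives $\theta_a^{t_c}=\theta_{a'}$ with $a'=a+sc$, and since $\theta_0$ polarises to $\varphi$ one gets $\theta_0(a')=\theta_0(a)+s^2\theta_0(c)+s\varphi(a,c)$. Substituting $s^2=\theta_0(c)+\varphi(a,c)^2+1$ and $s=\sqrt{\theta_0(c)}+\varphi(a,c)+1$ and then applying the two elementary facts that $\mathrm{Tr}$ is $\mathbb{F}_2$-linear and that $\mathrm{Tr}(x)=\mathrm{Tr}(x^2)$, every term cancels in pairs and $\mathrm{Tr}(\theta_0(a'))=\mathrm{Tr}(\theta_0(a))$. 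Conceptually, $\mathrm{Tr}(\theta_0(a))$ is exactly the Arf invariant of $\theta_a$, which is an isometry invariant; this is what forces the cancellation and explains why $\Omega^+$ and $\Omega^-$ turn out to be the plus-type and minus-type forms. I expect this verification to be the main obstacle, since it is the only genuinely computational point and one must handle the square roots and trace identities carefully.

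I would then count $\Omega^\pm$ by a character sum. Writing $a=(\alpha_1,\dots,\alpha_m,\beta_1,\dots,\beta_m)$ one has $\theta_0(a)=\sum_i\alpha_i\beta_i$, so with the additive character $\chi(x)=(-1)^{\mathrm{Tr}(x)}$ the sum $\sum_{a\in V}\chi(\theta_0(a))$ factorises as $\prod_{i=1}^{m}\bigl(\sum_{\alpha,\beta\in\mathbb{F}_q}\chi(\alpha\beta)\bigr)$, and each inner factor equals $q$: the $\alpha=0$ term contributes $q$, while for each $\alpha\neq 0$ the sum over $\beta$ vanishes because $\mathrm{Tr}$ is balanced. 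Hence $\sum_a\chi(\theta_0(a))=q^m$, giving $|\Omega^+|=\frac{1}{2}(q^{2m}+q^m)=\frac{q^m(q^m+1)}{2}$ and $|\Omega^-|=\frac{q^m(q^m-1)}{2}$.

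Finally I would pin down the orbits via orbit--stabiliser. For any $\theta_a$ the stabiliser $G_{\theta_a}$ consists of the $x\in\mathrm{Sp}_{2m}(q)$ that are isometries of $\theta_a$; in even characteristic an isometry of a quadratic form automatically preserves its polar form, so $G_{\theta_a}=\mathrm{O}(V,\theta_a)=\mathrm{O}_{2m}^{\varepsilon}(q)$, where $\varepsilon$ is the type of $\theta_a$. Since $\theta_0$ is the hyperbolic form of plus type, $G_{\theta_0}=\mathrm{O}_{2m}^+(q)$, and the order formulas for $\mathrm{Sp}_{2m}(q)$ and $\mathrm{O}_{2m}^+(q)$ give $|\theta_0^{G}|=\frac{q^m(q^m+1)}{2}$. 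As $\theta_0\in\Omega^+$ and $\Omega^+$ is $G$-invariant, $\theta_0^{G}\subseteq\Omega^+$, and equality of cardinalities forces $\theta_0^{G}=\Omega^+$. Consequently every form in $\Omega^+$ has plus type, so (there being only two types) every form in $\Omega^-$ has minus type; for such a form $G_{\theta_a}=\mathrm{O}_{2m}^-(q)$ and $|\theta_a^{G}|=\frac{q^m(q^m-1)}{2}=|\Omega^-|$, whence $\theta_a^{G}=\Omega^-$. Thus $G=N$ is transitive on each of $\Omega^+$ and $\Omega^-$ with point stabilisers $\mathrm{O}_{2m}^+(q)$ and $\mathrm{O}_{2m}^-(q)$, and therefore the action of $\mathrm{Sp}_{2m}(q)$ on $\Omega^\pm$ is permutation equivalent to its action on the right cosets of $\mathrm{O}_{2m}^\pm(q)$, as claimed.
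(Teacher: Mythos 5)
The paper does not actually prove this lemma: it is imported verbatim from~\cite{GiLoSp} (Lemma~6.8 there), so there is no in-text argument to compare against. Your proposal is a correct, self-contained proof, and its architecture is worth noting. The opening observation that $N=G$ (because $t_{ca}$ realises the transvection with multiplier $c^{2}$ and every element of $\mathbb{F}_q$ is a square in characteristic $2$, so $N$ contains all symplectic transvections and these generate $\mathrm{Sp}_{2m}(q)$) collapses the two orbit statements into one and lets you finish by orbit--stabiliser rather than by an explicit transitivity computation with the formula~\eqref{eq:0}, which is the more hands-on route one would expect in the source. The trace-invariance computation is right: with $s=\sqrt{\theta_a(c)}+1$ one gets $\theta_0(a+sc)=\theta_0(a)+s^{2}\theta_0(c)+s\varphi(a,c)$, and after substituting for $s$ and $s^{2}$ the terms pair off under $\mathrm{Tr}(x^{2})=\mathrm{Tr}(x)$; the identification of $\mathrm{Tr}(\theta_0(a))$ with the Arf invariant of $\theta_a$ is also correct and explains the cancellation. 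The character-sum count $\sum_a(-1)^{\mathrm{Tr}(\theta_0(a))}=q^{m}$ and the index computations $|\mathrm{Sp}_{2m}(q):\mathrm{O}_{2m}^{\pm}(q)|=q^{m}(q^{m}\pm1)/2$ are both right. The only step I would ask you to make explicit is the claim that every form in $\Omega^{-}$ has minus type: the clean justification is that any isometry between two quadratic forms polarising to the same $\varphi$ automatically preserves $\varphi$ and hence lies in $\mathrm{Sp}_{2m}(q)$, so a plus-type element of $\Omega$ would necessarily lie in the $G$-orbit $\theta_0^{G}=\Omega^{+}$; your parenthetical ``there being only two types'' is gesturing at exactly this, but the intermediate sentence deserves to be written out. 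With that small addition the proof is complete.
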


Now, we select a distinguished element of $\Omega^-$. Let $\epsilon\in\mathbb{F}_q$ with $\mathrm{Tr}(\epsilon)=1$ and set $\varepsilon=\epsilon e_1+e_{m+1}$, where $(e_i)_{i\in \{1,\ldots,2m\}}$ is the standard basis of $V$. Since $\theta_0(\epsilon e_1)=0=\theta_0(e_{m+1})$, we have $$\theta_0(\varepsilon)=\theta_0(\epsilon e_1)+\theta_0(e_{m+1})+\varphi(\epsilon e_1,e_{m+1})=\epsilon\varphi(e_1,e_{m+1})=\epsilon,$$ and hence $\mathrm{Tr}(\theta_0(\varepsilon))=\mathrm{Tr}(\epsilon)=1$. Therefore, $\theta_\varepsilon\in \Omega^-$.

We now compute stabilizer chains in these two actions. Let $\circ\in \{+,-\}$ (we deal simultaneously with both cases). 
Let $\{\theta_{a_1},\theta_{a_2},\ldots,\theta_{a_k}\}$ be a subset of $\Omega^\circ$. 
Without loss of generality, we may suppose that $a_1=0$ when $\circ=+$ and $a_1=\varepsilon$ when $\circ=-$. 

Let us define
$${\bf C}_{G_{\theta_{a_1}}}(a_1+a_i)=\{x\in G_{\theta_{a_1}}\mid (a_1+a_i)x=a_1+a_i\},$$
that is the set of matrices in $G_{\theta_{a_1}}$ fixing the vector $a_1+a_i\in V$.
\begin{lemma}[Lemma~6.10,~\cite{GiLoSp}]\label{aux-11}
For every $i\in \{2,\ldots,k\}$,
\begin{equation}\label{eq:-1}G_{\theta_{a_1}}\cap G_{\theta_{a_i}}={\bf C}_{G_{\theta{a_1}}}(a_1+a_i),
\end{equation}
\end{lemma}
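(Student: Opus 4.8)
The plan is to reduce the coincidence of the two point-stabilisers to a single linear condition on the vector $a_1+a_i$, exploiting the explicit parametrisation~\eqref{eq:blabla} of $\Omega$ together with the fact that squaring is a field automorphism of $\mathbb{F}_q$ in characteristic $2$. First I would rewrite $\theta_{a_i}$ relative to $\theta_{a_1}$. Using bilinearity of $\varphi$ in its second argument and the freshman identity $(x+y)^2=x^2+y^2$, valid in characteristic $2$, one computes $\varphi(u,a_1)^2+\varphi(u,a_1+a_i)^2=\varphi(u,a_i)^2$, and hence
\[
\theta_{a_i}(u)=\theta_0(u)+\varphi(u,a_i)^2=\theta_{a_1}(u)+\varphi(u,a_1+a_i)^2
\]
for every $u\in V$. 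This identity is the crux of the argument: it converts the difference of the two quadratic forms into a single squared linear functional, the one determined by $a_1+a_i$.

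Next I would take $x\in G_{\theta_{a_1}}$ and analyse when $x$ also stabilises $\theta_{a_i}$. Applying the definition of the action~\eqref{eq:blabla1} and the displayed identity at the point $ux^{-1}$, and then using $\theta_{a_1}^x=\theta_{a_1}$, I obtain
\[
\theta_{a_i}^x(u)=\theta_{a_i}(ux^{-1})=\theta_{a_1}(ux^{-1})+\varphi(ux^{-1},a_1+a_i)^2=\theta_{a_1}(u)+\varphi(ux^{-1},a_1+a_i)^2.
\]
Comparing this with $\theta_{a_i}(u)=\theta_{a_1}(u)+\varphi(u,a_1+a_i)^2$ and invoking the injectivity of $t\mapsto t^2$ on $\mathbb{F}_q$, the condition $\theta_{a_i}^x=\theta_{a_i}$ is seen to be equivalent to $\varphi(ux^{-1},a_1+a_i)=\varphi(u,a_1+a_i)$ for all $u\in V$.

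Finally, since $x$ lies in the symplectic group it preserves $\varphi$, so $\varphi(ux^{-1},a_1+a_i)=\varphi(u,(a_1+a_i)x)$; substituting, the condition becomes $\varphi\!\left(u,(a_1+a_i)x-(a_1+a_i)\right)=0$ for every $u\in V$, and non-degeneracy of $\varphi$ forces $(a_1+a_i)x=a_1+a_i$. Therefore an element of $G_{\theta_{a_1}}$ stabilises $\theta_{a_i}$ precisely when it fixes the vector $a_1+a_i$, which is exactly~\eqref{eq:-1}. The only delicate points are the repeated and essential use of the two characteristic-$2$ facts that squaring is additive and injective, and the correct transfer of $x^{-1}$ from the first to the second argument of $\varphi$ through the symplectic property; beyond this bookkeeping I do not expect a genuine obstacle.
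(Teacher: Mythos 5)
Your proof is correct: the identity $\theta_{a_i}=\theta_{a_1}+\varphi(\,\cdot\,,a_1+a_i)^2$, the injectivity of the Frobenius map, and the transfer of $x^{-1}$ across $\varphi$ via the symplectic property together give exactly the equivalence $\theta_{a_i}^x=\theta_{a_i}\iff (a_1+a_i)x=a_1+a_i$ for $x\in G_{\theta_{a_1}}$. The paper itself imports this lemma from the cited reference without reproducing a proof, and your argument is precisely the computation that the surrounding parametrisation of $\Omega$ by vectors of $V$ is set up to support, so there is nothing to add.
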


We are now ready to prove our main result of this section. 
\begin{proposition}\label{prop:25}Let $G$ be a permutation group as in Definition~$\ref{defintion6.2}$~$\eqref{definition6.2:2}$. Then $G$ is  $\mathrm{IBIS}$ if, and only if, $G$ is one of the following:
\begin{enumerate}
\item\label{prop:25_1} $m=1$, $G=\mathrm{Sp}_2(q)$ in its action on $\Omega^-$, $q=2^f$, $f\ge 3$,
\item\label{prop:25_2} $m=1$, $G=\mathrm{Sp}_2(q)$ or $G=\mathrm{Aut}(\mathrm{Sp}_2(q))$ in its action on $\Omega^-$, $q=4$,
\item\label{prop:25_3} $m=1$, $G=\mathrm{Aut}(\mathrm{Sp}_2(q))\cong\Gamma\mathrm{L}_2(q)$ in its action on $\Omega^+$, $q=2^f$, $f$ odd prime, 
\item\label{prop:25_4} $m=q=2$.
\end{enumerate}
\end{proposition}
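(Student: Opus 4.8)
The plan is to convert the whole question into one about chains of vector stabilisers in an orthogonal group, using Lemma~\ref{aux-11}. Fix the distinguished base point $\theta_{a_1}$ of that lemma (so $a_1=0$ on $\Omega^{+}$ and $a_1=\varepsilon$ on $\Omega^{-}$) and set $H=G_{\theta_{a_1}}$; by Lemma~\ref{report} the stabiliser $H\cap\mathrm{Sp}_{2m}(q)$ is the orthogonal group $\mathrm{O}_{2m}^{\circ}(q)$ realised inside $\mathrm{Sp}_{2m}(q)$ via Dye's embedding, so $H$ is an orthogonal-type group. Lemma~\ref{aux-11} gives, for any points $\theta_{a_2},\ldots,\theta_{a_k}$,
\[
G_{\theta_{a_1},\theta_{a_2},\ldots,\theta_{a_k}}=\bigcap_{i=2}^{k}\mathbf{C}_{G_{\theta_{a_1}}}(a_1+a_i),
\]
which is precisely the pointwise stabiliser in $H$ of the vectors $v_i:=a_1+a_i$. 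A direct computation with \eqref{eq:blabla} and the identity $\mathrm{Tr}(x^2)=\mathrm{Tr}(x)$ shows that $\theta_{a_i}\in\Omega^{\circ}$ if and only if $v_i$ lies in the $H$-invariant set $\Sigma:=\{v\in V\mid \mathrm{Tr}(\theta_{a_1}(v))=0\}$, and that $\theta_{a_i}\mapsto v_i$ is a bijection $\Omega^{\circ}\to\Sigma$. Since $G$ is transitive on $\Omega^{\circ}$, the set of lengths of irredundant bases of $G$ equals the set of lengths of those starting at $\theta_{a_1}$; the displayed formula then matches the irredundant bases of $G$ on $\Omega^{\circ}$ starting at $\theta_{a_1}$ with the irredundant bases of $H$ acting on $\Sigma$, the lengths differing by one. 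Hence $G$ on $\Omega^{\circ}$ is $\mathrm{IBIS}$ if and only if $H$ on $\Sigma$ is $\mathrm{IBIS}$, and the problem becomes: when does the orthogonal group $H$ acting on the vectors in $\Sigma$ admit irredundant bases of more than one length?

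For $m\ge 2$ with $(m,q)\ne(2,2)$ I would prove that $H$ on $\Sigma$ is not $\mathrm{IBIS}$ by exhibiting two irredundant bases of different lengths. The short one is built from vectors spanning a non-degenerate subspace, driving $H$ down to the identity through the chain $\mathrm{O}_{2m}^{\circ}(q)\ge\mathrm{O}_{2m-1}(q)\ge\cdots\ge\mathrm{O}_{1}(q)=1$ (recall $\mathrm{O}_1(q)=1$ in even characteristic); this realises the base size. The long one is built from a totally singular flag: every vector $v$ with $\theta_{a_1}(v)=0$ automatically lies in $\Sigma$, the pointwise stabiliser of a totally singular subspace carries a nontrivial unipotent radical, and trivialising that radical vector-by-vector costs strictly more steps than the non-degenerate route. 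Two bases of different lengths immediately contradict $\mathrm{IBIS}$. To make this uniform in $m$, I would follow the inductive device of Propositions~\ref{psu:cased>3} and~\ref{psp:cased>4}: fix a non-degenerate orthogonal decomposition $V=W\perp W^{\perp}$ with $\dim W$ equal to the dimension of a base case, transport the two configurations into $W$ so that both sequences share the common pointwise stabiliser equal to the pointwise stabiliser of $W$ (acting as $\mathrm{O}(W^{\perp})$), and then conclude by Lemma~\ref{lemma:1}; the base cases ($m=2$ with $q\ge4$, together with a small starting value of $m$) are settled by the explicit construction above or, when convenient, by a direct machine computation.

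The case $m=1$ accounts for the genuine exceptions. Here $H=\mathrm{O}_2^{\circ}(q)$ is dihedral, of order $2(q-1)$ when $\circ=+$ and $2(q+1)$ when $\circ=-$, acting on $\Sigma\subseteq\mathbb{F}_q^2$; once field automorphisms are adjoined, $G$ ranges over the subgroups of $\mathrm{Aut}(\mathrm{Sp}_2(q))=\Gamma\mathrm{L}_2(q)$. In this regime $\Omega^{+}$ and $\Omega^{-}$ are exactly the two actions of degree $q(q+1)/2$ and $q(q-1)/2$ of almost simple groups with socle $\mathrm{PSL}_2(q)$ that already appear in~\cite{LeSp} and in Table~\ref{table:table}. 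I would invoke (or re-derive, in the spirit of Lemma~\ref{lemma:-100}) that classification, whose numerical thresholds are governed by the Galois/trace analysis of Lemma~\ref{lemma:auxiliary}, to single out precisely the $\mathrm{IBIS}$ families, namely cases~\eqref{prop:25_1},~\eqref{prop:25_2} and~\eqref{prop:25_3}. Finally $m=q=2$, where $\mathrm{Sp}_4(2)'\cong\mathrm{Alt}(6)$ and $\Sigma$ is small, is checked to be $\mathrm{IBIS}$ directly, giving case~\eqref{prop:25_4}.

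The main obstacle is the generic step of the second paragraph: showing, uniformly in $q$ and in $m\ge 2$, that the totally singular configuration yields an irredundant base strictly longer than the base size, while verifying that both configurations consist of vectors of $\Sigma$ and have trivial pointwise stabiliser. The presence of semilinear field automorphisms when $G>\mathrm{Sp}_{2m}(q)$ is the delicate point, since a sequence that is irredundant for the socle can become redundant, or acquire extra irredundant extensions, once these maps are adjoined to $H$; this is also exactly what forces the boundary between the $\mathrm{IBIS}$ case $m=q=2$ and the non-$\mathrm{IBIS}$ cases with $m=2$, $q\ge4$, and it must be tracked carefully throughout.
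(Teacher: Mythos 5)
Your reduction via Lemma~\ref{aux-11} is sound and is essentially the same mechanism the paper uses: the identification of $\Omega^{\circ}$ with $\Sigma=\{v\in V\mid \mathrm{Tr}(\theta_{a_1}(v))=0\}$ is correct (a short trace computation confirms $\theta_a\in\Omega^{\circ}$ if and only if $a_1+a\in\Sigma$), and your treatment of $m=1$ via the dihedral stabilizers and~\cite{LeSp}, and of $m=q=2$ by direct check, coincide with the paper's. The gap is the generic step you yourself flag at the end: for $m\ge 2$, $(m,q)\ne(2,2)$, you never actually produce two irredundant bases of different lengths. There are two concrete problems. First, the proposed ``short'' chain $\mathrm{O}_{2m}^{\circ}(q)\ge\mathrm{O}_{2m-1}(q)\ge\cdots\ge\mathrm{O}_{1}(q)=1$ does not exist in characteristic $2$: the stabilizer in $\mathrm{O}_{2m}^{\pm}(q)$ of a non-singular vector $v$ is of type $\mathrm{Sp}_{2m-2}(q)$ (the bilinear form is alternating, so $v\in v^{\perp}$ and the radical is quotiented out), not an odd-dimensional orthogonal group, and the successive stabilizers do not descend through $\mathrm{O}_{2m-1},\mathrm{O}_{2m-2},\dots$; so the length of this route is not what the displayed chain suggests. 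Second, the assertion that the totally singular flag ``costs strictly more steps'' is precisely the inequality that must be proved and is only asserted; one must also check irredundance of every step of both chains and membership of every vector in $\Sigma$, and this is exactly where the semilinear part of $H$ and the boundary between $q=2$ and $q\ge 4$ enter.

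For comparison, the paper closes this step differently. For $q=2$ it exploits the fact that the action on $\Omega^{\pm}$ is $2$-transitive, so that by Lemma~\ref{aux-11} the action of $G_{\theta_{a_1}}$ on $\Omega^{\circ}\setminus\{\theta_{a_1}\}$ is the action of $\mathrm{O}_{2m}^{\pm}(2)$ on singular $1$-spaces, already shown not to be IBIS in Lemmas~\ref{ort+} and~\ref{evenminusorthogonal}. For $q>2$ it first reduces to $m=2$ by fixing the vectors $e_i,e_{i+m}$ for $3\le i\le m$ (so that the residual group is $\mathrm{O}_4^{\pm}(q)$ acting on $\langle e_1,e_2,e_{1+m},e_{2+m}\rangle$), and then writes down explicit $4\times 4$ stabilizer computations exhibiting irredundant bases of cardinalities $4$ and $5$ on each of $\Omega^{+}$ and $\Omega^{-}$. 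Your inductive framework (restricting to a non-degenerate subspace of bounded dimension and invoking Lemma~\ref{lemma:1}) is compatible with this, but without explicit base-case computations, or some substitute for them, the proof is not complete.
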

\begin{proof}
Suppose first $m=1$. Here, $G_0=\mathrm{Sp}_2(q)=\mathrm{SL}_2(q)$. The stabilizer of an element of $\Omega^+$ in $G_0$ is isomorphic to the dihedral group of order $2(q-1)$. This action was discussed in Example~3.8 in~\cite{LeSp} and it was proven that it is IBIS only when $G=\Gamma\mathrm{L}_2(q)$,  $q=2^f$ and $f$ is an odd prime. In particular, we obtain the examples in~\eqref{prop:25_3}. The stabilizer of an element of $\Omega^-$ in $G_0$ is isomorphic to the dihedral group of order $2(q+1)$. This action was discussed in Example~3.7 in~\cite{LeSp} and it was proven to be IBIS only when $G=\mathrm{SL}_2(q)$,  $q=2^f$ and $f\ge 3$, or when $\mathrm{SL}_2(q)\unlhd G\le\mathrm{Aut}(\mathrm{SL}_2(q))$ with $q=4$. In particular, we obtain the examples in~\eqref{prop:25_1} and~\eqref{prop:25_2}. In conclusion, if $m=1$, $G$ is $\mathrm{IBIS}$ if and only if $G$ is as in cases \eqref{prop:25_1},\eqref{prop:25_2},\eqref{prop:25_3}.

\smallskip

We now deal with the case $m=q=2$. In this case $\mathrm{Sp}_4(2)$ is not simple, since $\mathrm{Sp}_4(2)'\cong\mathrm{Alt}(6)$. Hence $G$ is an almost simple group having socle $\mathrm{Alt}(6)$. Moreover, $|\Omega^+|=10$ and $|\Omega^-|=6$. The action of $G$ on $\Omega^\pm$ is IBIS, see~\cite[Theorem~1.2]{LeSp}. In particular, we obtain the examples in~\eqref{prop:25_4}. Therefore, for the rest of our argument, we may suppose that $(m,q)\ne (2,2)$ and $m\ge 2$.

\smallskip

As usual, set $G_0=\mathrm{Sp}_{2m}(q)$. From Lemma~\ref{report}, we may idenfity the action of $G_0$ on the right cosets of $\mathrm{O}_{2m}^\pm(q)$ with the action of $G_0$ on $\Omega^\pm$ and we may also use the notation above.

Suppose first $q=2$, so that $m > 2$. This case is rather special because in this case the action of $G_0$ on both $\Omega^+$ and $\Omega^-$ is $2$-transitive.\footnote{In general  the action of $G_0$ on $\Omega^\pm$ has rank $q$, see for instance~\cite{GuPrSp}.} We first consider $\Omega^+$ and observe that $\theta_0,\theta_{e_1}\in \Omega^+$ because $\theta_0(e_1)=0$, (that is, $e_1$ is singular for the quadratic form $\theta_0$), see~\eqref{defineomega}. From Lemma~\ref{aux-11}, $G_{\theta_0}\cap G_{\theta_{e_1}}={\bf C}_{\mathrm{O}_{2m}^+(q)}(e_1)$. Since the transitive action of $G_{\theta_0}$ on $\Omega^+\setminus\{\theta_0\}$ is permutation equivalent to the action of $G_{\theta_0}=\mathrm{O}_{2m}^+(q)$ on the right cosets of $G_{\theta_0}\cap G_{\theta_{e_1}}={\bf C}_{\mathrm{O}_{2m}^+(q)}(e_1)$, we deduce that the action of $G_{\theta_0}$ on $\Omega^+\setminus\{\theta_0\}$ is permutation equivalent to the action of $\mathrm{O}_{2m}^+(q)$ on the set of singular non-zero vectors of $V$, with respect to the quadratic form $\theta_0$. In the special case $q=2$, this implies that  the action of $G_{\theta_0}$ on $\Omega^+\setminus\{\theta_0\}$ is permutation equivalent to the action of $\mathrm{O}_{2m}^+(q)$ on the set of $1$-dimensional singular subspaces of $V$, which we have shown in Lemma~\ref{ort+} to be not IBIS. The argument for $\Omega^-$ is similar. Observe that $\theta_\varepsilon\in \Omega^-$. Hence, using~\eqref{eq:0} and using the fact that $\theta_\varepsilon(e_2)=0$,
$$\theta_{\varepsilon}^{t_{e_2}}=\theta_{\varepsilon+(\sqrt{\theta_\varepsilon(e_2)}+1)e_2}=\theta_{\varepsilon+e_2}\in \Omega^-.$$
 From Lemma~\ref{aux-11}, $G_{\theta_\varepsilon}\cap G_{\theta_{\varepsilon+e_2}}={\bf C}_{\mathrm{O}_{2m}^-(q)}(e_2)$. Since the transitive action of $G_{\theta_\varepsilon}$ on $\Omega^-\setminus\{\theta_\varepsilon\}$ is permutation equivalent to the action of $G_{\theta_\varepsilon}=\mathrm{O}_{2m}^-(q)$ on the right cosets of $G_{\theta_\varepsilon}\cap G_{\theta_{\varepsilon+e_2}}={\bf C}_{\mathrm{O}_{2m}^-(q)}(e_2)$ and since $\theta_\varepsilon(e_2)=0$, we deduce that the action of $G_{\theta_\varepsilon}$ on $\Omega^-\setminus\{\theta_\varepsilon\}$ is permutation equivalent to the action of $\mathrm{O}_{2m}^-(q)$ on the set of singular non-zero vectors of $V$, with respect to the quadratic form $\theta_\varepsilon$. In the special case $q=2$, this implies that the action of $G_{\theta_\varepsilon}$ on $\Omega^-\setminus\{\theta_\varepsilon\}$ is permutation equivalent to the action of $\mathrm{O}_{2m}^-(q)$ on the set of $1$-dimensional singular subspaces of $V$, which we have shown in Lemma~\ref{evenminusorthogonal} to be not IBIS.

\smallskip

Suppose next $q>2$. We show that in this case $G$ is not $\mathrm{IBIS}$. To start with, we recall that $G = \mathrm{Sp}_{2m}(q)\rtimes \langle \varphi \rangle$, for some field automorphism $\varphi \in \mathrm{Aut}(\mathbb{F}_q)$. We first take in account $\Omega^+$. Consider $$H=G_{\theta_0}\cap \bigcap_{i=3}^m(G_{\theta_{e_i}}\cap G_{\theta_{e_{i+m}}}).$$
By construction, $H\cong\mathrm{O}_4^+(q) \rtimes \langle \varphi \rangle $ fixes the vector space $\langle e_i,e_{i+m}\mid 3\le i\le m\rangle$ and induces the orthogonal group $\mathrm{O}_4^+(q)\rtimes \langle \varphi \rangle $ on $\langle e_1,e_2,e_{1+m},e_{2+m}\rangle$. This shows that we may deduce the fact that the action of $G$ on $\Omega^+$ is not IBIS for general values of $m$, arguing only on the case $m=2$. Therefore, in what follows, we assume $m=2$. A direct computation with the quadratic form $\theta_0$ shows that
\begin{align*}
G_{\theta_0}\cap G_{\theta_{e_2}}=&\left\{
\begin{pmatrix}
a&ay&0&0\\
0&1&0&0\\
0&a^{-1}x&a^{-1}&0\\
x&xy&y&1\\
\end{pmatrix}\varphi^k \mid a,x,y\in\mathbb{F}_q,a\ne 0, k \in \mathbb{N}
\right\}\\
&\cup
\left\{
\begin{pmatrix}
0&ax&a&0\\
0&1&0&0\\
a^{-1}&a^{-1}y&0&0\\
x&xy&y&1\\
\end{pmatrix}\varphi^k \mid a,x,y\in\mathbb{F}_q,a\ne 0, k \in \mathbb{N}
\right\}.
\end{align*}
In particular, $|G_{\theta_0}\cap G_{\theta_{e_2}}|=2(q-1)q^2|\varphi |$, where $|\varphi|$ is the order of the automorphism $\varphi$. Similarly,
$$G_{\theta_0}\cap G_{\theta_{e_2}}\cap G_{\theta_{e_1}}=\left\{
\begin{pmatrix}
1&0&0&0\\
0&1&0&0\\
0&x&1&0\\
x&0&0&1\\
\end{pmatrix}\varphi^k\mid x\in\mathbb{F}_q, k \in \mathbb{N}
\right\}.
$$
In particular, $|G_{\theta_0}\cap G_{\theta_{e_2}}\cap G_{\theta_{e_1}}|=q|\varphi|$. Moreover, from this description, it is clear that
$G_{\theta_0}\cap G_{\theta_{e_2}}\cap G_{\theta_{e_1}}\cap G_{\theta_{e_4}}=\langle \varphi \rangle$. Let $\lambda$ be a generator of the multiplicative group of the field $\mathbb{F}_q$. If we repeat the above argument with $e_4$ replaced by $\lambda e_4$, we deduce 
\begin{equation}
	\label{eqnew}G_{\theta_0}\cap G_{\theta_{e_2}}\cap G_{\theta_{e_1}}\cap G_{\theta_{\lambda e_4}}=1.
\end{equation} Therefore $\mathrm{Sp}_4(q)\rtimes \langle \phi \rangle$ on $\Omega^+$ has a base of size $4$ and a base of size at least $5$ as long as $\varphi$ is not the identity.\\
Suppose now that $\varphi = 1$, that is $G = \mathrm{Sp}_4(q)$. Observe that \eqref{eqnew} shows that $G$ has a base of cardinality $4$.
We now exhibit a base of cardinality $5$. Let $\lambda\in\mathbb{F}_q$ with $\mathrm{Tr}(\lambda)=0$ and $\lambda\ne 0$: observe that this is possible because $q\ne 2$. Let $a=e_1+\lambda e_3$ and observe that $\theta_a\in\Omega^+$ because $\theta_0(a)=0$.  A direct computation with the quadratic form $\theta_0$ shows that
\begin{align*}
G_{\theta_0}\cap G_{\theta_{e_2}}\cap G_{\theta_a}=&\left\{
\begin{pmatrix}
1&\lambda x&0&0\\
0&1&0&0\\
0&ax&1&0\\
x&\lambda x^2&\lambda x&1\\
\end{pmatrix}\mid x\in\mathbb{F}_q
\right\}\\
&\cup
\left\{
\begin{pmatrix}
0&\lambda x&\lambda &0\\
0&1&0&0\\
\lambda^{-1}&x&0&0\\
x&\lambda x^2&\lambda x&1\\
\end{pmatrix}\mid x\in\mathbb{F}_q
\right\}.
\end{align*}
In particular, $|G_{\theta_0}\cap G_{\theta_{e_2}}\cap G_{\theta_a}|=2q$. Moreover, from this description, it is clear that
$$G_{\theta_0}\cap G_{\theta_{e_2}}\cap G_{\theta_{a}}\cap G_{\theta_{e_4}}=\left\langle
\begin{pmatrix}
0&0&\lambda&0\\
0&1&0&0\\
\lambda^{-1}&0&0&0\\
0&0&0&1\\
\end{pmatrix}\right\rangle$$
has order $2$. Now, $G_{\theta_0}\cap G_{\theta_{e_2}}\cap G_{\theta_{a}}\cap G_{\theta_{e_4}}\cap G_{\theta_{e_1}}=1$. Therefore 
the action of $\mathrm{Sp}_4(q)$ on $\Omega^+$ has a base of size $5$. This concludes the proof that $G$ on $\Omega^+$ is not IBIS when $q>2$.

\smallskip

We now consider $\Omega^-$. Observe that $\theta_\varepsilon(e_i)=0$ for each $i\in \{1,\ldots,2m\}\setminus\{1,1+m\}$. In particular, for these values of $i$, we have $$\theta_\varepsilon^{t_{e_i}}=\theta_{\varepsilon+(\sqrt{\theta_\varepsilon(e_i)}+1)e_i}=\theta_{\varepsilon+e_i}.$$ Consider $$H=G_{\theta_\varepsilon}\cap \bigcap_{i=3}^m(G_{\theta_{\varepsilon+e_i}}\cap G_{\theta_{\varepsilon+e_{i+m}}})={\bf C}_{G_{\theta_\varepsilon}}(\langle e_i,e_{i+1}\mid 3\le i\le m\rangle).$$
By construction, $H\cong\mathrm{O}_4^-(q)\rtimes \langle \varphi \rangle$ fixes  the vector space $\langle e_i,e_{i+m}\mid 3\le i\le m\rangle$ and induces the orthogonal group $\mathrm{O}_4^-(q)\rtimes \langle \varphi \rangle$ on $\langle e_1,e_2,e_{1+m},e_{2+m}\rangle$. This shows that we may deduce the fact that the action of $G$ on $\Omega^-$ is not IBIS for general values of $m$, arguing only on the case $m=2$. Therefore, in what follows, we assume $m=2$. Observe that when $m=2$, the quadratic form $\theta_\varepsilon$ has matrix
\[
\begin{pmatrix}
1&0&1&0\\
0&0&0&1\\
0&0&\epsilon^2&0\\
0&0&0&0
\end{pmatrix}.
\]

 As $\theta_\varepsilon(e_2)=0$, we have
$$\theta_{\varepsilon}^{t_{e_2}}=\theta_{\varepsilon+(\sqrt{\theta_\varepsilon(e_2)}+1)e_2}=\theta_{\varepsilon+e_2}.$$ A computation gives that the group $$G_{\theta_\varepsilon}\cap G_{\theta_{\varepsilon+e_2}}={\bf C}_{G_{\theta_\varepsilon}}(e_2)$$
consists of the matrices
\[
\begin{pmatrix}
a&cx_3+ay_3&c&0\\
0&1&0&0\\
x_1&x_1y_3+x_3y_1&y_1&0\\
x_3&x_3^2+x_3y_3+\epsilon^2y_3^2&y_3&1\\
\end{pmatrix}
\hbox{ subject to }
\left\{
\begin{array}{lc}
ay_1+cx_1&=1\\
a^2+ac+\epsilon^2c^2&=1\\
x_1^2+x_1y_1+\epsilon^2 y_1^2&=\epsilon^2
\end{array}
\right.
\]
together with the group generated by the automorphism $\varphi$. Moreover, as $\theta_\varepsilon(e_4)=0$, we have
$$\theta_{\varepsilon}^{t_{e_4}}=\theta_{\varepsilon+(\sqrt{\theta_\varepsilon(e_4)}+1)e_4}=\theta_{\varepsilon+e_4}.$$ A computation gives that the group $$G_{\theta_\varepsilon}\cap G_{\theta_{\varepsilon+e_2}}\cap G_{\theta_{\varepsilon+e_4}}={\bf C}_{G_{\theta_\varepsilon}}(\langle e_2,e_4\rangle)$$
consists of the matrices
\[
\begin{pmatrix}
a&0&c&0\\
0&1&0&0\\
x_1&0&y_1&0\\
0&0&0&1\\
\end{pmatrix}
\hbox{ subject to }
\left\{
\begin{array}{lc}
ay_1+cx_1&=1\\
a^2+ac+\epsilon^2c^2&=1\\
x_1^2+x_1y_1+\epsilon^2 y_1^2&=\epsilon^2
\end{array}
\right.
\]
together with the group generated by the automorphism $\varphi$. Moreover, as $\theta_\varepsilon(e_3)=\epsilon^2$, we have
$$\theta_{\varepsilon}^{t_{e_3}}=\theta_{\varepsilon+(\sqrt{\theta_\varepsilon(e_3)}+1)e_3}=\theta_{\varepsilon+(1+\epsilon)e_3}.$$
Using Theorem~$1.1$ of \cite{reis} (applied with $a=1$ and $q=2$ with the notation therein), we may choose $\epsilon$ to be a field generator with $\mathrm{Tr}(\epsilon)=1$. A computation gives that the group $$G_{\theta_\varepsilon}\cap G_{\theta_{\varepsilon+e_2}}\cap G_{\theta_{\varepsilon+e_4}}\cap G_{\theta_{\varepsilon+(1+\epsilon)e_3}}$$
has order 2 and is generated by
\[
\begin{pmatrix}
1&0&\epsilon^{-2}&0\\
0&1&0&0\\
0&0&1&0\\
0&0&0&1\\
\end{pmatrix}.
\]
Therefore $\mathrm{Sp}_4(q)\rtimes \langle \varphi \rangle$ in its action on $\Omega^-$ has a base of size $5$. Now, fix $\alpha\in\mathbb{F}_q\setminus\{0,1\}$ and let $a=\alpha e_1$. We have
$$\theta_{\varepsilon}^{t_{a}}=\theta_{\varepsilon+(\sqrt{\theta_\varepsilon(a)+1})a}=\theta_{\varepsilon+\alpha(\alpha+1)e_1}.$$ A computation gives that the group $$G_{\theta_\varepsilon}\cap G_{\theta_{\varepsilon+e_2}}\cap G_{\theta_{\varepsilon+(1+\alpha)\alpha e_1}}$$
consists of the matrices
\[
\begin{pmatrix}
1&0&0&0\\
0&1&0&0\\
x_1&x_3&1&0\\
x_3&x_3^2&0&1\\
\end{pmatrix}
\hbox{ subject to }
x_1\in\{0,1\}
\]
together with the elements in the group generated by $\varphi$ fixing $\alpha(\alpha+1)$. 
Finally,
$$G_{\theta_\varepsilon}\cap G_{\theta_{\varepsilon+e_2}}\cap G_{\theta_{\varepsilon+(1+\alpha)\alpha e_1}}\cap G_{\theta_{\varepsilon+(1+\epsilon)e_3}}=1$$
and hence $\mathrm{Sp}_4(q)\rtimes \langle \varphi \rangle$ on $\Omega^-$ has also a base of size $4$.
\end{proof}

\section{Primitive action on non-degenerete subspaces: Case~\eqref{HiHo2}}\label{nondegenerate}

Here, we deal with the action on the non-degenerate subspaces and hence with case~\eqref{HiHo2}. Let $G$ an almost simple group with socle a classical group $G_0$ (either symplectic, unitary or orthogonal), and let $V$ be its associated module of dimension $d$. We describe the primitive action of $G$ on non-degenerate subspaces.

When $G$ is symplectic or unitary, the action of $G$ is on the collection of all non-degenerate subspaces of $V$ of a given dimension $k$ with $ k<d/2$. 
When $G$ is orthogonal, there are a few cases to consider. Indeed, the collection of non-degenerate $k$-subspaces of $V$ is not always a $G_0$-orbit. Indeed, it is a single orbit if and only if $d$ is even and $k$ is odd (see Table~$4.1.1$ in \cite{BurGiu}). In this section, we denote with $N_k$ a $G$-orbit of $k$-dimensional non-degenerate subspaces.

\begin{lemma}\label{monday1}
Let $V,d$ and $k$ be as above and let $W_1$ be a non-degenerate $k$-dimensional subspace of $V$. Except when 
\begin{itemize}
\item $(q,k)=(2,2)$ and $V$ is symplectic, or 
\item $ (q,k)=(2,1)$ and $V$ is Hermitian, or
\item  $(q,k)=(3,1)$ and $V$ is orthogonal, or
\item $(q,k)=(2,2)$, $V$ is orthogonal and $W_1$ is anisotropic,
\end{itemize} there exist two non-degenerate subspaces $W_2$ and $W_3$ of $V$ and an element $g$ of $G_0$ with
\begin{enumerate}
\item\label{eq:monday1}$W_1\perp W_2$,
\item\label{eq:monday2}$W_1\oplus W_2=W_1\oplus W_3$ and $W_1\cap W_3=W_2\cap W_3=0$,
\item\label{eq:monday3}$W_1^g=W_1$, $W_2^g=W_2$ and $W_3^g\ne W_3$.
\end{enumerate}
\end{lemma}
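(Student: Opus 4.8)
The plan is to exploit the non-degeneracy of $W_1$ to split $V$ orthogonally and then realise $W_3$ as a diagonal inside $W_1\oplus W_2$. First I would write $V=W_1\perp W_1^{\perp}$; since $k<d/2$ we have $\dim W_1^{\perp}=d-k>k$, so $W_1^{\perp}$ is a non-degenerate space with enough room to contain a subspace $W_2$ isometric to $W_1$. Choosing such a $W_2$ gives condition~\eqref{eq:monday1} for free and makes $U:=W_1\oplus W_2$ a non-degenerate $2k$-dimensional subspace, so that $U^{\perp}$ is non-degenerate of dimension $d-2k\ge 1$.

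Next I would produce $W_3$ as a graph. Fix an isometry $\iota\colon W_1\to W_2$ and a scalar $c$, and set $\phi=c\,\iota$, a similarity of factor $\mu$ (equal to $c^2$ in the symplectic and orthogonal cases and to $c^{q+1}$ in the Hermitian case). Let $W_3=\{w+\phi(w)\mid w\in W_1\}$. As a graph over $W_1$ of an isomorphism onto $W_2$, the subspace $W_3$ is $k$-dimensional, meets $W_1$ and $W_2$ trivially and satisfies $W_1\oplus W_3=U=W_1\oplus W_2$, which is exactly condition~\eqref{eq:monday2}. Because $W_1\perp W_2$, the form restricted to $W_3$ equals $(1+\mu)$ times the form on $W_1$ under $w\mapsto w+\phi(w)$, so $W_3$ is non-degenerate as soon as $1+\mu\neq 0$; in odd characteristic any $\mu\neq -1$ works, while in characteristic $2$ one needs $\mu\neq 1$, i.e. a scalar $c\neq 1$. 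This is where small fields first intervene: over $\mathbb{F}_2$ the only available similarity factor is the forbidden one, every transverse subspace is forced to be totally singular, and (using $\mathrm{O}_2^-(2)=\mathrm{GL}_2(2)$ in the anisotropic orthogonal case) this yields exactly the three characteristic-two exceptions.

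For the element $g$ I would let it act as $g_1$ on $W_1$, as $g_2$ on $W_2$, and as a correcting isometry on $U^{\perp}$; since $V=U\perp U^{\perp}$ this determines $g$. A direct computation with the graph shows that $W_3^{g}$ is the graph of $g_2\,\phi\,g_1^{-1}$, so $W_3$ is moved precisely when $g_2\neq \phi g_1\phi^{-1}$, while $W_1$ and $W_2$ stay fixed setwise; thus violating that equality secures the last part of condition~\eqref{eq:monday3}. The remaining task is to force $g\in G_0$. In the symplectic case any $g_1\in\mathrm{Sp}(W_1)\setminus\{1\}$ with $g_2=\mathrm{id}$ already lies in $\mathrm{Sp}(V)$ and projects nontrivially; in the unitary case I would take $g_1,g_2$ scalar with $g_2\neq g_1$ and $\det(g_1)\det(g_2)=1$, so that $g\in\mathrm{SU}(V)$; the delicate case is orthogonal, where $g$ must lie in $\Omega$, forcing $\det(g)=1$ and trivial spinor norm. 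There I would realise $g$ on $U$ as a reflection negating one of $W_1,W_2$ and cancel both the determinant and the spinor norm by a reflection on $U^{\perp}$ in a vector of the same square class, which is possible once $\dim U^{\perp}=d-2k$ is large enough to represent both classes.

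The hard part will be the orthogonal bookkeeping. One must simultaneously keep $W_3$ non-degenerate and produce a genuine element of $\Omega$, and the spinor-norm cancellation on $U^{\perp}$ depends on the complement representing the right square class; the interplay between the value $Q(w+\phi(w))$, the determinant, and the spinor norm is the real constraint. The four listed configurations are exactly those where these requirements cannot be met for field-size or dimensional reasons — no usable scalar over $\mathbb{F}_2$ in the symplectic and Hermitian cases, the coincidence $\mathrm{O}_2^-(2)=\mathrm{GL}_2(2)$ in the anisotropic case, and the incompatibility of the $Q$-value and spinor-norm conditions over $\mathbb{F}_3$ with $k=1$ — and these I would dispose of by the direct, small (computer-assisted if necessary) checks reserved for such degenerate cases.
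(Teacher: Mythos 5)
Your overall strategy is the same as the paper's: split off $W_1\perp W_2$ with $W_2\le W_1^{\perp}$ isometric to $W_1$, realise $W_3$ as the graph of a map $W_1\to W_2$, and move it by an element acting diagonally on $W_1\oplus W_2$. The verification that $W_3^{g}$ is the graph of $g_2\phi g_1^{-1}$ is correct and matches the paper's use of a Witt extension of a non-identity isometry of $W_1$ fixing $W_2$ pointwise (there $g_2=1$, so $W_3$ moves iff $g_1\ne 1$).

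The genuine gap is in your construction of $W_3$: you only allow the gluing map to be a \emph{similarity} $\phi=c\,\iota$, so the form induced on the graph is $(1+\mu)$ times the form on $W_1$ with $\mu$ ranging over squares (resp.\ norms). Over $\mathbb{F}_2$ this forces $\mu=1$ and $1+\mu=0$ in \emph{every} geometry and for \emph{every} $k$, and over $\mathbb{F}_4$-Hermitian likewise, since the norm map lands in $\mathbb{F}_2^{*}=\{1\}$. But the lemma asserts the conclusion for, e.g., symplectic $(q,k)=(2,4)$, Hermitian $(q,k)=(2,2)$, and orthogonal $(q,k)=(2,3)$; your construction produces no non-degenerate $W_3$ in any of these, and your claim that the failure accounts for ``exactly'' the listed exceptions is false. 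The paper avoids this by taking $f:W_1\to W_2$ to be an \emph{arbitrary} linear isomorphism that is not an isometry: the form induced on the graph is then $(w,w')\mapsto b(w,w')+b(f(w),f(w'))$, which is not a scalar multiple of $b$ and can be arranged to be non-degenerate precisely when the isometry group of $W_1$ is a proper subgroup of $\mathrm{GL}(W_1)$. The four exceptions are exactly the cases where every linear isomorphism is an isometry ($\mathrm{Sp}_2(2)=\mathrm{GL}_2(2)$, $\mathrm{GU}_1(2)=\mathrm{GL}_1(4)$, $\mathrm{O}_1(3)=\mathrm{GL}_1(3)$, $\mathrm{O}_2^{-}(2)=\mathrm{GL}_2(2)$), which is why they are listed and why a separate device (Lemma~\ref{monday3}, using two orthogonal copies $W_2,W_3$ and a graph over both) is needed there. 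To repair your argument you must widen the class of gluing maps beyond similarities; the rest of your outline, including the determinant and spinor-norm bookkeeping for the orthogonal case, is consistent with what the paper does (it gives full details only for the symplectic case and declares the others similar).
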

\begin{proof}
We give complete details with $V$ is symplectic, all other cases are similar.
Let $b$ be the symplectic form defining $V$. Fix a non-degenerate $k$-dimensional subspace $W_1$ of $V$ and fix a non-degenerate $k$-dimensional subspace $W_2$ of $W_1^\perp$. Clearly,~\eqref{eq:monday1} is satisfied. Let $f:W_1\to W_2$ be an arbitrary isomorphism such that $b(f(w_1),f(w_2))\ne b(w_1,w_2)$, for some $w_1,w_2\in W_1$. The existence of $f$ is clear when $q>2$ because we may take $f$ the scalar multiplication by a scalar different form $0$ and $1$. When $q=2$, the existence of $f$ depends on the the fact that $\mathrm{SL}_{2k}(2)=\mathrm{Sp}_{2k}(2)$ only when $k=2$. Therefore, for the rest of the argument assume $(k,q)\ne (2,2)$.

Let $W_3=\{w+f(w)\mid w\in W_1\}$. Clearly, $W_3$ satisfies~\eqref{eq:monday2}. Let $w_1,w_1'\in W_1$ and let $w_3=w_1+f(w_1)$, $w_3'=w_1'+f(w_1')$. As 
\begin{align*}
b(w_3,w_3')&=
b(w_1,w_1')+b(w_1,f(w_1'))+b(f(w_1),w_1')+b(f(w_1),f(w_1'))\\
&=
b(w_1,w_2)+b(f(w_1),f(w_2)),
\end{align*} we see that $W_3$ is non-degenerate from our assumption on $f$.

Finally, let $x:W_1\to W_1$ be any non-identity symplectic isomorphism and, using Witt's lemma, extend $x$ to a symplectic automorphism $g$ of $V$ fixing pointwise $W_2$. Clearly, $g$ satisfies~\eqref{eq:monday3}.
\end{proof}

We need an analogous result for the exceptional cases arising in Lemma~\ref{monday1}.
\begin{lemma}\label{monday3}
Assume that $V,k,W_1$ are one of the four exceptional cases in Lemma~$\ref{monday1}$. Then,
there exist three non-degenerate subspaces $W_2$, $W_3$ and $W_4$ of $V$ and an element $g$ of $G_0$ with
\begin{enumerate}
\item\label{eq:monday11}$W_1\perp W_2\perp W_3$,
\item\label{eq:monday22}$W_1\oplus W_2\oplus W_3=W_1\oplus W_2\oplus W_4$ and $W_1\cap W_4=W_2\cap W_4=W_3\cap W_4=0$,
\item\label{eq:monday33}$W_1^g=W_1$, $W_2^g=W_2$, $W_3^g=W_3$ and $W_4^g\ne W_4$.
\end{enumerate}
\end{lemma}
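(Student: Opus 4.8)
The plan is to mimic the construction in Lemma~\ref{monday1}, compensating for its breakdown by introducing a third orthogonal summand. Recall that Lemma~\ref{monday1} fails in precisely these four cases for one of two reasons. In the three even-characteristic cases (symplectic with $(q,k)=(2,2)$, Hermitian with $(q,k)=(2,1)$, and orthogonal with $(q,k)=(2,2)$ and $W_1$ anisotropic) every isomorphism between two isometric non-degenerate $k$-spaces over $\mathbb{F}_2$ is automatically an isometry, so the ``diagonal'' $\{w+f(w)\}$ is totally singular and hence degenerate. In the remaining case (orthogonal with $(q,k)=(3,1)$) the diagonal can be made non-degenerate, but $\Omega(W_1)=1$ forces the only candidate element of $G_0$ moving it to be a single reflection, which has determinant $-1$ and so does not lie in $G_0$. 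In both situations the cure is the same: a third orthogonal copy supplies the extra room needed either to build a non-degenerate graph or to assemble an element of $G_0$ of the correct determinant and spinor norm.

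Concretely, first I would fix three mutually orthogonal, pairwise isometric non-degenerate $k$-subspaces $W_1\perp W_2\perp W_3$ (for the orthogonal cases with a prescribed discriminant), which is possible since the hypotheses in force guarantee $\dim V\ge 3k$ and the ambient form contains such a configuration (the few genuinely small configurations are dispatched by direct computation). Then I would take isometries $\alpha\colon W_3\to W_1$ and $\beta\colon W_3\to W_2$ and set $W_4=\{w+\alpha(w)+\beta(w)\mid w\in W_3\}$. Condition \eqref{eq:monday11} is immediate. Since the $W_3$-component of a vector of $W_4$ is $w$ itself, $W_4$ projects isomorphically onto $W_3$ along $W_1\oplus W_2$, which yields $W_1\oplus W_2\oplus W_4=W_1\oplus W_2\oplus W_3$ together with the transversality $W_1\cap W_4=W_2\cap W_4=W_3\cap W_4=0$ of \eqref{eq:monday22}. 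Non-degeneracy of $W_4$ is where orthogonality is decisive: the defining form transported to $W_4$ by $w\mapsto w+\alpha(w)+\beta(w)$ sends $(w,w')$ to $b(w,w')+b(\alpha w,\alpha w')+b(\beta w,\beta w')=3\,b(w,w')$, which in characteristic $2$ is just the original non-degenerate (and in the orthogonal case anisotropic) form on $W_3$. This settles non-degeneracy in the three even-characteristic cases at once. For the orthogonal $(3,1)$ case I would instead take $W_4=\langle v_3+av_1+bv_2\rangle$ with $a\ne 0$ and choose $b$ so that $Q(v_3)+a^2Q(v_1)+b^2Q(v_2)\ne 0$; this is possible because the two admissible values of $b^2$ over $\mathbb{F}_3$ differ by $Q(v_2)\ne 0$.

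The decisive step is producing $g\in G_0$ fixing $W_1,W_2,W_3$ setwise but moving $W_4$, while keeping $g$ inside the simple group rather than the full isometry group. I would let $g$ act block-diagonally for $V=W_1\perp W_2\perp W_3\perp(W_1\oplus W_2\oplus W_3)^\perp$, trivially on the last summand, so that it automatically fixes each $W_i$. Writing $g=(g_1,g_2,g_3)$, the subspace $W_4$ is fixed if and only if $g_1\alpha=\alpha g_3$ and $g_2\beta=\beta g_3$, so it suffices to violate one intertwining relation while arranging the global determinant and spinor norm to vanish. For the symplectic $(2,2)$ case any non-identity $g_3\in\mathrm{Sp}(W_3)$ with $g_1=g_2=1$ works, as $G_0=\mathrm{PSp}$ carries no determinant obstruction. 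For the Hermitian $(2,1)$ case I would take $g_3=\zeta$ a nontrivial scalar in $\mathbb{F}_4^\ast$ on $W_3$ and $g_2=\zeta^{-1}$ on $W_2$, so that $\det g=1$ and $g\in\mathrm{SU}$, while $g_1\alpha=\alpha\ne\zeta\alpha=\alpha g_3$ moves $W_4$. For the orthogonal $(2,2)$ anisotropic case I would take $g_3$ a generator of $\Omega_2^-(2)\cong C_3$ with $g_1=g_2=1$, which lies in $\Omega(V)$ under the orthogonal decomposition. Finally, for the orthogonal $(3,1)$ case I would let $g$ be the product of the reflections $-1$ on $W_2$ and $-1$ on $W_3$ (identity on $W_1$); then $\det g=1$, and choosing $W_2,W_3$ with $Q(v_2)Q(v_3)$ a square makes the spinor norm trivial, so $g\in\Omega(V)$, while $a\ne 0$ guarantees $W_4^g\ne W_4$. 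This establishes \eqref{eq:monday33}.

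The main obstacle I anticipate is exactly this last step: over these small fields the stabiliser of the configuration inside $G_0$ is tiny, so in each case one must check that the element assembled from local pieces genuinely lands in $\Omega$ (respectively $\mathrm{SU}$) — that is, that its determinant and, for the orthogonal groups, its spinor norm are trivial — while still failing to intertwine $\alpha$ (or $\beta$) and hence moving $W_4$. The attendant bookkeeping of discriminants, needed to secure three isometric orthogonal summands with trivial combined spinor norm, is routine but must be carried out case by case.
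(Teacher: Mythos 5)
Your construction is exactly the paper's: the paper's proof of this lemma is a two-line remark that one repeats the argument of Lemma~\ref{monday1} with $W_4=\{w+f_2(w)+f_3(w)\}$ for suitable isomorphisms $f_2,f_3$ into the two extra orthogonal summands, which is precisely your double-diagonal $W_4$. Your proposal simply fills in the details the paper leaves implicit (non-degeneracy of $W_4$ via $3=1$ in characteristic $2$, the separate scalar choice for the orthogonal $(3,1)$ case, and the determinant/spinor-norm bookkeeping for $g$), so it is correct and takes essentially the same route.
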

\begin{proof}
The proof uses the same argument as the proof of Lemma~\ref{monday1}. In this case, it suffices to take $W_4=\{w+f_2(w)+f_3(w)\mid w\in W_1\}$, where $f_2:W_1\to W_2$ and $f_3:W_1\to W_3$ are suitable isomorphisms.
\end{proof}

Using Lemmas~\ref{monday1} and~\ref{monday3} , we immediately deduce the following result, which concludes case \ref{HiHo2}.
\begin{lemma}\label{monday2}
The primitive actions of $G$ on $N_k$, for $1 \leq k < d/2$, are not $\mathrm{IBIS}$.
\end{lemma}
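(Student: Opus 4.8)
The plan is to use Lemmas~\ref{monday1} and~\ref{monday3} to manufacture, for a fixed non-degenerate $k$-subspace $W_1$, two irredundant chains of \emph{different} lengths having the \emph{same} pointwise stabiliser, and then to invoke Lemma~\ref{lemma:1}. The whole argument rests on a single observation: an isometry fixing a non-degenerate subspace also fixes its perpendicular, so the orthogonality relations in Lemmas~\ref{monday1} and~\ref{monday3} let us recover one of the constructed subspaces from the others at the level of stabilisers.

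First I would treat the generic case, so suppose $W_1,W_2,W_3$ and $g$ are as in Lemma~\ref{monday1}. Put $U=W_1\oplus W_2=W_1\oplus W_3$ using~\eqref{eq:monday2}. Since $W_1\perp W_2$ by~\eqref{eq:monday1} and both summands are non-degenerate, $U$ is non-degenerate and $W_2=U\cap W_1^\perp$. Consequently any $x\in G_{W_1,W_3}$ fixes $U=W_1+W_3$ and fixes $W_1^\perp$ (being an isometry fixing $W_1$), hence fixes $U\cap W_1^\perp=W_2$; that is,
\[
G_{W_1,W_3}=G_{W_1,W_2,W_3}.
\]
Now $(W_1,W_3)$ and $(W_1,W_2,W_3)$ share this terminal stabiliser but have lengths $2$ and $3$. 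Both are irredundant: the element $g$ of~\eqref{eq:monday3} lies in $G_{W_1}$ but moves $W_3$, and lies in $G_{W_1,W_2}$ but moves $W_3$, giving the strict drops $G_{W_1}>G_{W_1,W_3}$ and $G_{W_1,W_2}>G_{W_1,W_2,W_3}$; the remaining drop $G_{W_1}>G_{W_1,W_2}$ holds because $G_{W_1}$ induces on $W_1^\perp$ an isometry group transitive on its non-degenerate $k$-subspaces (Witt's theorem), of which there is more than one since $\dim W_1^\perp=d-k>k$. Lemma~\ref{lemma:1} then shows $G$ is not $\mathrm{IBIS}$.

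For the four exceptional pairs $(q,k)$ excluded from Lemma~\ref{monday1}, I would run the identical argument with the output of Lemma~\ref{monday3}. Setting $U=W_1\oplus W_2\oplus W_3=W_1\oplus W_2\oplus W_4$ and using $W_1\perp W_2\perp W_3$, one gets $W_3=U\cap(W_1\oplus W_2)^\perp$, whence every isometry fixing $W_1,W_2,W_4$ also fixes $W_3$ and therefore
\[
G_{W_1,W_2,W_4}=G_{W_1,W_2,W_3,W_4}.
\]
The chains $(W_1,W_2,W_4)$ and $(W_1,W_2,W_3,W_4)$ then have equal terminal stabiliser and lengths $3$ and $4$; their irredundancy follows exactly as before, the strict drops at the $W_3$ and $W_4$ steps coming from $g$ and the earlier ones from Witt transitivity, so Lemma~\ref{lemma:1} again applies.

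The main obstacle is the uniform verification, across symplectic, unitary and all orthogonal types, of the single ``generic'' strict inclusion $G_{W_1}>G_{W_1,W_2}$, i.e.\ that $W_1^\perp$ genuinely carries more than one non-degenerate subspace of the required dimension and that the group induced on it by $G_{W_1}$ is large enough to move $W_2$. This is precisely where the small-field and small-dimension degeneracies live, and they are exactly the configurations already siphoned off into Lemma~\ref{monday3}; the boundary orthogonal case $d=2k$, where $W_2=W_1^\perp$ is forced, must likewise be accommodated by taking the extra perpendicular summand of Lemma~\ref{monday3} rather than the two-subspace construction. Once these strictness checks are in place, the reduction to Lemma~\ref{lemma:1} is immediate in every case.
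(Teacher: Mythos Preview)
Your argument is correct and matches the paper's: both exhibit the irredundant chain $(W_1,W_2,W_3)$ of length~$3$ together with the inclusion $G_{W_1,W_3}\le G_{W_1,W_2}$ (equivalently $G_{W_1,W_3}=G_{W_1,W_2,W_3}$), and the analogous length-$4$ versus length-$3$ comparison in the exceptional cases, then appeal to Lemma~\ref{lemma:1}. The paper is terser---it writes ``Clearly'' for the strict drop $G_{W_1}>G_{W_1,W_2}$ where you invoke Witt---but the content is identical.

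Your final paragraph, however, misdiagnoses the role of the exceptions. The strict inclusion $G_{W_1}>G_{W_1,W_2}$ holds uniformly in all cases: since $k<d/2$ throughout (so your worry about $d=2k$ is moot), $W_1^\perp$ always contains more than one non-degenerate $k$-subspace of the appropriate type and the induced group moves them. The exceptional cases of Lemma~\ref{monday1} arise for a different reason entirely: in those small configurations \emph{every} linear isomorphism $f:W_1\to W_2$ happens to be an isometry (e.g.\ $\mathrm{GL}_2(2)=\mathrm{Sp}_2(2)$), forcing the graph $W_3=\{w+f(w)\}$ to be totally isotropic, so no admissible $W_3$ exists. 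Lemma~\ref{monday3} repairs this by using two maps $f_2,f_3$ to a pair of perpendicular summands. This misattribution does not affect the validity of your proof, only your explanation of where the case split comes from.
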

\begin{proof}
Suppose first that we are not in one of the exceptional cases in Lemma~\ref{monday1} and let $W_1,W_2,W_3$ be as in the statement of Lemma~\ref{monday1}. Clearly,
$$G_{W_1}>G_{W_1,W_2}>G_{W_1,W_2,W_3},$$
where the last strict inequality uses~\eqref{eq:monday3} of Lemma~\ref{monday1}. Finally, observe that $G_{W_1,W_3}\le G_{W_1,W_2}$ from~\eqref{eq:monday1} and~\eqref{eq:monday2} of Lemma~\ref{monday1}. This implies that 
\[
	G_{W_1,W_2,W_3} = G_{W_1,W_3}.
\]
Thus, $(W_1,W_2,W_3)$ and $(W_1,W_3)$ are to irredundant chain reaching the same stabilizer, hence $G$ is not $\mathrm{IBIS}$.

Next, suppose that we are in one of the exceptional cases in Lemma~\ref{monday1} and let $W_1,W_2,W_3,W_4$ be as in the statement of Lemma~\ref{monday3}. Clearly,
$$G_{W_1}>G_{W_1,W_2}>G_{W_1,W_2,W_3}>G_{W_1,W_2,W_3,W_4},$$
where the last strict inequality uses~\eqref{eq:monday33} of Lemma~\ref{monday3}. Finally, observe that $G_{W_1,W_2,W_4}\le G_{W_1,W_2,W_3}$ from~\eqref{eq:monday11} and~\eqref{eq:monday22} of Lemma~\ref{monday3}. This implies that $$G_{W_1,W_2,W_3,W_4} = G_{W_1,W_2,W_4}.$$ Thus, $(W_1,W_2,W_3,W_4)$ and $(W_1,W_2,W_4)$ are two irredundant chain reaching the same stabilizer, hence $G$ is not $\mathrm{IBIS}$.
\end{proof}

\section{Action on non-singular $1$-subspaces: Case~\eqref{HiHo3}}\label{1non-singular}
Here $G$ is an almost simple group having socle $G_0=\Omega_d^\pm(q)$  acting on the collection $\mathrm{NS}_1$ of $1$-dimensional non-singular subspaces of $V$ and $q$ is even. Since $\Omega_2^\pm(q)$ is solvable, we may suppose that $d\ge 4$.

We look closely at the case $d=4$ because this will serve as a base case of an induction.

\begin{lemma}\label{uuu1} When $d=4$, the action of $G$ on $\mathrm{NS}_1$  is $\mathrm{IBIS}$ if and only if $G=G_0=\Omega_4^\pm(q)$ and $q \geq 4$.
\end{lemma}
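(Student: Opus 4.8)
The plan is to exploit the special features of even characteristic to reduce the IBIS question to a computation of pointwise stabilisers of subspaces. Write $V=\mathbb{F}_q^4$ with quadratic form $Q$ and associated (alternating, symmetric) bilinear form $b$, and recall that $q$ is even. The first observation is that if $g\in \mathrm{O}_4^\pm(q)$ fixes a non-singular point $\langle v\rangle$, then $g$ fixes the vector $v$ itself: from $gv=cv$ we get $c^2Q(v)=Q(gv)=Q(v)$, whence $c=1$. Consequently, for non-singular vectors $v_1,\dots,v_k$ the pointwise stabiliser $G_{\langle v_1\rangle,\dots,\langle v_k\rangle}$ equals the pointwise stabiliser $G_{(U)}$ of the subspace $U=\langle v_1,\dots,v_k\rangle$. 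In particular a sequence $(\langle v_1\rangle,\dots,\langle v_\ell\rangle)$ is irredundant if and only if $v_1,\dots,v_\ell$ are linearly independent, so every irredundant chain has length at most $4$ and is indexed by the flag $U_1<U_2<\cdots<U_\ell$ of spanned subspaces. Thus $G$ is IBIS precisely when the minimal dimension of a spanned subspace with trivial pointwise stabiliser is the same along every irredundant chain.

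The second step is the stabiliser computation, dimension by dimension. The decisive case is $\dim U=3$: an isometry fixing a hyperplane $U$ pointwise is either the identity or the orthogonal transvection $t_w(x)=x+b(x,w)Q(w)^{-1}w$, where $\langle w\rangle=U^\perp$ is the radical of $U$, and $t_w$ exists exactly when $Q(w)\ne 0$. A direct check gives $t_w^2=1$ and $(t_w-1)^2=0$, and the Dickson invariant of $t_w$ equals $1$, so $t_w\in \mathrm{O}_4^\pm(q)\setminus\Omega_4^\pm(q)$. Hence in $\Omega_4^\pm(q)$ one has $G_{0,(U)}=1$ for every hyperplane $U$, regardless of the type of its radical. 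For $\dim U\in\{1,2\}$ the pointwise stabiliser is nontrivial once $q\ge 4$: when $U$ is non-degenerate one has $V=U\perp U^\perp$ and $G_{0,(U)}$ contains $\Omega(U^\perp)$, which for a $2$-space $U^\perp$ is cyclic of order $q-1$ or $q+1$ according to its type, hence nontrivial for $q\ge 4$; the degenerate $2$-spaces and the $1$-spaces are handled by the analogous short computation. I expect the book-keeping over all types of $2$-space (non-degenerate of each sign, and degenerate) to be the most tedious point, although each individual case is routine.

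These two facts give the positive direction at once. For $G=G_0=\Omega_4^\pm(q)$ with $q\ge 4$, every irredundant chain consists of independent non-singular vectors; the spanned subspace acquires trivial pointwise stabiliser as soon as its dimension reaches $3$, and never before, by the $\dim\le 2$ computation. Since the stabiliser is already trivial at dimension $3$, irredundancy forbids a fourth point. Therefore every irredundant base has cardinality exactly $3=d-1$, and $G$ is IBIS.

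For the negative directions I would construct, in each case, two irredundant bases of different lengths. If $G$ meets $\mathrm{O}_4^\pm(q)\setminus\Omega_4^\pm(q)$, so that some transvection $t_w$ lies in $G$, then a hyperplane with \emph{non-singular} radical has $G_{(U)}\supseteq\langle t_w\rangle\ne 1$ and yields an irredundant base of length $4$, whereas a hyperplane with \emph{singular} radical still has $G_{(U)}=1$ and yields one of length $3$; both kinds of hyperplane are spanned by non-singular vectors, so $G$ is not IBIS. If instead $G$ contains a nontrivial field (or, in plus type, graph) automorphism, then a hyperplane spanned by non-singular vectors having coordinates in a proper subfield is fixed pointwise by that automorphism, giving $G_{(U)}\ne 1$ and again a length-$4$ base alongside a generic length-$3$ one; this field-automorphism analysis, which parallels Lemma~\ref{lemma:-100}, is the step I expect to require the most care. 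Finally the case $q=2$, where $\Omega_2^\pm(2)$ is too small to make the $\dim U=2$ stabiliser nontrivial, produces an irredundant base of length $2$ coexisting with one of length $3$; here a direct inspection of the groups $\Omega_4^+(2)\cong \mathrm{SL}_2(2)\times\mathrm{SL}_2(2)$ and $\Omega_4^-(2)$ (or a short computer check) completes the argument.
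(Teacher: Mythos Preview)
Your approach is correct in spirit and genuinely different from the paper's. The paper exploits the exceptional isomorphisms $\Omega_4^+(q)\cong\mathrm{SL}_2(q)\times\mathrm{SL}_2(q)$ and $\Omega_4^-(q)\cong\mathrm{PSL}_2(q^2)$: in plus type the action becomes diagonal in the O'Nan--Scott sense and the result is read off from~\cite{LuMoMo}; in minus type the $G_0$-action is identified with Example~3.9 of~\cite{LeSp}, and for $G>G_0$ the paper produces an explicit irredundant base of length $3$ for the full automorphism group together with an irredundant chain of length $\ge 4$ for $G$, then invokes Lemma~\ref{lemma:minusminus1}. The authors themselves remark, immediately after the proof, that one could instead have argued geometrically; your proposal is precisely that geometric argument, and it has the advantage of treating both signs uniformly and of being self-contained. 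The key facts you isolate --- that in even characteristic fixing a non-singular $1$-space forces fixing the vector (for linear isometries), that orthogonal transvections lie in $\mathrm{O}\setminus\Omega$, and that therefore every hyperplane has trivial pointwise stabiliser in $\Omega_4^\pm(q)$ --- are exactly what drives the general case in Proposition~\ref{uuuu1} as well.

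There is one genuine, if minor, gap in your negative direction. Your reduction $G_{\langle v_1\rangle,\ldots,\langle v_k\rangle}=G_{(U)}$ is only valid for $G\le\mathrm{O}_4^\pm(q)$, since a field automorphism can fix $\langle v\rangle$ without fixing $v$. Consequently the claim that ``a hyperplane with singular radical still has $G_{(U)}=1$'' may fail once $G$ contains field automorphisms, and your case split does not cover groups containing \emph{both} a transvection and a nontrivial field automorphism. The cleanest fix is the paper's: exhibit one explicit irredundant base of length~$3$ for the full automorphism group (any base for the overgroup restricts to a base for $G$, hence $G$ has an irredundant base of length $\le 3$), and combine this via Lemma~\ref{lemma:minusminus1} with the length-$4$ irredundant chain you already constructed for $G>G_0$. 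With that adjustment your argument goes through.
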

\begin{proof}
We first deal with the hyperbolic case.\\
In this case, $G_0=\Omega_4^+(q)=\mathrm{SL}_2(q)\times \mathrm{SL}_2(q)$. Moreover, the stabilizer of a non-singular vector in $G_0$ is isomorphic to $\mathrm{Sp}_2(q)=\mathrm{SL}_2(q)$ diagonally embedded in $\mathrm{SL}_2(q)\times \mathrm{SL}_2(q)$. Therefore, in terms of the O'Nan-Scott classification of primitive groups, this action is diagonal. Therefore, using the main result in~\cite{LuMoMo}, we deduce that the action is IBIS exactly when $G=G_0$.

Suppose now we are in the elliptic case. \\
	Let $q=2^f$ for some integer $f$ and let $\phi:\mathbb{F}_q\to\mathbb{F}_q$ be the field automorphism defined by $x^\phi=x^2$, for all $ x\in \mathbb{F}_q$.
We use the quadratic form
$$X_1X_3+X_2^2+X_2X_4+\mu X_4^2,$$
for some $\mu\in \mathbb{F}_q$.

Assume first that $G=G_0=\Omega_4^-(q) = \mathrm{PSL}_2(q^2)$. The stabilizer of the action of $G$ is $\mathrm{Sp}_2(q) = \mathrm{SL}_2(q)$, so the action is the one described in Example~$3.9$ in \cite{LeSp}, and it is $\mathrm{IBIS}$ if $q\geq 4$. Suppose then that $$G_0 < G \leq \mathrm{Aut}(G_0) = \mathrm{SO}_4^-(q) \mathrm{Aut}(\mathbb{F}_q).$$

Let $\alpha$ be a generator of the multiplicative group of the field $\mathbb{F}_q$. Using the quadratic form above, we see that the stabilizer of $\langle e_2\rangle$  and $\langle e_4\rangle$ in the whole group  $\mathrm{SO}_4^-(q) \mathrm{Aut}(\mathbb{F}_q)$ is
\[
\left\{
\begin{pmatrix}
	a&0&0&0\\
	0&1&0&0\\
	0&0&a^{-1}&0\\
	0&0&0&1
\end{pmatrix},
\begin{pmatrix}
	0&0&a&0\\
	0&1&0&0\\
	a^{-1}&0&0&0\\
	0&0&0&1
\end{pmatrix}
\left|\right. a\in\mathbb{F}_q\setminus\{0\}\right\}\rtimes\mathrm{Aut}(\mathbb{F}_q).
\]
With this, it is easy to verify that 
\begin{align*}
	&(\langle e_2\rangle,\,\langle e_4 \rangle,\,\langle  e_1+ \alpha e_2+  e_4\rangle)
\end{align*}
is an irredundant base\footnote{Observe that $\langle  e_1+ \alpha e_2+ e_4\rangle$ is non-singular since $\mu$ is chosen in such a way that  the polynomial $T^2+T+\mu$ is irreducible over $\mathbb{F}_q$.} for $\mathrm{SO}_4^-(q) \mathrm{Aut}(\mathbb{F}_q)$ of cardinality $3$, where $\alpha$ is a generator of the multiplicative group of the field $\mathbb{F}$.
To conclude that $G$ is not IBIS, we show that $G$ admits an irredundant base of cardinality at least $4$ and then we invoke Lemma~\ref{lemma:minusminus1}. 
If $G < \mathrm{SO}_4^-(q) \mathrm{Aut}(\mathbb{F}_q)$, then either 
\begin{itemize}
	\item  $\mathrm{SO}_4^-(q)\le G$, or
	\item  $G \leq \Omega_4^-(q) \rtimes \langle \phi^i \rangle$ where $i | f$, or
	\item  $G \leq \Omega_4^-(q) \rtimes \langle \iota \phi^i \rangle$ where $i | f$, $f/i$ is even, and $\iota$ is the matrix 
	$$
	\begin{pmatrix}
		0&0&1&0\\
		0&1&0&0\\
		1&0&0&0\\
		0&0&0&1
	\end{pmatrix}.
	$$
\end{itemize}
In the first case, using the description of the stabilizer of $\langle e_2\rangle$ and $\langle e_4\rangle$, we get $\iota \in G_{\langle e_2\rangle,\langle e_4\rangle,\langle e_1+e_3\rangle}$
and hence $(\langle e_2\rangle,\langle e_4\rangle,\langle e_1+e_3\rangle)$ is an irredundant chain having non-identity stabilizer.
In the second and third case, since $(\langle e_2\rangle,\langle e_4\rangle,\langle e_1+e_3\rangle)$ is an irredundant base for $\Omega_4^-(q)$, the stabilizer of these three points is $\langle \phi^i \rangle$ or $\langle \iota \phi^i \rangle$ respectively. Hence, in all cases $G$ admits a base of cardinality at least $4$, so it is not $\mathrm{IBIS}$.
\end{proof}

In the proof of Lemma~\ref{uuu1}, we have used~\cite{LuMoMo} in the hyperbolic case and~\cite{LeSp} in the elliptic case (in this latter case, only to show that $G_0$ is $\mathrm{IBIS}$). However, in both cases we could also have argued geometrically.

\begin{proposition}\label{uuuu1}
The action of $G$ on $\mathrm{NS}_1$ is $\mathrm{IBIS}$ if and only if $G=G_0=\Omega_d^\pm(q)$ and $q\ge 4$.
\end{proposition}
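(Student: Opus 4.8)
The plan is to reduce the whole statement to the already-established base case $d=4$ (Lemma~\ref{uuu1}) together with one structural computation on pointwise stabilisers. First I record the basic dictionary. Since $q$ is even the Frobenius $x\mapsto x^2$ is a bijection of $\mathbb{F}_q$, so $Q(\lambda v)=\lambda^2 Q(v)$ shows that each non-singular $1$-space contains a unique vector of norm $1$; thus the action of $G_0=\Omega_d^\pm(q)$ on non-singular $1$-spaces is permutation isomorphic to its action on the set of norm-$1$ vectors, and stabilising the $1$-spaces $\langle v_1\rangle,\dots,\langle v_k\rangle$ is the same as stabilising the vectors $v_1,\dots,v_k$. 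Consequently $G_{\langle v_1\rangle,\dots,\langle v_k\rangle}=G_{(U)}$, the pointwise stabiliser of $U=\langle v_1,\dots,v_k\rangle$, and this depends only on $U$. In particular a sequence is irredundant if and only if the $v_i$ are linearly independent and $G_{(\langle v_1,\dots,v_i\rangle)}>G_{(\langle v_1,\dots,v_{i+1}\rangle)}$ at each step; every irredundant base therefore consists of $k=\dim U$ linearly independent non-singular vectors with $G_{(U)}=1$.

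The positive direction ($G=G_0$, $q\ge 4$) will follow at once from the following. \textbf{Key Claim.} For $U$ spanned by non-singular vectors, $G_{(U)}=1$ if and only if $\dim U\ge d-1$. Granting this, if $(v_1,\dots,v_k)$ is an irredundant base then $G_{(U_k)}=1$ gives $\dim U_k\ge d-1$, while $G_{(U_{k-1})}\ne 1$ (by irredundancy) gives $\dim U_{k-1}\le d-2$; since consecutive spans differ in dimension by one, $k=d-1$, so $G$ is IBIS and $b(G)=d-1$. To prove the Key Claim I compute $G_{(U)}$ directly. If $U$ is a hyperplane then $U=r^\perp$ for the generator $r$ of its radical, and an element fixing $r^\perp$ pointwise must send a fixed $w\notin r^\perp$ to $w+\lambda r$; preservation of $Q$ forces $\lambda^2 Q(r)+\lambda b(w,r)=0$, where $b(w,r)\ne 0$. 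When $r$ is singular this gives $\lambda=0$; when $r$ is non-singular the only nontrivial solution is the orthogonal transvection $t_r$, which has Dickson invariant $1$ and hence lies in $\mathrm{O}_d^\pm(q)\setminus\Omega_d^\pm(q)$. Either way $G_{(U)}=1$, proving ``$\Leftarrow$'' (the case $U=V$ being trivial). For ``$\Rightarrow$'', if $\dim U\le d-2$ then $\dim U^\perp\ge 2$; a nontrivial element of $\Omega_d^\pm(q)$ fixing $U$ pointwise is produced from $\Omega(U^\perp)$ when $U$ is non-degenerate (this embeds in $\Omega_d^\pm(q)$ and is nontrivial because $\Omega_2^\pm(q)\ne 1$ for $q\ge 4$), and from Siegel/Eichler transvections supported on the radical when $U$ is degenerate.

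For the negative direction I reduce to $d=4$. Suppose $q=2$, or $G_0<G$. Choose a non-degenerate $4$-dimensional subspace $W$ (of either type, chosen invariant under the relevant field automorphism $\phi$ when $G$ contains one), let $\Delta$ be the set of non-singular $1$-spaces contained in $W$, and let $H$ be the group induced on $\Delta$ by $G_\Delta$. Since the non-singular vectors of $W$ span $W$ we have $G_\Delta=G_W$, and $H$ is an almost simple group on the non-singular $1$-spaces of $W$ with socle $\Omega_4^\pm(q)$; moreover $H\ne\Omega_4^\pm(q)$ exactly when $G_0<G$. By Lemma~\ref{uuu1}, $H$ is not IBIS (because $q=2$, or because $H$ properly contains $\Omega_4^\pm(q)$), so it has two irredundant bases in $\Delta$ of different lengths $\ell\ne\ell'$; taking them to span $W$, their $G$-stabilisers lie in $G_W=G_\Delta$ and coincide, whence Lemma~\ref{lemma:1} shows $G$ is not IBIS. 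Equivalently, one may compare an irredundant base of $G$ of length at least $4$ inside $W$ with the short base of $\mathrm{SO}_4^\pm(q)\mathrm{Aut}(\mathbb{F}_q)$ via Lemma~\ref{lemma:minusminus1}.

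The main obstacle is the ``$\Rightarrow$'' half of the Key Claim: one must show that for every subspace $U$ of dimension at most $d-2$ spanned by non-singular vectors the pointwise stabiliser in $\Omega_d^\pm(q)$ is nontrivial, treating uniformly the degenerate $U$ (where one needs the existence of nontrivial Siegel transformations fixing $U$) and the non-degenerate $U$ (where the hypothesis $q\ge 4$ is exactly what keeps $\Omega_2^\pm(q)\ne 1$); the failure of this at $q=2$ is precisely what makes the action non-IBIS there and explains the bound $q\ge 4$. A secondary point to handle with care is that in the negative direction the identification of $G_{\langle v_i\rangle}$ with $G_{(U)}$ no longer holds verbatim once field automorphisms are present, which is why that case is routed through the reduction to Lemma~\ref{uuu1} rather than through the Key Claim.
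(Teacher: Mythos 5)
Your forward direction is essentially the paper's argument: the paper likewise observes that the stabiliser of non-singular $1$-spaces fixes the spanning vectors, that the pointwise stabiliser of a hyperplane in $\Omega_d^\pm(q)$ is trivial because $\Omega_d^\pm(q)$ contains no transvections, and that the pointwise stabiliser of a subspace of codimension at least $2$ is nontrivial (via $\Omega_2^\pm(q)\ne 1$ for $q\ge 4$ in the non-degenerate case and products of commuting transvections in the degenerate case), forcing every irredundant base to have length $d-1$. One caveat on your Key Claim: when the radical of $U$ is a $1$-dimensional anisotropic space $\langle r\rangle$ with $r$ non-singular, there is no Siegel/Eichler element ``supported on the radical'', and you must instead produce a nontrivial element of the stabiliser of $r$ inside $\Omega(U_0^\perp)$ for a non-degenerate complement $U_0$ of $\langle r\rangle$ in $U$; this is fixable but is not covered by the two cases you name.

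The genuine gap is in the negative direction. You pass to the set $\Delta$ of non-singular points of a non-degenerate $4$-space $W$, invoke Lemma~\ref{uuu1} to get two irredundant bases of the induced group $H$ of different lengths, and then say ``taking them to span $W$, their $G$-stabilisers lie in $G_W=G_\Delta$ and coincide.'' But Lemma~\ref{uuu1} gives no control over the spans of those bases, and in fact the short base exhibited in its proof, namely $(\langle e_2\rangle,\langle e_4\rangle,\langle e_1+\alpha e_2+e_4\rangle)$ in the elliptic case, spans only a $3$-dimensional subspace $W_0<W$. Since it is already a base for $H$ it cannot be extended irredundantly inside $\Delta$, so you cannot ``take it to span $W$''; its pointwise stabiliser in $G$ is $G_{(W_0)}$, which for $d\ge 6$ is a nontrivial group not contained in $G_W$ and not equal to $G_{(W)}$, so Lemma~\ref{lemma:1} does not apply, and Lemma~\ref{lemma:minusminus1} does not apply either because the short base is a base only for $H$ on $\Delta$, not for any overgroup of $G$ on $\Omega$. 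This is exactly why the paper's other reduction lemmas (e.g.\ Lemmas~\ref{psu:cased3} and~\ref{psp:cased4}) are stated with the extra conclusion that both bases sum to the whole ambient space; Lemma~\ref{uuu1} carries no such clause, and the paper therefore argues differently here: for $q\ge 4$ it stabilises two non-singular points spanning a non-degenerate $2$-space, identifies $G_{\omega_1,\omega_2}$ with a $(d-2)$-dimensional orthogonal group on the non-singular points of the perp, and inducts down to $d=4$; for $q=2$ it descends four dimensions at a time to $d\in\{6,8\}$ and verifies those cases by computer (note that $\Omega_4^+(2)$ is solvable, so the $d=4$, $q=2$ hyperbolic base case you would need is not even available). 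To repair your argument you would have to strengthen Lemma~\ref{uuu1} so that the two bases of different lengths have equal pointwise stabilisers in the ambient group, which the exhibited short base shows is not attainable by merely choosing bases inside $\Delta$.
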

\begin{proof}
From Lemma~\ref{uuu1}, we may suppose that $d\ge 6$. We use the hyperbolic quadratic form given by $$X_1X_2+X_3X_4+\cdots +X_{d-1}X_d,$$ and the elliptic quadratic form given by 	$$X_1X_{\frac{d}{2}+1}+X_2X_{\frac{d}{2}+2}+\cdots +X_{\frac{d}{2}-1}X_{d-1}+X_{\frac{d}{2}}^2 + X_{\frac{d}{2}}X_d + \mu X_d^2.$$
We first show that if $G = G_0$ with $q \geq 4$, then $G$ is $\mathrm{IBIS}$.  We show that all irredundant bases of $G$ have cardinality $d-1$. Let 
$\omega_1=\langle v_1\rangle,\ldots, \omega_\ell=\langle v_\ell\rangle$ be an irredundant base for $G$. Since $G=\Omega_d^\pm(q)$, $G$ fixes the vectors $v_1,\ldots,v_\ell$ and hence it fixes pointwise the subspace generated by $v_1,\ldots,v_\ell$. This immediately shows that $\ell\le d$. Suppose $d=\ell$. Then $v_1,\ldots,v_d$ is a basis of the vector space $V$. The pointwise stabilizer of the subspace $\langle v_1,\ldots,v_{d-1}\rangle$ must be the identity group because  $G=\Omega_d^\pm(q)$ does not contain any transvection. Therefore, we contradict the fact that $\omega_1,\ldots,\omega_\ell$ is an irredundant base. Finally, suppose $\ell\le d-2$ and let $W=\langle v_1,\ldots,v_{\ell}\rangle$. Choose any $(d-2)$-dimensional subspace $U$ of $V$ containing $W$. Suppose first $U^\perp$ is non-degenerate. The pointwise stabiliser of $U$ in $G$ is either $\Omega_2^+(q)$ or $\Omega_2^-(q)$. In both cases, since $q\ge 4$, the pointwise stabiliser of $U$ in $G$ is not the identity and it does move some non-singular vector, contradicting the fact that $\omega_1,\ldots,\omega_\ell$ is a base.  Suppose next that $U^\perp$ is totally isotropic and hence $U^\perp\le U$. In this case, the pointwise stabilizer of $U$ in $G$ contains an element of order $2$ which is the product of two communting transvections\footnote{Indeed, if we are in the hyperbolic case and if $U^\perp=\langle e_1,e_d\rangle$, then we may take the transvection with respect to $e_1$ and to $e_d$ and then take their product. Recall that each element of $\Omega_d^+(q)$ is the product of an even number of transvections in $\mathrm{SO}_d^+(q)$. The same argument works for the elliptic case.} and hence again we contradict the fact that $\omega_1,\ldots,\omega_\ell$ is a base. 

We now show that, if $G$ is IBIS, then  $G=G_0$ and $q\ge 4$. Here, we divide the case depending on $G_0$. Suppose firstly we are in the hyperbolic case. Assume $q\ge 4$ and let $\alpha\in\mathbb{F}_q\setminus\{0,1\}$. Let $\omega_1=\langle e_1+e_2\rangle$ and $\omega_2=\langle e_1+\alpha e_2\rangle$. Observe that $G_{\omega_1,\omega_2}$ stabilizes the subspace $\langle e_1,e_2\rangle$ and fixes two non-singular vectors in this subspace. Therefore, $G_{\omega_1,\omega_2}$ fixes pointwise $\langle e_1,e_2\rangle$. Thus, $G_{\omega_1,\omega_2}$ is a $(d-2)-$dimensional orthogonal group acting  on the non-singular $1$-subspaces. Hence, arguing inductively and using Lemma~\ref{uuu1} as the base case of the induction, we deduce $G=G_0$. Assume now $q=2$. In this case, either $G=G_0=\Omega_d^+(2)$  or $G=\mathrm{SO}_d^+(2)$. Let $\omega_1=\langle e_1+e_2\rangle$, $\omega_2=\langle e_1+e_2+e_3\rangle$, $\omega_3=\langle e_1+e_2+e_4\rangle$ and $\omega_4=\langle e_1+e_3+e_4\rangle$. Observe that $G_{\omega_1,\omega_2,\omega_3,\omega_4}$  fixes pointwise $\langle e_1,e_2,e_3,e_4\rangle$. Therefore, $G_{\omega_1,\omega_2,\omega_3,\omega_4}$ is a $d-4$ dimensional orthogonal group acting  on the non-singular $1$-subspaces. When $d\equiv 0\pmod 4$, arguing inductively, we reduce to the case $d=8$ (observe that, when $q=2$, $\Omega_4^+(2)$ is solvable and hence it is of no concern in this work). We have verified that both $\Omega_8^+(2)$ and $\mathrm{SO}_8^+(2)$ are not IBIS.
 When $d\equiv 2\pmod 4$, arguing inductively, we reduce to the case $d=6$. We have verified that both $\Omega_6^+(2)$ and $\mathrm{SO}_6^+(2)$ are not IBIS. For instance, when $G=\mathrm{SO}_6^+(2)$, it can be verified that
\begin{align*}
&(\langle e_1+e_2\rangle,\,\langle e_1+e_2+e_6\rangle,\,\langle e_1+e_2+e_3\rangle,\,\langle e_1+e_2+e_4\rangle,\,\langle e_5+e_6\rangle,\,\langle e_2+e_3+e_4\rangle),\\
&(\langle e_1+e_2\rangle,\,\langle e_1+e_2+e_6\rangle,\,\langle e_1+e_2+e_3\rangle,\,\langle e_1+e_2+e_4\rangle,\,\langle e_2+e_3+e_4\rangle)
\end{align*}
are irredundant bases of length $6$ and $5$.

The elliptic case is analogous.
\end{proof}

\section{$\mathrm{Sp}_4(2^a)$ containing a graph automorphism: Case~\eqref{HiHo5}}\label{Sp4graph}
In this case, $G_0=\mathrm{Sp}_4(q)\le G$ and $G$ contains a graph automorphism swapping the two types of parabolic subgroups: stabilizers of $1$-dimensional subspaces and stabilizers of $2$-dimensional totally singular subspaces. The action of $G$ can be identified with the action on pairs $\{p,\ell\}$, where $p$ is a $1$-dimensional subspace and $\ell$ is a $2$-dimensional totally singular subspace with $p\in \ell$. It is not hard to verify that this action is not IBIS. To prove that, we fix the symplectic form given by the matrix
\[
\begin{pmatrix}
0&I\\
I&0
\end{pmatrix}.
\]
Where $0$ and $I$ are the zero and identity $2 \times 2$ matrices respectively. 
Let
\begin{align*}
	\omega_1 &= \omega_1' = \{\langle e_1 \rangle, \langle e_1,e_2 \rangle\}, \\
	\omega_2 &= \omega_2' = \{\langle e_2\rangle, \langle e_1, e_2 \rangle \}, \\
	\omega_3 &= \{\langle e_3 \rangle, \langle e_3,e_4\rangle \}, \\
	\omega_3'&= \{ \langle e_3 \rangle, \langle e_2, e_3\rangle\}, \\
	\omega_4'&= \{ \langle e_4 \rangle, \langle e_1, e_4\rangle\}, \\
	\omega_4 &= \omega_5' =  \{ \langle e_1+e_2 \rangle, \langle e_1+e_2,e_3+e_4 \rangle\}, \\
	\omega_5 &= \omega_6' = \{ \langle e_2 + e_3 \rangle, \langle e_2+e_3,e_1+e_4 \rangle\} .
\end{align*}
Observe that $\langle e_1, e_2 \rangle$ is common to $\omega_1$ and $\omega_2$, and hence every element of $G_{\omega_1,\omega_2}$ fixes $\langle e_1,e_2 \rangle$. Thus, $G_{\omega_1,\omega_2}$ does not contain graph automorphisms swapping the two subspaces. With this in hand, using the same argument that we have used in Lemma~\ref{prop:2}, it is not hard to verify that $(\omega_1,\dots,\omega_5)$ and $(\omega_1',\dots,\omega_6')$ are two irredundant bases for $G$, hence $G$ is not $\mathrm{IBIS}$.
\section{$\mathrm{P}\Omega_8^+(q)$ containing triality: Cases~\eqref{HiHo6} and~\eqref{HiHo7}}\label{trialies}
In this section we deal with the case $G_0=\mathrm{P}\Omega_8^+(q)$ and $G$ contains a triality.
\begin{lemma}If $G$ is as in Case~$\eqref{HiHo6}$, then $G$ is not $\mathrm{IBIS}$.
\end{lemma}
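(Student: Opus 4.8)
The plan is to realise the action geometrically before attempting any computation. Writing $V=\mathbb{F}_q^8$ with a hyperbolic quadratic form of plus type and fixing a standard hyperbolic basis, the point stabiliser of Case~\eqref{HiHo6} is the normaliser in $G$ of the parabolic subgroup of $G_0$ whose Levi factor is, up to the central torus, a single $\mathrm{GL}_2(q)$ and whose unipotent radical has order $q^{11}$; this is precisely the parabolic associated with the triality orbit $\{1,3,4\}$ of simple roots of $D_4$. Consequently the coset space $[G_0:H\cap G_0]$ can be identified with the set $\Omega$ of mutually incident triples $(p,g,\ell)$, where $p$ is a singular point, $g$ a greek and $\ell$ a latin maximal totally singular subspace, all pairwise incident. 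The triality automorphism $\tau$ acts on $\Omega$ by cyclically permuting the three coordinates $p\mapsto g\mapsto\ell\mapsto p$, and it is exactly this feature that places the action outside the scope of Sections~\ref{sec:orthogonal+}--\ref{sec:orthogonalminus}.

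First I would produce, entirely inside the socle $G_0=\mathrm{P}\Omega_8^+(q)$, an irredundant chain of some length $\ell_A$ by choosing incident triples whose defining subspaces are supported on disjoint coordinate blocks, and computing the successive stabilisers from the Levi decomposition of $H$; the intersection and span conditions exploited repeatedly in Sections~\ref{psu}--\ref{psp} (via Lemma~\ref{lemma:minus1}) would guarantee that each new point strictly refines the stabiliser. Next I would exhibit an irredundant base of strictly smaller length $\ell_B<\ell_A$ for the full group $B=\mathrm{Aut}(G_0)$ containing $\tau$: once $\tau$ is available, a short chain suffices, since fixing one sufficiently generic triple already forces the triality component of any stabilising element to be trivial (the three coordinates of a $\tau$-asymmetric triple cannot be permuted cyclically back onto themselves), after which only a linear stabiliser remains and is killed by a bounded number of further points. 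Having a base of length $\ell_A$ for $A=G_0\le G$ and a base of length $\ell_B<\ell_A$ for $G\le B$, Lemma~\ref{lemma:minusminus1} immediately yields that $G$ is not $\mathrm{IBIS}$, uniformly in $q$ and independently of which intermediate group $G$ containing the triality one takes.

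The hard part will be the explicit bookkeeping for $\tau$, because triality does not act linearly on $V$ and so the stabiliser computations cannot be carried out by pure matrix algebra as in Lemma~\ref{prop:3bispart1} or Section~\ref{Sp4graph}. I would handle this by working in the $D_4$ incidence geometry itself: rather than tracking $\tau$ on coordinates, I would record, for each candidate element $x\tau^{\varepsilon}$ with $x$ in the subgroup $\mathrm{PGO}_8^+(q)\rtimes\mathrm{Aut}(\mathbb{F}_q)$ acting $\mathbb{F}_q$-semilinearly on $V$ and $\varepsilon\in\{0,1,2\}$, the induced permutation of the three object types, and then use incidence (a point lies on a line, a line lies in a solid) to pin down $x$. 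Isolating a triple whose point, greek and latin are in general position, so that no nontrivial power of $\tau$ can stabilise it, is the delicate step; once such a triple is fixed one reduces to a linear stabiliser, which is again computable by the methods already developed for the orthogonal subspace actions. For the finitely many small $q$ where the genericity argument degenerates, a direct machine verification, as used for $d=8$ in Lemma~\ref{ort+}, would complete the proof.
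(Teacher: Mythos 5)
Your geometric identification of $\Omega$ with the incident point--greek--latin triples matches the paper's, and your overall route (a long irredundant base for $G_0$ plus a strictly shorter one for $\mathrm{Aut}(G_0)$, then Lemma~\ref{lemma:minusminus1}) is legitimate in principle — it is the strategy the paper itself uses elsewhere, e.g.\ in Lemma~\ref{psu:cased3}. But the mechanism you propose for producing the short base fails at its key step. You claim that fixing one ``sufficiently generic'' ($\tau$-asymmetric) triple forces the triality component of any stabilising element to be trivial. No such triple exists: $G$ acts transitively on $\Omega$, the stabiliser $H$ of a triple is maximal in $G$ and does not contain $G_0$, so $HG_0=G$ and $H$ surjects onto $G/G_0$; hence \emph{every} triple is stabilised by elements inducing triality. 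There is no asymmetry obstruction, because the three members of a triple already have three different types, and a triality element $x\tau$ fixing the triple simply sends $p\mapsto g\mapsto \ell\mapsto p$ for a suitable $x$. Since you never actually compute either chain, the inequality $\ell_B<\ell_A$ — which is the entire content of the argument — is left unestablished, and the one concrete idea offered for achieving it is false.

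The paper avoids this difficulty by working with two chains inside $G$ itself and invoking Lemma~\ref{lemma:1} rather than Lemma~\ref{lemma:minusminus1}: it takes two triples $\omega_1,\omega_2$ sharing the same pair of solids $\langle e_1,e_2,e_3,e_4\rangle$ and $\langle e_1,e_2,e_3,e_8\rangle$ and differing only in the point. An element of $G_{\omega_1,\omega_2}$ inducing triality would have to send the two distinct points $\langle e_1\rangle$ and $\langle e_2\rangle$ to solids of a common type inside the shared pair, which is impossible for a bijection; hence $G_{\omega_1,\omega_2}\le\mathrm{P}\Gamma\mathrm{GO}_8^+(q)$, the remaining computation is linear, and one obtains two irredundant chains of lengths $4$ and $5$ with the same final stabiliser (with a machine check for $q=2$). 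If you want to salvage your approach, this is the missing ingredient: triality must be killed by fixing \emph{two} points of $\Omega$ with a common solid, not by a genericity argument on a single point.
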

\begin{proof}
We have a geometric description of the domain $\Omega$ of $G$. We briefly recall a few basic facts about trialities, see~\cite[Chapter~8]{PolarSpaces}. Let $V=\mathbb{F}_q^8$ be the orthogonal spaces associated to $G$. The $4$-dimensional totally singular subspaces fall into two families (the greeks and the latins), where two subspaces $W_1,W_2$ are in the same class if $\dim(W_1)-\dim(W_1\cap W_2)$ is even. Now, $\Omega$ can be identified with the collection of all triples $\{p,\pi_1,\pi_2\}$, where $p$ is a point (that is, a $1$-dimensional totally singular subspace) and $\pi_1$ and $\pi_2$ are solids containing $p$ and of different type. Without loss of generality we may suppose that the hyperbolic quadric is $$X_1X_5+X_2X_6+X_3X_7+X_4X_8.$$

Let 
\begin{align*}
\omega_1&=\{\langle e_1\rangle,\langle e_1,e_2,e_3,e_4\rangle,\langle e_1,e_2,e_3,e_8\rangle\},\\
\omega_2&=\{\langle e_2\rangle,\langle e_1,e_2,e_3,e_4\rangle,\langle e_1,e_2,e_3,e_8\rangle\},\\
\omega_3&=\{\langle e_3\rangle,\langle e_1,e_2,e_3,e_4\rangle,\langle e_1,e_2,e_3,e_8\rangle\},\\
\omega_4&=\{\langle e_1+e_2+e_3\rangle,\langle e_1,e_2,e_3,e_4\rangle,\langle e_1,e_2,e_3,e_8\rangle\},\\
\omega_3'&= \{\langle e_1+e_2\rangle,\langle e_1,e_2,e_3,e_4\rangle,\langle e_1,e_2,e_3,e_8\rangle\},\\
\omega_4'&= \omega_3, \\
\omega_5'&= \omega_4.
\end{align*}
As in Section~\ref{Sp4graph}, since $\langle e_1,e_2,e_3,e_4\rangle$  is common to $\omega_1$ and $\omega_2$, every element of $G_{\omega_1,\omega_2}$ fixes $\langle e_1,e_2,e_3,e_4\rangle$. This implies that $G_{\omega_1,\omega_2}\le\mathrm{P}\Gamma\mathrm{L}_8(q)$, that is, $G$ does not contain triality. With this remark, it is elementary to show that, when $q>2$,
$(\omega_1,\omega_2,\omega_3,\omega_4)$ and $(\omega_1,\omega_2, \omega_3',\omega_4',\omega_5')$ are irredundant chains with
$$G_{\omega_1,\omega_2,\omega_3,\omega_4}=G_{\omega_1,\omega_2, \omega_3',\omega_4',\omega_5'}.$$
Therefore $G$ is not IBIS by Lemma~\ref{lemma:1}.

When $q=2$, we have verified with a computer that $b(G)=3$ and that $G$ also admits bases of cardinality $4$.
\end{proof}

Dealing with case~\ref{HiHo7} is much more delicate and in fact we need to start with a lemma dealing with certain non-subspace actions. Our proof is quite geometric and we believe that it is interesting in its own right.
\begin{lemma}\label{lemma:non-subspace}
Let $q$ be even and $G_0=\mathrm{Sp}_6(q)$ or let $q$ be odd and $G_0=\Omega_7(q)$. Let $G$ be an almost simple primitive group with socle $G_0$ such that the stabilizer of a point in $G_0$ is a maximal subgroup in the Aschbacher class $\mathcal{C}_9$ isomorphic to $G_2(q)$. Then $G$ is not $\mathrm{IBIS}$.
\end{lemma}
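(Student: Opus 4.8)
The plan is to show that $G$ is not IBIS by exhibiting an irredundant base for the socle $G_0$ that is strictly longer than some irredundant base for $A=\mathrm{Aut}(G_0)$, and then invoking Lemma~\ref{lemma:minusminus1} (which covers every intermediate $G$ at once). The two cases can be handled uniformly: for $q$ even one has the exceptional isomorphism $\mathrm{Sp}_6(q)\cong\Omega_7(q)$, so in both cases $G_0$ carries the same octonionic geometry, and $G_2(q)$ arises as the full stabiliser in $G_0$ of a distinguished alternating trilinear form (the Dickson/octonion ``cross product'') $\mu_0$ on the natural module $V$ of $G_0$ (the $7$-dimensional orthogonal module when $q$ is odd, the $6$-dimensional symplectic module, obtained by factoring out the radical, when $q$ is even). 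Transport of structure then identifies the domain $\Omega=[G_0:G_2(q)]$ with the $G_0$-orbit of $\mu_0$: a point of $\Omega$ is a cross product $\mu=\mu_0^{\,g}$, and $G_\mu=G_2(q)^g$.

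With this model in place, first I would assemble the elementary facts about $G_2(q)$ acting on $V$: it is transitive on the nonsingular and on the singular $1$-spaces, the stabiliser of a nonsingular point is $\mathrm{SL}_3(q)$ or $\mathrm{SU}_3(q)$ (and $\mathrm{SL}_3(q)$ in even characteristic), and the stabiliser of a singular point is the maximal parabolic $[q^5]{:}\mathrm{GL}_2(q)$. These give explicit control over the pointwise stabiliser of $\mu_0$ together with a short flag of subspaces, and hence over the successive stabilisers $G_2(q)\cap G_2(q)^{g_1}\cap\cdots$, once the companion cross products $\mu_i=\mu_0^{\,g_i}$ are chosen so that each $g_i$ also fixes the previously selected subspaces. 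The aim is to produce for $G_0$ an irredundant chain of cross products whose stabilisers descend $G_2(q)>G_2(q)\cap G_2(q)^{g_1}>\cdots>1$ in, say, one more step than a competing chain with the same (trivial) terminal stabiliser, so that Lemma~\ref{lemma:1} already applies to $G_0$ and a fortiori, after extension, yields the required base discrepancy.

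For the automorphism group $A=G_0\rtimes\mathrm{Aut}(\mathbb{F}_q)$ (including the graph automorphism when $q$ is even, under which $G_2(q)$ is also stable) the base is shortened by a \emph{regular-orbit} argument for the field automorphisms: after fixing a few cross products to trivialise the inner part, one selects a last cross product lying in a regular $\mathrm{Aut}(\mathbb{F}_q)$-orbit, exactly as in the proof of Lemma~\ref{psu:cased3} via Lemma~\ref{lemma:auxiliary}. This produces an irredundant base for $A$ of length one less than the one constructed for $G_0$, which is precisely the input needed for Lemma~\ref{lemma:minusminus1}.

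The main obstacle is the explicit determination of the joint stabiliser of two octonion structures, that is, of the intersections $G_2(q)\cap G_2(q)^{\,g}$. Here I would exploit the key reduction that fixing $\mu_0$ together with a single (generic) vector already cuts $G_2(q)$ down to a $3$-dimensional special linear or unitary group, whose action on subspaces is understood from Sections~\ref{psl} and~\ref{psu}; iterating this collapse is what keeps the chains computable and forces their termination. The finitely many small cases (roughly $q\le 4$, where the genericity and regular-orbit arguments can fail) I would dispatch by direct computation with a computer in $\mathrm{Aut}(\mathrm{Sp}_6(q))$ and $\mathrm{Aut}(\Omega_7(q))$.
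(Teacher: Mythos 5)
Your proposal is a plan rather than a proof, and the step you yourself flag as ``the main obstacle'' --- the explicit determination of the intersections $G_2(q)\cap G_2(q)^{g}$ and hence of the successive stabilizers along your chains --- is never carried out. This is not a detail: it is the entire content of the lemma. The paper disposes of it by citing Burness~\cite{Bur07} (Lemma~3.1 for $\mathrm{Sp}_6(q)$, Proposition~3.2 for $\Omega_7(q)$), whose explicit constructions show $b(G)=4$ and, as the paper observes, can be rerun to produce an irredundant base of cardinality $5$; two irredundant bases of different lengths then give non-IBIS at once. Without either that citation or a worked-out substitute for it, your argument does not close: you assert that chains of cross products ``in, say, one more step'' than a competing chain exist, but you exhibit neither chain, you do not verify irredundancy of any step, and you do not establish that the two chains terminate in the same (trivial) stabilizer.

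There is also a conceptual slip in the mechanism you propose for controlling the chain. The facts you assemble about $G_2(q)$ --- transitivity on singular and nonsingular $1$-spaces of $V$, point stabilizers $\mathrm{SL}_3(q)$, $\mathrm{SU}_3(q)$, $[q^5]{:}\mathrm{GL}_2(q)$ --- describe the action of $G_2(q)$ on $V$, whereas the base points of the permutation group under consideration must be elements of $\Omega=[G_0:G_2(q)]$, i.e.\ conjugates of the cross product $\mu_0$. Choosing the conjugating elements $g_i$ to fix previously selected subspaces of $V$ does not determine $G_2(q)\cap G_2(q)^{g_i}$, since the stabilizer of $\mu_0^{g_i}$ depends only on the point $\mu_0^{g_i}$ and not on the witness $g_i$; to exploit the subgroup structure of $G_2(q)$ on $V$ you would need to realize each subspace stabilizer as an intersection of point stabilizers in $\Omega$, which is precisely the double-coset analysis that is missing. (Minor further issues: $\mathrm{Sp}_6(q)$ has no graph automorphism, so the parenthetical about stability of $G_2(q)$ under one is vacuous; and the regular-orbit argument for $\mathrm{Aut}(\mathbb{F}_q)$, modelled on Lemma~\ref{lemma:auxiliary}, is again only asserted, not performed.) If you wish to avoid the citation to~\cite{Bur07}, you must actually produce the two irredundant bases; as written, the proposal establishes nothing beyond the (correct) observation that Lemmas~\ref{lemma:1} and~\ref{lemma:minusminus1} would finish the job once such bases are in hand.
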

\begin{proof}
We denote with $\Omega$ the coset space $[G_0 : G_2(q)]$ on which $G$ acts. Thus, $G = G_0.\langle \varphi \rangle$, for some field automorphism $\varphi$. Lemma~3.1 and Proposition~3.5 in~\cite{Bur07} show that $G$ has a base of size $4$ and hence it suffices to show that $G$ has a base of cardinality at least $5$. Consider an irreducible spin representation $\rho:\mathrm{P}\Omega_8^+(q) \to \mathrm{P}\Omega_8^+(q)$. The proof of Lemma~$3.1$ and Proposition~3.2 in \cite{Bur07} show that the action of $G$ on $\Omega$ is equivalent to the action of $\rho(G_0).\langle \varphi \rangle$ on the set of $1$-dimensional non-singular subspace of the natural module of $\mathrm{P}\Omega_8^+(q)$. In particular, let $Q$ be the quadratic form
$$X_1X_8+X_2X_7+X_3X_6+X_4X_5,$$
which defines the hyperbolic quadric preserved by $\mathrm{P}\Omega_8^+(q)$. We 
may identify $\Omega$ with the set of $1$-dimensional subspaces of $\mathbb{F}_q^8$ with respect to $Q$.

Suppose that $q$ is even, so that $G_0 = \mathrm{Sp}_6(q)$ and let $G_0(\mathbb{F}_2)=\mathrm{Sp}_6(2)\le G_0$. Observe that, if $\rho(G_0(\mathbb{F}_2))$ in its action on the elements of $\Omega$ lying in $\mathbb{F}_2^8$ has a base of cardinality at least $5$, then so does $G$. We have constructed $\rho(G_0(\mathbb{F}_2))$ with \texttt{magma}. In particular, using the command ClassicalMaximals,
we see that $\rho(G_0(\mathbb{F}_2))$ is generated by the matrices
\[
\begin{pmatrix}
	1 & 1 & 0 & 0 & 0 & 0 & 0 & 0 \\
	0 & 1 & 0 & 0 & 0 & 0 & 0 & 0 \\
	1 & 1 & 1 & 1 & 0 & 0 & 0 & 0 \\
	0 & 1 & 0 & 1 & 0 & 0 & 0 & 0 \\
	0 & 0 & 0 & 0 & 1 & 1 & 0 & 0 \\
	0 & 0 & 0 & 0 & 0 & 1 & 0 & 0 \\
	0 & 0 & 0 & 0 & 1 & 1 & 1 & 1 \\
	0 & 0 & 0 & 0 & 0 & 1 & 0 & 1
\end{pmatrix},
\begin{pmatrix}
	1 & 1 & 1 & 0 & 0 & 0 & 0 & 0 \\
	0 & 0 & 0 & 1 & 0 & 0 & 0 & 0 \\
	0 & 1 & 1 & 1 & 0 & 0 & 0 & 0 \\
	0 & 1 & 0 & 0 & 0 & 0 & 0 & 0 \\
	0 & 0 & 0 & 0 & 0 & 1 & 1 & 0 \\
	0 & 0 & 0 & 0 & 0 & 1 & 0 & 1 \\
	0 & 0 & 0 & 0 & 1 & 1 & 0 & 1 \\
	0 & 0 & 0 & 0 & 0 & 0 & 0 & 1
\end{pmatrix},
\]
\[
\begin{pmatrix}
	1 & 0 & 0 & 0 & 0 & 0 & 0 & 0 \\
	1 & 1 & 1 & 0 & 0 & 0 & 0 & 0 \\
	0 & 0 & 1 & 0 & 0 & 0 & 0 & 0 \\
	0 & 0 & 0 & 1 & 0 & 0 & 0 & 0 \\
	0 & 0 & 0 & 0 & 1 & 0 & 0 & 0 \\
	0 & 0 & 0 & 0 & 0 & 1 & 1 & 0 \\
	0 & 0 & 0 & 0 & 0 & 0 & 1 & 0 \\
	0 & 0 & 0 & 0 & 0 & 0 & 1 & 1
\end{pmatrix},
\begin{pmatrix}
	0 & 0 & 0 & 0 & 0 & 0 & 0 & 1 \\
	0 & 1 & 0 & 1 & 1 & 1 & 1 & 0 \\
	0 & 1 & 0 & 1 & 0 & 0 & 0 & 1 \\
	0 & 1 & 1 & 0 & 0 & 0 & 0 & 1 \\
	0 & 1 & 0 & 1 & 0 & 1 & 0 & 0 \\
	0 & 0 & 1 & 0 & 1 & 0 & 0 & 1 \\
	0 & 1 & 0 & 0 & 0 & 0 & 0 & 0 \\
	1 & 0 & 1 & 0 & 1 & 1 & 0 & 1
\end{pmatrix}.
\]
Thus, we have verified that
$$(\langle e_1+e_8\rangle,\,\langle e_2+e_7\rangle,\langle e_3+e_6\rangle,\langle e_3+e_4+e_5\rangle,\langle e_4+e_5\rangle)$$
is an irredundant base for  $\rho(G_0(\mathbb{F}_2))$ in its action on $\Omega$.

Suppose that $q$ is odd, so that $G_0 = \Omega_7(q)$. Thus, $G_2(q) = (\Omega_7(q).\langle \varphi \rangle)_{\omega_1}$, for some $1$-dimensional non-singular subspace $\omega_1 \in \Omega$. A set of generators for $G_2(q)$ in $\Omega_7(q)$ is defined explicitly in~\cite[page~172]{Malle}. In particular, using the notation from~\cite{Malle}, we may assume that $G_2(q)$ is generated by the root elements
\[
x_\alpha(t)=
\begin{pmatrix}
1&t&0&0&0&0&0\\
0&1&0&0&0&0&0\\
0&0&1&t&-t^2&0&0\\
0&0&0&1&-2t&0&0\\
0&0&0&0&1&0&0\\
0&0&0&0&0&1&-t\\
0&0&0&0&0&0&1
\end{pmatrix},
x_\beta(t)=
\begin{pmatrix}
1&0&0&0&0&0&0\\
0&1&t&0&0&0&0\\
0&0&1&0&0&0&0\\
0&0&0&1&0&0&0\\
0&0&0&0&1&-t&0\\
0&0&0&0&0&1&0\\
0&0&0&0&0&0&1
\end{pmatrix},\]
with $t\in \mathbb{F}_q$, and the Coxeter element \[
w=
\begin{pmatrix}
0&0&0&0&0&0&1\\
0&0&0&0&0&-1&0\\
0&0&0&0&1&0&0\\
0&0&0&-1&0&0&0\\
0&0&1&0&0&0&0\\
0&-1&0&0&0&0&0\\
1&0&0&0&0&0&0\\
\end{pmatrix}.
\]
Moreover, from~\cite[Section~2]{Malle}, we see that the quadratic form preserved by $G_2(q)$ is
$$X_1X_7+X_2X_6+X_3X_5+X_4^2,$$
and is the restriction of $Q$ to a suitable $7$-dimensional non-degenerate subspace of $\mathbb{F}_q^7$. Let $\omega_2=\langle e_1+e_7\rangle$, $\omega_3=\langle e_1-e_7\rangle$, $\omega_{4}=\langle e_3+e_4\rangle $ and observe that $\omega_2,\omega_3,\omega_4$ are non-degenerate $1$-subspaces. Moreover,
\begin{align*}
x_\alpha(1)&\in G_2(q)\setminus (G_2(q))_{\omega_2}, &&w\in (G_2(q))_{\omega_2}\setminus (G_2(q))_{\omega_2,\omega_3},\\
wx_\beta(1)w&\in (G_2(q))_{\omega_2,\omega_3}\setminus (G_2(q))_{\omega_2,\omega_3,\omega_4}, &&x_\beta(1)\in (G_2(q))_{\omega_2,\omega_3,\omega_4}.
\end{align*}
Therefore $(\omega_1,\omega_2,\omega_3,\omega_4)$ is an irredundant chain of length $4$ that can be extended to a base of size at least $5$.
\end{proof}
Using Lemma~\ref{lemma:non-subspace} we may now deal with~\eqref{HiHo7}.
\begin{lemma}\label{lemma:caseHiHo7}
If $G$ is as in Case~$\eqref{HiHo7}$ then $G$ is not $\mathrm{IBIS}$.
\end{lemma}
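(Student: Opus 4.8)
The plan is to reduce the triality action of $G$ on $\Omega=[G_0:G_2(q)]$ to the non-subspace actions already settled in Lemma~\ref{lemma:non-subspace}, by exploiting the standard chain of subgroups $G_2(q)\le S\le G_0$, where $S=\Omega_7(q)$ when $q$ is odd and $S=\mathrm{Sp}_6(q)$ when $q$ is even (the latter embedded as the stabiliser of a non-singular $1$-space via Dye's embedding, cf.~\cite{Dye}). In both parities $S$ is the stabiliser in $G_0$ of a non-singular point of the natural module, and $G_2(q)\le S$ by the usual description of the $\mathcal{C}_9$-subgroup $G_2(q)$ (see~\cite{KL90,PolarSpaces}); we choose $S$ to contain the specific copy $(G_0)_{\omega_0}=G_2(q)$, where $\omega_0\in\Omega$ is the base point.

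First I would let $\Delta$ be the $S$-orbit of $\omega_0$. Since $G_2(q)\le S$, we have $S_{\omega_0}=S\cap G_2(q)=G_2(q)$, so the action of $S$ on $\Delta$ is permutation isomorphic to the action of $S$ on $[S:G_2(q)]$, that is, precisely the $\mathcal{C}_9$ action of Lemma~\ref{lemma:non-subspace}. The crucial point is a \emph{stabiliser transfer}: for every $\delta=\omega_0^{\,g}\in\Delta$ with $g\in S$ we have $(G_0)_\delta=G_2(q)^g\le S^g=S$, whence $(G_0)_\delta=S_\delta$. Consequently, for any finite sequence $\delta_1,\ldots,\delta_j\in\Delta$ the entire stabiliser chain coincides with the one computed inside $S$:
\[
(G_0)_{\delta_1,\ldots,\delta_j}=\bigcap_{i=1}^{j}(G_0)_{\delta_i}=\bigcap_{i=1}^{j}S_{\delta_i}=S_{\delta_1,\ldots,\delta_j}.
\]
In particular, irredundancy and the base property transfer verbatim from $S$ acting on $\Delta$ to $G_0$ acting on $\Omega$.

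By the construction used in the proof of Lemma~\ref{lemma:non-subspace} (following~\cite[Lemma~3.1, Proposition~3.2]{Bur07}), the socle $S$ possesses irredundant bases of cardinality $4$ and $5$ on $[S:G_2(q)]$. Through the transfer above these yield irredundant bases of cardinality $4$ and $5$ for $G_0$ on $\Omega$; in particular $b(G_0)\le 4$ and $G_0$ itself is not $\mathrm{IBIS}$. It then remains to pass from $G_0$ to $G$, which contains a triality and hence strictly contains $G_0$. Here I would invoke the sandwich Lemma~\ref{lemma:minusminus1} with $A=G_0$ and $B=\mathrm{Aut}(G_0)$: the subgroup $A=G_0$ admits an irredundant base of length $5$, while by~\cite{Bur07} the full group $B$ has base size $4$ in this action and so admits an irredundant base of length $4$. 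Since $A\le G\le B$ and $5>4$, Lemma~\ref{lemma:minusminus1} gives that $G$ is not $\mathrm{IBIS}$.

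The main obstacle is the stabiliser-transfer step, which hinges on the fact that \emph{every} $G_0$-point-stabiliser occurring along $\Delta$ already lies inside the single subgroup $S$; granting this, the remainder is bookkeeping together with the explicit bases supplied by Lemma~\ref{lemma:non-subspace} and Burness's base-size computation for the ambient automorphism group. A secondary care point is making sure the size-$5$ base is recorded for the \emph{socle} $S$ (and not merely for some almost simple overgroup of $S$), since that is what the transfer to $G_0$ and the subsequent application of Lemma~\ref{lemma:minusminus1} require.
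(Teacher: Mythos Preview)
Your stabiliser-transfer step is correct and elegant: since $(G_0)_{\omega_0}=G_2(q)\le S$ and conjugation by any $g\in S$ keeps $G_2(q)^g$ inside $S$, the $G_0$-stabiliser chain along the $S$-orbit $\Delta$ really does coincide with the $S$-chain, and the length-$4$ and length-$5$ irredundant bases for $S$ become irredundant bases for $G_0$.

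The genuine gap is the final step. You invoke $b(\mathrm{Aut}(G_0))\le 4$ from~\cite{Bur07} in order to apply Lemma~\ref{lemma:minusminus1}, but that citation does not deliver this bound. By the conventions adopted both here and in~\cite{Bur07} (see the remark after Definition~\ref{defintion6.2} and \cite[p.~549]{Bur07}), the novelty maximal subgroup with $H\cap G_0\cong G_2(q)$ arising under triality is counted as a \emph{subspace} subgroup, so this action is \emph{standard}; consequently the non-standard base-size theorem of~\cite{Bur07} (summarised in Theorem~\ref{thrm:6.4}) does not apply to it. No alternative reference establishing $b(\mathrm{Aut}(G_0))\le 4$ for this action is given, and the paper's own argument does not yield it either: it only shows that the $G$-stabiliser of a length-$4$ sequence in $\lambda_1$ equals $G_{(\lambda_1)}$, which need not be trivial when $G>G_0$. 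Without this bound, your use of Lemma~\ref{lemma:minusminus1} collapses.

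The paper closes this gap differently and self-containedly. It identifies your $\Delta$ with a block $\lambda_1$ of one of three systems of imprimitivity $\Sigma_1,\Sigma_2,\Sigma_3$ (permuted transitively by triality) and proves, via the factorisation $G_0=M_jM_k$, that the projection $\beta\mapsto(\theta_j(\beta),\theta_k(\beta))$ is a bijection. Any element of $G$ fixing two distinct points of $\lambda_1$ and moving $\Sigma_1$ would force these points to agree on a second coordinate as well, contradicting injectivity; hence the $G$-stabiliser of each irredundant sequence in $\lambda_1$ fixes $\lambda_1$ setwise, and therefore both the length-$4$ and the length-$5$ sequences have the same $G$-stabiliser, namely $G_{(\lambda_1)}$. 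Lemma~\ref{lemma:1} then applies directly, with no appeal to an external base-size bound for $\mathrm{Aut}(G_0)$.
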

\begin{proof}
Here $G_0=\mathrm{P}\Omega_8^+(q)$ and the stabilizer of a point in $G_0$ is isomorphic to $G_2(q)$: the centralizer of a graph automorphism of order $3$. 

Let $\Omega$ be the domain of $G$ and fix $\alpha\in \Omega$. Thus $(G_0)_\alpha={\bf C}_{G_0}(\tau)\cong G_2(q)$, for some graph automorphism $\tau$ of order $3$. From~\cite[Table~$8.50$]{bhr}, we deduce that the lattice of subgroups of $G_0$ containing $(G_0)_\alpha$ consists of five subgroups: $G_0$, $(G_0)_\alpha$ and three maximal subgroups $M_1,M_2,M_3$ of $G_0$ with $M_1\cong M_2\cong M_3\cong\Omega_7(q)$ when $q$ is odd and $M_1\cong M_2\cong M_3\cong\mathrm{Sp}_6(q)$ when $q$ is even. These embeddings arise from the spin representations of orthogonal groups. 

The group $G_0$ is imprimitive on $\Omega$ and each subgroup $M_i$ gives rise to a system of imprimitivity $\Sigma_i$ for the action of $G_0$ on $\Omega$. Clearly, $G$ does not preserve $\Sigma_i$ and in fact $G$ acts transitively on the three systems of imprimitivity $\Sigma_1,\Sigma_2,\Sigma_3$ making the action of $G$ primitive. Let $\theta_i:\Omega\to \Sigma_i$ be the natural projection, that is, $\theta_i(\beta)$ is the unique part in the partition $\Sigma_i$ containing the point $\beta$. We now consider the map
\begin{align*}
\theta:&\,\Omega\to \Sigma_1\times\Sigma_2\times\Sigma_3,\\
&\beta\mapsto\theta(\beta)=(\theta_1(\beta),\theta_2(\beta),\theta_3(\beta)).
\end{align*} Since $G$ is primitive on $\Omega$, $\theta$ is injective. Therefore, when necessary, we identify $\Omega$ with its image under $\theta$.

Let $i\in\{1,2,3\}$ and let $\{1,2,3\}\setminus\{i\}=\{j,k\}$. We claim that the mapping $\Omega\to \Sigma_j\times\Sigma_k$ defined by $\beta\mapsto (\theta_j(\beta),\theta_k(\beta))$ is a bijection. In other words, in the function $\theta$, each coordinate is uniquely determined by the remaining two coordinates. Firstly, note that
\begin{align*}
|\Omega|&=|\mathrm{P}\Omega_8^+(q):G_2(q)|=q^6(q^4-1)^2,\\
|\Sigma_i|&=\begin{cases}
|\mathrm{P}\Omega_8^+(q):\Omega_7(q)|=q^3(q^4-1),&\textrm{when }q\textrm{ is odd},\\
|\mathrm{P}\Omega_8^+(q):\mathrm{Sp}_6(q)|=q^3(q^4-1),&\textrm{when }q\textrm{ is even}.
\end{cases}
\end{align*}
Therefore, $|\Omega|=|\Sigma_j\times\Sigma_k|$. Hence, it suffices to show that $\beta\mapsto (\theta_j(\beta),\theta_k(\beta))$ is injective. Let $\beta,\beta'\in \Omega$ with  $(\theta_j(\beta),\theta_k(\beta))=(\theta_j(\beta'),\theta_k(\beta'))$. Set $\lambda=\theta_j(\beta)$ and $\lambda'=\theta_k(\beta)$. From the classification of the maximal factorizations of the simple group $\mathrm{P}\Omega_8^+(q)$ (see first line of Table~4 in~\cite{LPS}), we deduce that $$G_0=(G_0)_{\lambda}(G_0)_{\lambda'}$$
and hence simply checking the cardinality, we deduce that
$$(G_0)_\beta=(G_0)_{\lambda}\cap(G_0)_{\lambda'}=(G_0)_{\beta'}.$$
The primitivity of $G$ implies that $(G_0)_\beta$ can fix at most one point and hence $\beta=\beta'$.

Let $\theta(\alpha)=(\lambda_1,\lambda_2,\lambda_3)$ and let $X$ be the setwise stabilizer of $\lambda_1$ in $G$. Then, $X$ is an almost simple group with socle either 
$\mathrm{Sp}_6(q)$ or $\Omega_7(q)$ and the action of $X$ on $\lambda_1$ is the one investigated in Lemma~\ref{lemma:non-subspace}. Therefore, there exist 
$\alpha=\alpha_1,\alpha_2,\alpha_3,\alpha_4\in \lambda_1$ and $\alpha=\alpha_1',\alpha_2',\alpha_3',\alpha_4',\alpha_5'\in \lambda_1$ forming irredundant bases of cardinality $4$ and $5$ for the action of $X$ on $\lambda_1$. We claim that these are also irredundant chains for the action of $G$ on $\Omega$ with 
$$G_{\alpha_1,\alpha_2,\alpha_3,\alpha_4}=
G_{\alpha_1',\alpha_2',\alpha_3',\alpha_4',\alpha_5'}=G_{(\lambda_1)},
$$
where $G_{(\lambda_1)}$ is the pointwise stabilizer of $\lambda_1$ in $G$. To prove this it suffices to show that $G_{\alpha_1,\alpha_2,\alpha_3,\alpha_4}$ and
$G_{\alpha_1',\alpha_2',\alpha_3',\alpha_4',\alpha_5'}$ fix setwise $\lambda_1$ and then use the fact these points form two irredundant bases for the action of $G_{\{\lambda_1\}}$ on $\lambda_1$.

Clearly,
$$G>G_{\alpha_1}>G_{\alpha_1,\alpha_2}>G_{\alpha_1,\alpha_2,\alpha_3}>G_{\alpha_1,\alpha_2,\alpha_3,\alpha_4},$$
because the analogous chain of stabilizers in $X$ is strictly decreasing. Let $Y=G_{\alpha_1,\alpha_2,\alpha_3,\alpha_4}$. Observe that, for each $i$, the first coordinate of $\theta(\alpha_i)$ is $\lambda_1$, that is, 
$$\theta_1(\alpha_1)=\theta_1(\alpha_2)=\theta_1(\alpha_3)=\theta_1(\alpha_4).$$
Assume that $Y$ contains some permutation $g$ moving $\Sigma_1$, say $\Sigma_1^g=\Sigma_2$. Then, since $g$ fixes $\alpha_i$ for each $i$, we deduce
 $$\theta_2(\alpha_1)=\theta_2(\alpha_2)=\theta_2(\alpha_3)=\theta_2(\alpha_4).$$
However, since the mapping $\beta\mapsto (\theta_1(\beta),\theta_2(\beta))$ is bijective, this implies $\alpha_1=\alpha_2=\alpha_3=\alpha_4$, which is clearly a contradiction. This has shown that every element of $Y$ fixes $\Sigma_1$ (and eventually permutes $\Sigma_2$ with $\Sigma_3$). Therefore, $\Sigma_1$ is a system of imprimitivity for the action of $Y$. Since $\alpha_1\in\lambda_1$ and since  $Y$ fixes $\alpha_1$, we deduce that $Y$ fixes setwise $\lambda_1$. With the same argument, we can show that $G_{\alpha_1',\alpha_2',\alpha_3',\alpha_4',\alpha_5'}$ fixes setwise $\lambda_1$. Therefore, $G$ has two irredundant bases of cardinality $4$ and $5$ respectively, and hence it is not $\mathrm{IBIS}$. 
\end{proof}

\section{Almost simple groups having socle $E_6(q)$ or $E_7(q)$: case~\eqref{HiHo8} and~\eqref{HiHo9}}\label{e6e7}
In this case, $G_0 = E_6(q) \leq G$ or $G_0 = E_7(q) \leq G$, and the action is one of those described in cases~\eqref{HiHo8} and~\eqref{HiHo9}.
\begin{proposition}\label{prop:e6}Let $G$ be an almost simple group on $\Omega$ with socle $G_0=E_6(q)$, where the action of $G_0$ on $\Omega$ is permutation equivalent to the action on the right cosets of a parabolic subgroup labeled $P_1$ or $P_6$. Then $G$ is not IBIS.
\end{proposition}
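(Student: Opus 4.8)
We are dealing with $G_0 = E_6(q)$ acting on the cosets of a maximal parabolic $P_1$ (equivalently $P_6$), which by Theorem~\ref{thrm:6.4} is one of the two residual large-base actions with $b(G)=6$. The strategy is the one used throughout the paper for the non-IBIS cases: exhibit two irredundant bases of different lengths whose terminal stabilizers coincide, and then invoke Lemma~\ref{lemma:1}. Since the base size is exactly $6$, the natural goal is to produce an irredundant base of size $6$ and a second irredundant chain of length $7$ (or $5$) reaching the same pointwise stabilizer.

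Let me describe the approach in more detail.

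**Geometric setup.** The parabolic $P_1$ is the stabilizer of a point on the $27$-dimensional minuscule module $V_{27}$ for $E_6(q)$; equivalently, $\Omega$ is the set of points of the $E_6$ minuscule variety (the highest-weight orbit of singular points in $\mathbb{P}(V_{27})$). The plan is to work explicitly inside $V_{27}$, using the incidence geometry whose points are the elements of $\Omega$ and whose maximal singular subspaces come in two families. First I would recall (citing the appropriate reference on the $27$-dimensional representation) the precise structure of $P_1$: its Levi factor has a $D_5 = \mathrm{Spin}_{10}(q)$ component together with a torus, and this controls the chain of stabilizers. The key point to extract is how the pointwise stabilizer of a configuration of points of $\Omega$ shrinks as points are added — i.e.\ an explicit stabilizer chain $G_0 > (G_0)_{\alpha_1} > \cdots$ analogous to the ones computed for the symplectic and unitary actions in earlier sections.

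**The construction.** I would choose points $\alpha_1,\ldots,\alpha_6$ of $\Omega$ in ``general position'' so that $(\alpha_1,\ldots,\alpha_6)$ is an irredundant base realizing $b(G)=6$; this existence is guaranteed once we know $b(G)=6$, but I need the stabilizers explicitly. Then I would find a second point $\alpha_3'$ (to replace $\alpha_3$, say) lying in a special position relative to $\alpha_1,\alpha_2$ — for instance a point forced to lie on a line already determined by $\alpha_1$ and $\alpha_2$ — so that passing through $\alpha_3'$ subdivides one of the stabilizer-quotient steps into two proper steps, yielding an irredundant chain of length $7$. The crucial verification is that the two chains terminate at the \emph{same} subgroup, namely the pointwise stabilizer of the subvariety spanned by the configuration; this is the analogue of equations~\eqref{eq:eqTRENO} and~\eqref{eq:number1}. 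Once $(\alpha_1,\ldots,\alpha_6)$ and $(\alpha_1',\ldots,\alpha_7')$ are shown to be irredundant with equal terminal stabilizers, Lemma~\ref{lemma:1} finishes the argument. I would also need to confirm the construction survives the extension from $G_0$ to an arbitrary almost simple $G$ with this socle: the outer automorphisms (field automorphisms and the graph automorphism interchanging $P_1$ and $P_6$) must fix the relevant spanned subvariety, exactly as the field automorphisms did in the linear case of Lemma~\ref{prop:2}; the graph automorphism does not act within a single $\Omega$ so causes no trouble for a fixed choice of $P_1$.

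**The main obstacle.** The hard part will be making the stabilizer chain explicit enough to certify irredundancy and, above all, the coincidence of terminal stabilizers, without a full matrix computation in $\mathrm{GL}_{27}(q)$. Unlike the classical cases, there is no light-weight coordinate description; the cleanest route is probably to argue that both chains stabilize (pointwise or setwise) a common subspace $U \le V_{27}$ — the span of the chosen minuscule points — and that the pointwise stabilizer of the configuration equals the pointwise stabilizer of $U$, using the fact that the points forming each chain span the same $U$. If a fully uniform geometric argument proves elusive for small $q$, I would fall back, as the authors do elsewhere (e.g.\ Lemma~\ref{psp:cased=6k=2}), on a direct computer verification for the smallest cases and a reduction-by-Levi-subgroup induction for larger $q$, the Levi $D_5$-action playing the role that $\mathrm{PSL}_{d-k+1}(q)$ played in Lemma~\ref{prop:3}.
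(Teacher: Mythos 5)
Your overall strategy is pointed in the right direction, but the proposal stops exactly where the real work begins, and it also makes the task harder than it needs to be. First, the unnecessary burden: since $b(G)=6$ is already known from~\cite{Bur2018}, $G$ automatically has an irredundant base of cardinality $6$, so you do \emph{not} need two chains with coinciding terminal stabilizers and Lemma~\ref{lemma:1}; it suffices to exhibit a single irredundant chain of length at least $7$ in $G_0$ (which by Lemma~\ref{lemma:minus1} is irredundant for $G$ and extends to an irredundant base of length $\ge 7$), and then $G$ has irredundant bases of two different sizes, contradicting IBIS directly. Insisting on equality of terminal stabilizers, as in~\eqref{eq:eqTRENO}, is precisely the part you identify as intractable, and it can simply be dropped.

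Second, and more seriously, the construction of the long chain is left unresolved. Your ``main obstacle'' paragraph concedes that you cannot make the stabilizer chain explicit, and the proposed fallbacks do not close the gap: a computer check only handles finitely many $q$, and the Levi $D_5$ reduction is not developed into an argument. The missing ingredient is the result of Cohen and Cooperstein~\cite{CoCo} on the geometry of the $27$-dimensional module: $\Omega$ identifies with the set of white points of $V_{27}$, there exist $6$-dimensional subspaces $W$ all of whose $1$-spaces are white, and the setwise stabilizer of the white points of $W$ induces on them an almost simple group with socle $\mathrm{PSL}_6(q)$ in its natural projective action. Choosing six linearly independent white points of $W$ (and, for $q>2$, the span of their sum) yields a strictly decreasing stabilizer chain whose terminal member still contains the unipotent radical of the point stabilizer, hence is nontrivial and extends to an irredundant base of length at least $7$. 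Without this (or some equivalent device for producing a certified length-$7$ irredundant chain uniformly in $q$), the proof is not complete.
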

\begin{proof}
Observe that a graph automorphism of $G_0=E_6(q)$ fuses the $G_0$-conjugacy classes of parabolic subgroups labeled $P_1$ and $P_6$.

By~\cite{Bur2018}, $b(G)=6$ and hence, by Lemma~\ref{lemma:minus1}, it suffices to show that $G_0$ has an irredundant base of length at least $7$. 

Let $V$ be the $27$-dimensional natural module for the covering group of $G_0$ over the finite field $\mathbb{F}_q$ and let $\mathcal{P}$ be the collection of all $1$-dimensional subspaces of $V$. From~\cite{CoCo}, the $1$-dimensional subspaces of $V$ are partitioned into three parts $$\mathcal{P}=\mathcal{W}\cup\mathcal{G}\cup \mathcal{B},$$ namely the set of white points $\mathcal{W}$, the set of gray points $\mathcal{G}$ and the set of black points $\mathcal{B}$. In~\cite{CoCo}, it  is noticed that the action of $G_0$ on the right cosets of a maximal parabolic subgroup of type $P_1$ (or $P_6$) is permutation equivalent to the action of $G_0$ on the set of white points $\mathcal{W}$ of $V$.\footnote{Incidentally, this fact was first observed by Freudenthal in~\cite{6}.} In particular, without loss of generality, we may suppose that $\Omega=\mathcal{W}$. 

Let $\mathcal{W}_6$ be the collection of all $6$-dimensional subspaces $W$ of $V$ consisting only of white points, that is, each $1$-dimensional subspace of $W$ is white. From~\cite[(P.3), page~470]{CoCo}, we see  that $\mathcal{W}_6\ne \emptyset$ and that $G_0$ acts transitively on $\mathcal{W}_6$. Let $W\in\mathcal{W}_6$ and let $H=(G_0)_{\{\Delta\}}$ be the setwise stabilizer of $$\Delta=\{\omega\in \Omega\mid \omega\le W\},$$ in other words, $\Delta\subseteq \Omega$ consists of the white points contained in $W$. Again from~\cite[(P.3), page~470]{CoCo}, we deduce that the permutation group induced by $H$ on $\Delta$ is permutation equivalent to an almost simple group $X$ having socle $Y=\mathrm{PSL}_6(q)$ in its natural action on the projective points of a $6$-dimensional vector space $\mathbb{F}_q^6$ over $\mathbb{F}_q$. In particular, we may identify the action of $H=G_{\{\Delta\}}$ on $\Delta$ with the natural action of $X$ on the projective points. We use this identification in what follows.

Now, let $\delta_1=\langle v_1\rangle,\ldots,\delta_6=\langle v_6\rangle$ be elements of $\Delta$ such that the vectors $v_1,\ldots,v_6$ are linearly independent in $\mathbb{F}_q^6$. When $q>2$, let 
$\delta_7=\langle v_1+\cdots+v_6\rangle$. Moreover, let $\ell=6$ when $q=2$ and let $\ell=7$ when $q>2$. Now,
$$G_0>(G_0)_{\delta_1}>(G_0)_{\delta_1,\delta_2}>\cdots>(G_0)_{\delta_1,\delta_2,\ldots,\delta_\ell}.$$
In particular, when $q>2$, we may extend the sequence $(\delta_1,\ldots,\delta_\ell)$ to an irredundant base for $G_0$ of length at least $7$. When $q=2$, by definition $\ell=6$ and, moreover, 
$(G_0)_{\delta_1,\delta_2,\ldots,\delta_\ell}$ is the pointwise stabilizer of $\Delta$. Since this subgroup contains the unipotent radical of the parabolic subgroup $(G_0)_{\delta_1}$\footnote{See~\cite{CoCo} or~\cite{6}.}, we deduce that $(G_0)_{\delta_1,\delta_2,\ldots,\delta_\ell}\ne1 $, hence we may extend the sequence $(\delta_1,\ldots,\delta_\ell)$ to an irredundant base for $G_0$ of length at least $7$.\footnote{We observe that the argument when $q=2$ also applies to the case $q>2$.}
\end{proof}

\begin{proposition}\label{prop:e7}Let $G$ be an almost simple group on $\Omega$ with socle $G_0=E_7(q)$, where the action of $G_0$ on $\Omega$ is permutation equivalent to the action on the right cosets of a parabolic subgroup labeled $P_7$. Then $G$ is not $\mathrm{IBIS}$.
\end{proposition}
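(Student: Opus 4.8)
The plan is to mimic the proof of Proposition~\ref{prop:e6} almost verbatim, with the $27$-dimensional module of $E_6(q)$ replaced by the $56$-dimensional minuscule module of $E_7(q)$. By Theorem~\ref{thrm:6.4}~\eqref{part1} we have $b(G)=6$, so by Lemma~\ref{lemma:minus1} it suffices to produce an irredundant base for the socle $G_0=E_7(q)$ of length at least $7$: such a base is an irredundant chain for $G$ by Lemma~\ref{lemma:minus1}, and it extends to an irredundant base of $G$ of length at least $7>6=b(G)$, whence $G$ has irredundant bases of two different sizes and is not $\mathrm{IBIS}$.

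First I would set up the geometry. Let $V$ be the $56$-dimensional natural module for the covering group of $G_0$ over $\mathbb{F}_q$. As in the $E_6$ situation, the $G_0$-orbits on the $1$-spaces of $V$ split into \emph{white} points and others, and the action of $G_0$ on the right cosets of the parabolic subgroup $P_7$ is permutation equivalent to the action of $G_0$ on the set $\mathcal{W}$ of white points (the highest-weight orbit, that is, the Freudenthal variety); thus I take $\Omega=\mathcal{W}$. Next I would exhibit a subspace $W$ of $V$, of some dimension $d\ge 6$, all of whose $1$-spaces are white, and set $\Delta=\{\omega\in\Omega\mid\omega\le W\}$ with $H=(G_0)_{\{\Delta\}}$. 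The crucial geometric input, the analogue for $E_7$ of the facts recorded in~\cite[(P.3), page~470]{CoCo} for $E_6$, is that $H$ induces on $\Delta$ an almost simple group with socle $\mathrm{PSL}_d(q)$ in its natural action on the projective points of $W$.

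Granting this, the combinatorics is identical to Proposition~\ref{prop:e6}. I choose white points $\delta_1=\langle v_1\rangle,\dots,\delta_r=\langle v_r\rangle$ with $v_1,\dots,v_r$ linearly independent, where $r=\min(d,7)$. If $d\ge 7$ these already form a strictly decreasing chain of stabilizers in $G_0$ of length $7$. If $d=6$ and $q>2$ I append $\delta_7=\langle v_1+\dots+v_6\rangle$ to reach length $7$, exactly as in the $E_6$ proof; if $d=6$ and $q=2$, the stabilizer $(G_0)_{\delta_1,\dots,\delta_6}$ equals the pointwise stabilizer of $\Delta$ and still contains the unipotent radical of the parabolic $(G_0)_{\delta_1}$, so it is nontrivial and the chain of length $6$ extends. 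In all cases the chain extends to an irredundant base of $G_0$ of length at least $7$, which finishes the proof.

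The main obstacle is therefore not the permutation-group bookkeeping, which is a copy of the $E_6$ case, but the geometry of the $56$-dimensional representation: that the $P_7$-action is the action on white points, that a maximal white subspace $W$ of dimension at least $6$ exists, and that the stabilizer of the white points of $W$ induces an almost simple group with socle $\mathrm{PSL}_d(q)$ on the projective points of $W$. These are the $E_7$-analogues of the statements about linear subspaces of the Cayley plane in~\cite{CoCo}, and the whole proof reduces to citing (or re-deriving) the corresponding facts for the Freudenthal variety.
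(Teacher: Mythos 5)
Your overall strategy --- quote $b(G)=6$ and then exhibit an irredundant base of $G_0$ of length at least $7$ --- is exactly the paper's, but the way you try to produce the long base is genuinely different from what the paper does, and as written it has a real gap. Everything in your argument is delegated to the ``crucial geometric input'': that the $P_7$-action is the action on the highest-weight orbit of the $56$-dimensional module, that this orbit contains a linear subspace $W$ with $\dim W\ge 6$ all of whose $1$-spaces are white, and that the setwise stabilizer of $W$ induces on the projective points of $W$ an almost simple group with socle $\mathrm{PSL}_d(q)$. You neither prove these statements nor cite an $E_7$-analogue of~\cite{CoCo}; you explicitly concede that ``the whole proof reduces to citing (or re-deriving) the corresponding facts for the Freudenthal variety'', but that reduction is precisely where the argument stops being a proof. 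The facts are plausible (the maximal linear subspaces of $E_7/P_7\subset\mathbb{P}^{55}$ are known, and one family of $\mathbb{P}^6$'s has stabilizer the parabolic $P_2$, whose Levi is of type $A_6$), but until they are sourced and the induced action is identified, your proof is conditional.

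The paper avoids the $56$-dimensional geometry altogether. It uses Vasilyev's data~\cite{Vasilev} on the permutation representation of $E_7(q)$ on the cosets of $P_7$: the action has rank $4$ and the point stabilizer $P_7$ acts faithfully on a suborbit $\Delta$ of cardinality $n_2=q(q^9-1)(q^8+q^4+1)/(q-1)$. Building an irredundant base of $G_0$ greedily inside $\{\omega\}\cup\Delta$, each step of the stabilizer chain divides the order by a factor of at most $|\Delta|=n_2$, so $n_2^{\ell-1}\ge |P_7|$; since $n_2$ is roughly $q^{17}$ while $|P_7|$ is roughly $q^{106}$, this forces $\ell\ge 7$. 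This counting argument is self-contained and purely numerical, and it is the step you should either adopt or replace by a precise reference for the missing $E_7$-geometry.
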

\begin{proof}
From~\cite{Bur2018}, $G$ has a base of cardinality $6$. Therefore, from Lemma~\ref{lemma:minus1}, it suffices to prove that $G_0$ has irredundant bases of cardinality larger than $6$.

Let $\Omega$ be the set of cosets of $P_7$ in $G_0$ and let $\omega\in \Omega$. From~\cite{Vasilev}\footnote{Vasilev~\cite{Vasilev} employs a distinct labeling system for the parabolic subgroups compared to that of~\cite{Bourbaki}. Specifically, parabolic subgroups denoted as $P_7$ in~\cite{Bourbaki} are identified as $P_1$ in~\cite{Vasilev}.}, we have $$|\Omega|=\frac{(q^{14}-1)(q^9+1)(q^5+1)}{q-1}.$$ Moreover, again from~\cite{Vasilev}, we see that $G_0$ has (permutational) rank $4$ in its action on $\Omega$, and that $G_0$ has a suborbit of cardinality
\begin{equation}\label{eq:n2}
n_2=\frac{q(q^9-1)(q^8+q^4+1)}{q-1}.
\end{equation} Therefore, $(G_0)_\omega$ has an orbit,  $\Delta$ say, having cardinality $n_2$. Moreover, from~\cite{Vasilev}, we deduce that the action of $(G_0)_\omega$ on $\Delta$ is faithful. From this, we have that $G_0$ admits an irredundant base $(\omega_0=\omega,\omega_1,\ldots,\omega_{\ell-1})$, where $\omega_1,\ldots,\omega_{\ell-1}\in \Delta$. This implies
\begin{align*}
n_2^{\ell-1}&=|\Delta|^{\ell-1}>|(G_0)_\omega|=|P_7|=\frac{|G_0|}{|\Omega|}\\
&=\frac{1}{d}q^{63}(q^9-1)(q^{12}-1)(q^5-1)(q^8-1)(q^6-1)(q^2-1)(q-1),
\end{align*}
where $d=\gcd(2,q-1)$. Using~\eqref{eq:n2}, it can be verified that this implies $\ell\ge 7$.
\end{proof}

\section*{GAP code}
\begin{lstlisting}[language=GAP]
	randomVect := function(A,numIn,size)
		#Create a random vector of given size, by selecting points ,
		#from A, with the starting point numIn
 		local RndVect,x;	
		RndVect := [];
		Add(RndVect,numIn);
		while Size(RndVect) < size do
			x:=Random(A);
			if not x in RndVect then
				Add(RndVect, x);
			fi;
		od;
		return RndVect;
	end;
	isBase := function(G,base)
		#Return true if base is a base for the permutation group G.
		return Size(Stabilizer(G,base,OnTuples))=1;
	end;
	
	IsIrredundant := function(G,B)
		#Return true if the sequence B is irredundant.
		local irr,size,i,stab,newstab;
		irr:=true;
		size:=Size(B);
		stab:=Stabilizer(G,B[1]);
		for i in [2..size] do
			newstab:=Stabilizer(stab,B[i])
			if Size(newstab)=Size(stab) then
				irr:=false;
				break;
			fi;
		stab:=newstab;
		od;
		return irr;
	end;
	FindRandBase := function(G, dim)
		#Find a (random) basis of a given dimension. 
		#If the base is not found after 1000 tries, return false.
		local base, a,Omega,found,i,max;
		found:=false;
		Omega:=MovedPoints(G);
		a:= Random(Omega);
		i:=0;
		base:=[];
		max:=1000;
		while not found and i<max do
			base:=randomVect(Omega,a,dim);
			if IsBase(G,base) and IsIrredundant(G,base) then
				found:=true;
			fi;
			i:=i+1;
		od;
		if not found then 
			base:=[]; 
		fi;
		return [base,found];
	end;
	
	### Codes used for computation with FinInG package
	
	FiningPointwiseStabiliser := function(G, base)
		#Compute the pointwise stabilizer of the
		#sequence base using the package FinInG
		local stab,x;
		stab:=G;
		for x in base do
			stab := Intersection(stab,FiningStabiliser(G,x));
		od;
		return stab;
	end;
	
	FiningIsIrrBase := function(G,base)
		return Size(FiningPointwiseStabiliser(G,base))=1 
			and FiningIsIrredundant(G,base);
	end;
	
	FiningIsIrredundant := function(G,base)
		local irr,H,Htemp,x;
		irr:=true;
		H:=G;
		for x in base do
			Htemp:=Stabiliser(H,x);
			if Size(Htemp) = Size(H) then
				irr:=false;
				break;
			fi;
			H:=Htemp;
		od;
		return irr;
	end;
	
	randomVettMat := function(O,numIn,size)
		#Create a random vector by selecting points from O,
		#starting with the point numIn.
		#O is an orbit of subspace for some classical group G 
		local RndVect, x,diff;
		RndVect := [];
		Add(RndVect,numIn);
		while Size(RndVect) < size do
			x:=Random(O);
			if not x in RndVect then
				Add(RndVect, x);
			fi;
		od;
		return RndVect;
	end;
	
	
	#Examples of use
	
	#Lemma 3.14. We use GAP to show that PSp_4(3) in its action of degree 40 is not IBIS. 
	
	gap> G:=PSp(4,3);;
	gap> FindRandBase(G,4);
	[ [ 15, 13, 4, 9 ], true ]
	gap> FindRandBase(G,5);
	[ [ 14, 13, 11, 1, 34 ], true ]
	
	#We deduce that G has an irredundant base of size 4 and an irredundant base of size 5,
	#and hence is not IBIS.
	
	gap> FindRandBase(G,6);
	[ [], false ];
	
	#In this case, we may not deduce that G has not a base of cardinality 6.
	
	#Lemma 3.15. We use GAP to show that PSp_6(2), in its action on
	#the set of totally singular 2-dimensional subspaces, has two
	#irredundant bases satisfying the conditions of Lemma 3.15.
	#We need two auxiliary functions.
	
	MeetSubspaces :=  function(V,list)
		#compute the intersection of the subspaces in list
		local x, meet,n;
		n:=Size(list);
		meet:=Meet(list[1],list[2]);
		for x in [3..n] do
			meet := Meet(meet,list[x]);
		od;
		return meet;
	end;
	
	SumSubspaces := function(V,list)
		#compute the sum of the subspaces in list
		local x;
		base:=[];
		for x in list do
			base:=Union(base,Unpack(UnderlyingObject(x)));
		od;
		return Subspace(V,base);
	end;
	
	gap> LoadPackage("Fining");
	true
	gap> ps:=SymplecticSpace(5,2);
	W(5, 2)
	gap> G:=IsometryGroup(ps);
	PSp(6,2)
	gap> spaces:=Lines(ps);
	<lines of W(5, 2)>
	gap> V:=GF(2)^6;
	( GF(2)^6 )
	gap> found:=false;
	false;
	gap> b1:=[];
	[ ];
	gap> while not found do b1:=randomVettMat(spaces,Random(spaces),4);
	if FiningIsIrrBase(G,b1) and V = SumSubspaces(V,b1)
	and IsEmptySubspace(MeetSubspaces(V,b1))
	then found:=true; fi; od;
	gap> b1;
	[ <a line in W(5, 2)>, <a line in W(5, 2)>,
	<a line in W(5, 2)>, <a line in W(5, 2)> ]
	gap> found:=false;
	false
	gap> while not found do b2:=randomVettMat(spaces,Random(spaces),5);
	if FiningIsIrrBase(G,b2) and V = SumSubspaces(V,b2)
	and IsEmptySubspace(MeetSubspaces(V,b2)) then 
	ound:=true; fi; od;
	gap> b2;
	[ <a line in W(5, 2)>, <a line in W(5, 2)>, <a line in W(5, 2)>,
	<a line in W(5, 2)>, <a line in W(5, 2)> ]
\end{lstlisting}

\thebibliography{20}

\bibitem{Robert}R.~F.~Bailey,
Uncoverings-by-bases for base-transitive permutation 
groups, \textit{Des. Codes Cryptogr.} \textbf{41} (2006), no. 2, 153--176. 

\bibitem{FINING}  J.~Bamberg, A.~Betten, P.~Cara,  J.~De Beule,  M.~Lavrauw, M.~Neunhoeffer,  and  M.~Horn, FinInG, Finite Incidence Geometry, Version 1.5.6 (2023)

\bibitem{bhr}J.~N.~Bray, D.~F.~Holt, C.~M.~Roney-Dougal, \textit{The maximal subgroups of the low-dimensional finite classical groups}, Cambridge: Cambridge University Press, 2013.

\bibitem{Bourbaki}N.~Bourbaki, \textit{Groupes et Algebr\`es de Lie (Chapters 4, 5 and 6)}, Hermann, Paris, 1968.
   
\bibitem{B1}T.~C.~Burness, Fixed point ratios in actions of finite classical groups. I, \textit{J. Algebra} \textbf{309} (2007), 69--79.

\bibitem{B2}T.~C.~Burness, Fixed point ratios in actions of finite classical groups. II, \textit{J. Algebra} \textbf{309} (2007), 80--138.

\bibitem{B3}T.~C.~Burness, Fixed point ratios in actions of finite classical groups. III, \textit{J. Algebra} \textbf{309} (2007), 693--748. 

\bibitem{B4}T.~C.~Burness, Fixed point ratios in actions of finite classical groups. IV, \textit{J. Algebra} \textbf{309} (2007), 749--788. 

\bibitem{Bur07} T.~C.~Burness, On base sizes for actions of finite classical groups, \textit{J. Lond. Math. Soc. (2)} \textbf{75} (2007), 545--562.
\bibitem{Bur2018}T.~C.~Burness, On base sizes for almost simple primitive groups, \textit{J. Algebra} \textbf{516} (2018), 38--74.

\bibitem{BurGiu} T.~C.~Burness, M.~Giudici, Classical Groups, Derangements and Primes, 	\textit{Australian Mathematical Society Lecture Series}, Cambridge University Press, 2016.
 
\bibitem{BGS11}T.~C.~Burness, R.~M.~Guralnick, J.~Saxl, On base sizes for symmetric groups, \textit{Bull. Lond. Math. Soc. 43} (2011), 386--391.
\bibitem{BLS09} T.~C.~Burness, M.~W.~Liebeck, A.~Shalev, Base sizes for simple groups and a conjecture of Cameron, \textit{Proc. Lond. Math. Soc. (3)} \textbf{98} (2009), 116--162.
\bibitem{BOW10} T.~C.~Burness, E.~A.~O’Brien, R.~A.~Wilson, Base sizes for sporadic simple groups, \textit{Israel J. Math.} \textbf{177} (2010), 307--333.

\bibitem{PolarSpaces}P.~J.~Cameron, \textit{Projective and Polar Spaces}, University of London, Queen Mary and Westfield college, 1992.
\bibitem{cameron}P.~J.~Cameron, W.~M.~Kantor, Random permutations: some group-theoretic aspects, \textit{Combin. Probab. Comput.} \textbf{2} (1993) 257--262. 
 
\bibitem{Peter}P.~J.~Cameron, \textit{Permutation Groups}, London Mathematical Society Student Texts, Cambridge University Press, 1999. 
\bibitem{CF}P.~J.~Cameron, D.~G.~Fon-Der-Flaass, Bases for permutation groups and matroids, \textit{Eur. J. Comb.} \textbf{16} (1995), 537--544.
\bibitem{Cam92}P.~J.~Cameron,``Some open problems on permutation group'' in Groups, Combinatorics and
Geometry. Proceedings of the L.M.S. Durham Symposium, Held July 5--15, 1990 in Durham, UK,
Cambridge University Press, Cambridge, MA, 1992, 340--350.
\bibitem{CoCo}A.~M.~Cohen, B.~N.~Cooperstein, The $2$-spaces of the standard $E_6(q)$-module, \textit{Geom. Dedicata} \textbf{25} (1988), 467--480. 
\bibitem{DMS}F.~Dalla Volta, F.~Mastrogiacomo, P.~Spiga, On the cardinality of irredundant and minimal  bases of finite permutation groups, \textit{J. Algebr. Comb.}, to appear. 
\bibitem{dixon_mortimer}J.~D.~Dixon,  B.~Mortimer, \textit{Permutation groups}, Graduate Texts in Mathematics \textbf{163}, Springer-Verlag, New York, 1996.
\bibitem{Dye}R.~H.~Dye, Interrelations of Symplectic and Orthogonal Groups in Characteristic Two, \textit{J. Algebra} \textbf{59} 1979, 202--221. 
\bibitem{6}H.~Freudenthal, Beziehungen der $E_7$ und $E_8$ zur Oktavenebene I.,  \textit{Indagationes Math.} \textbf{16} 1954,
218--230.
\bibitem{GAP}The GAP Group, GAP -- Groups, Algorithms, and Programming, Version 4.13.1; 2024. (https://www.gap-system.org)

\bibitem{GiLoSp}N.~Gill, B.~Lod\'a, P.~Spiga, On the height and relational complexity of a finite permutation group, \textit{Nagoya Math. J.} \textbf{246} (2022), 372--411.
\bibitem{GuPrSp}S.~Guest, A.~Previtali, P.~Spiga, A remark on the permutation representations afforded by the embeddings of $\mathrm{O}^\pm_{2m}(2^f)$ in $\mathrm{Sp}_{2m}(2^f)$,
\textit{Bull. Aust. Math. Soc.} \textbf{89} (2014),  331--336. 

\bibitem{jordan}C.~Jordan, \textit{Trait\'e des Substitutions et des \'Equations Alg\'ebriques}, Gauthier-Villars, Paris, 1870.
\bibitem{KRD} V.~Kelsey, M.~C.~Roney-Dougal, On relational complexity and base size of finite primitive groups, \textit{Pacific Journal of Mathematics} (2021), 89--108.
\bibitem{KL}P.~Kleidman, The maximal subgroups of the finite 8-dimensional orthogonal groups $\mathrm{P}\Omega^+_8(q)$ and of their automorphism groups, \textit{J. Algebra} \textbf{110} (1987), 173--242.

\bibitem{KL90}P.~Kleidman, M.~Liebeck, \textit{The Subgroup Structure of the Finite Classical Groups}, Lond. Math. Soc. Lect. Note Ser. \textbf{129}, Cambridge University Press, Cambridge, MA, 1990.

\bibitem{lang}S.~Lang, \textit{Algebra. 3rd revised ed.}, Graduate Texts in Mathematics \textbf{211}, 2002, New York, NY: Springer.
\bibitem{Lee}M.~Lee, Primitive almost simple IBIS groups with sporadic socle,  \href{https://arxiv.org/abs/2302.01521}{arXiv:2302.01521}. 
\bibitem{LeSp}M.~Lee, P.~Spiga, A classification of finite primitive IBIS groups with alternating socle, \textit{J. Group Theory} \textbf{26} (2023),  915--930.

\bibitem{LPS}
M.~W. Liebeck, C.~E. Praeger and J. Saxl, The maximal factorizations of the finite simple groups and their automorphism groups, \emph{Mem. Amer. Math. Soc.} 86 (1990).

\bibitem{LS99} M.~W.~Liebeck, A.~Shalev, Simple groups, permutation groups, and probability, \textit{J. Amer. Math. Soc.} \textbf{12} (1999), 497--520.
\bibitem{LuMoMo}A.~Lucchini, M.~Morigi, M.~Moscatiello, Primitive permutation IBIS groups, \textit{J. Combin. Theory Ser. A} \textbf{184} (2021), Paper No. 105516. 
\bibitem{LuMa}A.~Lucchini, D.~Malinin, IBIS soluble linear groups, \textit{Comm. Algebra} \textbf{52} (2024), 457--465.
\bibitem{Malle}G.~Malle, Explicit realization of the Dickson groups $G_2(q)$ as Galois groups, \textit{Pacific J. Math.} \textbf{212} (2003), 157--167.
\bibitem{Pra90}C.~E.~Praeger, The inclusion problem for ﬁnite primitive permutation groups, \textit{Proc. Lond. Math. Soc. (3)} \textbf{60} (1990), 68--88.
\bibitem{reis} L.~Reis, S.~Ribas, Generators of finite fields with prescribed traces, \textit{Journal of the Australian Mathematical Society}, 112 (2022), pp. 355-366.

\bibitem{Vasilev}A.~V.~Vasilyev, Minimal permutation representations of finite simple exceptional groups of type $E_6$, $E_7$ and $E_8$, \textit{Algebra and Logic} \textbf{36} (1997), 302--3010.
\end{document}